\let\over=\@@over \let\overwithdelims=\@@overwithdelims
\let\atop=\@@atop \let\atopwithdelims=\@@atopwithdelims
\let\above=\@@above \let\abovewithdelims=\@@abovewithdelims
\tikzstyle{int}=[draw, fill=blue!20, minimum size=2em]
\tikzstyle{dot}=[circle, draw, fill=blue!20, minimum size=2em]
\tikzstyle{init} = [pin edge={to-,thin,black}]
\newcommand{\eqref}[1]{~(\ref{#1})}
\def\mod{\mathop{\rm mod}}
\newcommand{\norm}[1]{{\left\Vert #1 \right\Vert}}
\newcommand{\argmin}{\mathop{\rm argmin}}
\newcommand{\argmax}{\mathop{\rm argmax}}
\def\exp{\mathop{\rm exp}}
\def\EE{\Expect}
\def\PP{\mathbb{P}}
\def\simiid{\stackrel{\text{i.i.d.}}\sim}
\def\bbordermatrix#1{\begingroup \m@th
	\@tempdima 4.75\p@
	\setbox\z@\vbox{%
		\def\cr{\crcr\noalign{\kern2\p@\global\let\cr\endline}}%
		\ialign{$##$\hfil\kern2\p@\kern\@tempdima&\thinspace\hfil$##$\hfil
			&&\quad\hfil$##$\hfil\crcr
			\omit\strut\hfil\crcr\noalign{\kern-\baselineskip}%
			#1\crcr\omit\strut\cr}}%
	\setbox\tw@\vbox{\unvcopy\z@\global\setbox\@ne\lastbox}%
	\setbox\tw@\hbox{\unhbox\@ne\unskip\global\setbox\@ne\lastbox}%
	\setbox\tw@\hbox{$\kern\wd\@ne\kern-\@tempdima\left[\kern-\wd\@ne
		\global\setbox\@ne\vbox{\box\@ne\kern2\p@}%
		\vcenter{\kern-\ht\@ne\unvbox\z@\kern-\baselineskip}\,\right]$}%
	\null\;\vbox{\kern\ht\@ne\box\tw@}\endgroup}
\newcommand{\stepa}[1]{\overset{\rm (a)}{#1}}
\newcommand{\stepb}[1]{\overset{\rm (b)}{#1}}
\newcommand{\stepc}[1]{\overset{\rm (c)}{#1}}
\newcommand{\stepd}[1]{\overset{\rm (d)}{#1}}
\newcommand{\Poi}{\mathrm{Poi}}
\newcommand{\Expect}{\mathbb{E}}
\newcommand{\TV}{{\rm TV}}
\newcommand{\indc}[1]{{\mathbf{1}_{\left\{{#1}\right\}}}}
\definecolor{myblue}{rgb}{.8, .8, 1}
\definecolor{mathblue}{rgb}{0.2472, 0.24, 0.6} 
\definecolor{mathred}{rgb}{0.6, 0.24, 0.442893}
\definecolor{mathyellow}{rgb}{0.6, 0.547014, 0.24}
\newcommand{\calF}{{\mathcal{F}}}
\newcommand{\calG}{{\mathcal{G}}}
\newcommand{\calN}{{\mathcal{N}}}
\newcommand{\calP}{{\mathcal{P}}}
\newcommand{\calQ}{{\mathcal{Q}}}
\newcommand{\mmse}{\mathsf{mmse}}
\def\R{\mathbb{R}}
\def\Z{\mathbb{Z}}
\def\E{\mathbb{E}}
\def\PP{\mathbb{P}}
\def\unifto{\mathop{{\mskip 3mu plus 2mu minus 1mu%
			\setbox0=\hbox{$\mathchar"3221$}%
			\raise.6ex\copy0\kern-\wd0%
			\lower0.5ex\hbox{$\mathchar"3221$}}\mskip 3mu plus 2mu minus 1mu}}
\def\simleq{{{\mskip 3mu plus 2mu minus 1mu%
			\setbox0=\hbox{$\mathchar"013C$}%
			\raise.2ex\copy0\kern-\wd0%
			\lower0.9ex\hbox{$\mathchar"0218$}}\mskip 3mu plus 2mu minus 1mu}}
\def\simleq{\lesssim}
\def\simgeq{{{\mskip 3mu plus 2mu minus 1mu%
			\setbox0=\hbox{$\mathchar"013E$}%
			\raise.2ex\copy0\kern-\wd0%
			\lower0.9ex\hbox{$\mathchar"0218$}}\mskip 3mu plus 2mu minus 1mu}}
\def\simgeq{\gtrsim}
\newif\ifmapx
\edef\jobnametmp{\expandafter\string\csname ic_apx\endcsname}
\edef\jobnameapx{\expandafter\mkillslash\jobnametmp}
\edef\jobnameexpand{\jobname}
\newcommand{\Tpiell}{\hat{T}_{\pi, \ell}}
\newcommand{\Tpik}{\hat{T}_{\pi, k}}
\newcommand{\tpiell}{\hat{t}_{\pi, \ell}}
\newcommand{\tpik}{\hat{t}_{\pi, k}}
\newcommand{\thetaerm}{\hat{T}_\mathsf{erm}}
\newcommand{\Termk}{\hat{T}_{\mathsf{erm}, k}}
\newcommand{\Termell}{\hat{T}_{\mathsf{erm}, \ell}}
\newcommand{\Tnpmlek}{\hat{T}_{\mathsf{NPMLE}, k}}
\newcommand{\Termclipped}[1]{\hat{T}_{\mathsf{erm}, #1}}
\newcommand{\Trobk}{\hat {T}_{\mathsf{Rob}, k}}
\newcommand{\thetarobell}{\hat {T}_{\mathsf{Rob}, \ell}}
\newcommand{\sfem}{\mathsf{emp}}
\newcommand{\subexpo}{\mathsf{SubE}}
\newcommand{\Regret}{\mathsf{Regret}}
\newcommand{\TotRegret}{\mathsf{TotRegret}}
\newcommand{\SubG}{\mathsf{SubG}}
\renewcommand{\hat}{\widehat}
\renewcommand{\tilde}{\widetilde}
\newcommand{\bbR}{\mathbb R}
\newcommand{\bbZ}{\mathbb Z}
\newcommand{\bbE}{\mathbb E}
\newcommand{\bbP}{\mathbb P}
\let\over=\@@over \let\overwithdelims=\@@overwithdelims
\let\atop=\@@atop \let\atopwithdelims=\@@atopwithdelims
\let\above=\@@above \let\abovewithdelims=\@@abovewithdelims
\tikzstyle{int}=[draw, fill=blue!20, minimum size=2em]
\tikzstyle{dot}=[circle, draw, fill=blue!20, minimum size=2em]
\tikzstyle{init} = [pin edge={to-,thin,black}]
\newtheorem{theorem}{Theorem}
\newtheorem{definition}{Definition}
\newtheorem{lemma}{Lemma}
\newtheorem{corollary}[lemma]{Corollary}
\newtheorem{assump}{Assumption}
\newtheorem{remark}{Remark}
\begin{document}
	
\ifpdf
	\DeclareGraphicsExtensions{.pgf,.jpg}
	\graphicspath{{figures/}{plots/}}
	\fi

    \title{Function estimation in the empirical Bayes setting}

\author{Benjamin Kang, Yury Polyanskiy and Anzo Teh\thanks{
            B.K., Y.P., and A.T. are with the Department of EECS, MIT, Cambridge,
		MA, emails: \url{benkang@alum.mit.edu}, \url{yp@mit.edu} and \url{anzoteh@mit.edu}. }}

	\maketitle

\begin{abstract}
    We study \emph{function estimation} in the empirical Bayes setting for Poisson and normal
    means. Specifically, given observations $Y_i\sim f(\cdot; \theta_i)$ with latent parameters $\theta_i\sim \pi$, the goal is to estimate $\mathbb{E}_{\pi}[\ell(\theta)|X = x]$. 
    This task lies between classical deconvolution (recovering the full prior $\pi$), and standard empirical Bayes mean estimation. 
    While the minimax risk for estimating $\pi$ in the Wasserstein distance is known to decay only logarithmically, 
    we show that estimating certain smooth functions admits dramatically faster rates.
    In particular, for polynomial functions of degree $k$ in the Poisson model, 
    we establish a tight bound of $\Theta(\frac{1}{n}(\frac{\log n}{\log \log n})^{k+1})$ and $\Theta(\frac{1}{n}(\log n)^{2k+1})$ for bounded and subexponential priors, respectively, attainable by estimators mimicking those that achieve optimal regret for the mean estimation problem (Robbins, mininum distance, ERM). 
    Our analysis identifies the approximation-theoretic origin of this improvement: smooth functions can be well-approximated by low-degree polynomials, whereas Lipschitz functions require dense polynomial approximations, incurring a $\frac{1}{k}$ loss for degree $k$ polynomial approximation. The results reveal a sharp hierarchy in the difficulty of empirical Bayes problems: ranging from slow, logarithmic deconvolution to near-parametric convergence for smooth posterior functionals, and establish new connections between nonparametric empirical Bayes theory, polynomial approximation, and statistical inverse problems.
    Finally, we complement our analysis with a lower bound of $\Omega(\frac 1n (\frac{\log n}{\log \log n})^{k+1})$ (bounded priors) and $\Omega(\frac 1n (\log n)^{k + 1})$ (subgaussian priors) for the normal means model. 
\end{abstract}
 
\tableofcontents

\maketitle

\section{Introduction}

\begin{figure}[H]
    \centering
    \begin{subfigure}[t]{0.45\textwidth}
        \includegraphics[scale=0.4]{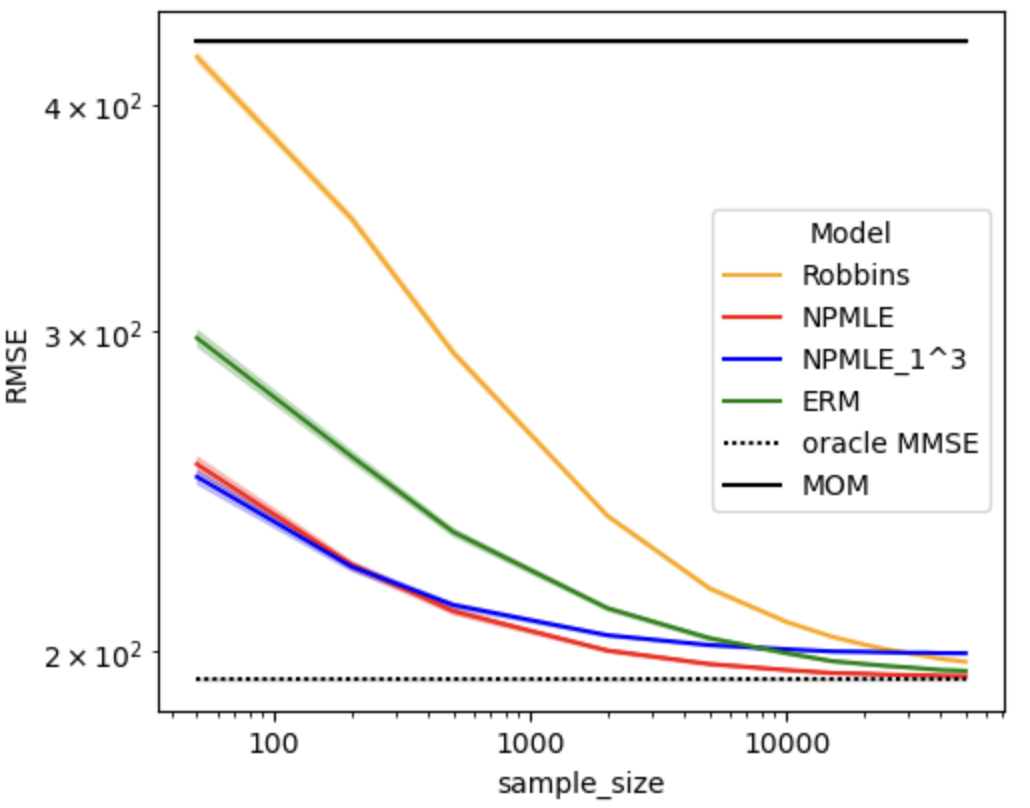}
    \end{subfigure}
    \begin{subfigure}[t]{0.45\textwidth}
        \includegraphics[scale=0.4]{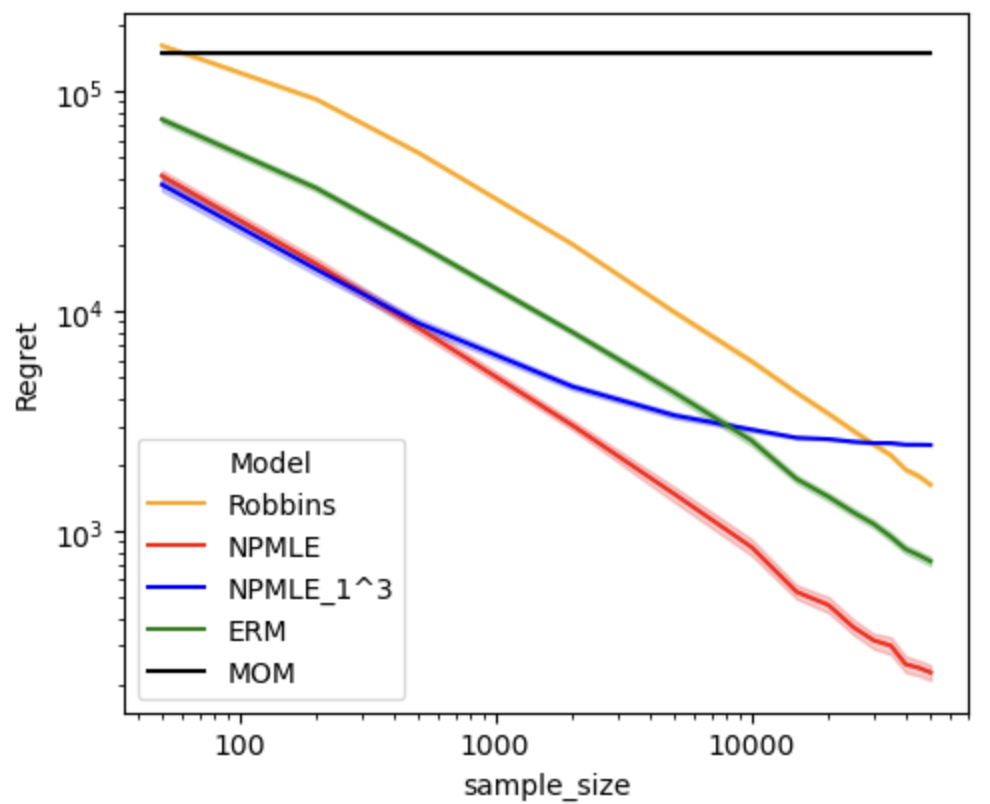}
    \end{subfigure}
    \caption{RMSE and regret (gap to oracle-MMSE as defined in \prettyref{def:mmse}) for estimating $\ell(\theta) = \theta^3$ in the Poisson model with $\pi = \mathsf{Unif}([0, 10])$.
    The proper estimators $\Trobk, \Tnpmlek, \Termk$ (defined later in \prettyref{sec:poisson-estimators} along with $\hat{T}_{\mathsf{MOM}, k}$) all achieve vanishing regret (minimax optimal).
    The naïve plugin $\widehat{\theta^3} = (\hat{\theta}_{\mathsf{NPMLE}})^3$ incurs significant bias and is eventually dominated by even $\Trobk$.
    Despite slow $W_1$-convergence of $\hat{\pi}_{\mathsf{NPMLE}}$ to $\pi$, the proper
    NPMLE-based estimator $\Tnpmlek$ achieves fast (near-parametric) regret decay.
    }
    \label{fig:intro_unif10}
\end{figure}

Empirical Bayes (EB), introduced by Robbins \cite{Rob51, Rob56}, exploits shared structure among independent observations to reduce estimation risk.
It has found applications in computational biology \cite{efron2001empirical}, sports prediction \cite{brown2008season}, and false discovery rate control \cite{benjamini1995controlling}; see \cite{efron2012large} for a comprehensive treatment.

The EB setup is as follows: latent parameters $\theta_1, \ldots, \theta_n \simiid \pi$ (unknown prior), observations $X_i \sim f(\cdot; \theta_i)$ (known channel).
Given $(X_1, \ldots, X_n)$, the classical EB task is to estimate $(\theta_1,\ldots,\theta_n)$. In
this paper, we consider a variation where the task is to estimate a function of $\theta_j$, namely
$(\ell(\theta_1), \ldots, \ell(\theta_n))$.
When $\pi$ is known a priori, the optimal estimator under squared loss is $\hat{t}_{\pi,\ell}(x) =
\mathbb{E}_\pi[\ell(\theta) \mid X = x]$, whose mean square error (MSE) is known as the Bayes risk or
\emph{oracle minimum MSE (MMSE)}.
Without knowledge of $\pi$, performance is measured by the \emph{regret}: excess MSE over the Bayes risk.

For mean estimation ($\ell(\theta) = \theta$) of the Poisson model, the minimax regret is well understood:
\begin{itemize}
    \item Poisson, bounded priors: $\Theta\bigl(\frac 1n(\tfrac{\log n}{\log \log n})^2\bigr)$ \cite{BGR13, polyanskiy2021sharp}
    \item Poisson, subexponential priors: $\Theta(\frac 1n(\log n)^3)$ \cite{polyanskiy2021sharp, JPW24, JPTW23}
\end{itemize}
These rates are achieved by Robbins' estimator, non-parametric maximum likelihood (NPMLE), and
empirical risk minimization (ERM)-based methods.

At the opposite extreme is deconvolution --- recovering $\pi$ under the Wasserstein metric $W_1$. (We
call it the opposite extreme because estimating $\pi$ is equivalent to estimating all Lipschitz
functions of $\theta$, as opposed to only estimating a linear function as in the original EB).
Here convergence is much slower: $\Theta(\tfrac{\log\log n}{\log n})$ for Poisson \cite{miao2024fisher} and $\Theta(1/\sqrt{\log n})$ for normal means \cite{dedecker2013minimax}.
This contrast is surprising: mean estimation achieves near-parametric rates, whereas prior recovery is logarithmically slow.

This paper studies the intermediate regime of polynomial functionals $\ell(\theta) = \theta^k$.
A key observation is that the naive plugin estimator $(\hat{\theta}_{\mathsf{NPMLE}})^k$ performs poorly due to bias (\prettyref{fig:intro_unif10}), while the proper estimators based on the generalized Tweedie formula
\begin{equation}\label{eq:tweedie_intro}
\mathbb{E}_\pi[\theta^k \mid X = x] = \frac{(x+1)_k \, f_\pi(x+k)}{f_\pi(x)}
\end{equation}
achieve optimal rates (see Fig.~\ref{fig:intro_unif10}, where $(x)_k = x(x+1)\cdots(x+k-1)$ denotes the Pochhammer symbol.

\paragraph{Contributions.} In this paper, we show the following results:
\begin{enumerate}
    \item \textit{Poisson model, tight bounds for $\ell(\theta) = \theta^k$:} We establish matching upper and lower bounds:
    \begin{itemize}
        \item Bounded priors $\calP([0,h])$: $\Theta\bigl(\frac 1n(\tfrac{\log n}{\log \log n})^{k+1}\bigr)$
        \item Subexponential priors $\subexpo(s)$: $\Theta\bigl(\frac 1n(\log n)^{2k+1}\bigr)$
    \end{itemize}
    The upper bounds are achieved by Robbins-type, NPMLE, and ERM estimators (\prettyref{sec:upper_bound}).

    \item \textit{Poisson model, smooth functionals:} For $\ell \in C^p$ with $\ell^{(p)} \in \mathrm{Lip}_\alpha$, we establish upper bounds depending on the smoothness parameter $\beta = p + \alpha$.

    \item \textit{Normal means model, lower bounds:} We prove $\Omega\bigl(\frac 1n(\tfrac{\log n}{\log \log n})^{k+1}\bigr)$ for bounded priors and $\Omega\bigl(\frac 1n(\log n)^{k+1}\bigr)$ for subgaussian priors.
\end{enumerate}

On the technical side, for upper bounds, we extend the offset Rademacher complexity framework of \cite{JPTW23} to handle the modified loss for the $\theta^k$-estimation.
For lower bounds, we analyze higher-order derivatives of the basis functions from \cite{polyanskiy2021sharp}, connecting posterior moment estimation to the analytic structure of these functions.

\paragraph{Transport maps between $\hat \pi$ and $\pi$.}
The slow deconvolution rate reflects poor polynomial approximation of 1-Lipschitz functions:
degree-$k$ polynomial approximation incurs $\Omega(1/k)$ error and thus requires estimating high-degree
polynomials $\ell(\theta)$, whose rate is exponential in $k$. This provides us with a perspective on the so-called 
``EB miracle'' (the fact that despite $W_1(\hat{\pi}, \pi) = \Omega(\tfrac{\log\log n}{\log n})$,
the plugin Bayes estimator $\hat{t}_{\hat{\pi}}$ estimates
$\EE_{\pi}[\theta|X]$ at almost parametric rate) is to realize that the posterior mean is a smooth functional
of $\pi$. Thus, integrating over $\hat{\pi}$ to obtain $\hat{t}_{\hat{\pi}}$ smooths out all
small-scale mismatches between $\pi$ and $\hat{\pi}$. Consequently, we make an important
qualitative conclusion (under
Gaussian and Poisson models at least): the large transport distance $W_1(\hat{\pi},\pi)$ is a result of
needing to move a lot of small masses to small distances. Indeed, if a large transport was needed
somewhere, for example, if
$$\hat{\pi}(-\infty,1/2]>\pi(-\infty,2/3]+\Omega(\tfrac{\log \log n}{\log n})\,,$$ then there would
exist a low-degree polynomial with different expectations under $\hat{\pi}$ and $\pi$, thus
violating our results.

\paragraph{Paper organization.} 
The rest of the paper is organized as follows. 
We detail our main results in \prettyref{sec:main}, 
where in \prettyref{sec:poisson_model} we describe the estimators of the Poisson model, and in \prettyref{sec:poisson_bound} and \prettyref{sec:normal_bound} we provide regret bounds for the Poisson and normal means models, respectively. 
In \prettyref{sec:preliminaries} we provide some background context needed for the rest of the paper. 
In \prettyref{sec:lower-bound} we provide proofs for lower bounds. 
For the Poisson model, we prove the upper bounds for $\ell(\theta)$ polynomial and smooth functions in 
\prettyref{sec:upper_bound} and \prettyref{sec:smooth}, respectively. 
Technical auxiliary proofs, as well as commentary on the compound decision version of the problem, are in the appendices.

\section{Main results}\label{sec:main}
    
    \subsection{Empirical Bayes task}

    We formally define the empirical Bayes task. 
    Recall that $\theta_1, \cdots, \theta_n$ are i.i.d. from $\pi$, 
    and $X_i\sim f(\cdot; \theta_i)$ from some known conditional distribution $f(\cdot; \theta)$ parametrized by $\theta$. 
    Upon seeing samples $X_1^n := (X_1, \cdots, X_n)$, 
    one produces an estimate $\hat{T}(X_1,\cdots, X_n) = (\hat{T}_1(X_1^n), \cdots, \hat{T}_n(X_1^n))$ such that the MSE 
    $\frac 1n \sum_{i = 1}^n \bbE[(\hat{T}_i(X_1^n) - \ell(\theta_i))^2]$ is minimized. 
    If $\pi$ is known, the minimizer of this MSE (MMSE) is the Bayes estimator, 
    $\hat T_{\pi, \ell}(X_1^n) = (\hat{t}_{\pi, \ell}(X_1), \cdots, \hat{t}_{\pi, \ell}(X_n))$ where $(\hat{t}_{\pi, \ell}(x) = \bbE[\ell(\theta)|X = x])$.  
    In practice, we do not have access to the true distribution $\pi(\cdot)$. 
    The goal now is to output an approximation $\hat T$ of the Bayes estimator. 
    We evaluate the quality of such an estimator via the \textit{regret}, which captures the difference between the mean squared error of the estimator $\hat{T}$ and the MMSE estimator $\Tpiell$. 
    
    \begin{definition}[mmse of a functional $\ell$]\label{def:mmse}
        Let the mmse of a prior distribution $\pi$ and a functional $\ell$ be the expected squared error of the Bayes estimator, i.e.
        \begin{align*}
            \mmse_{\ell}(\pi)\overset{\Delta}{=}
            \min_{\hat{t}} \E_{\pi}\left[(\hat{t}(X)-\ell(\theta))^2\right]
            =\E_{\pi}[(\hat{t}_{\pi, \ell}(X)-\ell(\theta))^2].
        \end{align*}
    \end{definition}

    Next, we consider the following definition of total regret. 
    \begin{definition}[Total regret]\label{def:totregret}
        The total regret of an estimator $\hat{T}:\mathbb{R}^n\to\mathbb{R}^n$ on a prior $\pi$ and functional $\ell$ is defined as 
        \[\mathsf{TotRegret}_{\pi, \ell, n}(\hat{T}) = \mathbb{E}\left[\sum_{j=1}^n (\hat{T}_j(X_1^n) -
	\ell(\theta_j))^2\right] - n\cdot \mathsf{mmse}_{\ell}(\pi)\]
        Correspondingly, the minimax optimal regret of a class $\mathcal{G}$ of priors is defined by optimizing $\hat T$'s performance uniformly over all priors in $\mathcal{G}$:
        \[
        \mathsf{TotRegret}_{\ell, n}(\mathcal{G}) = \inf_{\hat{T}} \sup_{\pi\in\mathcal{G}} \mathsf{TotRegret}_{\pi, \ell, n}(\hat{T})
        \]
    \end{definition}
    
Below, whenever $\ell$ denotes the $k$-th moment functional, i.e. $\ell(\theta) = \theta^k$, we will instead use the subscript $k$ in place of $\ell$ in both the mmse and regret notation, e.g. $\mathsf{TotRegret}_{k,n}(\mathcal{G})$ etc.

Next, we define the classes of priors over which we will bound the regret. 
These are priors satisfying certain light-tailedness assumptions (in particular, priors of bounded support). 
Regret bound for the mean estimation problem has generally been established under those settings 
\cite{BGR13, polyanskiy2021sharp, JPW24, JPTW23}. 
We remind that the class of bounded distributions is defined as 
$\calP([a,b]) = \{\pi: \pi([a,b])=1\}$; 
the class of subgaussian distributions $\SubG(s) = \{\pi: \pi[|\theta| \ge t]\le 2e^{-t^2/2s} \,\,\, \forall t>0\}$; 
and the class of subexponential distributions as $\subexpo(s) = \{\pi: \pi[|\theta| \ge t]\le 2e^{-t/s}\,\,\, \forall t>0\}$. 

We now describe the models that we will be working on. These are the normal means and Poisson models. 
We recall that the normal means model is where $f(\cdot; \theta)\triangleq \calN(\theta, 1)$, 
and the Poisson model is where $f(\cdot; \theta)\triangleq \Poi(\theta)$. 
We will continue the discussion of the Poisson model in \prettyref{sec:poisson_model}. 

\subsection{Poisson model}\label{sec:poisson_model}
We now describe certain properties of the Poisson model that we can manipulate. 
Recall that marginal distribution $f_{\pi} = \text{Poi}\circ \pi$ is given by 
\begin{align}\label{eq1}f_{\pi}(x) = \int e^{-\theta}\frac{\theta^x}{x!}d\pi(\theta).\end{align}
Thanks to the relatively straightforward form of the Tweedie's formula, the standard estimators can also be formulated rather easily. 

\subsubsection{Tweedie's formula}
We start the discussion by outlining the Tweedie's formula for $\ell(\theta)=\theta^k$. 
(c.f. \cite[Section 2]{nichols1972empirical}). 
    \begin{equation}\label{eq:bayes_polyk}
        \tpiell(x) = \E[\theta^k|X=x] 
            = \frac{\int\theta^k \left(e^{-\theta}\frac{\theta^x}{x!}\right)d\pi(\theta)}{f_\pi(x)}
            = \frac{(x+1)_{k}\int e^{-\theta}\frac{\theta^{x+k}}{(x+k)!}d\pi(\theta)}{f_\pi(x)}
            = \frac{(x+1)_{k} f_\pi(x+k)}{f_\pi(x)}
    \end{equation}
    where: 
    \begin{equation}\label{eq:pochhammer}
        (x)_n = x(x + 1) \cdots (x + n - 1) = \frac{\Gamma(x + n)}{\Gamma(x)}
    \end{equation}
    denotes the Pochhammer symbol. 

\subsubsection{Estimators}\label{sec:poisson-estimators}
Next, we introduce some of the estimators for the Poisson model with the goal of approximating \prettyref{eq:bayes_polyk}, as adapted from the mean estimation cases. 

\paragraph{Method of moments estimator.} 
We first introduce a non-empirical Bayes baseline, based on the following observation: 
for $X\sim \Poi(\theta)$, we have 
\[
\EE[X(X - 1)\cdots (X - k + 1)] = \theta^k,
\]
thus suggesting a natural (unbiased) method of moment estimator \[\hat{t}_{\mathsf{MOM}, k}(X) = X(X - 1)\cdots (X - k + 1) =: (X-k+1)_k.\]
Note that for $k = 1$ this also coincides with the (frequentist's) maximum likelihood estimator. 
By Lehmann–Scheffé theorem, this is the unique minimum variance unbiased estimator (UMVUE) for this Poisson moment estimation problem
\cite[(3.17)]{lehmann1998theory}. 

\paragraph{Modified Robbins estimator ($f$-modeling).} This estimator is naturally extended from the Robbins estimator \cite{Rob51, Rob56}. That is, we estimate $f_\pi$ using the empirical distribution of $X$'s and plugin into Tweedie's formula~\eqref{eq:bayes_polyk}. This way, we obtain estimator 
\begin{align}\label{eq:robbins}
\Trobk(X_1, \ldots, X_n) =\left(\frac{(X_i+1)_kN_n(X_i+k)}{N_n(X_i)}\right)_{i = 1, \cdots, n}, 
\qquad 
N_n(x) = \sum_{j=1}^n \indc{X_j = x}.
\end{align}

\paragraph{Minimum distance method ($g$-modeling).} Another approach to EB problems is to first derive an estimator $\hat \pi$ of the unknown latent $\pi$ and then compute the Bayes estimator $\hat T_{\hat \pi,\ell}$. One of the surprises of EB is that despite $\hat \pi$ converging very slowly to $\pi$ \cite[Theorem 6.2]{miao2024fisher} (due to non-parametric nature of the class $\mathcal{G}$), the large mismatch between $\hat \pi$ and $\pi$ is almost irrelevant for the computation of the Bayes conditional expectation. In this paper, we consider estimators $\hat{\pi}$ that minimize $d(p_n^{\mathsf{emp}} || f_{\hat{\pi}})$, where $p_n^{\mathsf{emp}}$ denotes the empirical distribution of sample $X_1^n$, and $d$ is a divergence. That is, we define $\hat{\pi}_d$ as 
\begin{equation}\label{eq:mindist-def}
\hat{\pi}_d = \argmin_{\hat{\pi}} d(p_n^{\mathsf{emp}}|| f_{\hat{\pi}})
\end{equation}
The most important special case is $d = D_{KL}$ (the Kullback-Leibler divergence \cite[(2.1)]{polyanskiy2025information}), in which case $\hat{\pi}$ becomes the non-parametric maximum likelihood estimator (NPMLE) \cite{KW56} and we give it a special name: 
\begin{equation}\label{eq:npmle-def}
    \hat{\pi}_{\mathsf{NPMLE}} = \argmax_Q \prod_{i=1}^n f_Q(x_i)
\end{equation}
Then we use the following plugin estimator: 
\[\hat T \triangleq \hat{T}_{\hat{\pi}} = (\hat{t}_{\hat{\pi}}(X_i))_{i = 1, \cdots, n}
\qquad 
\hat{t}_{\hat{\pi}}(x) = \frac{(x+1)_kf_{\hat{\pi}}(x + k)}{f_{\hat\pi}(x)}.
\]
Recall that this approach (based on estimating the distribution of $\theta$) is called $g$-modeling~\cite{Efr14}. 

\paragraph{ERM on monotone functions.} 
    The ERM is an idea in statistical learning theory first introduced in \cite{Vap91}, where an optimal hypothesis is learned by finding the hypothesis with the smallest loss over the empirical data. 
    In the mean estimation setting, this is first done by \cite{BZ22} for normal means, and then by \cite{JPTW23} for Poisson. We now extend the latter algorithm to estimating $\theta^k$ for the Poisson model. 

This is motivated by the following: each component $\tpik$ of the Bayes estimator $\Tpik$ satisfies \begin{align}\label{eq:erm_obj_pre}
        \tpik = \argmin_{\hat{t}}\E\left[\left(\hat{t}(X)-\ell(\theta)\right)^2\right] = \argmin_{\hat{t}}\E\left[\hat{t}(X)^2-2\ell(\theta)\hat{t}(X)\right].
    \end{align}
    where the last equality is due to that $\E[\ell(\theta)^2]$ does not depend on the estimator $\hat{T}$. 
    For a given $\hat{T}$ we denote $\hat{t}_i(\theta; x) := \mathbb{E}[\hat{T}_i(X_1^n) \mid X_i = x, \theta_i = \theta]$ 
    and $\hat{t}_i(x) = \mathbb{E}[\hat{T}_i(X_1^n) \mid X_i = x]$. 
    Then the cross term can be further decomposed into: 
    \begin{align}\label{eq:erm_obj_theta}
        \E\left[\theta_i^k \hat{T}_i(X_1^n)\right] &= \E_{\theta_i\sim\pi}[\theta_i^k\E_{X_i\sim \Poi(\theta_n)}[\hat{T}_i(X_1^n)]]
        \nonumber\\
        &=\E_{\theta\sim\pi}[\theta^k\E_{X\sim \Poi(\theta)}[\hat{t}_i(\theta; x)]]
        \nonumber\\
        &=\int\sum_{x=0}^\infty e^{-\theta}\frac{\theta^x}{x!} 
        \hat{t}_i(\theta; x)\theta^kd\pi(\theta) \nonumber\\
        &= \int\sum_{x=0}^\infty e^{-\theta}\frac{\theta^{x+k}}{(x+k)!}(x+1)_k \hat{t}_i(\theta; x) d\pi(\theta) \nonumber\\
        &\stepa{=} \int\sum_{x=0}^\infty e^{-\theta}\frac{\theta^x}{x!}(x-k+1)_k 
        \hat{t}_i(\theta; x - k) d\pi(\theta) \nonumber\\
        &= \E[(X - k + 1)_k \hat{t}_i(X-k)].
    \end{align}
    where (a) follows from shifting the summation to start at $k$ and adding in the terms from $0, \dots, k-1$ since they are all $0$. Substituting \prettyref{eq:erm_obj_theta} into \prettyref{eq:erm_obj_pre}, we obtain 
    \begin{align}\label{eq:erm_obj}
        \tpik = 
        \argmin_{\hat{t}} R_{\pi,k}(\hat{t})
        \qquad 
        R_{\pi, k}(\hat{t}) \triangleq \E_{X\sim \text{Poi}\circ \pi}\left[\hat{t}(X)^2-2(X - k + 1)_k \hat{t}(X - k)\right].
    \end{align}
    and in a similar manner, we may define the empirical version $\hat{R}_k := \hat{R}_{X_1,\cdots, X_n, k}$ of $R$ defined in 
    \prettyref{eq:erm_obj} as follows: 
    \begin{equation}\label{eq:erm_obj_emp}
    \hat{R}_k(\hat{t}) \triangleq \hat{\E}\left[\hat{t}(X)^2-2(X - k + 1)_k \hat{t}(X - k)\right]
    =\frac{1}{n} \sum_{i=1}^n \left[\hat{t}(X_i)^2-2(X_i - k + 1)_k \hat{t}(X_i - k)\right]
    \end{equation}

    Like \cite{JPTW23}, this motivates finding an estimator $\hat{T}$ that minimizes the empirical version of the objective function in \prettyref{eq:erm_obj} over some function class $\calF$. 
    We now consider the following monotone class $\mathcal{F}_{\uparrow}$ defined as follows: 
    \begin{equation}\label{eq:monotone}
        \mathcal{F}_{\uparrow}= \{\hat{t}:\bbZ_+\to\bbR_+: \hat{t}(x) \le \hat{t}(x + 1), \forall x\in\bbZ_+\}
    \end{equation}
    We then define the ERM estimator as follows: 
    \begin{align}\label{eq:erm_obj_mon}
        \Termk(X_1^n) = (\hat{t}(X_1), \cdots, \hat{t}(X_n)), 
    \qquad 
    \hat{t} \in \argmin_{t\in \mathcal{F}_{\uparrow}} \hat{R}_{k}(t)
    \end{align}

    The ERM can be viewed as a `remedy' between the accuracy-efficiency tradeoff between the $f$- and $g$-modeling. 
    The $f$-modeling technique suffers from multiple numerical instability problems (e.g., when $x$ is large and the counts are small, small changes in counts greatly affect the estimated values)\cite{Mar68}. Furthermore, the Robbin's estimator may not be monotone, which is often desired \cite{HS83}. 
    $g$-modeling technique reconciles this by preserving the Bayesian structure (and hence desired monotonicity and stability of the Bayes estimator). 
    For this reason, $g$-modeling has historically attracted a lot of attention, see \cite{koenker2014convex, koenker2024empirical, gu2022nonparametric, gu2023invidious} for recent examples. 
    
    Despite all of this, $g$-modeling has a major computational issue: solving the optimization problem in \prettyref{eq:mindist-def} is pretty challenging even in the considered here case of single-dimensional $\theta_i$'s (and essentially impossible for higher dimensional cases as runtime becomes exponential in dimension \cite{Lin83}).
    The asymptotic computational complexity of NPMLE remains poorly understood, and such studies only started recently \cite{polyanskiy2025nonparametric}. 
    
    The ERM method preserves regularity by imposing the monotonicity constraint, while escaping the computationally intensive step of prior estimation. 
    This is partly inspired by some works (c.f. \cite{VH77}) that impose monotonicity to $f$-modeling estimators without increasing regret.

\subsection{Regret bound (Poisson)}\label{sec:poisson_bound}

We are now ready to explicitly state our main results. Subsequent sections contain the proofs.

\begin{theorem}[Poisson problem: $\ell$ polynomial.]\label{thm:poisson_bound}
    Consider the Poisson model $f(\cdot; \theta) = \Poi(\theta), \theta\in\bbR^+$. 
    \begin{itemize}
        \item There exist  $c_1 = c_1(h, k) > 0, c_2 = c_2(h, k) > 0$ and $n_0 = n_0(k)$ such that for all $n\ge n_0$, 
        $$
        c_1\left(\left(\frac{\log n}{\log\log n}\right)^{k+1}\right)
        \le 
        \TotRegret_{k,n}(\calP([0,h])) \le c_2\left(\left(\frac{\log n}{\log\log n}\right)^{k+1}\right)$$

        \item There exist constants $c_3 = c_3(h, k) > 0, c_4 = c_4(h, k)$ and $n_0 = n_0(k)$ such that for all $n\ge n_0$, 
        $$
        c_3\left((\log n)^{2k+1}\right)
        \le \TotRegret_{k, n}(\subexpo(s)) 
        \le c_4\left((\log n)^{2k+1}\right)
        $$
    \end{itemize}

    Furthermore, the upper bounds are achieved by any of the following estimators: $\Trobk$ and estimators based on $\Termk$ and $\hat{T}_{\hat{\pi}}$ where $\hat{\pi}$ is estimated by minimum distance estimators satisfying \cite[Assumptions 1, 2]{JPW24}. 
\end{theorem}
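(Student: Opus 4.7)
The plan is to treat the upper and lower bounds independently, and within the upper bounds to handle the three estimators $\Trobk$, $\Termk$, and $\hat T_{\hat\pi_d}$ in a unified framework anchored by the Robbins analysis. The common thread is an ``effective support cutoff'' $\Xmax$: $\Xmax\asymp\log n/\log\log n$ for $\pi\in\calP([0,h])$ (the Poisson mixture has superexponential tails) and $\Xmax\asymp\log n$ for $\pi\in\subexpo(s)$ (the mixture is essentially mixed-geometric).

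For $\Trobk$, I would extend the approach of \cite{polyanskiy2021sharp} to the generalized Tweedie identity \prettyref{eq:bayes_polyk}. Setting $\hat f_n(y)=N_n(y)/n$, the pointwise error factors as
\[
\hat t_{\mathsf{Rob},k}(x) - \hat t_{\pi,k}(x)
= (x+1)_k \cdot \frac{\hat f_n(x+k)\, f_\pi(x) - \hat f_n(x)\, f_\pi(x+k)}{\hat f_n(x)\, f_\pi(x)}.
\]
Splitting the domain into a ``good'' region $\{x : f_\pi(x) \ge \tau\}$ with threshold $\tau \asymp (\log n)/n$ (where Bernstein concentration makes $\hat f_n(x)$ a stable surrogate for $f_\pi(x)$) and a ``bad'' region where the truncated estimator is effectively $0$, one obtains a pointwise expected squared error of order $(x+1)_k^2 \bigl(f_\pi(x+k)^2/f_\pi(x) + f_\pi(x+k)\bigr)/n$. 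Weighting by $f_\pi(x)$ and summing over $x \le \Xmax$ produces the bound $n^{-1}\sum_{x\le\Xmax}(x+1)_k^2 f_\pi(x)$; since $(x+1)_k^2\simleq \Xmax^{2k}$ at the cutoff and an extra $\Xmax$ comes from the support enumeration (and from the $\log$ slack in $\tau$), this gives the announced $\Xmax^{k+1}/n$ and $\Xmax^{2k+1}/n$ rates.

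For $\Termk$ and the plugin $\hat T_{\hat\pi_d}$ I would port this $\Xmax$ into the frameworks already set up for $k=1$. For the ERM, the modified empirical objective \prettyref{eq:erm_obj_emp} slots into the offset Rademacher scheme of \cite{JPTW23}, but the cross term now carries a weight $(X-k+1)_k \simleq \Xmax^k$ that inflates the effective range of the learning class; localizing $\hat t$ to a sup-norm ball of radius $\Xmax^k$ around $\hat t_{\pi,k}$ and using the standard bracketing entropy $\log|\calF_\uparrow|\simleq\Xmax$ for monotone classes on $\{0,\ldots,\Xmax\}$ reproduces the $\Xmax^{2k+1}/n$ rate. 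For $\hat T_{\hat\pi_d}$, the hypotheses of \cite{JPW24} give $\hellinger^2(f_{\hat\pi_d}, f_\pi)\simleq \Xmax/n$; combining this with pointwise stability of the generalized Tweedie map under small Hellinger perturbations of $f_\pi$ (which loses $(x+1)_k^2$ from the prefactor and a $f_\pi(x+k)/f_\pi(x)$ ratio from inverting the denominator) again gives the same rate after integrating against $f_\pi(x)$.

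For the lower bound I would use Le Cam's two-point method on a pair of priors $\pi_0,\pi_1$ whose first $m$ Poisson moments coincide, so that $\TV(f_{\pi_0}^{\otimes n}, f_{\pi_1}^{\otimes n})$ stays bounded away from $1$; the correspondence is exactly as in \cite{polyanskiy2021sharp}, namely $m\asymp\log n/\log\log n$ for $\calP([0,h])$ and $m\asymp\log n$ for $\subexpo(s)$. By \prettyref{eq:bayes_polyk},
\[
\hat t_{\pi_0,k}(x) - \hat t_{\pi_1,k}(x)
= (x+1)_k\!\left(\frac{f_{\pi_0}(x+k)}{f_{\pi_0}(x)} - \frac{f_{\pi_1}(x+k)}{f_{\pi_1}(x)}\right),
\]
so after weighting by $f_\pi(x)$ the regret is driven by the $k$-shifted mismatch of the basis functions of \cite{polyanskiy2021sharp}. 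The new step is to track the $k$-th order contribution (rather than only the first, as for $k=1$) and show it is non-degenerate; this should yield an extra $\Xmax^{k-1}$ factor in the bounded case and $\Xmax^{2k-2}$ in the subexponential case, matching the upper bounds. The main obstacle is carrying the polynomial prefactor $(x+1)_k$ through both sides without losing logarithmic constants: on the upper-bound side all three contributions are dominated by $x$ near $\Xmax$, where $(x+1)_k$ is largest, and on the lower-bound side one must verify that the $k$-th order derivative of the Chebyshev-type basis function is not accidentally small, a calculation that is trivial for $k=1$ but genuinely uses the structure of the moment-matching polynomials for general $k$.
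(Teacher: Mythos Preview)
Your upper-bound sketches are in the right spirit and track the paper's arguments closely enough: the Robbins analysis via the generalized Tweedie identity, the offset-Rademacher treatment of the modified ERM objective, and the Hellinger-stability argument for the plugin estimator are all what the paper does. One bookkeeping slip: your Robbins endgame ``$n^{-1}\sum_{x\le\Xmax}(x+1)_k^2 f_\pi(x)$'' does not separate the bounded and subexponential rates. The missing step is the identity $(x+1)_k\,f_\pi(x+k)/f_\pi(x)=\hat t_{\pi,k}(x)\le h^k$, which converts one factor of $(x+1)_k$ into the constant $h^k$ and leaves you with $\sum_{x\le\Xmax}(x+1)_k\,h^k\asymp h^k\Xmax^{k+1}$; for $\subexpo(s)$ one truncates to $h\asymp s\log n$ and this becomes $(\log n)^{2k+1}$.

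The genuine gap is the lower bound. A two-point Le Cam argument is not enough here, and the paper's related-work discussion says so explicitly: for $k=1$ the two-point bound of \cite{LGL05} gives only $\Omega(1)$ total regret, and the improvement to $(\log n/\log\log n)^2$ in \cite{polyanskiy2021sharp} required Assouad's lemma. Concretely, with a single perturbation $r$ you are constrained by $\|Kr\|_{L_2(f_0)}^2\lesssim 1/n$ (indistinguishability) and can hope to make $\|K_kr\|^2$ as large as $\sim m^k\cdot\|Kr\|^2$ by choosing $r$ at ``frequency'' $m\asymp\Xmax$; this yields individual regret $\asymp m^k/n$, i.e.\ total regret $\Xmax^k$, one factor of $\Xmax$ short. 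The paper recovers that factor by building a hypercube of $2^m$ priors from $m$ \emph{orthogonal} perturbations $r_1,\ldots,r_m$ (constructed from generalized Laguerre polynomials under a Gamma base prior), each satisfying $\|K_kr_j\|^2=1$, $(K_kr_i,K_kr_j)=0$, and $\|Kr_j\|^2\lesssim m^{-k}$; Assouad then sums the $m$ contributions to get $\delta^2 m\asymp m^{k+1}/n$. Your ``extra $\Xmax^{k-1}$ factor'' heuristic implicitly borrows the $k=1$ Assouad rate as a baseline while proposing a two-point method, which is inconsistent. The new technical content for general $k$ is not the shifted Tweedie mismatch you wrote, but rather computing $(K_k\Gamma_q,K_k\Gamma_q)$ via the differential-operator identity $K_kr=K(\theta^k r-\mathrm{op}_k(r))$ with $\mathrm{op}_k(r)=\theta^k(1-\tfrac{\partial}{1+\beta})^k r$, which expands $K_k\Gamma_q$ in the Laguerre basis and shows the leading coefficient scales like $q^k$.
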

The upper bound in \prettyref{thm:poisson_bound} can be established by analyzing just one estimator. 
Instead, we do so for three estimators to show that they are minimax optimal for a more general class of estimation tasks (i.e. $\ell(\theta)=\theta^k$), and to show that the techniques developed in prior works can be generalized to our setting. In addition, we illustrate in \prettyref{fig:intro_unif10} how the said empirical Bayes estimator achieves diminishing regret as sample size $n$ increases,  
and at which $n$  we see the difference between $(\mathbb{E}[\theta | X])^k$ and $\mathbb{E}[\theta^k | X]$. 

Next, we generalize \prettyref{thm:poisson_bound} to obtain an upper bound over a more general class of smooth functions. Here, we remind ourselves that the class $\text{Lip}_{\alpha}(L)$ of H\"older continuous functions with exponent $\alpha$ and constant $L$ are functions that obey 
    \begin{equation}\label{eq:lipchitz}
    |\ell(\theta_1) - \ell(\theta_2)|\le L|\theta_1 - \theta_2|^{\alpha}, \quad \forall \theta_1, \theta_2\in [0, 1]
    \end{equation}

We also consider the analytic function centered around the origin, which is a more restricted form of the class of smooth functions. We say that $\ell$ is analytic around $\theta = 0$ if 
$\ell$ can be written in the form of $\ell(\theta) = \sum_{m = 0}^{\infty} a_m\theta^m$, 
and the radius $R$ of convergence is defined as 
\begin{equation*}
    R = \sup_{r\ge 0}\{r: \sum_{m = 0}^{\infty} a_mr^m \text{ converges}\}
\end{equation*}

\begin{theorem}[Poisson problem: $\ell$ continuous.]\label{thm:eb_cts}
    Let $h > 0$ be given, and consider a function $\ell$ continuous on $[0, h]$. 
    Let  $M\triangleq\sup_{\theta\in [0, h]}|\ell(\theta)|$. 
    Then the minimax regret of $\mathcal{P}([0, h])$ has the following upper bound on the Poisson model:
    \begin{itemize}
        \item 
        Let $p\ge 0$ be a nonnegative integer and $0 < \alpha \le 1$ be a real number. 
        Suppose that the $p$-th derivative $\ell^{(p)}$ of $\ell$ satisfies $\ell^{(p)}\in \text{Lip}_{\alpha}(L)$. 
        Denote $\beta = p + \alpha$. 
        Then there exists $C\triangleq C(L, M, h, \beta)$ and an estimator $\hat{T}$
        such that 
        \[
        \TotRegret_{\ell, n}(\mathcal{P}([0, h]))\le Cn\left(\frac{\log \log n}{\log n}\right)^{2\beta}.
        \]

        \item 
        Suppose that $\ell$ is analytic around $\theta = 0$ with radius of convergence $R > h$. 
        Then there exists a constant $C_1\triangleq C_1(R, h, M)$ and $C_2\triangleq C_2(R, h, M) > 0$ such that 
        \[
        \TotRegret_{\ell, n}(\mathcal{P}([0, h]))\le C_1n^{1-\frac{-C_2}{\log \log n}}.
        \]

        \item Finally, suppose that $\ell$ is analytic with coefficient $c_m$ of $x^m$ satisfying $|c_m|\le (D_1m)^{-D_2m}$ for some $D_2 > 0$. Then there exist constants $C_1, C_2$ depending on $D_1, D_2, h$ such that 
        \[
        \TotRegret_{\ell, n}(\mathcal{P}([0, h]))\le C_1 n^{1-\min\{2D_2, 1\} + \frac{C_2}{\log \log n}}.
        \]
    \end{itemize}
\end{theorem}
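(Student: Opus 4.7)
The strategy is to reduce each of the three cases to the polynomial-moment problem handled by \prettyref{thm:poisson_bound} via polynomial approximation. For a target degree $K = K(n)$, write $\ell(\theta) = p_K(\theta) + r_K(\theta)$ on $[0,h]$ with $p_K = \sum_{k=0}^K c_k \theta^k$, and define the estimator
\[
\hat T \;\triangleq\; \sum_{k=0}^K c_k \hat T_k,
\]
where each $\hat T_k$ is one of the minimax-optimal $k$-th moment estimators supplied by \prettyref{thm:poisson_bound} (e.g.\ $\Termk$ or $\Tnpmlek$). Linearity of the Bayes estimator gives $\hat t_{\pi, \ell} = \hat t_{\pi, p_K} + \hat t_{\pi, r_K}$, so by orthogonality of the regret, Cauchy--Schwarz, and \prettyref{thm:poisson_bound},
\begin{equation*}
\E\!\left[(\hat T - \hat t_{\pi,\ell}(X))^2\right] \;\le\; \frac{2(K+1)}{n}\sum_{k=0}^K c_k^2\, C(h,k)\left(\frac{\log n}{\log\log n}\right)^{k+1} + 2\|r_K\|_\infty^2,
\end{equation*}
where $C(h,k)$ is the constant from \prettyref{thm:poisson_bound}, which one should take to grow at most exponentially in $k$ (verifiable by revisiting the monomial upper-bound proof). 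Writing $L := \log n/\log\log n$, the remainder of the argument is to pick $K$ that balances the two summands for each smoothness class.

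\paragraph{Parts 1 and 2.}
Both rely on having a polynomial approximation with favorable coefficient control. For Part~1, Jackson-type inequalities (or a rescaled Bernstein construction in a well-chosen basis) supply $p_K$ with $\|r_K\|_\infty \le CK^{-\beta}$ and with $\sum_k c_k^2 C(h,k)$ growing at most like $C_1^K$. Setting $K = cL$ with $c<1$ makes the estimation term $\calO(n^{c-1}\polylog(n))$, which is negligible compared to the approximation term $\calO(L^{-2\beta})$; multiplying the per-sample regret by $n$ yields $\calO\bigl(n(\log\log n/\log n)^{2\beta}\bigr)$. For Part~2, Taylor truncation gives $|c_k| \lesssim R^{-k}$ (using $R>h$) and $\|r_K\|_\infty = \calO((h/R)^K)$; the estimation term reduces to $(C'Lh^2/R^2)^{K}/n$ and balancing it against $(h/R)^{2K}$ forces $K = cL$ with $c = 1 - C_2/\log\log n$ and $C_2 = 2\log(R/h)$, producing the stated $n^{1-C_2/\log\log n}$ rate.

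\paragraph{Part 3 and main obstacle.}
For Part~3 the super-exponential coefficient decay $|c_k|\le (D_1 k)^{-D_2 k}$ makes the estimation summand $c_k^2\, C(h,k)\, L^{k+1}$ non-monotonic in $k$, peaking at some interior $k^{\ast\ast} \asymp L^{1/(2D_2)}$. Choosing $K \asymp L$, the two regimes split as follows:
\begin{itemize}
\item If $D_2 < 1/2$, then $k^{\ast\ast} > L = K$, so the summand is increasing on $[0,K]$; both the estimation contribution and the approximation error $\bigl((h/(D_1 L)^{D_2})^L\bigr)^2 = n^{-2D_2 + o(1)}$ are dominated by $k=K$, giving total regret $n^{1-2D_2 + o(1)}$.
\item If $D_2 \ge 1/2$, then $k^{\ast\ast} \le K$ and the sum concentrates at $k^{\ast\ast}$ with peak value $\exp(\calO(L^{1/(2D_2)})) = n^{o(1/\log\log n)}$, while the approximation error at $K=L$ is already super-polynomially small.
\end{itemize}
Combining these yields $n^{1-\min\{2D_2,1\}+\calO(1/\log\log n)}$. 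The main technical obstacle is in Part~3: a naive bound that replaces the sum by its last term gives only the suboptimal rate $n^{1/(1+2D_2)}$, and obtaining the sharper $\min\{2D_2,1\}$ exponent requires locating the interior peak $k^{\ast\ast}$ and exploiting the geometric decay of $c_k^2 (C'L)^k$ on both sides of it. A secondary obstacle (also relevant for Part~1) is producing approximating polynomials whose coefficients $c_k$ in the monomial basis do not inflate $\sum_k c_k^2 C(h,k) L^{k+1}$ beyond the advertised rate; using orthogonal-polynomial bases or directly analyzing the NPMLE plug-in $\hat t_{\hat\pi, p_K}$ (rather than a monomial-by-monomial decomposition) may be needed.
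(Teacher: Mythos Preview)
Your proposal is correct and follows essentially the same route as the paper: decompose $\ell = p_K + r_K$, estimate $p_K$ monomial-by-monomial via \prettyref{thm:poisson_bound}, bound the total regret by a Cauchy--Schwarz split into an estimation sum $\sum_m c_m^2 \cdot Reg(m)$ and an approximation term $n\|r_K\|_\infty^2$, and take $K \asymp \log n/\log\log n$. The case analysis (Jackson for Part~1, Taylor truncation for Parts~2--3) and the balancing also match.

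Two minor points where the paper is more direct than your outline. First, the ``secondary obstacle'' you flag---controlling the monomial coefficients of the Jackson approximant in Part~1---is resolved not by switching to an orthogonal basis or analyzing the NPMLE plug-in, but by the classical Markov-type bound (the paper's \prettyref{lmm:poly_approx_coefs}): any degree-$K$ polynomial bounded by $M'$ on $[0,h]$ has monomial coefficients at most $e^K\max(1,h^K)M'$, which is $n^{o(1)}$ for $K\asymp L$ and hence harmless. Second, for Part~3 the paper does not locate the interior peak $k^{\ast\ast}$: it simply parametrizes $m=c'L$, observes that $m^{-2D_2 m}L^{m}=\bigl((c')^{-2D_2}L^{1-2D_2}\bigr)^{c'L}\cdot n^{o(1)}$, and reads off the uniform bound $n^{\max(0,\,c(1-2D_2))+o(1)}$ over $0\le c'\le c$; taking $c=1$ gives the stated exponent directly. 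Your peak-location argument is correct but more elaborate than needed.
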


\begin{remark}
    In the case of 1-Lipchitz functions, the total regret bound is of magnitude $n\left(\frac{\log \log n}{\log n}\right)^2$. 
    On the other hand, 
    the deconvolution lower bound in \cite[Theorem 6.2(a)]{miao2024fisher} says that for each metric $d$, the minimum distance estimator $\hat{\pi}_d$ has the following lower bound in $W_1$: 
    \[
    \sup_{\pi\in \mathcal{P}([0, h])}\mathbb{E}[W_1(\hat{\pi}_d, \pi)]
    =\sup_{\pi\in \mathcal{P}([0, h])}\mathbb{E}[\sup_{\ell\in\text{Lip}(1)}\mathbb{E}_{\hat{\pi}_d}[\ell(\theta)] - \mathbb{E}_{\pi}[\ell(\theta)]]\ge C\frac{\log\log n}{\log n}
    \]
    for some constant $C > 0$ depending on $h$. We therefore conjecture that our results cannot be improved unless there is an asymptotic gap between the following two quantities: 
    \[
    \sup_{\pi\in \mathcal{P}([0, h])}\mathbb{E}[\sup_{t\in\text{Lip}(1)}\mathbb{E}_{\hat{\pi}_d}[\ell(\theta)] - \mathbb{E}_{\pi}[\ell(\theta)]]
    \qquad 
    \text{and}
    \qquad 
    \sup_{\pi\in \mathcal{P}([0, h])}\sup_{\ell\in\text{Lip}(1)}\mathbb{E}[\mathbb{E}_{\hat{\pi}_d}[\ell(\theta)] - \mathbb{E}_{\pi}[\ell(\theta)]]
    \]
    
\end{remark}

\begin{remark}
We give some examples of functions that satisfy some of the smoothness classes above. 
Functions like $\ell(\theta) := (1 + \theta)^{\alpha}$ for any $\alpha\in \mathbb{R}$ has a power series expansion for $|\theta| < 1$, 
hence within $[0, h]$ where $h < 1$ we can achieve a minimax total regret of $O(n^{1 - \frac{C}{\log \log n}})$. 
One family of functions $\ell$ with coefficients falling into the  ``rapidly decaying'' regime is where 
there exists a constant $h'$ such that the $k$-th derivative $\ell^{(k)}$ satisfies $\sup_{[0, h]}\ell^{(k)}(\theta)\le (h')^k$. 
This is in particular the case for $\ell(\theta)=e^{\theta}$ and the trigonometric polynomial 
$T(\theta)\triangleq a_0 + \sum_{m = 1}^N a_m \cos(m\theta) + b_m\sin (m\theta)$, where we may choose any $D_2 < 1$ in each of the cases, and therefore achieve a total regret of $n^{o(1)}$. 
\end{remark}

\subsection{Regret bound (normal means)}\label{sec:normal_bound}
For the normal means model, in the mean estimation problem ($k = 1$) \cite{JZ09} formulated an estimator based on minimum distance to achieve a $\frac{\log^5 n}{n}$ regret, 
while \cite{polyanskiy2021sharp} gave a lower bound of $\frac{\log^2 n}{n}$. 
Unfortunately, this polylog gap between the upper and lower bounds remains open to this day. Thus, here we only show using the same techniques as in the Poisson case that a lower bound with an arbitrary large power of $\log n$ is possible in the functional estimation case. 

\begin{theorem}[Normal means problem]\label{thm:gsn_lower_bound}
    Consider the normal mean model: $P_{\theta} = \calN(\theta, 1), \theta\in\bbR$. 
    \begin{itemize}
        \item There exist constants $c_1 = c_0(h, k) > 0$ and $n_0 = n_0(k)$, such that for all $n \ge n_0$,
        $$\inf_{\hat{T}}\sup_{\pi\in\calP([-h,h])}\TotRegret_{\pi,k, n}(\hat{T}) \ge c_1\left(\left(\frac{\log n}{\log\log n}\right)^{k+1}\right)$$

        \item There exist constants $c_2 = c_1(s, k) > 0$ and $n_0 = n_0(k)$ such that for all $n\ge n_0$, 
        $$\inf_{\hat{T}}\sup_{\pi\in\SubG(s)}\TotRegret_{\pi,k, n}(\hat{T}) \ge c_2\left((\log n)^{k+1}\right)$$
    \end{itemize}
\end{theorem}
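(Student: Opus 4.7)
The plan is to extend the lower-bound technique of \cite{polyanskiy2021sharp}, which established the $\Omega((\log n/\log\log n)^2)$ rate for the mean case $k=1$, to arbitrary posterior-moment order $k$. The essential new ingredient is the operator form of Tweedie's formula in the normal-means model,
\begin{equation*}
    \hat{t}_{\pi,k}(x)\,f_\pi(x)=(x+\partial_x)^k f_\pi(x),
\end{equation*}
which converts the $k$-th posterior moment into a $k$-th order differential operator applied to the marginal density. Consequently a perturbation of the prior gives a perturbation of the Bayes estimator that is amplified by a factor growing polynomially in the moment-matching order through the $k$-fold differentiation.

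First, for each $K\ge k+1$ I would construct a family of priors $\{\pi_\alpha\}_{\alpha}\subset \calP([-h,h])$ (respectively $\SubG(s)$) whose first $K-1$ moments agree identically while their $K$-th moments differ by $\alpha M_K$. This is the Chebyshev / Gauss-quadrature construction underlying the $k=1$ argument, under the hard constraint $|M_K|\lesssim h^K$ (bounded) or the softer $|M_K|\lesssim K^{K/2}$ (subgaussian). Expanding $\phi(x-\theta)=\sum_m (-\theta)^m\phi^{(m)}(x)/m!$ together with the identity $\phi^{(m)}(x)=(-1)^m H_m(x)\phi(x)$ for probabilist's Hermite polynomials, the leading term of the marginal difference is
\begin{equation*}
    (\pi_\alpha-\pi_{\alpha'})*\phi(x)=\frac{(\alpha-\alpha')M_K(-1)^K H_K(x)\phi(x)}{K!}+\text{lower order},
\end{equation*}
and $\int H_K^2\phi=K!$ then gives $\chi^2(f_{\pi_\alpha},f_{\pi_{\alpha'}})\asymp (\alpha-\alpha')^2 M_K^2/K!$.

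Second, I would compute the corresponding gap in the Bayes estimators by applying the operator $(x+\partial_x)^k$. Iterating the Hermite identity $(x+\partial_x)(H_j\phi)=jH_{j-1}\phi$ $k$ times gives
\begin{equation*}
    (x+\partial_x)^k\Bigl[\frac{(\alpha-\alpha')M_K(-1)^K H_K\phi}{K!}\Bigr]=\frac{(\alpha-\alpha')M_K(-1)^K H_{K-k}(x)\phi(x)}{(K-k)!},
\end{equation*}
so the pointwise gap is $\hat{t}_{\pi_\alpha,k}(x)-\hat{t}_{\pi_{\alpha'},k}(x)\asymp (\alpha-\alpha')M_K H_{K-k}(x)/(K-k)!$ after dividing by $f_\pi\asymp \phi$, and the squared $L^2$-gap against the marginal is $\asymp (\alpha-\alpha')^2 M_K^2/(K-k)!$ via $\int H_{K-k}^2\phi=(K-k)!$.

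Finally, combining the two steps through the multiple-hypothesis ensemble argument of \cite{polyanskiy2021sharp} (which, instead of a naive two-point test, takes $|A|$ priors at Chebyshev nodes and exchanges per-pair indistinguishability for ensemble entropy $\log|A|\asymp K$) allows the scale $M_K^2\asymp K\cdot K!/n$ instead of the cruder $K!/n$. Multiplying the per-sample squared gap by the $n$ samples then gives
\begin{equation*}
    \TotRegret\;\gtrsim\; n\cdot\frac{M_K^2}{(K-k)!}\;\asymp\;\frac{K\cdot K!}{(K-k)!}\;\asymp\;K^{k+1},
\end{equation*}
and substituting the admissible regimes $K\asymp\log n/\log\log n$ (bounded, determined by $h^{2K}\asymp K!/n$) and $K\asymp \log n$ (subgaussian, determined by $K^K\asymp K!/n$) yields the two claimed lower bounds. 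The main technical obstacles are (i) the Hermite-polynomial bookkeeping that cleanly extracts the factor $K!/(K-k)!\asymp K^k$ from $k$-fold differentiation; (ii) the multiple-hypothesis refinement that collects the extra factor of $K$ from ensemble entropy (without which one recovers only $K^k$, short of the sharp bound); and (iii) controlling the Taylor remainders in $(\pi_\alpha-\pi_{\alpha'})*\phi$ so they remain negligible both in the marginal and after $k$-fold differentiation, which in the subgaussian case also requires verifying that the construction has sufficient tail decay to realize the desired $M_K$.
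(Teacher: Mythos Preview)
Your Hermite/Tweedie computations are correct and capture the right amplification mechanism: applying $(x+\partial_x)^k$ to $H_K\phi$ produces $\frac{K!}{(K-k)!}H_{K-k}\phi$, so a perturbation that is barely detectable in the marginal (cost $\asymp M_K^2/K!$) produces a Bayes-estimator gap of order $M_K^2/(K-k)!$, a ratio of $K^k$. This is exactly the spectral fact the paper exploits, there phrased as $\|K_k\psi_q\|^2/\|K\psi_q\|^2\asymp q^k$ for the Hermite eigenfunctions $\psi_q$ of $K^*K$ under a Gaussian base prior.

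The gap is in your step (ii). The ``multiple-hypothesis ensemble argument'' of \cite{polyanskiy2021sharp} is \emph{not} a Fano argument trading ensemble entropy $\log|A|$ for per-pair divergence; it is Assouad's lemma applied to $m$ \emph{orthogonal} perturbation directions. With a one-parameter family $\{\pi_\alpha\}$ perturbing only the single $K$-th moment, placing $|A|$ values of $\alpha$ on a line cannot recover an extra factor of $K$: the minimum pairwise separation in the Bayes-estimator metric shrinks like $1/|A|$, and a van Trees or two-point calculation yields total regret $\asymp K!/(K-k)!\asymp K^k$, one factor short. The paper gets the missing factor by taking $m$ perturbation functions $r_j=\xi_{q_j}\psi_{q_j}$ at Hermite degrees $q_j=m+(2k+1)j$, $j=1,\dots,m$; the spacing $2k+1$ forces $(K_k r_i,K_k r_j)_{L_2(f_0)}=0$ for $i\neq j$, so Assouad delivers regret $\gtrsim m/(n\gamma)$ with $\gamma=\max_j\|Kr_j\|^2\asymp m^{-k}$, hence $m^{k+1}/n$. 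In your framework the fix is to perturb not one moment but $m$ moments at spaced orders simultaneously, with independent $\pm$ signs---at which point your construction becomes the paper's. The remaining step (passing from $\SubG(s)$ to $\calP([-h,h])$) is handled in the paper by taking $s\asymp 1/\log n$ and truncating via \prettyref{lmm:6_generalized}, which your proposal does not address.
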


\subsection{Related work}

\textbf{Mean estimation in empirical Bayes setting}. 
There has been a wealth of literature on  bounding regret in the mean estimation problem in the empirical Bayes setting, 
particularly in the normal means and Poisson means models. 
In normal means, estimators include the NPMLE estimator \footnote{NPMLE's definition is given in \prettyref{eq:npmle-def}.} ($g$-modeling) and the GEB estimator based on approximation of the posterior density $f_{\pi}$ and its derivative $f'_{\pi}$ using a kernel ($f$-modeling). 
\cite{zhang1997empirical} considers the \emph{truncated} Bayes estimator, and shows that the GEB estimator achieves an extra total MSE of $O(\log^{3/2} n)$ compared to the truncated Bayes estimator.
\cite[Example 1]{LGL05} establishes an upper bound on total regret of $O(\log^8 n)$ using polynomial kernels. 
For $g$-modeling, regret bounds are typically shown using results on posterior density estimation 
\cite{ghosal2001entropies, zhang_generalized_2009}, and \cite{JZ09} establishes an upper bound on total regret of $O(\log^5 n)$ using the NPMLE estimator, which is still the best known upper bound. On the other hand, the lower bound of minimax regret in this setting is $O(\log^2 n)$ \cite[Theorem 1]{polyanskiy2021sharp}. 

In the Poisson model, 
both the $f$- and $g$-modeling methods have been shown to achieve minimax total regret of $O((\frac{\log n}{\log \log n})^2)$ and $O((\log n)^3)$ in compact and subexponential priors, respectively \cite{BGR13, JPW24}. 
For lower bound on total regret, 
\cite[p.891]{singh1979empirical} conjectured that bounded minimax total regret in any exponential family is impossible. 
\cite[Theorem 2.2]{LGL05} shows a lower bound of $\Omega(1)$ for the normal means using two-point methods. 
\cite[Theorems 1, 2]{polyanskiy2021sharp} made a substantial improvement using Assouad’s lemma, which resolves the conjecture in \cite{singh1979empirical} for normal means and Poisson models.  
In particular, \cite{polyanskiy2021sharp} shows that the aforementioned bounds for Poisson are tight up to a constant factor, 
while for normal means the gap of the NPMLE is of factor $\log^3 n$ between the best upper and lower bounds. 
In view of the stability-efficiency tradeoff presented in the $f$- and $g$-modeling methods, 
\cite{BZ22} proposed an estimator based on ERM \footnote{ERM for Poisson model is defined in \prettyref{eq:erm_obj_emp}; for the normal means model it is a minimizer of $\frac 1n\sum_{i=1}^n [\hat{t}(X_i)^2 - 2X_i\hat{t}(X_i) + 2\hat{t}'(X_i)]$} on a restricted class of estimator for normal means model to add regularity to the estimator. However, they only achieved a slow rate of $O(\sqrt{n}\mathsf{polylog}(n))$ total regret. 
This was later improved to $O(\log^6 n)$ in \cite[Theorem 7]{ghosh2025stein} which optimizes the Stein's unbiased risk estimate (SURE) among all the possible Bayes estimators of normal means. 
For the Poisson model, \cite{JPTW23} derived a fast algorithm implementing ERM on the monotone class, and showed that it achieves asymptotically optimal regret (up to a multiplicative constant) for two classes of priors (bounded and subexponential). In order to show the fast rate, ERM analysis required adaptation of a learning theory method known as offset Rademacher complexity \cite{liang2015learning}. In a different setting, \cite{jaffe2025constrained} studies the problem of estimating the Bayes estimator with the constraints of matching either the moments or distributions of the latent estimator $\theta$, in an attempt to reverse the shrinkage phenomenon of typical EB estimators. This and earlier works \cite{garcia2024new} establish connection between EB and optimal transport.  

Finally, in the case of higher dimensional priors, for the Poisson model, the upper bound of $O(\log^{O(d)}(n))$ total regret have been established by \cite{JPW24, JPTW23} for a generalization of the problem, in which each $\theta$ is allowed to take values in a $\R^d$. For the normal means model (with identity covariance matrix), the upper bound of $O(\log^{O(d)}(n))$ total regret is established in \cite[Corollary 3.2]{saha2020nonparametric}, 
while for density estimation a tight bound of $\Theta(\frac{(\log n)^d}{n})$ squared $L^2$ distance is shown in \cite[Theorem 2.1]{kim2022minimax}. To our knowledge, there is no result on lower bound for regret estimation in both the Poisson and normal means models for $d > 1$. 

\textbf{Function estimation other than the mean}. 
We now discuss some works outside of the mean estimation problem. For the higher moment estimation problem, 
\cite{nichols1972empirical} formulated an estimator based on the Tweedie formula for a certain class of exponential families, to which Poisson belongs. 
For the variance estimation problem, 
\cite{wang2008effect} established a minimax bound for the variance estimation in the heteroscedastic nonparametric regression model. \cite{xie2012sure} established a consistency result of the SURE estimator on the variance of the normal prior in the heteroscedastic normal means model. 
Several works studied the problem of variance estimation in the normal means model (i.e. $X\sim N(0, \sigma^2)$), 
e.g. \cite{wu2020optimal} when priors have bounded number of atoms, and \cite{subhodh2024variance} establishes a tight bound for the variance estimation of the normal means model in a nonparametric setting. 
An example of a more general posterior inference in the empirical Bayes is the quantile estimation problem, done via maximization of the likelihood function under the asymmetric Laplace density, as formalized in \cite{yu2001bayesian}. 
\cite{yang2012bayesian} showed that the resultant posterior is asymptotically normal. 
Estimation of confidence interval in the empirical Bayes setting is done in \cite{laird1987empirical} via bootstrap sampling, and \cite{carlin1990approaches} via bias correction. 
We stress that for many of these estimation tasks (particularly the higher moment estimation problem), the minimax rate in the non-parametric setting is not nearly as well-studied as the mean estimation problem. 

\textbf{Compound decision theory and empirical Bayes}. Although we focus on the empirical Bayes problems here, it would be appropriate to recall that those problems (and our results) go hand in hand with compound decision theory since the emergence of both in~\cite{Rob51}. This association is 
based on the premise that the risk of a statistical procedure can be improved by allowing estimation of each individual datapoint to depend on the entire sequence. The stark demonstration of this effect is in the Stein's problem on the mean estimation of the normal means model \cite{stein1956inadmissibility}, 
where the James-Stein estimator \cite{james1961estimation} demonstrates a strictly lower risk than the maximum likelihood estimator. 
In the first introduction of the concept of empirical Bayes, 
\cite{Rob56} proposed the Robbins estimator to approximate the Bayes estimator of the Poisson model in the mean estimation problem by approximating the mixture density of the samples with the empirical distribution. This informal association between two problems was finally quantified by \cite{greenshtein2009asymptotic}, who demonstrated that under certain conditions the total regret in the compound decision problem (against a permutation oracle) is within $O(1)$ of the regret in the equivalent empirical Bayes problem. This result was generalized and strengthened in~\cite{han2024approximate} which generalizes \cite{greenshtein2009asymptotic} to more general loss functions and models by showing a $\chi^2$ independent of sample size between permutation and product mixtures. 
\cite{han2025besting} improves the dependance on support size from exponential to linear compared to \cite{greenshtein2009asymptotic, han2024approximate}. 
Finally, the follow-up work on \cite{liang2025sharp} unifies the techniques from both works and improves upon them. 

In our context of functional estimation, we comment on some relations between empirical Bayes and compound decision setting in \prettyref{app:compound}, where in particular we show that the lower bound on the former implies a lower bound on the latter. Determination of the exact regret (up to a constant factor) remains open, however.

\subsection{Notation}
In the course of this work, we denote $n$ as the sample size, $k$ the exponent of the polynomial $\theta^k$, 
$f_{\pi}$ the mixture density induced by prior $\pi$ (could be either Poisson or normal means for this paper). 
The Bayes estimator of $\ell(\theta)$ with prior $\pi$ is denoted $\tpiell$, 
which is also denoted by $\tpik$ in the case where $\ell(\theta) := \theta^k$. 
We denote $\hat{T}$ as an $n$-letter to $n$-letter estimator 
(i.e. $\mathbb{R}^n\to\mathbb{R}^n$ for normal means model, and $\mathbb{Z}_+^n\to\mathbb{R}_+^n$ for Poisson model) 
and $\hat{t}$ as a scalar to scalar estimator (i.e. $\mathbb{R}\to\mathbb{R}$ for normal means model, and $\mathbb{Z}_+\to\mathbb{R}_+$ for Poisson model), respectively. 
Estimators like $\thetarobell, \Trobk, \Termell,$ and $\Termk$ are also defined similarly for Robbins and ERM estimator, 
respectively. 

\section{Preliminaries}\label{sec:preliminaries}
    \subsection{Relationship between total regret and individual regret}
    Besides total regret, we may also define the individual regret as follows, 
    treating $X_1, \cdots, X_{n - 1}$ as training data and our task is to estimate $\ell(\theta_n)$ given the unseen data $X_n$. 
    \begin{definition}[Individual Regret]\label{def:regret}
        The individual of an estimator $\hat{T}$ on a prior $\pi$ and functional $\ell$ is defined as  
        \[\mathsf{Regret}_{\pi, \ell, n}(\hat{T}) = \mathbb{E}\left[(\hat{T}_n(X_1^n) - \theta_n)^2\right] - \mathsf{mmse}_{\ell}(\pi)\]
        
        The individual regret of a collection $\mathcal{G}$ of priors is the minimax regret achieved by any estimator over the class $\mathcal{G}$
        \[
        \mathsf{Regret}_{\ell, n}(\mathcal{G}) = \inf_{\hat{T}} \sup_{\pi\in\mathcal{G}} \mathsf{Regret}_{\pi, \ell, n}(\hat{T})
        \]
    \end{definition}
    Similar to \cite[Lemma 5]{polyanskiy2021sharp}, 
    we show that the minimax total regret over a prior class is the same as $n$ times the minimax individual regret. 
    
    \begin{lemma}\label{lmm:totregretind}
        For any class $\mathcal{G}$, sample size $n$ and functional $\ell$, 
        we have $n\cdot \Regret_{\ell, n}(\mathcal{G}) = \TotRegret_{\ell, n}(\mathcal{G})$. 
    \end{lemma}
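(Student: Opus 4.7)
The plan is to prove the two inequalities $\TotRegret_{\ell,n}(\mathcal G) \le n\cdot \Regret_{\ell,n}(\mathcal G)$ and $n\cdot \Regret_{\ell,n}(\mathcal G) \le \TotRegret_{\ell,n}(\mathcal G)$ separately, in each case by converting a candidate estimator for one problem into a candidate for the other while exactly accounting for the squared error. Both constructions rest on the iid property of $(X_i,\theta_i)_{i=1}^n$: the joint law is invariant under permutations of the index set, so samples may be shuffled at no statistical cost. Since $\mmse_\ell(\pi)$ subtracts off identically on both sides, it suffices to track how the raw mean squared errors transform under these conversions.

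For $\TotRegret_{\ell,n}(\mathcal G) \le n\cdot \Regret_{\ell,n}(\mathcal G)$, fix any scalar estimator $\hat s:\mathbb R^n\to\mathbb R$ used in the individual problem and build an $n$-letter estimator by cycling each observation into the last slot:
\[
\hat T_j(X_1,\ldots,X_n) := \hat s(X_1,\ldots,X_{j-1},X_{j+1},\ldots,X_n,X_j).
\]
By iid exchangeability, $\bigl((X_1,\ldots,X_{j-1},X_{j+1},\ldots,X_n,X_j),\theta_j\bigr)\eqdistr \bigl((X_1,\ldots,X_n),\theta_n\bigr)$, so the $j$-th coordinate error of $\hat T$ equals the individual error of $\hat s$. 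Summing over $j$ gives $\TotRegret_{\pi,\ell,n}(\hat T) = n\cdot \Regret_{\pi,\ell,n}(\hat s)$ for every $\pi\in\mathcal G$; taking the supremum over $\mathcal G$ and then the infimum over $\hat s$ yields the desired inequality, since the resulting $\hat T$'s form a subfamily of the estimators eligible in the definition of total regret.

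For the reverse direction $n\cdot\Regret_{\ell,n}(\mathcal G) \le \TotRegret_{\ell,n}(\mathcal G)$, I would symmetrize an $n$-letter estimator $\hat T$ into an individual one. Letting $\tau_j$ denote the permutation that swaps coordinates $j$ and $n$, define
\[
\hat s(X_1,\ldots,X_n) := \frac{1}{n}\sum_{j=1}^n \hat T_j\bigl(\tau_j(X_1,\ldots,X_n)\bigr).
\]
Jensen's inequality applied to the squared loss and the same iid-exchangeability give
\[
\E\bigl[(\hat s(X_1^n)-\ell(\theta_n))^2\bigr] \le \frac{1}{n}\sum_{j=1}^n \E\bigl[(\hat T_j(\tau_j X_1^n)-\ell(\theta_n))^2\bigr] = \frac{1}{n}\sum_{j=1}^n \E\bigl[(\hat T_j(X_1^n)-\ell(\theta_j))^2\bigr],
\]
where in the last step a swap of $X_j$ and $X_n$ in the argument is absorbed into a swap of $\theta_j$ and $\theta_n$ in the label by iid invariance. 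Subtracting $\mmse_\ell(\pi)$ yields $\Regret_{\pi,\ell,n}(\hat s) \le \frac{1}{n}\TotRegret_{\pi,\ell,n}(\hat T)$; taking $\sup_{\pi\in\mathcal G}$ and then $\inf_{\hat T}$ gives the claim.

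The only real care required is the permutation bookkeeping, namely verifying that a swap of $X$'s in the input corresponds in distribution to the same swap of $\theta$'s in the label; no nontrivial analytic estimate is involved. If desired, one could instead use a randomized symmetrization (draw $J$ uniform on $[n]$ and output $\hat T_J(\tau_J X_1^n)$), which gives the same bound on the expected error without invoking Jensen, but the deterministic averaging above is more parsimonious.
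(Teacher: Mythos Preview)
Your proof is correct and follows essentially the same approach as the paper's. The only cosmetic difference is in the second direction: the paper uses the randomized estimator you mention at the end (uniformly pick $J\in[n]$ and output $\hat T_J(\tau_J X_1^n)$), obtaining the equality $\Regret_{\pi,\ell,n}(\tilde T)=\frac1n\TotRegret_{\pi,\ell,n}(\hat T)$ directly, whereas you average deterministically and invoke Jensen to get the inequality; either suffices for the minimax conclusion.
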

    \begin{proof}
    The proof follows closely the proof of \cite[Lemma 5]{polyanskiy2021sharp}. 
    To start with, given any estimator $\hat{T}$, we define $\tilde{T}$ with $\tilde{T}(X_1^n) = (\tilde{T}_1(X_1^n), \cdots, \tilde{T}_n(X_1^n))$, 
    where $\tilde{T}_i(X_1^n) = \hat{T}_n(X_{\backslash i}, X_i)$, 
    where $X_{\backslash i} = (X_1, \cdots, X_{i-1}, X_{i+1}, \cdots, X_n)$. 
    Then 
    \[
    \TotRegret_{\pi, \ell, n}(\tilde{T})
    =\sum_{i=1}^n \mathbb{E}[(\tilde{T}_i(X_1^n) - \ell(\theta_i))^2] - n\cdot \mathsf{mmse}_{\ell}(\pi)
    =\sum_{i=1}^n \mathbb{E}[(\hat{T}_n(X_{\backslash i}, X_i) - \ell(\theta_i))^2]
    \]\[
    =n\mathbb{E}[(\hat{T}_n(X_1^n) - \ell(\theta_n))^2] - n\cdot \mathsf{mmse}_{\ell}(\pi)
    =n\Regret_{\pi, \ell, n}(\hat{T})
    \]
    Therefore by choosing $\hat{T}$ that achieves the minimax total regret, we have $\TotRegret_{\ell, n}(\mathcal{G})\le n\Regret_{\ell, n}(\mathcal{G})$. 

    Conversely, given an estimator $\hat{T}$, consider the following randomized estimator 
    \[
    \tilde{T}_n(X_1^n) 
    := \hat{T}_i(X_1^{i-1}, X_n, X_{i+1}^{n-1}, X_i), \text{  w.p. } \frac 1n,
    i = 1, \cdots, n. 
    \]
    Then for each prior $\pi$ we have 
    \[
    \Regret_{\pi, \ell, n}(\tilde{T})
    =\mathbb{E}[(\tilde{T}_n(X_1^n)  - \ell(\theta_n))^2] - \mathsf{mmse}_{\ell}(\pi)
    \]\[
    =\frac 1n \sum_{i=1}^n \mathbb{E}[\hat{T}_i(X_1^{i-1}, X_n, X_{i+1}^{n-1}, X_i) - \ell(\theta_n))^2] - \mathsf{mmse}_{\ell}(\pi)
    =\frac 1n\TotRegret_{\pi, \ell, n}(\hat{T})
    \]
    and therefore $\Regret_{\ell, n}(\mathcal{G})\le \frac 1n\TotRegret_{\ell, n}(\mathcal{G})$. 
    \end{proof}
    We note that the proof also establishes that an estimator achieving a minimax total regret of $\alpha$ (for some $\alpha > 0$) can be adapted to produce an estimator achieving a minimax individual regret of $\frac{\alpha}{n}$, and vice versa. 

\subsection{Truncation of priors}
    We first show that for both directions of the bounds, 
we may reduce regret bounds of priors with uniform tail bounds 
to those of compacted support, 
effectively generalizing \cite[Lemma 6]{polyanskiy2021sharp}. 

\begin{lemma}\label{lmm:6_generalized}
            Given $h>0$, let $\calG$ be a collection of priors on $\R_{\ge 0}$ such that $\sup_{\pi\in\calG}\PP(|\theta| > h)\le\epsilon\le\frac12$ for some $\epsilon$ and $\sup_{\pi\in\calG}\E_\pi[\theta^{4k}]\le M$. Then 
            \begin{align}\label{eq:lemma_6_generalized}
                \inf_{\hat{T}}\sup_{\pi\in\calP([-h,h])}\Regret_{\pi,k, n}(\hat{T}) \ge \inf_{\hat{T}}\sup_{\pi\in\calG}\Regret_{\pi,k, n}(\hat{T})-6\sqrt{(M+h^{4k})n\epsilon}.
            \end{align}
        \end{lemma}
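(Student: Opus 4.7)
The plan is to adapt \cite[Lemma 6]{polyanskiy2021sharp} (the $k=1$ case) to general $k$, with the $4k$-th moment assumption playing the role that the $4$-th moment plays there. The strategy is to start from a near-optimal estimator $\hat T$ for $\calP([-h,h])$ and show that it retains near-optimality on $\calG$, losing only $O(\sqrt{(M+h^{4k})n\epsilon})$.

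I would first reduce to clipped estimators: for any $\hat T$ and $\pi' \in \calP([-h,h])$, the target $\theta_i^k$ lies in $[-h^k, h^k]$, so replacing each $\hat T_i$ by $\mathrm{clip}_{[-h^k,h^k]}(\hat T_i)$ only decreases the squared loss under $\pi'$, hence does not increase $\Regret_{\pi', k, n}$; we may therefore assume $|\hat T_i| \le h^k$. Next, for each $\pi \in \calG$ I define the truncation $\tilde\pi = \pi(\,\cdot \mid \theta \le h) \in \calP([0,h]) \subset \calP([-h,h])$, which a direct computation shows satisfies $d_{\TV}(\pi, \tilde\pi) = \pi(\theta > h) \le \epsilon$. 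By maximal coupling I construct a joint law of $((\theta_i, X_i), (\tilde\theta_i, \tilde X_i))_{i\le n}$ with the correct marginals and $(\theta_i, X_i) = (\tilde\theta_i, \tilde X_i)$ whenever $\theta_i \le h$. Letting $F = \{\theta_i = \tilde\theta_i \text{ for all } i\}$, the union bound gives $\Prob(F^c) \le n\epsilon$.

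The per-sample losses agree exactly on $F$, so by Cauchy--Schwarz
\[
\bigl|\E_\pi[(\hat T_n(X_1^n) - \theta_n^k)^2] - \E_{\tilde\pi}[(\hat T_n(\tilde X_1^n) - \tilde\theta_n^k)^2]\bigr| \le \E[\Indc_{F^c} Y] \le \sqrt{\Prob(F^c)} \cdot \sqrt{\E[Y^2]},
\]
where $Y = (\hat T_n(X_1^n) - \theta_n^k)^2 + (\hat T_n(\tilde X_1^n) - \tilde\theta_n^k)^2$. Bounding $Y^2 \le 2(a^4 + b^4)$ for the squared residuals and $(u - v)^4 \le 8(u^4 + v^4)$, the clipping $|\hat T_n| \le h^k$, the hypothesis $\E_\pi[\theta^{4k}] \le M$, and the trivial $\tilde\theta^{4k} \le h^{4k}$ give $\E[Y^2] \lesssim M + h^{4k}$. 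A single-sample version of the same coupling, applied to $\hat t^{\tilde\pi}$ (which satisfies $|\hat t^{\tilde\pi}| \le h^k$ as a posterior mean of a bounded variable) and $\hat t^{\pi}$ (for which $\E[(\hat t^\pi(X))^4] \le \E[\theta^{4k}] \le M$ by iterated Jensen), yields $|\mmse_k(\pi) - \mmse_k(\tilde\pi)| \lesssim \sqrt{\epsilon(M + h^{4k})}$. Combining the two contributions gives $|\Regret_{\pi, k, n}(\hat T) - \Regret_{\tilde\pi, k, n}(\hat T)| \le 6\sqrt{(M + h^{4k}) n \epsilon}$ after a careful book-keeping of constants; since $\tilde\pi \in \calP([-h,h])$, the chain $\sup_{\pi \in \calG} \Regret_\pi(\hat T) \le \sup_{\pi \in \calG} \Regret_{\tilde\pi}(\hat T) + 6\sqrt{\cdots} \le \sup_{\pi' \in \calP([-h,h])} \Regret_{\pi'}(\hat T) + 6\sqrt{\cdots}$ together with taking the infimum over $\hat T$ yields \prettyref{eq:lemma_6_generalized}.

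The main obstacle is obtaining the $\sqrt{n\epsilon}$ rather than $n\epsilon$ rate: a naive bound $\E[\Indc_{F^c} Y] \le \Prob(F^c)\cdot \|Y\|_\infty$ fails because $Y$ is unbounded under $\pi \in \calG$. Using Cauchy--Schwarz to pay only $\sqrt{\Prob(F^c)}$ against a finite second moment $\E[Y^2]$ is what lets us trade a lower power of $\epsilon$ for the $(M + h^{4k})$ factor, which in turn is precisely what motivates the $4k$-moment hypothesis in the lemma (a $2k$-moment bound would only control $\|Y\|_1$, not $\|Y\|_2$).
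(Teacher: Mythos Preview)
Your proposal is correct and follows essentially the same strategy as the paper: restrict to estimators clipped to $[-h^k,h^k]$, control the difference between $\pi\in\calG$ and its truncation $\tilde\pi\in\calP([-h,h])$ on the bad event $\{\exists i:\theta_i>h\}$ via Cauchy--Schwarz, and combine with a comparison of $\mmse_k(\pi)$ and $\mmse_k(\tilde\pi)$.

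The only stylistic difference is that you phrase the risk comparison through an explicit maximal coupling and bound $|\mmse_k(\pi)-\mmse_k(\tilde\pi)|$ by a second single-sample coupling, whereas the paper simply conditions on the event $E=\{\theta_i\le h\ \forall i\}$ (using $\E_\pi[\,\cdot\mid E]=\E_{\pi_h^{\otimes n}}[\,\cdot\,]$) and handles the $\mmse$ gap via the one-line inequality $\mmse_k(\pi_h)\le\mmse_k(\pi)/\PP_\pi(\theta\le h)$ (\prettyref{lmm:truncate}), which yields the contribution $\frac{\epsilon}{1-\epsilon}\mmse_k(\pi)\le 2\epsilon\sqrt{M}$. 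Your coupling version is equivalent but slightly more self-contained; the paper's version is shorter because it can appeal to \prettyref{lmm:truncate} directly.
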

        \begin{proof}
            Let $E$ be the event that $\theta_i\le h$ for all $\theta$. For any estimator $\hat{T}$ taking values in $[-h^k,h^k]$ and any prior $\pi \in \calG'$, \begin{align}\label{eq:lemma_6_help}
                \E_\pi[(\hat{T}_n(X_1^n)-\theta_n^k)^2] 
                &= \E_\pi[(\hat{T}_n(X_1^n)-\theta_n^k)^2\mathbf{1}_E] + \E_\pi[(\hat{T}_n(X_1^n)-\theta_n^k)^2\mathbf{1}_{E^C}]\nonumber\\
                &\le \E_\pi[(\hat{T}_n(X_1^n)-\theta_n^k)^2|E] 
                + \sqrt{\E_\pi[(\hat{T}_n(X_1^n)-\theta_n^k)^4]\PP_\pi(E^C)}\nonumber\\
                &\le \E_\pi[(\hat{T}_n(X_1^n)-\theta_n^k)^2|E] + \sqrt{8(M+h^{4k})n\epsilon}
            \end{align}
            
            For all distributions in $\calP([-h,h])$, the optimal estimator will take values in $[-h^k, h^k]$. Then we get the following inequalities: \begin{align*}
                \inf_{\hat{T}}\sup_{\pi\in\calP([-h,h])}\Regret_{\pi,k, n}(\hat{T}) 
                &= \inf_{\hat{T}\in[-h^k,h^k]}\sup_{\pi\in\calP([-h,h])}\E_\pi[(\hat{T}_n(X_1^n)-\theta_n^k)^2]-\mmse_k(\pi)\nonumber\\
                &\ge\inf_{\hat{T}\in[-h^k,h^k]}\sup_{\pi\in\calG}\E_{\pi_h}[(\hat{T}_n(X_1^n)-\theta_n^k)^2]-\mmse_k(\pi_h)\nonumber\\
                &\ge\inf_{\hat{T}\in[-h^k,h^k]}\sup_{\pi\in\calG}\E_{\pi}[(\hat{T}_n(X_1^n)-\theta_n^k)^2|E]-\mmse_k(\pi_h)\nonumber\\
                &\stepa{\ge}\inf_{\hat{T}\in[-h^k,h^k]}\sup_{\pi\in\calG}\E_{\pi}[(\hat{T}_n(X_1^n)-\theta_n^k)^2]-\sqrt{8(M+h^{4k})n\epsilon}-\frac1{1-\epsilon}\mmse_k(\pi)\nonumber\\
                &\ge\inf_{\hat{T}}\sup_{\pi\in\calG}\Regret_{\pi,k, n}(\hat{T})-\sqrt{8(M+h^{4k})n\epsilon}-\frac\epsilon{1-\epsilon}\mmse_k(\pi)\nonumber\\
                &\stepb{\ge}\inf_{\hat{T}}\sup_{\pi\in\calG}\Regret_{\pi,k, n}(\hat{T})-\sqrt{8(M+h^{4k})n\epsilon}-2\epsilon\sqrt{M}
            \end{align*}
            where (a) follows from \prettyref{eq:lemma_6_help} and \prettyref{lmm:truncate} and (b) follows from $\epsilon\le\frac12$ and $\mmse_k(\pi)\le\E_\pi[\theta^{2k}]\le\sqrt{M}.$ We can combine the last two terms to obtain \prettyref{eq:lemma_6_generalized}.
        \end{proof}

    \subsection{Properties of Poisson model}
    Here, we consider a tail bound that essentially pins down the constants as discussed in \cite[(122, 124)]{polyanskiy2021sharp}: 

    \begin{lemma}\label{lmm:poi_tail_bound}
        Consider the Poisson model with mixture density 
        $f_{\pi}\equiv \Poi\circ \pi$. 
        \begin{itemize}
        \item 
            Suppose $\pi\in\mathcal{P}([0, h])$. 
            Then there exist constants $c_1, c_2 > 0$ such that 
            for $x_0 = \max \{h, (c_1 + c_2h)\frac{\log n}{\log \log n}\}$ we have 
            \[
            \bbP_{X\sim f_\pi}[X \ge x_0]
            = \sum_{x\ge x_0} f_{\pi}(x)
            \le \frac 1n
            \]

        \end{itemize}
    \end{lemma}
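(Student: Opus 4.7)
Since $\pi$ is supported on $[0,h]$ and $X \mid \theta \sim \Poi(\theta)$, I would first reduce the claim to a uniform Poisson tail bound by conditioning on $\theta$:
\begin{equation*}
\Pr_{X \sim f_\pi}[X \ge x_0] \;=\; \int_0^h \Pr[\Poi(\theta) \ge x_0]\, d\pi(\theta) \;\le\; \sup_{\theta \in [0,h]} \Pr[\Poi(\theta) \ge x_0].
\end{equation*}
Thus it suffices to show that for the chosen $x_0$, $\Pr[\Poi(\theta) \ge x_0] \le 1/n$ whenever $0 \le \theta \le h \le x_0$.

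The second step is a standard Chernoff / Stirling bound. For $x \ge \theta$, the inequality $x! \ge (x/e)^x$ gives $e^{-\theta}\theta^x/x! \le (e\theta/x)^x \le (eh/x)^x$ when $\theta \le h \le x$. Since $x_0 \ge eh \cdot e$ for $n$ large (as $x_0 \to \infty$), the ratio of consecutive terms $(eh/(x+1))^{x+1} / (eh/x)^x$ is at most $1/2$, so the tail sum is dominated by a geometric series:
\begin{equation*}
\Pr[\Poi(\theta) \ge x_0] \;\le\; \sum_{x \ge x_0} \left(\frac{eh}{x}\right)^{x} \;\le\; 2\left(\frac{eh}{x_0}\right)^{x_0}.
\end{equation*}

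The final step is to choose $c_1, c_2$ so that $2(eh/x_0)^{x_0} \le 1/n$. Taking logarithms with $x_0 = C \frac{\log n}{\log\log n}$ where $C = c_1 + c_2 h$,
\begin{equation*}
x_0 \log\!\bigl(eh/x_0\bigr) = -x_0\bigl[\log\log n - \log\log\log n + \log(C/(eh))\bigr] = -C\log n \bigl(1 + o(1)\bigr).
\end{equation*}
Any choice with $C > 1$ eventually makes $(eh/x_0)^{x_0} \le 1/(2n)$; concretely one may take $c_1 = 2$ and $c_2$ chosen so that the $\log(C/(eh))$ correction is negligible for all $n$ above some $n_0$. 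The only mild subtlety is handling the $\log\log n$ cancellation uniformly in $h$ so that the constants are absolute (or depend only on $h$ in the stated affine way); this is routine but worth doing carefully to produce a clean $c_1 + c_2 h$ form. Including the $\max\{h, \cdot\}$ in the definition of $x_0$ is only needed so that $x_0 \ge h$ even for very large $h$ relative to $\log n / \log\log n$, in which case the bound is vacuous because $\Poi(\theta)$ concentrates well below $h$ and the same Chernoff argument applies a fortiori.
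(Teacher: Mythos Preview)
Your proposal is correct and follows essentially the same route as the paper: condition on $\theta$ to reduce to a single Poisson tail, apply a Chernoff/Stirling-type bound to get $\Pr[\Poi(\theta)\ge x_0]\lesssim (eh/x_0)^{x_0}$, and then calibrate $x_0 = C\frac{\log n}{\log\log n}$ so the exponent beats $\log n$. The only cosmetic difference is that the paper quotes the ready-made Chernoff bound $\Pr[\Poi(\lambda)\ge x]\le (e\lambda)^x e^{-\lambda}/x^x$ and then verifies the constant via $c\ge\max\{eh,\tfrac{1}{1-L}\}$ with $L=\sup_{n\ge 3}\tfrac{\log\log\log n}{\log\log n}$, whereas you rederive the same bound via Stirling plus a geometric tail sum; both give the affine form $c_1+c_2h$.
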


     \begin{proof}[Proof of \prettyref{lmm:poi_tail_bound}]
         For the case $\mathcal{P}([0, h])$, we use the following Chernoff's bound, 
         e.g. \cite[(15.19)]{polyanskiy2025information} on Poisson-Binomial distribution
         to estimate the following tail bound for $X\sim\Poi(\lambda)$ and for $x\ge h\ge \lambda$: 
         \[
         \bbP[X\ge x] \le \frac{(e\lambda)^x e^{-\lambda}}{x^x}
         \le \frac{(eh)^x e^{-h}}{x^x}
         \]
         This would mean that for $x_0\ge h$ we have 
         $\bbP_{X\sim f_\pi}[X \ge x_0]
         \le \frac{(eh)^{x_0} e^{-h}}{{x_0}^{x_0}}$. 

         Now setting $x_0 = c\frac{\log n}{\log \log n}$, we want to find $c$ such that the following holds: 
         \[
         c\log n\left(1 - \frac{\log \log \log n}{\log \log n}\right)
         -c\frac{\log n}{\log \log n}(1 - \log c + \log h)
         +h
         =
         x_0\log x_0 + h - x_0(1 + \log h)\ge \log n
         \]
         Let $L = \sup_{n\ge 3}\frac{\log \log \log n}{\log \log n} < 1$, 
         so by taking $c\ge \max\{eh, \frac{1}{1 - L}\}$ we have 
         \begin{flalign*}
             &~c\log n\left(1 - \frac{\log \log \log n}{\log \log n}\right)
             -c\frac{\log n}{\log \log n}(1 - \log c + \log h) + h
             \nonumber\\
             &~\ge c\log n(1 - L) - c \frac{\log n}{\log \log n}(1 - \log (eh) + \log h) + h
         \ge \log n
         \end{flalign*}
         as desired. 
     \end{proof}

\section{Proofs for lower bound: $\ell(\theta) := \theta^k$}\label{sec:lower-bound}
In view of \prettyref{lmm:totregretind}, we will instead provide a lower bound on the individual regret over a class $\mathcal{G}$ of priors. 
\subsection{General lower bound proof setup}

We first set up a generalization to \cite[Proposition 7]{polyanskiy2021sharp}. 
         Let $\ell$ be any function that is continuously differentiable everywhere on $\theta \ge 0$, 
         $G_0$ a base prior, 
         and $f_0(x)\equiv f_{G_0}(x) = f_{\theta}(x)G_0(d\theta)$ the induced mixture density 
         (here $f_{\theta}$ is the density under the channel $\gamma$).  
         For a function $r$, we define the operator $K$ acting on $r$ as follows: \cite[(21)]{polyanskiy2021sharp} 
         \begin{equation}\label{eq:k-operate}
         Kr(x) \triangleq \mathbb{E}_{G_0}[r(\theta) \mid X = x]
         = \frac{\int r(\theta) f_{\theta}(x) G_0(d\theta)}{f_0(x)}
         \end{equation}
         Also, fix an arbitrary bounded function $r$,
         consider the distribution $G_{\delta}$ given by the small perturbation 
         \[
         dG_{\delta} \triangleq \frac{(1 + \delta r)dG_0}{1 + \delta \int rdG_0}
         \]

         We now consider what happens as we consider $\mathbb{E}_{G_\delta}[\ell(\theta) | Y = y]$. 
         To start, by \prettyref{eq:k-operate}, we have $K\ell(y) = \mathbb{E}_{G_0}[\ell(\theta) | Y = y]$ (i.e. the base distribution). Then similar to 
         \cite[(24)]{polyanskiy2021sharp} we may obtain 
         \begin{flalign}\label{eq:KFdelta}
             \mathbb{E}_{G_\delta}[\ell(\theta) |Y = y] 
             &= \mathbb{E}_{G_0}
             \left[\frac{\ell(\theta)(1 + \delta r(\theta))}{1 + \delta  Kr(y)} | Y = y\right]
             \nonumber
             \\&=\frac{K\ell + \delta K(\ell\cdot r)}{1 + \delta  Kr}(y)
             \nonumber 
             \\&=K\ell(y) + \delta \left(\frac{K(\ell\cdot r) - K\ell\cdot Kr}{1 + \delta Kr}\right)(y)
             \nonumber 
             \\&=K\ell(y) + \delta K_{\ell} r(y)
              + \delta^2\cdot \frac{(Kr)(K_{\ell}r)}{1 + \delta Kr}(y)
         \end{flalign}
         
         where the operation $K_{\ell}$ is defined as (modified from \cite[(25)]{polyanskiy2021sharp}). 
         \begin{equation}\label{eq:Kell_def}
             K_{\ell}r\overset{\Delta}{=} K(\ell r) - (K\ell)(Kr)
         \end{equation}
        When $\ell(\theta) = \theta^k$ for $k\ge 1$, we will use the notation $K_{\ell} = K_k$. 
        When $k = 0$, $\ell(x)\equiv 1$ and $K\ell = 1$ so $K_{\ell}r = Kr - (Kr) = 0$. 
        Note also the following identity, which can again be generalized from 
        \cite[(25)]{polyanskiy2021sharp}
        \[
        K_{\ell} r(x) = \frac{d}{d\delta}\mid_{\delta = 0}\mathbb{E}_{G_{\delta}} [\ell(\theta) \mid X = x]
        \]
        With this, we are ready to establish the following general recipe for proving lower bounds on the empirical Bayes functional estimation problems. 
        \begin{lemma}\label{lmm:7_generalized}
        Fix a prior distribution $G_0$, constants $a,\tau,\tau_1,\tau_2,\gamma\ge0$ and $m$ real-valued functions $r_1,\dots, r_m$ on $\Theta$ with the following properties,
        \begin{align*}
            \|r_q\|_\infty&\le a\quad\forall q\nonumber\\
            \|Kr_q\|_{L_2(f_0)}&\le\sqrt{\gamma} \quad\forall q\nonumber\\
            \left\|\sum_{i=1}^m v_iK_{\ell}r_i\right\|_{L_2(f_0)}^2&\ge\tau\|v\|_2^2-\tau_2\quad\forall v\in\{0,\pm1\}^m\nonumber\\
            \left\|\sum_{i=1}^m v_iK_{\ell}r_i\right\|_{L_2(f_0)}^2&\le\tau_1^2m\quad\forall v\in\{0,\pm1\}^m.
        \end{align*}
        Then the optimal regret in $\ell(\theta)$ estimation over the class of priors $\calG=\{G:|\frac{dG}{dG_0}-1|\le\frac12\}$ satisfies \begin{align*}
            \inf_{\hat{T}}\sup_{\pi\in\calG}\Regret_{\pi,k,n}(\hat{T})\ge C\delta^2(m(4\tau-\tau_1^2)-\tau_2),\quad\delta\overset{\Delta}{=}\frac1{\max(\sqrt{n\gamma},ma)}
        \end{align*}
        for some constant $C>0$.
    \end{lemma}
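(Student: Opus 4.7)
The plan is to mimic the Assouad-style reduction in \cite[Proposition 7]{polyanskiy2021sharp}, replacing its mean-specific derivative operator by the functional operator $K_\ell$ from \eqref{eq:Kell_def}. First I would construct a family of perturbed priors indexed by the Boolean hypercube: for each $v\in\{0,1\}^m$, set $dG_v \propto \bigl(1 + \delta \sum_{i=1}^m v_i r_i\bigr) dG_0$. The assumption $\|r_q\|_\infty \le a$ together with $\delta m a \le 1$ forces $|dG_v/dG_0 - 1| \le 1/2$, so every $G_v$ lies in $\calG$. Applying \eqref{eq:KFdelta} with $r = \sum_i v_i r_i$ gives the expansion $\hat t_{G_v,\ell}(x) = K\ell(x) + \delta \sum_i v_i K_\ell r_i(x) + O(\delta^2)$, and the quadratic remainder will be absorbed into the final constants.

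Next, I would control the statistical indistinguishability of the $n$-sample laws. A first-order expansion of $f_{G_v}$ around $f_0$ yields, for $v,v'$ differing in a single coordinate $q$, $\chi^2(f_{G_v}, f_{G_{v'}}) \lesssim \delta^2 \|Kr_q\|_{L_2(f_0)}^2 \le \delta^2 \gamma$, and tensorization gives $\chi^2(f_{G_v}^{\otimes n}, f_{G_{v'}}^{\otimes n}) \lesssim n\gamma\delta^2 = O(1)$ by the choice $\delta\le 1/\sqrt{n\gamma}$. This is exactly the regime in which the posterior of $v$ given $X_1^n$ cannot concentrate on any single vertex, so an Assouad-type argument can extract a nontrivial lower bound.

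Using the Bayes-risk identity $\Regret_{G_v,\ell,n}(\hat T) = \E_{G_v}\|\hat T_n(X_1^n) - \hat t_{G_v,\ell}(X_n)\|^2$, averaging $v$ uniformly over $\{0,1\}^m$, and switching to the reference measure $f_0^{\otimes n}$ at bounded multiplicative cost (via the $\chi^2$ bound above), I reduce the problem, up to $O(\delta^4)$ corrections, to a lower bound on
\begin{equation*}
\E_v\,\E_{X_1^{n-1}}\Bigl\|\hat T_n(X_1^{n-1},\cdot) - K\ell(\cdot) - \delta \sum_{i=1}^m v_i K_\ell r_i(\cdot)\Bigr\|_{L_2(f_0)}^2.
\end{equation*}
For fixed $X_1^{n-1}$, the optimal $\hat T_n$ is the conditional mean of $\hat t_{G_v,\ell}$ over the posterior of $v$, and Assouad's lemma combined with the above chi-squared bound turns the display into a cube-Hamming estimate of the form $c\delta^2 \min_{v^\star\in\{0,1\}^m} \E_v\bigl\|\sum_i(v_i-v_i^\star) K_\ell r_i\bigr\|_{L_2(f_0)}^2$.

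Finally, I would plug in the two quadratic-form hypotheses. Applying the lower bound with $w=v-v^\star\in\{0,\pm1\}^m$ supplies a mean-square separation of order $\tau m - \tau_2$ after averaging over $v$, while the upper bound $\tau_1^2 m$ enters with the opposite sign, accounting for the posterior-covariance inflation produced by the earlier change of measure. Combining these produces the advertised $C\delta^2\bigl(m(4\tau - \tau_1^2) - \tau_2\bigr)$. The main obstacle is exactly this final bookkeeping: tracking how the cube-variance computation on $\{0,1\}^m$ yields the coefficient $4\tau$ from the lower bound while the Pinsker/chi-squared swap produces the $\tau_1^2$ penalty from the upper bound, so that both combine in the claimed form. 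The $O(\delta^2)$ Bayes-estimator residuals and the change-of-measure overhead are routine to absorb but require care to pin down the explicit constant $C$.
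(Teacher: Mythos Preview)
Your overall strategy---hypercube of perturbed priors, expansion via \eqref{eq:KFdelta}, $\chi^2$ control on neighboring mixtures, Assouad---matches the paper's. But there is a genuine gap in how you account for the $\tau_1^2$ penalty, and the paper's reduction to a discrete problem is cleaner than what you sketch.

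In the paper the passage from continuous $\hat T$ to discrete $u$-estimation is done by nearest-neighbor projection, not by averaging over $v$ and computing a posterior mean: given any $\hat T$, set $\hat u=\argmin_v\|T_v-\hat T\|_{L_2(f_0)}$; the triangle inequality gives $\|T_{\hat u}-T_u\|\le 2\|\hat T-T_u\|$, so the minimax regret is at least $\tfrac18\min_{\hat u}\max_u\E_{G_u}\|T_{\hat u}-T_u\|^2_{L_2(f_0)}$. Assouad then yields $\max_u\E_{G_u}[d_H(\hat u,u)]\ge C_3 m$. No averaging over $v$, no posterior of $v$, no ``posterior-covariance inflation'' enter. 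Your expression $\min_{v^\star}\E_v\|\sum_i(v_i-v_i^\star)K_\ell r_i\|^2$ is also problematic as written (it vanishes at $v^\star=v$).

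The substantive error is your claim that the $O(\delta^2)$ remainder in \eqref{eq:KFdelta} is ``routine to absorb'' and that $\tau_1^2$ instead arises from a ``Pinsker/chi-squared swap.'' In fact $\tau_1^2$ comes \emph{entirely} from that remainder. Writing $T_u=K\ell+\delta K_\ell r_u+\delta^2\tfrac{h_u}{1+\delta h_u}K_\ell r_u$, the paper bounds the last term in $L_2(f_0)$ by $2\delta^2 ma\|K_\ell r_u\|_2\le 2\delta^2 m^{3/2}a\tau_1\le\tfrac18\delta\sqrt m\,\tau_1$ (using $\delta\le\tfrac1{16ma}$, not merely $\delta ma\le1$). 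This feeds into $\|T_u-T_v\|^2\ge\tfrac12\delta^2\|K_\ell(r_u-r_v)\|^2-\tfrac1{16}\delta^2 m\tau_1^2\ge\tfrac12\delta^2(\tau d_H(u,v)-\tau_2)-\tfrac1{16}\delta^2 m\tau_1^2$, and that is the sole source of the $-\tau_1^2$ in the final bound. The $\chi^2$ computation uses only $\gamma$ (through $\|Kr_q\|^2$) and never touches $\tau_1$. If you treat the remainder as a harmless constant you will not be able to recover the stated form of the lemma.
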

    \begin{proof}
        The proof follows the proof of \cite[Lemma 7]{polyanskiy2021sharp} with appropriate modifications for the case of $\ell(\theta)\neq \theta$. 
        We start by defining the following hypercubes. 
        For $i = 1, 2, \cdots, m$, define $\mu_i\triangleq \int r_idG_0$, 
        $\delta > 0$ chosen with $\delta \le \frac{1}{16 ma}$, 
        and for each $u\in \{0, 1\}^m$, 
        \[
        r_u\triangleq \sum_{i=1}^n u_ir_i
        \qquad 
        h_u\triangleq Kr_u
        \qquad 
        \mu_u\triangleq \sum_{i=1}^m u_i\mu_i
        \qquad 
        dG_u \triangleq \frac{1+\delta r_u}{1+\delta\mu_u} dG_0
        \qquad 
        f_u \triangleq \frac{(1 + \delta h_u) f_0}{1+\delta \mu_u}
        \]
        where $f_u$ is the mixture density induced by the prior $G_u$. 
        Now consider the class of priors $\tilde{\calG}=\{G_u: u\in \{0, 1\}^m\}$. 
        The construction guarantees $\tilde{\calG}\subseteq \calG$, 
        and $\frac 12 \le \frac{dG_u}{dG_0}\le \frac 32$. 
        Define, also, $T_u(x)\triangleq \EE_{G_u}[\ell(\theta) \mid X = x]$. 
        Then the minimax regret over $\mathcal{G}$ satisfies: 
        \begin{flalign*}
            \inf_{\hat{T}}\sup_{\pi\in \calG} \Regret_{\pi, k, n}(\hat{T})
            &\ge \inf_{\hat{T}}\max_{\pi\in \tilde{\calG}} \Regret_{\pi, k, n}(\hat{T})
            \\&=\inf_{\hat{T}}\max_{u\in \{0, 1\}^m} \mathbb{E}_{G_u}||\hat{T} - T_u||^2_{L^2(f_u)}
            \\&\stepa{\ge}\inf_{\hat{T}}\max_{u\in \{0, 1\}^m} \frac 12\mathbb{E}_{G_u}||\hat{T} - T_u||^2_{L^2(f_0)}
            \\&\stepb{\ge}\min_{\hat{u}\in \{0, 1\}^m}\max_{u\in \{0, 1\}^m} \frac 18\mathbb{E}_{G_u}||T_{\hat{u}} - T_u||^2_{L^2(f_0)}
        \end{flalign*}
        where (a) uses $f_u\ge \frac 12 f_0$ and (b) is due to the following: 
        given $\hat{T}$, define $\hat{u}=\argmin_{v} \norm{T_v - \hat{T}}_{L_2(f_0)}$. 
        Then for any $u$, 
        \[\norm{T_{\hat u} - T_u}_{L_2(f_0)} \le 
        \norm{T_{\hat u} - \hat{T}}_{L_2(f_0)} + \norm{\hat{T} - T_u}_{L_2(f_0)}
        \le 2\norm{\hat{T} - T_u}_{L_2(f_0)}\] 
        from the definition of $\hat{u}$ and by the triangle inequality.

        Now we consider the following property of $T_u$, due to \prettyref{eq:KFdelta} and that $Kr = h_u$
        \[
        T_u 
        =K\ell + \delta K_{\ell}r + \delta^2 \frac{h_u}{1 + \delta h_u}\cdot K_{\ell}r
        \]
        and by our assumptions: 
        \[
        ||\delta^2 \frac{h_u}{1 + \delta h_u}\cdot K_{\ell}r||_2
        \le 2\delta^2 ma ||K_{\ell}r_u||_2
        \le 2\delta^2 m^{3/2} a\tau_1
        \le \frac 18 \delta a\sqrt{m}\tau_1
        \]
        Again, by triangle inequality: 
        \[
        ||T_u - T_v||_2\ge \delta ||K_\ell(r_u - r_v)||_2 - \frac{\delta \sqrt{m}\tau_1}{4}
        \]
        Thus by using $(a-b)^2\ge\frac 12 a^2 - b^2$ this translates into 
        \[
        ||T_u - T_v||_2^2\ge \frac 12\delta^2 ||K_\ell(r_u - r_v)||_2^2 - \frac{\delta^2 m\tau_1^2}{16}
        \ge \frac 12\delta^2(\tau d_H(u, v) - \tau_2) - \frac {1}{16}\delta^2m\tau_1^2
        \]

        The next step is to bound the $\chi^2$ distance of $f_u$ and $f_v$. 
        Note that: 
        \[
        \frac{f_u}{f_0} - \frac{f_v}{f_0}
        = \frac{1+\delta h_u}{1 + \delta\mu_u} 
        - \frac{1+\delta h_v}{1 + \delta\mu_v}
        = \delta \frac{h_u - h_v}{1 + \delta\mu_u}
        + (1 + \delta h_v)\frac{\delta(\mu_v - \mu_u)}{(1 + \delta\mu_u)(1 + \delta\mu_v)}
        \]
        and note that all the quantities 
        $1 + \delta h_v, 1 + \delta\mu_u, 1 + \delta\mu_v\in [\frac 12, \frac 32]$. 
        Therefore, 
        \begin{flalign*}
            \chi^2(f_u || f_v) 
            &= \int d\mu f_0 \frac{(f_u/f_0 - f_v/f_0)^2}{f_v/f_0}
            \\&=
            \delta^2\int d\mu f_0
            \frac{1 + \delta\mu_v}{1+\delta h_v}
            \left(\frac{h_u - h_v}{1 + \delta\mu_u}
        + (1 + \delta h_v)\frac{\mu_v - \mu_u}{(1 + \delta\mu_u)(1 + \delta\mu_v)}\right)^2
            \\&\le C_0\delta^2\norm{h_u-h_v}_2^2
            +\delta^2(\mu_v-\mu_u)^2
            \\&\le C_1\delta^2\norm{h_u-h_v}_2^2
        \end{flalign*}
        with $C_1\triangleq C_0 + 1$ and $C_0$ a constant.
        The last step is due to Cauchy-Schwarz inequality: 
        \[
        \mu_u-\mu_v 
        = \int dG_0(r_u - r_v) = \int d\mu f_0(h_u - h_v) \le \norm{h_u - h_v}_2.
        \]
        By our assumption, 
        for all $u, v$ with $d_H(u, v) = 1$, we have $\norm{h_u - h_v}^2\le \gamma$. 
        
        Therefore, provided $n\delta^2\gamma\le 1$, 
        \[
        \chi^2(f_u^{\otimes n} || f_v^{\otimes n})
        =(1+\chi^2(f_u || f_v))^n - 1
        =(1 + \delta^2\gamma)^n - 1
        \le e^{n\delta^2\gamma} - 1
        \le e - 1
        \]
        for all $u, v$ where $d_H(u, v) = 1$. 
        Thus by Assouad's lemma \cite[Theorem 31.2]{polyanskiy2025information} we have for any estimator $\hat u$
        \[
        \max_{u\in \{0, 1\}^m} \mathbb{E}_{G_u}[d_H(\hat{u}, u)]\ge C_3m
        \]
        for some constant $C_3$. 
        Therefore, by choosing $\delta > 0$ such that 
        $\delta^2=\frac{1}{\max(n\gamma, m^2a^2)}$, we have: 
        \begin{flalign*}
            \inf_{\hat{T}}\sup_{\pi\in\calG}\Regret_{\pi,k,n}(\hat{T})
            &\ge \inf{\hat{u}}\max_{u\in \{0, 1\}^m} \frac 18\mathbb{E}_{G_u}||T_{\hat{u}} - T_u||^2_{L^2(f_0)}
            \\&\ge 
            \inf_{\hat{u}}\max_{u\in \{0, 1\}^m}
            \frac {1}{16} \delta^2 (\tau \bbE_{G_u}[d_H(\hat{u}, u)] - \tau_2)
            - \frac{1}{128}\delta^2m\tau_1^2
            \\&=
            \frac {1}{16} \delta^2 (\tau C_3m - \tau_2)
            - \frac{1}{128}\delta^2m\tau_1^2
            \\&=\frac {1}{16}\delta^2(m(C_3\tau - \frac{1}{8}\tau_1^2) - \tau_2)
        \end{flalign*}
        as desired. 
    \end{proof}

    Before we proceed to each of the two individual models, we consider the following two linear operations on a function $r$ with argument $\theta$: $\theta\cdot$ as multiplying by $\theta$, $\partial\cdot$ as differentiating w.r.t. $\theta$. We now introduce $op_j(r)$ a linear differential operator acting on $r$ such that the following is satisfied: 
    \begin{equation}\label{eq:opj_def}
        K(\theta^j)K(r) = K(op_j(r)). 
    \end{equation}
    A few observations in order: 
    \begin{itemize}
        \item $\frac{d}{d\theta}(\theta r) = r + \theta r'$, i.e. $\partial(\theta\cdot) = 1 + \theta(\partial \cdot)$. 

        \item For any nonnegative integers $j_1, j_2$, $op_{j_1}(op_{j_2}(\cdot)) = op_{j_2}(op_{j_1}(\cdot))$, 
        i.e. the two actions $op_{j_1}$ and $op_{j_2}$ commute. 
    \end{itemize}
    The most crucial identity is the following: 
    \begin{equation}\label{eq:kk_vs_opk}
        K_k(r) = K(\theta^k r) - K(\theta^k)K(r) = K(\theta^k r - op_k(r)) 
    \end{equation}
    which we will heavily utilize in our proofs that follow. 

\subsection{Normal means model}\label{sec:normal_means_lower}
    We use the normal means prior as per \cite[Section 3.1]{polyanskiy2021sharp}, i.e. $G_0 = \mathcal{N}(0, s)$ for $s > 0$ to be specified later. 
    Let $\varphi(x) = \frac{1}{\sqrt{2\pi}}e^{-x^2/2}$ be the standard normal density. 
    Then the mixture distribution is 
    $P_{G_0} = \calN(0, 1 + s)$, 
    and the mixture density $f_0(y) = \frac{1}{\sqrt{1 + s}}\varphi(\frac{y}{\sqrt{1 + s}}).$ 
    The posterior density of $\theta$ given $y$ is 
    \[
    \frac{f_{\theta}(y)G_0(d\theta)}{f_0(y)}
    =\frac{\varphi(y - \theta)
    \frac{1}{\sqrt{s}}\varphi(\frac{\theta}{\sqrt{s}})}
    {\frac{1}{\sqrt{1 + s}}\varphi(\frac{y}{\sqrt{1 + s}})}
    =\frac{1}{\eta}\varphi\left(\frac{\theta}{\eta} - \eta y\right)
    \]
    where $\eta = \sqrt{\frac{s}{s + 1}}$. 
    That is, $\theta | y\sim N(\eta^2 y, \eta^2)$. 

    We will next derive a recursive formula for the action of $K_j$ and $op_j$ operators acting on a subset of $k$ times continuously differentiable functions.
    We defer the proof to \prettyref{app:gaussian_lower}.
    \begin{lemma}\label{lmm:8_generalized}
        We consider the following subset $\mathcal{O}^{(k)}(\mathbb{R})$ of $\mathcal{C}^k(\mathbb{R})$: 
    \begin{equation}
        \label{eq:polybnd}
        \mathcal{O}^{(k)}(\mathbb{R})
        = \{r\in \mathcal{C}^k: \forall j=0,\ldots,k, \exists C_j, m_j < \infty, |r^{(j)}(\theta)| \le  C_j(1+|\theta|^{m_j})\}
    \end{equation} 
   	Then for each $j = 0, 1, \cdots, k$ and for each $r\in \mathcal{O}^{(k)}(\mathbb{R})$, the $op_j$ operation acts on $r$ in the following (recursive) fashion: 
        \begin{equation}\label{eq:opj_topr}
            op_{j + 1}(r) = \theta \, op_j (r) - \eta^2 op_j( \partial r)
        \end{equation}
        and that $op_j(r)\in \mathcal{O}^{(k - j)}(\mathbb{R})$. 
        Consequently, we have the following two identities in closed form:
        \begin{equation}\label{eq:opj_gsn}
        op_j(\cdot) = \sum_{i=0}^j \binom{j}{i}(\theta\cdot)^i (-\eta^2\partial\cdot)^{j-i}
        \end{equation}
	\begin{equation}\label{eq:gsn_kop}
        K_j r = -\sum_{m = 1}^j (-\eta^2)^m \binom{j}{m} K(\theta^{j - m}r^{(m)})
	\end{equation}
	where $r^{(m)}$ denotes the $m$-th derivative of $r$ w.r.t. the variable $\theta$. 

    \end{lemma}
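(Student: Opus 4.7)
The plan is to anchor everything on two scalar identities for the Gaussian conditional expectation $K(g)(y) = \mathbb{E}[g(\theta)\mid Y=y]$, recalling that $\theta\mid Y=y \sim \mathcal{N}(\eta^2 y,\eta^2)$. First, Stein's identity against this Gaussian posterior: for any $g\in\mathcal{O}^{(1)}(\mathbb{R})$, integration by parts (legitimate because the polynomial-growth condition in \prettyref{eq:polybnd} kills the boundary terms) gives
\begin{equation*}
K(\theta g) \;=\; K(\theta)\,K(g) + \eta^2 K(g').
\end{equation*}
Second, differentiating the posterior density under the integral sign yields the Gaussian derivative identity $\tfrac{d}{dy}K(g) = \eta^2 K(g')$, which is simply Stein's identity rewritten via $\tfrac{d}{dy}K(g)(y) = K(\theta g)(y) - K(\theta)(y)K(g)(y)$. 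These two identities, really two faces of the same fact, are the entire analytic content of the proof; the rest is induction and bookkeeping.

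I would then prove the recursion \prettyref{eq:opj_topr} and the regularity claim $op_j(r) \in \mathcal{O}^{(k-j)}(\mathbb{R})$ simultaneously by induction on $j$, the base case $op_0(r)=r$ being immediate. For the inductive step, define $\widetilde{op}_{j+1}(r) := \theta\cdot op_j(r) - \eta^2\,op_j(\partial r)$ and apply Stein's identity with $g = op_j(r)$:
\begin{equation*}
K(\widetilde{op}_{j+1}(r)) = K(\theta)\,K(op_j r) + \eta^2 K\bigl((op_j r)'\bigr) - \eta^2 K\bigl(op_j(\partial r)\bigr).
\end{equation*}
The inductive hypothesis replaces $K(op_j r)$ by $K(\theta^j)K(r)$ and $K(op_j(\partial r))$ by $K(\theta^j)K(r')$. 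For the middle term, I would differentiate the inductive identity $K(op_j r) = K(\theta^j)K(r)$ in $y$ and invoke the derivative identity on both sides to obtain $K((op_j r)') = jK(\theta^{j-1})K(r) + K(\theta^j)K(r')$. Substituting and collapsing via one more Stein step, $K(\theta^{j+1}) = K(\theta)K(\theta^j) + \eta^2 j K(\theta^{j-1})$, yields $K(\widetilde{op}_{j+1}(r)) = K(\theta^{j+1})K(r)$. Regularity is immediate since multiplication by $\theta$ and $\partial$ each cost at most one order in the $\mathcal{O}$-index.

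With the recursion established, the closed form \prettyref{eq:opj_gsn} follows by a short second induction on $j$: expand $\theta\cdot op_j(r) - \eta^2 op_j(\partial r)$ using the binomial-coefficient hypothesis, shift the index of the $\theta$-multiplied sum, and invoke Pascal's identity $\binom{j}{m-1}+\binom{j}{m}=\binom{j+1}{m}$. Finally, \prettyref{eq:gsn_kop} is pure bookkeeping: write $K_j r = K(\theta^j r) - K(op_j r)$, note that the $i=j$ term in the closed-form expansion of $op_j r$ equals $\theta^j r$ and cancels, and reindex $m=j-i$ in the remaining sum. The main obstacle in the whole proof is the middle step of the inductive verification — converting $K((op_j r)')$ into $jK(\theta^{j-1})K(r) + K(\theta^j)K(r')$ — which is precisely where the Gaussian derivative identity is genuinely used to exchange $y$-differentiation with $K$. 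The polynomial-growth clauses in \prettyref{eq:polybnd} are what justify this exchange and Stein's integration by parts, so carefully propagating $op_j(r)\in\mathcal{O}^{(k-j)}$ through the induction is essential rather than cosmetic.
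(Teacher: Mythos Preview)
Your proof is correct and takes a genuinely different route from the paper's. The paper first establishes the intermediate recursion $op_{j+1} = j\eta^2\, op_{j-1} + op_1\circ op_j$ from the moment identity $K(\theta^{j+1}) = K(\theta)K(\theta^j) + j\eta^2 K(\theta^{j-1})$, and then converts it to \eqref{eq:opj_topr} via the commutator relation $\partial\circ op_j - op_j\circ\partial = j\,op_{j-1}$, which is verified using the closed form \eqref{eq:opj_gsn} carried along in the induction. You instead verify \eqref{eq:opj_topr} directly: apply Stein to $K(\theta\cdot op_j r)$, then differentiate the inductive identity $K(op_j r)(y) = K(\theta^j)(y)\,K(r)(y)$ in the observation variable $y$ using $\tfrac{d}{dy}K(g) = \eta^2 K(g')$ to extract $K\bigl((op_j r)'\bigr) = jK(\theta^{j-1})K(r) + K(\theta^j)K(r')$; one more Stein step on $\theta^j$ closes the loop. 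Your argument is more streamlined in that it proves the recursion \eqref{eq:opj_topr} without needing the closed form \eqref{eq:opj_gsn} as an auxiliary inductive hypothesis --- the closed form follows afterward by a separate Pascal-identity induction. The paper's route, by contrast, surfaces the commutator $[\partial, op_j] = j\,op_{j-1}$ as an explicit structural fact, which nicely parallels the operator-algebra treatment of the Poisson model in \prettyref{lmm:Kkr}.
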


    Next, we define $S \triangleq K^*K$ the self-adjoint operator of $K$, i.e. for any $r_1, r_2\in L_2(\text{Leb})$ we have 
    \[
    (Kr_1, Kr_2)_{L_2(f_0)} = (Sr_1, r_2)_{L_2(\text{Leb})}
    \]
    
    We now state the following, adapted and generalized from \cite[Proposition 9]{polyanskiy2021sharp}. 
    We defer the proof to \prettyref{app:gaussian_lower}.

    \begin{lemma}\label{lmm:9_generalized}
    Let $G_0 = \mathcal{N}(0, s)$ for some $s > 0$ fixed. 
    Then there is an orthonormal basis $\{\psi_q: q = 0, 1, \cdots\}$ in $L_2(\text{Leb})$ consisting
		of eigenfunctions of the operator $S$, satisfying the following:
		\begin{itemize}
			\item $(K\psi_q, K\psi_r)_{L_2(f_0)} = \lambda_0\mu^q \indc{q = r}$; 
			
			\item $(K_k\psi_q, K_k\psi_r)_{L_2(f_0)} = 0$ for $|q - r|\ge 2k + 1$, 
			and 
			\[ (K_k\psi_q, K_k\psi_q)_{L_2(f_0)}\ge \lambda_0  (q - k + 1)_k\alpha_1^{-2k}c(s, k)^2 \mu^{q - k}
			\]
			for all $q\ge 2k$ (recall that $(q - k + 1)_k = q(q - 1)\cdots (q - k+1)$ is the Pocchammer symbol defined in \prettyref{eq:pochhammer}). 
			
			\item $\norm{\psi_q}_{\infty}\le c_0\sqrt{\alpha_1}$ for some absolute constant $c_0$. 
		\end{itemize}
        In addition, for $0 < s < \frac 12$, this satisfies: 
			\[
			\mu\asymp s \qquad \lambda_0 \asymp \frac{1}{\sqrt{s}}\qquad \alpha_1 \asymp \frac{1}{\sqrt{s}}
			\]
			and $c(s, k) > \frac 12$. 
	\end{lemma}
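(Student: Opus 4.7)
\textbf{Proof plan for \prettyref{lmm:9_generalized}.}
The strategy is to take for $\{\psi_q\}$ the same Hermite-function-based orthonormal basis of $L_2(\mathrm{Leb})$ that already appears in Proposition 9 of \cite{polyanskiy2021sharp}. That construction simultaneously diagonalizes $S=K^*K$ with eigenvalues $\lambda_0\mu^q$, so the first claim $(K\psi_q, K\psi_r)_{L_2(f_0)}=\lambda_0\mu^q \indc{q=r}$ and the sup-norm bound $\|\psi_q\|_\infty \le c_0\sqrt{\alpha_1}$ (together with the asymptotics $\mu\asymp s,\ \lambda_0\asymp s^{-1/2},\ \alpha_1\asymp s^{-1/2}$ as $s\downarrow 0$) are inherited directly from that proposition. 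The only genuinely new content here concerns $K_k$, and I plan to reduce it to the already-established behavior of $K$ via the closed form \prettyref{eq:gsn_kop}.

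The key step is to locate the support of $K_k\psi_q$ in the $K$-basis. By \prettyref{eq:gsn_kop},
\[
K_k \psi_q \;=\; -\sum_{m=1}^{k} (-\eta^2)^m \binom{k}{m}\, K\!\bigl(\theta^{k-m}\,\psi_q^{(m)}\bigr).
\]
Since $\psi_q$ is a (rescaled) Hermite function, the Hermite three-term recurrences for multiplication by $\theta$ and for differentiation each shift the basis index by $\pm 1$. Hence $\theta^{k-m}\psi_q^{(m)}$ is a finite linear combination of $\psi_j$ with $j\in\{q-k,q-k+2,\ldots,q+k-2m\}$. Applying $K$ (which is diagonal in $\{\psi_j\}$) and summing over $m$, the expansion of $K_k\psi_q$ in $\{K\psi_j\}$ lives in the window $\{q-k,\ldots,q+k\}$. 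The orthogonality of $\{K\psi_j\}$ in $L_2(f_0)$ then immediately gives $(K_k\psi_q,K_k\psi_r)_{L_2(f_0)}=0$ whenever $|q-r|\ge 2k+1$.

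For the diagonal lower bound, the contribution to the $\psi_{q-k}$ coefficient comes only from the maximally-lowering branch of each $\theta^{k-m}\psi_q^{(m)}$. Using the Hermite lowering identity $\partial\psi_q \propto \alpha_1\sqrt{q}\,\psi_{q-1}$ (and analogously for multiplication by $\theta$), the coefficient of $\psi_{q-k}$ in $\theta^{k-m}\psi_q^{(m)}$ equals $\alpha_1^{-(k-m)}$ times $\sqrt{(q-k+1)_k}$ times a combinatorial factor depending only on $k,m$. Aggregating through the binomial sum in the display above, squaring, and using orthonormality of $\{K\psi_j\}$ in $L_2(f_0)$ with the weight $\lambda_0\mu^{q-k}$, the contribution of the $\psi_{q-k}$ mode alone is bounded below by
\[
\lambda_0\,(q-k+1)_k\,\alpha_1^{-2k}\,c(s,k)^2\,\mu^{q-k},
\]
provided $q\ge 2k$ (so that the index $q-k\ge k$ is in the regime where the Hermite recurrences are uniformly nondegenerate). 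The remaining modes $\psi_{q-k+1},\ldots,\psi_{q+k}$ only add to the norm, giving the claimed lower bound. The asymptotics for $s<1/2$ and $c(s,k)>1/2$ follow from the asymptotics of $\eta^2=s/(s+1)$ and of $\alpha_1,\mu$.

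The main obstacle is verifying that the combinatorial factor $c(s,k)$ multiplying $\alpha_1^{-2k}\sqrt{(q-k+1)_k}$ in the $\psi_{q-k}$ coefficient is not killed by cancellation across the $m=1,\ldots,k$ terms of the binomial sum. This reduces to a fixed polynomial identity in $\eta^2$ (independent of $q$) whose value must be shown strictly positive and bounded away from zero for $0<s<1/2$; I expect this is a Hermite-polynomial identity analogous to the $k=1$ computation in \cite{polyanskiy2021sharp} and can be verified by an induction on $k$ using \prettyref{eq:opj_topr}.
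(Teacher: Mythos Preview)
Your approach is essentially the paper's: use the same Hermite-function basis $\psi_q$, expand $K_k\psi_q$ as a finite combination of $K\psi_{q-k},\ldots,K\psi_{q+k}$, read off orthogonality for $|q-r|\ge 2k+1$, and lower-bound the norm by the squared coefficient at $\psi_{q-k}$ times $\lambda_0\mu^{q-k}$. The paper organizes the computation slightly differently, via the identity $K_k\psi_q=K(\theta^k\psi_q-op_k(\psi_q))$ and a separate lemma (\prettyref{lmm:gsn_xmpsiqsimple}) that tracks by induction the endpoint coefficients of $\theta^k\psi_q$ and $op_k(\psi_q)$ in the $\psi$-basis; their difference at $\psi_{q-k}$ is $\alpha_1^{-k}\sqrt{(q-k+1)_k}\bigl[1-(1-\tfrac{\eta^2\alpha_1^2}{2})^k\bigr]$, so $c(s,k)=1-(1-\tfrac{\eta^2\alpha_1^2}{2})^k$ explicitly. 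This dissolves what you flag as the ``main obstacle'': no induction on a polynomial identity is needed, and for $0<s<\tfrac12$ one checks directly that $\eta^2\alpha_1^2\in(1,2)$, whence $c(s,k)\ge \tfrac{\eta^2\alpha_1^2}{2}>\tfrac12$.

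Two small slips in your write-up are worth fixing. First, differentiation of the Hermite \emph{functions} $\psi_q$ (unlike Hermite \emph{polynomials}) both raises and lowers the index, $\psi_q'=\tfrac{\alpha_1}{2}(\sqrt{q}\,\psi_{q-1}-\sqrt{q+1}\,\psi_{q+1})$, so $\theta^{k-m}\psi_q^{(m)}$ lives in $\{\psi_{q-k},\ldots,\psi_{q+k}\}$, not $\{\psi_{q-k},\ldots,\psi_{q+k-2m}\}$; this does not affect your argument but your stated range is incorrect. Second, the ``combinatorial factor'' in the $\psi_{q-k}$ coefficient of $\theta^{k-m}\psi_q^{(m)}$ is not purely combinatorial: it equals $(\alpha_1/2)^m$, which depends on $s$. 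Carrying that through your binomial sum gives exactly the closed form above for $c(s,k)$.
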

    
    \begin{proof}[Proof of \prettyref{thm:gsn_lower_bound}]
        Consider, now, $G_0 = \mathcal{N}(0, s)$ for some $s\in (0, \frac 12)$. 
        Choose $\mathcal{G}'_s = \{G: \frac 12\le \frac{dG}{dG_0}\le \frac 32\}$, 
        then all such $G$ are in $\SubG(2s)$ and so 
        \[
        \inf_{\hat{T}}\sup_{\pi\in\calG'_s}\Regret_{\pi,k}(\hat{T})
        \le \inf_{\hat{T}}\sup_{\pi\in\SubG(2s)}\Regret_{\pi,k}(\hat{T}). 
        \]
    	Consider the following `basis functions': for $i = 1, 2, \cdots , m$ (with $m$ to be determined later),
        \[r_j = \xi_{m + (2k + 1)j}\psi_{m + (2k + 1)j}\]
        where $\psi_1, \psi_2, \cdots, \psi_m$ satisfy \prettyref{lmm:9_generalized}, and for each $q = m + (2k + 1) j$ 
        $j = 1, 2, \cdots, m$, we have: 
        \[\xi_q\triangleq \frac{1}{\sqrt{(K_k\psi_q, K_k\psi_q)_{L_2(f_0)}}}
        \in O_k\left(\frac{1}{\sqrt{s^{-1/2} q^k s^k s^{q - k}}}\right)
        =O_k\left((s^{-(q - 1/2)}q^{-k})^{\frac 12}\right)
        \]
    	Note that our construction guarantees the orthogonality of $K_kr_j$, 
        i.e. $(K_k \psi_{q_1}, K_k \psi_{q_2})_{L_2(f_0)} = 0$ whenever $q_1\neq q_2$. 

        Next, we note that $\norm{K\psi_q}_{L_2(f_0)}^2 = \lambda_0\mu^{q}=\Theta_k(s^{-1/2} s^q) = \Theta_k(s^{q-1/2})$, we have 
    	\[
    	\gamma = \max_{m\le i\le (2k + 2)m} \xi_i^2 \norm{K\psi_i}^2 = O_k(q^{-k})= O_k(m^{-k})
    	\]
            for $q = m + (2k + 1)j$, $j = 1, 2, \cdots, m$. 
    	In addition, we also have 
    	\[
    	a^2 = \max_{m\le i\le (2k + 2)m} (\xi_i)^2\norm{\psi_i}_{\infty}^2
    	= O_k\left(\frac{s^{-0.5}}{s^{-0.5}q^k s^q}\right)
    	  = O_k\left(\frac{1}{m^k s^{(2k + 2)m}}\right)
    	\]
        Thus with this set up, we may apply \prettyref{lmm:7_generalized} on $a, \tau, \tau_1, \tau_2, \gamma$ and $r_1, \cdots, r_m$ such that $\tau = \tau_1 = 1, \tau_2 = 0$ to get the following: 
        \[
        \inf_{\hat{T}}\sup_{\pi\in\calG_s'}\Regret_{\pi,k, n}(\hat{T})\ge C\cdot \frac{1}{\max\{n\gamma, m^2a^2\}}(3m)
        = C'\min\{\frac{m}{n\gamma}, \frac{1}{ma^2}\}. 
        \]
        for some constants $C, C' > 0$. 
       
        Take $m = \frac 12\frac{\log n}{(2k + 2)\log \frac{1}{s}}$, 
        then $s^{(2k + 2)m} = \frac{1}{\sqrt{n}}$. 
        We now have: 
        \[
        \frac{m}{n\gamma} = \Omega_{k}\left(\frac{m^{k+1}}{n}\right) = \Omega_{k, s}\left(\frac 1n (\log n)^{k+1}\right); 
        \qquad \frac{1}{ma^2} 
        = \Omega_{k}\left(m^{k - 1}s^{(2k+2)m}\right)
        = \Omega_{k, s}\left(\frac{(\log n)^{k - 1}}{\sqrt{n}}\right)
        \]
        which gives the $\inf_{\hat{T}}\sup_{\pi\in\SubG(2s)}\Regret_{\pi,k, n}(\hat{T}) = \Omega_{k, s}(\frac 1n(\log n)^{k + 1})$ bound. 

        When $\mathcal{G} = \mathcal{P}([-h, h])$, 
        let $s = \frac{c}{\log n}$ for some $c:=c(h)$ such that: 
        \[
        \sup_{G\in \mathcal{G}_s'} G[|\theta| > h]\le 2n^{-5}. 
        \]
        Note that all such $G$ are $2s$-subgaussian, and so 
        \[
        G[|\theta| > h]\le 2e^{-h^2/4s} = 2e^{-h^2\log n/4c}
        \]
        so we may take $c = \frac{h^2}{20}$. 
        Also choose 
        $m = \frac 12\frac{\log n}{(2k + 2)\log (1/s)} = \frac 12\frac{\log n}{(2k + 2)(\log c + \log \log n)}$. 
        Then, by the same logic as the subgaussian case, we have 
        \[
        \frac{m}{n\gamma} = \Omega_{k}(\frac{m^{k+1}}{n}) = \Omega_{k, h}\left(\frac 1n \left(\frac{\log n}{\log \log n}\right)^{k+1}\right); 
        \qquad \frac{1}{ma^2} 
        = \Omega_{k}\left(m^{k - 1}s^{(2k+2)m}\right)
        = \Omega_{k, h}\left(\frac{(\log n)^{k - 1}}{(\log \log n)^{k - 1}\sqrt{n}}\right)
        \]
        i.e. $\inf_{\hat{T}}\sup_{\pi\in\calG_s'}\Regret_{\pi,k,n}(\hat{T}) = \Omega_{k, h}\left(\frac 1n \left(\frac{\log n}{\log \log n}\right)^{k+1}\right)$. 

        To connect between $\calG_s'$ and $\mathcal{P}([-h, h])$, 
        note that the $4k$-th moment of $G$ in $\calG_s'$ is at most 
        $M = \frac 32 \bbE_{\theta\sim G_0}[\theta^{4k}]\le c_1s^{4k}\in O_{h, k}(1)$
        for some constant $c_1 := c_1(k)$. 
        We now invoke \prettyref{lmm:6_generalized} to deduce that 
        \[
        \inf_{\hat{T}}\sup_{\pi\in [-h, h]}\Regret_{\pi,k,n}(\hat{T}) 
        \le \inf_{\hat{T}}\sup_{\pi\in\calG_s'}\Regret_{\pi,k,n}(\hat{T}) 
        - 6\sqrt{(M + h^{4k})n^{-4}}
        =\Omega_{k, h}\left(\frac 1n \left(\frac{\log n}{\log \log n}\right)^{k+1}\right)
        \]
        given that $M\in O_{h, k}(1)$, and therefore $\sqrt{(M + h^{4k})n^{-4}}\in O_{h, k}(n^{-2})$. 
    \end{proof}
    
    \subsection{Poisson model}
    Next, we apply \prettyref{lmm:7_generalized} to the Poisson model. 
    We mimic the technique in \cite[Section 3.2]{polyanskiy2021sharp} by selecting 
    $G_0$ the Gamma prior $\text{Gamma}(\alpha, \beta)$ for some $\alpha > 0$. 
    That is, the density of $G_0$ is $G_0(\theta) = \frac{\beta^{\alpha}}{\Gamma(\alpha)}\theta^{\alpha - 1}e^{-\beta\theta}$. 
    Some observations are as follows: 
    the mixture density $f_0$ is negative binomial, i.e. \cite[(54)]{polyanskiy2021sharp}
    \begin{equation}\label{eq:f0_binom}
    f_0(y) = \binom{y + \alpha - 1}{y}\left(\frac{\beta}{1 + \beta}\right)^{\alpha}
    \left(\frac{1}{1 + \beta}\right)^{y}
    \end{equation}
    and the posterior distribution of $\theta | X = x$ is $\text{Gamma}(x + \alpha, \beta + 1)$ \cite[(55)]{polyanskiy2021sharp}

    We start with the following generalization of \cite[Proposition 10]{polyanskiy2021sharp}: 
        \begin{lemma}\label{lmm:Kkr}
            Let $G_0$ be the Gamma prior $\text{Gamma}(\alpha, \beta)$, 
            and suppose $r\in\mathcal{O}^{(k)}$ as defined in \prettyref{eq:polybnd}, i.e. 
            for each $j = 0, 1, \cdots, k$ there exist $C_j, m_j < \infty$ such that for all $\theta\in\mathbb{R}$, 
            \[
            |r^{(j)}(\theta)| \le  C_j(1+|\theta|^{m_j})
            \]
            Then for all $j\le k$ with $0\le j\le k$ we have the following identity for $op_j$: 
            \begin{equation}\label{eq:opj_poisson_recur}
                op_{j}(r) = \theta^j \left(1 - \frac{\partial\cdot}{1 + \beta}\right)^j(r)
            \end{equation}
            and that $op_j(r)\in \mathcal{O}^{(k - j)}$. 
            Consequently, we have the following two identities in closed form:
        \begin{equation}\label{eq:opj_poisson}
            op_j(\cdot) = \sum_{i=0}^j \frac{(-1)^j}{(1 + \beta)^j}\binom{j}{i} \theta^j\partial^{(i)}(\cdot)
        \end{equation}
            \begin{equation}\label{eq:Kkr_lmm}
            K_jr = \sum_{i=1}^j\frac{(-1)^{i+1}}{(1+\beta)^i}{j\choose i} K(\theta^jr^{(i)}). 
            \end{equation}
        \end{lemma}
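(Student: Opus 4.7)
The plan is to establish the explicit formula $op_j(r) = \theta^j D_\beta^j r$ where $D_\beta \triangleq 1 - \frac{\partial}{1+\beta}$, from which everything else will follow by binomial expansion and \prettyref{eq:kk_vs_opk}. The starting point is the observation that for $G_0 = \mathrm{Gamma}(\alpha,\beta)$ the posterior of $\theta \mid X = x$ is $\mathrm{Gamma}(x+\alpha, \beta+1)$, so
\[
K(r)(x) = \frac{\int_0^\infty r(\theta)\, w_x(\theta)\, d\theta}{\int_0^\infty w_x(\theta)\, d\theta}, \qquad w_x(\theta) \triangleq \theta^{x+\alpha-1} e^{-(1+\beta)\theta}.
\]
Since $w_x'(\theta) = \bigl[(x+\alpha-1)/\theta - (1+\beta)\bigr] w_x(\theta)$, integration by parts applied to $\int r'(\theta)\, \theta^j w_x(\theta)\, d\theta$ will give the key identity
\[
K(\theta^j D_\beta r)(x) = \frac{x+\alpha+j-1}{1+\beta}\, K(\theta^{j-1} r)(x), \qquad j \ge 1,
\]
valid for any $r$ of at most polynomial growth (with at least one continuous derivative).

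Iterating this recursion by induction on $j$ (applying it with $D_\beta^{i} r$ in place of $r$ at step $i$) will yield $K(\theta^j D_\beta^j r)(x) = \frac{(x+\alpha)_j}{(1+\beta)^j} K(r)(x)$. Since $K(\theta^j)(x) = (x+\alpha)_j/(1+\beta)^j$ is precisely the $j$-th moment of the posterior $\mathrm{Gamma}(x+\alpha,\beta+1)$, this identifies the right-hand side as $K(\theta^j)(x)\, K(r)(x)$. Injectivity of $K$ (which follows from the Laplace-transform uniqueness applied to $s(\theta)\, \theta^{\alpha-1} e^{-(1+\beta)\theta}$ and all $x \in \mathbb{Z}_+$) together with the defining relation \prettyref{eq:opj_def} then forces $op_j(r) = \theta^j D_\beta^j r$, establishing \prettyref{eq:opj_poisson_recur}.

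The remaining claims will follow quickly. Expanding $D_\beta^j = \sum_{i=0}^j \binom{j}{i} \bigl(-\tfrac{1}{1+\beta}\bigr)^i \partial^i$ via the binomial theorem gives \prettyref{eq:opj_poisson} directly. For \prettyref{eq:Kkr_lmm}, I substitute this closed form into $K_j r = K(\theta^j r - op_j(r))$ from \prettyref{eq:kk_vs_opk}: the $i = 0$ term of the expansion of $op_j(r)$ equals $\theta^j r$ and cancels the leading $\theta^j r$, leaving $K_j r = \sum_{i=1}^j \binom{j}{i} \tfrac{(-1)^{i+1}}{(1+\beta)^i}\, K(\theta^j r^{(i)})$. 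The containment $op_j(r) \in \mathcal{O}^{(k-j)}$ then follows from the closed form: $op_j(r)$ is a polynomial-coefficient combination of $r^{(0)}, \ldots, r^{(j)}$, so differentiating up to $k-j$ more times involves only $r^{(\ell)}$ with $\ell \le k$, each of which is polynomially bounded by the hypothesis $r \in \mathcal{O}^{(k)}$.

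The only delicate point will be justifying the vanishing of boundary terms in the integration by parts: as $\theta \to \infty$ the factor $e^{-(1+\beta)\theta}$ in $w_x$ dominates the polynomial growth of $r$, while as $\theta \to 0^+$ the factor $\theta^{j+x+\alpha-1}$ (with $j \ge 1$, $x \ge 0$, $\alpha > 0$) provides sufficient vanishing. This is precisely where the polynomial-growth hypothesis $r \in \mathcal{O}^{(k)}$ is used; the remainder of the argument is algebraic bookkeeping.
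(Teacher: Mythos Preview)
Your proof is correct and takes a genuinely different route from the paper's. The paper first establishes the base case $op_1(r)=\theta r-\tfrac{\theta r'}{1+\beta}$ by integrating $K(\theta r')$ by parts, then derives an \emph{operator} recursion $op_{j+1}=\tfrac{j}{1+\beta}\,op_j+op_1\circ op_j$ from the identity $K(\theta^{j+1})=\tfrac{j}{1+\beta}K(\theta^j)+K(\theta)K(\theta^j)$, and separately proves by induction the commutation rule $(1+c\partial)^j(\theta\cdot)=\theta(1+c\partial)^j+jc(1+c\partial)^{j-1}$; combining these two inductions yields $op_j=\theta^j D_\beta^j$. You instead integrate $K(\theta^j s')$ by parts once to obtain the clean scalar recursion $K(\theta^j D_\beta s)(x)=\tfrac{x+\alpha+j-1}{1+\beta}K(\theta^{j-1}s)(x)$, telescope it with $s=D_\beta^{j-1}r,\,D_\beta^{j-2}r,\ldots$ to reach $K(\theta^j D_\beta^j r)=\tfrac{(x+\alpha)_j}{(1+\beta)^j}K(r)=K(\theta^j)K(r)$, and then invoke injectivity of $K$ (moment determinacy for the exponentially damped density) to identify $op_j(r)$. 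Your argument is shorter and avoids the operator-commutation bookkeeping; the paper's argument is more self-contained (no appeal to injectivity of $K$) and mirrors the structure of the Gaussian case in \prettyref{lmm:8_generalized}, which may be why the authors chose it. Note also that your binomial expansion gives the coefficient $(-1)^i/(1+\beta)^i$ in \prettyref{eq:opj_poisson}, which is correct; the $(-1)^j/(1+\beta)^j$ appearing in the paper's display is evidently a typo.
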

    This lemma allows us to relate the operator $K_k$ to $K$. The proof is given in \prettyref{app:poisson_lower}. Using this result, we can construct a suitable set of functions that gives the desired result when used in \prettyref{lmm:7_generalized}. The analysis will be continued in \prettyref{app:poisson_lower}, but we state the results below, which are generalizations of \cite[Lemma 11, Lemma 12]{polyanskiy2021sharp}.

	\begin{lemma}\label{lmm:11_generalized}
        Let $G_0 = \text{Gamma}(\alpha,\beta)$. Then there exist absolute positive constants $C,m_0$ such that for all $m\ge m_0,\beta\ge2,\alpha\ge(2k+2)m,$ there exist functions $r_1, \dots, r_m$ such that \begin{align}
			\label{eq:lemma11_1}\|K_kr_j\|_{L_2(f_0)}^2&=1 &\forall j = 1, \cdots, m, \\
			\label{eq:lemma11_2}(Kr_j,Kr_i)_{L_2(f_0)} = (K_kr_j,K_kr_i)_{L_2(f_0)} &= 0&\forall i\ne j,\\
			\label{eq:lemma11_3}\|Kr_j\|_{L_2(f_0)}^2&\le\frac{C\beta^k}{\alpha^km^k}&\forall j = 1, \cdots, m, \\
			\label{eq:lemma11_4}\|r_j\|_{\infty}&\le\sqrt{\frac{\beta^k}{\alpha^k}}e^{C(m\log\beta+\alpha)}&\forall j = 1, \cdots, m.
		\end{align}
	\end{lemma}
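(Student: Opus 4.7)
The plan is to construct $r_1, \ldots, r_m$ as explicit polynomials in $\theta$ designed so that the family $\{K_k r_j\}_{j=1}^m$ is an orthogonal subset of a Meixner polynomial basis adapted to $f_0$. Because $f_0$ in \eqref{eq:f0_binom} is a negative binomial distribution, the self-adjoint operator $S = K^*K$ on $L^2(f_0)$ is diagonalized by the Meixner polynomials $\{M_q\}_{q\ge0}$, with eigenvalues decaying geometrically in $q$. The construction uses indices $q_j = m + (2k+1)j$, $j = 1, \ldots, m$; the spacing $2k+1$ is chosen because $K_k$ differs from $K$ by a differential operator of order $k$ (cf.~\prettyref{eq:Kkr_lmm}), so it can only mix indices within a window of width $\sim k$, and this spacing guarantees the windows around distinct $q_j$'s do not overlap.

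The key step is a polynomial inversion. By \prettyref{eq:opj_poisson_recur} and \prettyref{eq:Kkr_lmm} we may write $K_k = K \circ L_k$ with $L_k = \theta^k[1 - (1 - \partial/(1+\beta))^k]$. Since $K$ acts bijectively on polynomials of each degree (sending $\theta^a$ to $(x+\alpha)_a/(\beta+1)^a$, a polynomial in $x$ of degree $a$), I would define $r_j$ as the polynomial solution of $L_k r_j = K^{-1}(c_j M_{q_j})$ for a normalizing constant $c_j > 0$. Solvability inside the class of polynomials requires the right-hand side to be divisible by $\theta^k$; the lowest $k$ coefficients of $r_j$ lie in the kernel of $L_k$ and can be freely chosen to arrange this divisibility without affecting $K_k r_j$. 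The resulting $r_j$ is an explicit polynomial of degree $q_j - k + 1 = O(m)$.

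With this construction, \eqref{eq:lemma11_1} and \eqref{eq:lemma11_2} follow from Meixner orthogonality together with the spacing $|q_i - q_j| \ge 2k+1$. For \eqref{eq:lemma11_3}, I would observe that $K r_j$ is a polynomial in $x$ of degree $q_j - k + 1$, so its Meixner expansion is concentrated at indices $\le q_j - k + 1$; the standard negative-binomial eigenvalue estimate then yields $\|K r_j\|^2 \lesssim \beta^k/(\alpha^k m^k)$, the factor $(\alpha m)^{-k}$ reflecting that the inverse of the differential-operator part of $L_k$ gains a factor of order $(1+\beta)/q$ per order. Finally \eqref{eq:lemma11_4} is obtained by bounding the coefficients of the polynomial $r_j$ via the Taylor expansion of $L_k^{-1}$, with the factor $e^{O(m \log \beta + \alpha)}$ arising from Meixner coefficient growth and the accumulated $(1+\beta)^k$ from the inverse differential operator.

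The main obstacle is the explicit polynomial inversion of $L_k$ combined with the $L^\infty$ bound \eqref{eq:lemma11_4}. The operator $1 - (1 - \partial/(1+\beta))^k$ is surjective but has a one-dimensional kernel, and tracking how its inverse interacts with the polynomial prefactor $\theta^k$ while keeping the coefficients of $r_j$ in the sub-exponential range $e^{O(m \log \beta + \alpha)}$ requires careful combinatorics. The hypothesis $\alpha \ge (2k+2)m$ is used to ensure that all Meixner indices under consideration stay in the central regime of the negative binomial, where the eigenvalue and coefficient estimates are uniformly well-behaved.
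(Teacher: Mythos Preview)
Your proposal has the right intuition about the spacing $2k+1$ and the identification $K_k = K\circ L_k$ with $L_k = \theta^k[1-(1-\partial/(1+\beta))^k]$, but the inversion scheme breaks down in two places.

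First, the kernel of $L_k$ on polynomials is one-dimensional (the constants), not $k$-dimensional: if $r$ has degree $n\ge1$ with leading coefficient $c_n$, then $(1-\partial/(1+\beta))^k r - r$ has $\theta^{n-1}$-coefficient $-knc_n/(1+\beta)\ne0$. So you have at most one free parameter in $r_j$, not $k$, and there is no mechanism to force $K^{-1}(M_{q_j})$ to be divisible by $\theta^k$; the equation $L_k r_j = K^{-1}(c_j M_{q_j})$ is generically unsolvable in polynomials. Adjusting $r_j$ changes the left side, not the right, so the ``free lowest $k$ coefficients'' fix does not address the obstruction.

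Second, even granting a solution, your construction controls $K_k r_j$ but says nothing about $Kr_j$. The lemma also requires $(Kr_i, Kr_j)_{L_2(f_0)} = 0$ for $i\ne j$; this is precisely what feeds the parameter $\gamma$ in \prettyref{lmm:7_generalized}. With $r_j$ a polynomial of degree $\sim q_j - k$, the function $Kr_j$ carries Meixner components at all indices $\le q_j-k+1$, so the $Kr_j$'s overlap and are not mutually orthogonal.

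The paper sidesteps both issues by working directly on the $\theta$-side with the Laguerre-type eigenfunctions $\Gamma_q(\theta)=e^{-\gamma\theta}L_q^\nu(2\gamma\theta)$ of $S=K^*K$ (an operator on $L^2(\mathbb{R}_+,\mathrm{Leb})$, not on $L^2(f_0)$ as you wrote). The structural fact (\prettyref{lmm:recur_gamma_poisson}) is that both $\theta^k\Gamma_q$ and $op_k(\Gamma_q)$ lie in $\mathrm{span}\{\Gamma_{q-k},\ldots,\Gamma_{q+k}\}$, so $K_k\Gamma_q$ lies in $\mathrm{span}\{K\Gamma_{q-k},\ldots,K\Gamma_{q+k}\}$. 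Since the $K\Gamma_q$ are already orthogonal in $L^2(f_0)$, both orthogonality requirements follow at once from the spacing $|q_i-q_j|\ge 2k+1$. Setting $r_j = \Gamma_{q_j}/\|K_k\Gamma_{q_j}\|$ then reduces \eqref{eq:lemma11_3}--\eqref{eq:lemma11_4} to the explicit Laguerre estimates of \prettyref{lmm:15_generalized}, with no operator inversion anywhere.
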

    \begin{lemma}\label{lmm:12_generalized}
        Let $G_0 = \text{Gamma}(\alpha,\beta)$ where $\alpha=1$ and $\beta>0$ is fixed, there exists some constant $C(\beta)>0$ such that for all $m\ge 1$ there exist functions $r_1, \dots, r_m$ such that \prettyref{eq:lemma11_1} and \prettyref{eq:lemma11_2} hold, and for all $j = 1, \cdots, m$, \begin{align}
            \label{eq:lemma12_1}\|Kr_j\|_{L_2(f_0)}^2&\le\frac{C}{m^{2k}}, \\
            \label{eq:lemma12_2}\|r_j\|_{\infty}&\le m^{1-k}e^{Cm}.
        \end{align}
    \end{lemma}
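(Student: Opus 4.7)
The plan is to carry over the argument of \prettyref{lmm:11_generalized} but now with $\alpha = 1$, replacing the $\alpha$-scaling by a sharper $m$-dependence afforded by the finite-difference structure of $K_k$ in the Poisson--Gamma conjugate pair. With $G_0 = \text{Gamma}(1,\beta)=\text{Exp}(\beta)$, the mixture $f_0$ is geometric, $f_0(x)=\beta(1+\beta)^{-(x+1)}$, and the posterior is $\theta\mid X=x\sim\text{Gamma}(x+1,\beta+1)$. Combining with \prettyref{eq:Kkr_lmm}, the iterated identity $K(\theta^k r)(x)=\frac{(x+1)_k}{(\beta+1)^k}Kr(x+k)$ yields the clean closed form
\[
K_k r(x)=\frac{(x+1)_k}{(\beta+1)^k}\bigl[Kr(x+k)-Kr(x)\bigr],
\]
reducing the analysis of $K_k$ to a $k$-shift finite difference of $K$ weighted by $(x+1)_k$.

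First I would take as test family $\tilde r_q(\theta)=L_q(\beta\theta)$, the Laguerre polynomials of degree $q$ rescaled to $G_0$. Using the Laguerre generating function together with $K(e^{-t\theta})(x)=\bigl(\frac{\beta+1}{\beta+1+t}\bigr)^{x+1}$, a direct bivariate generating-function calculation produces
\[
\sum_{q,p\ge 0}(\phi_q,\phi_p)_{L_2(f_0)}\,t^q s^p = \frac{\beta+1}{\beta+1-ts},\qquad \phi_q:=K\tilde r_q,
\]
so $\phi_q$ are mutually orthogonal in $L_2(f_0)$ with $\|\phi_q\|^2 = (\beta+1)^{-q}$. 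Combining with the finite-difference identity and summing against the weight $(x+1)_k^2(1+\beta)^{-x}$, the analogous generating function for $(K_k\tilde r_q,K_k\tilde r_p)_{L_2(f_0)}$ is explicit and, after coefficient extraction, yields $\|K_k\tilde r_q\|^2 \asymp q^{2k}(\beta+1)^{-q}$ for large $q$. The polynomial factor $q^{2k}$ is the new feature compared to \prettyref{lmm:11_generalized}: the $(x+1)_k^2$ weight together with the shift $x\mapsto x+k$ compound to double the polynomial order in $q$.

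I would then apply Gram--Schmidt to $\{\tilde r_q\}_{q\le cm}$ with respect to the inner product $(K_k\cdot,K_k\cdot)_{L_2(f_0)}$ to obtain orthonormal $r_1,\dots,r_m$ satisfying \prettyref{eq:lemma11_1}--\prettyref{eq:lemma11_2}. The bound \prettyref{eq:lemma12_1} then follows from $\|Kr_j\|^2 / \|K_kr_j\|^2 \simeq q^{-2k}$ after unit-normalization with $q\asymp m$. For the sup-norm \prettyref{eq:lemma12_2}, since $|L_q(\beta\theta)|\le e^{\beta\theta/2}$ is unbounded on $(0,\infty)$, I would apply a smooth cutoff at $\theta\le Cm/\beta$; the resulting perturbation of the Gram matrix is negligible because $G_0$ has exponentially small mass outside this window, and combining with the $q^k(\beta+1)^{-q/2}$ normalization factor produces $\|r_j\|_\infty \le m^{1-k}e^{Cm}$ as claimed.

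The main obstacle is verifying the exponent $q^{2k}$ (not $q^k$) in $\|K_k\tilde r_q\|^2$, which is exactly what separates the $m^{-2k}$ bound from the $m^{-k}$ bound that a naive adaptation of \prettyref{lmm:11_generalized} would yield. This requires a careful coefficient extraction in the two-variable generating function, where the $(x+1)_k^2$ weight combined with the $k$-shift contributes the right polynomial order. A secondary difficulty is that the Gram--Schmidt amplification must stay within the $m^{1-k}e^{Cm}$ budget for the sup-norm, which is only possible because the exponential orthogonality $\|\phi_q\|^2 = (\beta+1)^{-q}$ keeps the Gram matrix well-conditioned as $m\to\infty$.
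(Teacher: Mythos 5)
Your instinct that the exponent is $q^{2k}$ (not $q^k$) when $\alpha=1$ is exactly right, and the identity $K_k r(x)=\frac{(x+1)_k}{(\beta+1)^k}[Kr(x+k)-Kr(x)]$ is correct. But the construction you propose has two genuine gaps, both of which the paper avoids by a different choice of test functions.

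The more serious gap is the Gram--Schmidt step. The lemma requires \emph{simultaneous} orthogonality: both $(Kr_i,Kr_j)_{L_2(f_0)}=0$ and $(K_kr_i,K_kr_j)_{L_2(f_0)}=0$ for $i\ne j$ (the two conditions in \prettyref{eq:lemma11_2}). Gram--Schmidt with respect to the $(K_k\cdot,K_k\cdot)$ inner product makes the second family orthogonal by taking linear combinations of the $\tilde r_q$; those linear combinations will in general destroy the orthogonality of the $K\tilde r_q$. You cannot diagonalize two quadratic forms simultaneously by a single orthogonalization unless they commute in the right sense. The paper does not Gram--Schmidt: it uses $\Gamma_q^\nu(\theta)=e^{-\gamma\theta}L_q^\nu(2\gamma\theta)$ with $\gamma=\sqrt{\beta(1+\beta)}$, which are exact eigenfunctions of $S=K^*K$ (so $K\Gamma_q$ are automatically mutually orthogonal), and then exploits the banded structure of $K_k$ established in \prettyref{lmm:15_generalized}, namely $(S_k\Gamma_{q_1},\Gamma_{q_2})=0$ whenever $|q_1-q_2|\ge 2k+1$. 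Choosing indices in the arithmetic progression $q\in\{m,m+2k+1,\dots,(2k+2)m\}$ then makes \emph{both} families orthogonal with no Gram--Schmidt at all. This is the mechanism you are missing.

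The second gap is the cutoff. You observe correctly that $L_q(\beta\theta)$ is unbounded and propose a smooth cutoff, arguing the perturbation of the Gram matrix is negligible. But \prettyref{eq:lemma11_1} and \prettyref{eq:lemma11_2} are exact equalities, not approximate ones, and a cutoff would perturb them. The paper's $\Gamma_q^\nu$ has the exponential damping $e^{-\gamma\theta}$ built in, which combines with Cramer's-type bound $|L_q^\nu(\theta)|\le e^{\theta/2}\binom{q+\nu}{q}$ to give $\|\Gamma_q^\nu\|_\infty\le\binom{q+\nu}{q}$ exactly, with no cutoff and no approximation argument. With $\alpha=1$ (hence $\nu=0$) this is $\binom{q}{q}$ times a constant, and combined with $b_q\asymp z^q$ and the normalization $1/\sqrt{(S_k\Gamma_q,\Gamma_q)}\asymp q^{-k}z^{-q/2}$, one gets $\|r_q\|_\infty\lesssim m^{1-k}e^{Cm}$ directly. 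Your generating-function computation of $\|K\tilde r_q\|^2$ and $\|K_k\tilde r_q\|^2$ is a plausible alternative to the paper's closed-form manipulation of the Laguerre three-term recurrence, and the $q^{2k}$ scaling you identify as the crux is indeed what produces $m^{-2k}$ rather than $m^{-k}$; but the downstream construction needs to be replaced by the eigenfunction-plus-banding argument to make the orthogonality conditions hold exactly.
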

    
    Now using the functions constructed in \prettyref{lmm:11_generalized} and \prettyref{lmm:12_generalized} in \prettyref{lmm:7_generalized}, we can finally prove a lower bound.
    \begin{proof}[Proof of \prettyref{thm:poisson_bound} (lower bound)]
        For the set of bounded priors $\calP([0,h])$, we apply \prettyref{lmm:7_generalized} by using the functions generated by \prettyref{lmm:11_generalized} with \[m=c_1\frac{\log n}{\log\log n}, \quad\alpha=c_1\log n, \quad\beta=c_2\alpha.\] 
        where $c_1, c_2 > 0$ to be specified later based on $h$. 
        Note that \prettyref{eq:lemma11_1} and \prettyref{eq:lemma11_2} ensure that $\tau=\tau_1=1$ and $\tau_2=0$. Furthermore, \prettyref{eq:lemma11_3} gives $\gamma = \frac{Cc_2^k}{m^k}$ for some absolute constant $C$ as defined in \prettyref{eq:lemma11_3}. \prettyref{eq:lemma11_4} gives \[a = c_2^{k/2}e^{C(\alpha+m\log\beta)} = c_2^{k/2}e^{(2Cc_1+o(1))\log n}=O_{h,k}(n^{2Cc_1+o(1)}).\]
        If we pick $c_1 = \frac1{8C}$, then $ma = O_{h,k}(n^{1/4+o(1)})$ while $\sqrt{n\gamma} =O_{h,k}(n^{1/2-o(1)})$ so $\delta = \frac1{\sqrt{n\gamma}}$. 
        Applying \prettyref{lmm:7_generalized}, we have \begin{align}\label{eq:regret_lb_pretruncate}
            \inf_{\hat{T}}\sup_{\pi\in\calG}\Regret_{\pi,k, n}(\hat{T})\ge \frac{3C}{n\gamma}m = \frac{3c_1^{k+1}}{c_2^kn}\left(\frac{\log n}{\log\log n}\right)^{k+1}
        \end{align}
        where $\calG = \left\{G:\left|\frac{dG}{dG_0}-1\right|\le\frac12\right\}$. Now we relate the regret over $\calG$ to the regret over $\calP([0,h])$ using the following lemma. 
        
        By the proof of \cite[Theorem 2]{polyanskiy2021sharp}, we can choose $c_2$ such that $\PP(G\ge h)\le2n^{-4}$ for $G\in\calG$. Furthermore, it is well known the moments of the Gamma distribution are \[\E_{G_0}[\theta^{4k}] = \frac{\Gamma(\alpha+4k)}{\beta^{4k}\Gamma(\alpha)} \asymp c_2^{-4k} = O_{h,k}(1)\] so $\sup_{\pi\in\calG}\E_\pi[\theta^{4k}] = O_{h,k}(1)$. Now using \prettyref{eq:regret_lb_pretruncate} and \prettyref{lmm:6_generalized} with $\epsilon=n^{-4}$ and constant $M$, we have \begin{align*}
            \inf_{\hat{T}}\sup_{\pi\in\calP([0,h])}\Regret_{\pi,k,n}(\hat{T})&\ge \frac{3c_1^{k+1}}{c_2^kn}\left(\frac{\log n}{\log\log n}\right)^{k+1}-O_{h,k}(n^{-3/2}) = \Omega_{h,k}\left(\frac{1}{n}\left(\frac{\log n}{\log\log n}\right)^{k+1}\right).
        \end{align*}
        Now we move on to the subexponential case. If we choose $\alpha=1$ and $\beta=s$, $G_0 = \text{Expo}(s)$ so $\PP_{G_0}(\theta\ge t)\le e^{-t/s}.$ Thus, for all $G\in \calG = \left\{G:\left|\frac{dG}{dG_0}-1\right|\le\frac12\right\}$, $\PP_G(\theta\ge t)\le 2e^{-t/s}$ so $\calG \subseteq \subexpo(s)$. Now we apply \prettyref{lmm:7_generalized} by using the functions generated by \prettyref{lmm:12_generalized} with $m=c\log n$. Again, \prettyref{eq:lemma11_1} and \prettyref{eq:lemma11_2} ensure that $\tau=\tau_1=1$ and $\tau_2=0$. Furthermore, \prettyref{eq:lemma12_1} gives $\gamma = \frac{C}{m^{2k}}$ and \prettyref{eq:lemma12_2} gives \[a = m^{1-k}e^{C(\alpha+m\log\beta)} = (c\log n)^{1-k}e^{(Cc\log s\log n)+C}=O_{s,k}(n^{Cc\log s+o(1)}).\] If we pick $c = \frac1{4C\log s}$, then $ma = O_{s,k}(n^{1/4+o(1)})$ while $\sqrt{n\gamma} = O_{s,k}(n^{1/2-o(1)})$ so $\delta = \frac1{\sqrt{n\gamma}}$. Applying \prettyref{lmm:7_generalized}, and using $\calG\subseteq\subexpo(s),$ we have \begin{align*}
            \inf_{\hat{T}}\sup_{\pi\in\text{SubE}(s)}\Regret_{\pi,k,n}(\hat{T})\ge\inf_{\hat{T}}\sup_{\pi\in\calG}\Regret_{\pi,k,n}(\hat{T})\ge \frac{3C}{n\gamma}m = \Omega_{s,k}\left(\frac{1}{n}\left(\log n\right)^{2k+1}\right).
        \end{align*}
    \end{proof}

\section{Proofs for upper bound (Poisson model): $\ell(\theta) := \theta^k$}\label{sec:upper_bound}
In this section, we establish the minimax optimality of several estimators for the Poisson model where $\ell(\theta) \triangleq \theta^k$, 
up to constants that could depend on the exponent $k$ and either $h$ or $s$ for bounded or subexponential priors, respectively. 
Note that we will establish the total regret bound for the modified Robbins estimator, 
and the individual regret bound for both the minimum distance estimator and the ERM estimator. 
This is sufficient in view of \prettyref{lmm:totregretind}. 

In addition, for the case of bounded prior, we will make the dependence on $h$ and $k$ explicit, as to help us in the case where $\ell$ is a general smooth function. 

\subsection{Modified Robbins method}
We now give a formal statement on the optimality of the modified Robbins estimator. 
    \begin{theorem}\label{thm:robbins}
		The Robbins estimator for $\theta^k$ defined in \prettyref{eq:robbins} satisfies the following regret bounds:
		\begin{enumerate}
			\item For some $A, B > 0$ we have
                \[\sup_{\pi\in\calP([0,h])}\TotRegret_{\pi,k,n}(\Trobk) \le 
              (A + Bh)^{2k + 1}
            \left(\frac{\log \log n}{\log n} + k\right)^{k + 1}\le O_{h,k}\left(\left(\frac{\log n}{\log\log n}\right)^{k+1}\right)\]
			\item \[\sup_{\pi\in\subexpo(s)}\TotRegret_{\pi,k,n}(\Trobk) \le O_{s,k}\left((\log n)^{2k+1}\right).\]
		\end{enumerate}
    \end{theorem}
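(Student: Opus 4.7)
The plan is as follows. First, by the orthogonality property of Bayes estimators (since $\mathbb{E}[\theta_i^k \mid X_1^n] = \tpik(X_i)$ by i.i.d.\ of the $\theta_i$'s), the cross term in the regret expansion vanishes and
\[
\TotRegret_{\pi,k,n}(\Trobk) = \sum_{i=1}^n \mathbb{E}\bigl[(\Trobk_i - \tpik(X_i))^2\bigr].
\]
Conditioning on $X_i=x$, let $M=\#\{j\ne i : X_j=x\}$ and $A=\#\{j\ne i : X_j=x+k\}$; since $k\ge 1$ ensures $x\ne x+k$, we have $\Trobk_i=(x+1)_k A/(M+1)$, while $\tpik(x)=(x+1)_k q(x)$ with $q(x)=f_\pi(x+k)/f_\pi(x)$. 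Hence
\[
\TotRegret_{\pi,k,n}(\Trobk) = \sum_x n\, f_\pi(x)\,(x+1)_k^2\, \mathbb{E}\bigl[(A/(M+1)-q(x))^2\bigr].
\]

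Second, the main tool is per-bin Poissonization: replace $(M, A)$ by independent Poissons with rates $\lambda=(n-1)f_\pi(x)$ and $\mu=\lambda q(x)$, at the cost of an absolute constant. Conditioning on $M$ and using $\mathbb{E}[A]=\mu$, $\mathrm{Var}(A)=\mu$ gives
\[
\mathbb{E}\bigl[(A/(M+1)-q(x))^2 \mid M\bigr] = \frac{\mu}{(M+1)^2} + \frac{q(x)^2(\lambda-M-1)^2}{(M+1)^2}.
\]
The standard Poisson bound $\mathbb{E}[1/(M+1)^s]\le\min(1,1/\lambda^s)$ (for $s=1,2$) and $\mathbb{E}[(M+1-\lambda)^2/(M+1)^2]\lesssim 1/\lambda$ in the bulk regime yield the per-bin estimate
\[
n\,f_\pi(x)\,(x+1)_k^2\, \mathbb{E}\bigl[(A/(M+1)-q(x))^2\bigr] \;\lesssim\; (x+1)_k^2\, q(x)\,(1+q(x)).
\]

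Third, the crucial input from bounded support is the moment identity $(x+1)_k\, q(x) = \mathbb{E}_\pi[\theta^{x+k}e^{-\theta}]/\mathbb{E}_\pi[\theta^x e^{-\theta}] \le h^k$, yielding the termwise bound $(x+1)_k^2\, q(x)(1+q(x)) \le h^k(x+1)_k + h^{2k}$. Truncating at the threshold $x_0 = (c_1+c_2h)\log n/\log\log n$ from \prettyref{lmm:poi_tail_bound} bounds the bulk sum by $h^k\sum_{x\le x_0}(x+1)_k + (x_0+1)h^{2k} \lesssim h^k x_0^{k+1}$, which matches the stated rate once the constants $A, B$ and the polynomial $x_0^{k+1}$ factor are reconciled. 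The tail $x>x_0$ is dispatched by the crude envelope $\Trobk_i \le (X_i+1)_k\, n$ (using $N(x+k)/N(x)\le n$ whenever $i$ contributes) together with a sharpened Chernoff bound $\mathbb{P}[X\ge 2x_0]\le n^{-C}$ for $C$ as large as needed and a moment bound on $(X+1)_k^4$. For subexponential priors, the same template applies with $x_0\asymp\log n$ and a weaker bound $(x+1)_k\, q(x)\lesssim (s(x+1))^k$ (derived by a Laplace/saddle-point analysis of $\mathbb{E}_\pi[\theta^{x+k}e^{-\theta}]$), giving an additional factor $x_0^k$ in the bulk and producing the $(\log n)^{2k+1}$ rate.

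The main obstacle will be Poissonization in the transition regime $\lambda\asymp 1$, where neither a Taylor expansion around $\lambda^{-1}$ nor the trivial bound $1/(M+1)\le 1$ is sharp; I would handle this via the integral representation $\mathbb{E}[1/(M+1)^s]=\int_0^\infty t^{s-1}e^{-t-\lambda(1-e^{-t})}/\Gamma(s)\,dt$ and a case split on $\lambda \lessgtr 1$, extending the $k=1$ analysis of \cite{polyanskiy2021sharp} to arbitrary $k$. A secondary technical layer is keeping explicit control of the constants $(A+Bh)^{2k+1}$, which requires substituting the precise thresholds from \prettyref{lmm:poi_tail_bound} into the bulk-sum calculation rather than using asymptotic $O(\cdot)$ bookkeeping.
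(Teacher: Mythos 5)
Your overall template -- orthogonality to reduce $\TotRegret$ to $\sum_i\E[(\Trobk_i - \tpik(X_i))^2]$, a per-bin bias–variance decomposition, and a bulk/tail split at a threshold $x_0$ from \prettyref{lmm:poi_tail_bound} -- matches the paper. The paper's proof of \prettyref{lmm:rob_help}, however, does \emph{not} Poissonize: it groups the sum by the observed value $x$ so each bin contributes $N(x)(x+1)_k^2(N(x+k)/N(x) - f_\pi(x+k)/f_\pi(x))^2$, then conditions on $N(x)$ and uses the exact binomial law $N(x+k)\mid N(x)\sim\mathrm{Binom}(n-N(x),\,f_\pi(x+k)/(1-f_\pi(x)))$ together with the inverse-binomial moment bounds \prettyref{eq:binom_inverse_bound}. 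This sidesteps the independence approximation you invoke (``at the cost of an absolute constant''), which would need a real argument: your $(M,A)$ are negatively correlated multinomial counts, and translating to independent Poissons must be done at the level of the expectation of $1/(M+1)$-type functionals, not distributionally. The integral representation you mention is reasonable machinery, but you have not said how you control the Poissonization error.

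The more serious gap is in the subexponential case. The pointwise bound $(x+1)_k\,q(x)=\E_\pi[\theta^k\mid X=x]\lesssim(s(x+1))^k$ is \emph{false} as stated. Take $\pi=(1-\epsilon)\delta_0+\epsilon\,\delta_{\theta^*}$ with $\theta^*=s\log(1/\epsilon)$, which lies in $\subexpo(s)$. For every $x\ge1$ the posterior is entirely supported at $\theta^*$, so $(x+1)_k q(x)=(\theta^*)^k=(s\log(1/\epsilon))^k$, which diverges as $\epsilon\to0$ while the right-hand side $(s(x+1))^k$ stays fixed. What saves the rate is that in such cases $f_\pi(x)$ is simultaneously tiny, so the per-bin contribution is negligible -- but your template, having replaced the per-bin factor $\min\{n^2f_\pi(x)^2,1\}$ (which the paper keeps throughout \prettyref{lmm:rob_help}) with the cleaner $(x+1)_k^2q(x)(1+q(x))$ that is only valid in the high-$\lambda$ bulk, loses exactly this coupling between a large posterior moment and a small marginal. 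The paper avoids the issue entirely by first truncating via \prettyref{lmm:truncate_regret} (costing only $o(1/n)$) and then applying the bounded-prior bound with $h\asymp s\log n$; that is the missing idea you need, and a uniform Laplace/saddle-point pointwise estimate over all of $\subexpo(s)$ will not produce it.

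For the bounded-prior bulk and tail your steps are sound in outline (the bulk sum matches, and the Cauchy--Schwarz + crude envelope $\Trobk_i\le n(X_i+1)_k$ + high-order Chernoff dispatches the tail), though the paper's version is tighter since it carries the $\min\{n^2f_\pi(x)^2,1\}$ weights through and uses \prettyref{lmm:17_generalized} for the tail sum rather than a blunt fourth-moment argument.
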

    We first start with the following lemma, which we use to bound the regret on a prior bounded by $[0,h]$.
    \begin{lemma}\label{lmm:rob_help}
        For a prior in $\calP([0,h])$ (here $h$ may depend on $n$), the total regret satisfies
        \begin{align*}
          \TotRegret_{\pi,k,n}(\Trobk) 
          \le & c\left(1 + h^{2k} + k!(1 + h^k)+\sum_{x\ge1}(x + 1)_k h^k\min\left\{n^2f_\pi(x)^2,1\right\}+h^{2k}\min\left\{nf_\pi(x),1\right\}\right).
        \end{align*}
        where $c > 0$ is a universal constant. 
    \end{lemma}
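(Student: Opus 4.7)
The plan is to reduce the total regret to a sum indexed by the value $x$ that a sample can take, and then control each summand via the independence of Poisson counts. Writing $g(x) = (x+1)_k f_\pi(x+k)/f_\pi(x)$ for the Bayes estimator and $\hat g(x) = (x+1)_k N_n(x+k)/N_n(x)$ for the Robbins estimator (with convention $0/0 = 0$), and using that $\E[\theta_i^k \mid X_1^n] = g(X_i)$ by conditional independence of $\theta_i$ from $X_{\backslash i}$ given $X_i$, the Pythagorean identity for conditional expectations gives
\begin{align*}
\TotRegret_{\pi,k,n}(\Trobk) = \sum_{i=1}^n \E\bigl[(\hat g(X_i) - g(X_i))^2\bigr] = \sum_{x \ge 0} \E\bigl[N_n(x)(\hat g(x) - g(x))^2\bigr].
\end{align*}

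Next, I would pass to the Poissonized sample-size model, where $A := N_n(x) \sim \Poi(np)$ and $B := N_n(x+k) \sim \Poi(nq)$ become independent (with $p := f_\pi(x)$, $q := f_\pi(x+k)$ and $k \ge 1$); the de-Poissonization error is standard and absorbed into the universal constant. Setting $C := (x+1)_k$, $g := Cq/p$ and $\lambda := np$, the identity $A(\hat g(x) - g)^2 = (CB - gA)^2/A \cdot \Indc\{A \ge 1\}$ expanded via independence of $A, B$ and $\E[B^2] = nq + (nq)^2$ yields the clean decomposition
\begin{align*}
\E\bigl[N_n(x)(\hat g(x) - g(x))^2\bigr] \;=\; \underbrace{C^2 nq \cdot \E\bigl[\Indc\{A \ge 1\}/A\bigr]}_{\text{``variance'' part}} \;+\; \underbrace{g(x)^2 \cdot f(\lambda)}_{\text{``bias'' part}},
\end{align*}
where $f(\lambda) := \lambda^2 \E[\Indc\{A \ge 1\}/A] - 2\lambda(1 - e^{-\lambda}) + \lambda = \E[(A - \lambda)^2/A \cdot \Indc\{A \ge 1\}]$.

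The two technical inputs are the Poisson functional bounds $\E[\Indc\{A \ge 1\}/A] \le C_0 \min(\lambda, 1/\lambda)$ and $f(\lambda) \le C_0 \min(\lambda, 1)$, both proved by direct computation (the first via $1/A \le 2/(A+1)$ and the identity $\E[1/(A+1)] = (1 - e^{-\lambda})/\lambda$; the second via the three-term cancellation, with the right scaling verified at $\lambda \to 0$ and $\lambda \to \infty$ separately). The decisive structural input is the boundedness inequality
\begin{align*}
\frac{f_\pi(x+k)}{f_\pi(x)} \le \frac{h^k}{(x+1)_k}, \qquad \pi \in \calP([0,h]),
\end{align*}
obtained from $\theta^k \le h^k$ on $\supp \pi$ applied to the Tweedie representation~\eqref{eq:bayes_polyk}; in particular $g(x) \le h^k$. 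Combining the first Poisson bound with this inequality and splitting cases $np \gtrless 1$ gives a variance bound $\lesssim (x+1)_k h^k \min\{n^2 f_\pi(x)^2, 1\}$; combining the second with $g(x) \le h^k$ gives a bias bound $\lesssim h^{2k} \min\{n f_\pi(x), 1\}$. Summing over $x \ge 1$ produces the main sum in the claim. The $x = 0$ term is separate because $(0+1)_k = k!$ and $f_\pi(0) = \E_\pi[e^{-\theta}] \ge e^{-h}$ is bounded away from $0$, so both minima equal $1$ for large $n$ and the $x = 0$ summand is $O(k! h^k + h^{2k})$, absorbed into the constants $1 + h^{2k} + k!(1 + h^k)$ (which additionally accommodate finite-$n$ corrections and constant bookkeeping).

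The main obstacle is the pair of Poisson functional bounds — especially securing the two-sided scaling $f(\lambda) \lesssim \min(\lambda, 1)$, since the three summands defining $f$ each grow linearly in $\lambda$ and only cancel at the leading order. Once these are in hand, everything else is routine algebra together with the structural inequality controlling $q/p$ in terms of $h^k$; the de-Poissonization can be handled by a standard coupling between a Poisson and a deterministic sample size.
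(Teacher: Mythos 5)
Your opening moves are correct and match the paper: the Pythagorean reduction to $\TotRegret_{\pi,k,n}(\Trobk)=\sum_{x\ge0}\E[N(x)(\hat g(x)-g(x))^2]$ is exactly \prettyref{eq:regret_robbins}, the structural inequality $g(x)=(x+1)_kf_\pi(x+k)/f_\pi(x)=\tpik(x)\le h^k$ is the same key input the paper uses, and the inverse-moment bound $\E[\mathbf{1}_{N(x)>0}/N(x)]\lesssim\min\{nf_\pi(x),1/(nf_\pi(x))\}$ is the same estimate the paper imports (there from [Lemma 16 of polyanskiy2021sharp]).

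The gap is the Poissonization step. You replace the pair $(N(x),N(x+k))$ --- which in the actual model are coordinates of a multinomial with fixed total $n$, hence dependent --- by independent Poissons, and assert that ``the de-Poissonization error is standard and absorbed into the universal constant.'' That assertion needs a real argument and you give none. The standard Poissonization device gives the \emph{identity} $\E_{M\sim\Poi(n)}[\TotRegret^{\mathrm{mult}}_{\pi,k,M}(\Trobk)]=\TotRegret^{\mathrm{Poi}}_{\pi,k,n}(\Trobk)$, but converting that into $\TotRegret^{\mathrm{mult}}_{\pi,k,n}\lesssim\TotRegret^{\mathrm{Poi}}_{\pi,k,n}$ requires either monotonicity of $m\mapsto\TotRegret^{\mathrm{mult}}_{\pi,k,m}$ (not obviously true; a quantitative version would itself need proof) or a distributional comparison between the multinomial marginal of $(N(x),N(x+k))$ and the product Poisson law (the likelihood ratio is unbounded since the Poisson model puts mass on $N(x)+N(x+k)>n$, so this is not free either). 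The paper avoids the issue entirely: it stays in the multinomial model, conditions on $N(x)$ so that $N(x+k)\mid N(x)\sim\mathrm{Binom}(n-N(x),\,f_\pi(x+k)/(1-f_\pi(x)))$, and applies the bias--variance formula for that conditional Binomial --- which is precisely what your $f(\lambda)$ cancellation accomplishes in the Poissonized model, but with no modeling change to justify. This approach also forces the factor $1/(1-f_\pi(0))$ to appear at $x=0$, which is why the paper needs the separate bound $\frac{f_\pi(k)}{(1-f_\pi(0))f_\pi(0)}\le\frac{1}{k!}\max\{4h^k,e\}$ (\prettyref{lmm:zero_term}); your $x=0$ bound via $f_\pi(0)\ge e^{-h}$ alone only looks adequate because Poissonization has silently deleted the $1/(1-f_\pi(0))$ term, so if the Poissonization is in question, so is your $x=0$ estimate (recall $h$ may grow with $n$ here, so $e^{-h}$ is not bounded away from zero).

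Everything else --- the $(x+1)_k$ bookkeeping, the case split on $nf_\pi(x)\gtrless1$, the two-sided scaling $f(\lambda)\lesssim\min(\lambda,1)$ --- is sound. Either carry out the conditional Binomial computation directly as the paper does, or supply an actual comparison lemma making the Poissonization rigorous; the shortcut is not free.
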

    We claim an auxiliary result that generalizes \cite[(120)]{polyanskiy2021sharp}, which we defer to proof to \prettyref{app:robbins}. 

    \begin{lemma}\label{lmm:zero_term}
        Let $\pi\in\mathcal{P}([0, h])$. Then the following holds: 
        \[
        \frac{f_{\pi}(k)}{(1 - f_\pi(0))f_{\pi}(0)}
        \le 
        \frac{1}{k!}\max\{4h^k, e\}. 
        \]
    \end{lemma}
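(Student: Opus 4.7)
The plan is to split on the size of $p_0 := f_\pi(0)$ and use the tilted probability measure $\tilde\pi(d\theta) := p_0^{-1} e^{-\theta}\,d\pi(\theta)$ on $[0,h]$, together with the elementary convexity inequality $1 - e^{-\theta} \ge \theta e^{-\theta}$ (equivalently $e^{\theta} \ge 1+\theta$). The workhorse identity I will rely on throughout is
\[
\frac{f_\pi(k)}{p_0} = \frac{1}{k!}\int \theta^k \,d\tilde\pi(\theta) \le \frac{h^k}{k!},
\]
since $\tilde\pi$ is supported on $[0,h]$.

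First I would dispose of the easy regime $p_0 \le 1/2$: then $1 - p_0 \ge 1/2$, so combined with the identity above I obtain $f_\pi(k)/[p_0(1-p_0)] \le 2h^k/k! \le \max\{4h^k,e\}/k!$, and the bound holds.

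The real work is in the complementary regime $p_0 > 1/2$. Here I would use the convexity bound to write
\[
1 - p_0 = \int (1 - e^{-\theta})\,d\pi(\theta) \ge \int \theta e^{-\theta}\,d\pi(\theta).
\]
Since $\theta^k \le h^{k-1}\theta$ on $[0,h]$ for $k \ge 1$, this yields $\int \theta^k e^{-\theta}\,d\pi \le h^{k-1}(1-p_0)$, hence $f_\pi(k)/(1-p_0) \le h^{k-1}/k!$; dividing by $p_0 > 1/2$ gives $f_\pi(k)/[p_0(1-p_0)] \le 2h^{k-1}/k!$.

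To close, I would verify $2h^{k-1} \le \max\{4h^k,e\}$ for all $h > 0$ and $k \ge 1$: if $h \ge 1/2$ then $4h^k \ge 2h^{k-1}$, while if $h < 1/2$ then $2h^{k-1} \le 2 < e$ (the $k=1$ case is the tightest; for $k \ge 2$ the LHS is strictly smaller). The step I expect to require the most care is precisely this small-$h$, $k=1$ corner of the second regime, where the pure $h^k$ scaling fails and only the additive constant $e$ inside the $\max$ can absorb the residual---this is why the lemma is stated with $\max\{4h^k,e\}$ rather than a single monomial bound.
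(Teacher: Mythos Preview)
Your proof is correct and takes a genuinely different, more economical route than the paper's argument. The paper splits first on $k$ (treating $k=1$ by citation, then for $k\ge 2$ bounding the ratio by $2\max\{h^k/k!,1\}$ via $\max\{f_\pi(0),1-f_\pi(0)\}\ge\tfrac12$), and then further sub-splits on whether $k<h+1$ or $k\ge h+1$; in the latter regime it analyzes the monotonicity of $\theta\mapsto e^{-\theta}\theta^k/(1-e^{-\theta})$ on $[0,h]$ to control the numerator directly. Your approach instead splits once on whether $p_0\le\tfrac12$ or $p_0>\tfrac12$, and in the second case exploits the single inequality $1-e^{-\theta}\ge\theta e^{-\theta}$ together with $\theta^k\le h^{k-1}\theta$ on $[0,h]$; this replaces the paper's monotonicity step and its $k$-versus-$h$ case analysis by a two-line computation, and handles $k=1$ without needing a separate citation. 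The paper's route has the mild advantage of making the two regimes $h^k/k!$ large versus small more explicit, but your argument is shorter, self-contained, and isolates exactly where the additive constant $e$ in the $\max$ is needed.
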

    
    \begin{proof}[Proof of \prettyref{lmm:rob_help}]
        We have  
        \begin{align}\label{eq:regret_robbins}
            \TotRegret_{\pi,k,n}(\Trobk) &= \E\left[\sum_{i=1}^n ((\Trobk(X_1^n))_i-\tpik(X_i))^2\right]\nonumber\\
            &=\E\left[\sum_{x\ge0}N(x)(x + 1)_k^2\left(\frac{N(x+k)}{N(x)}-\frac{f_\pi(x+k)}{f_\pi(x)}\right)^2 \mathbf{1}_{N(x)>0}\right]\nonumber\\
            &=\E\left[\sum_{x\ge 0}\frac{\mathbf{1}_{N(x)>0}(x + 1)_k^2}{N(x)}\left(N(x+k)-\frac{f_\pi(x+k)N(x)}{f_\pi(x)}\right)^2\right].
        \end{align}
        To proceed, we note a few observations. 
        First, we note that conditioned on $N(x)$, $N(x+k)\sim\text{Binom}\left(n-N(x),q\right)$ where $q\triangleq \frac{f_\pi(x+k)}{1-f_\pi(x)}$. It follows that 
        \begin{align}\label{eq:regret_robbins_condition}
            &~\E\left[\left(N(x+k)-
            \frac{f_\pi(x+k)N(x)}{f_\pi(x)}\right)^2|N(x)\right] \nonumber\\
            \stepa{=}& ~ (n-N(x))q(1-q) + \left((n-N(x))q-
            \frac{f_\pi(x+k)N(x)}{f_\pi(x)}\right)^2\nonumber\\
            \stepb{\le}& ~ n\frac{f_\pi(x+k)}{1-f_\pi(x)} + 
            \left(\frac{f_\pi(x+k)}{(1-f_\pi(x))f_\pi(x)}\right)^2\left(nf_\pi(x)-N(x)\right)^2.
        \end{align}
        where (a) follows from bias variance decomposition on $N(x+k) | N(x)\sim\text{Binom}\left(n-N(x),q\right)$, 
        and (b) simply uses $(n - N(x))q(1 - q) \le nq$. 

        Next, we note that for all $x\ge 1$, 
        \begin{equation}\label{eq:poi_sterling}
            f_{\pi}(x) = \int p(x|\theta) d\pi(\theta)
            \le \sup_{\theta} p(x |\theta)
            =\sup_{\theta} \frac{e^{-\theta}\theta^x}{x!}
            \stepa{=} \frac{e^{-x}x^x}{x!}
            \stepb{\le} \frac{1}{\sqrt{2\pi x}}
            \le \frac{1}{\sqrt{2\pi}}
        \end{equation}
        where (a) is obtained by noting that the supremum of $e^{-\theta}\theta^x$ is obtained when $\theta = x$, 
        and (b) is due to Sterling's method. 
        Thus this implies $\frac{1}{1 - f_{\pi}(x)} \le c_3\triangleq \frac{\sqrt{2\pi}}{\sqrt{2\pi} - 1}$ for all $x\ge 1$ (since $f_{\pi}(x)$ is bounded away from 1). 
        Finally, given $N(x)\sim\text{Binom}(n,f_\pi(x))$, by \cite[Lemma 16]{polyanskiy2021sharp}, 
        there exist absolute constants $c_1,c_2 > 0$ such that 
        \begin{equation}\label{eq:binom_inverse_bound}
            \E\left[\frac{\mathbf{1}_{N(x)>0}}{N(x)}\right]\le c_1\min\left\{nf_\pi(x),\frac1{nf_\pi(x)}\right\},\quad\E\left[\frac{\mathbf{1}_{N(x)>0}}{N(x)}(N(x)-nf_\pi(x))^2\right]\le c_2.
        \end{equation}

        Therefore, using \prettyref{eq:regret_robbins_condition}, we can continue \prettyref{eq:regret_robbins} to get \begin{align}\label{eq:regret_robbins_v2}
            &~n\cdot\Regret_{\pi,k,n}(\Trobk) \nonumber\\
            =&~ \E\left[\sum_{x\ge0}\frac{\mathbf{1}_{N(x)>0}(x+1)_k^2}{N(x)}\left(n\frac{f_\pi(x+k)}{1-f_\pi(x)}+ \left(\frac{f_\pi(x+k)}{(1-f_\pi(x))f_\pi(x)}\right)^2\left(nf_\pi(x)-N(x)\right)^2\right)\right]\nonumber\\
            \stepa{\le} & ~\E\Bigg[\frac{\mathbf{1}_{N(0)>0}(k!)^2}{N(0)}\left(n\frac{f_\pi(k)}{1-f_\pi(0)}+ \left(\frac{f_\pi(k)}{(1-f_\pi(0))f_\pi(0)}\right)^2\left(nf_\pi(0)-N(0)\right)^2\right)\nonumber\\
            & ~\quad +\sum_{x\ge1}\frac{\mathbf{1}_{N(x)>0}(x+1)_k^2}{N(x)}\left(c_3nf_\pi(x+k)+ c_3^2\left(\frac{f_\pi(x+k)}{f_\pi(x)}\right)^2\left(nf_\pi(x)-N(x)\right)^2\right)\Bigg]\nonumber\\
            \stepb{\le} & ~(k!)^2\left(\frac{c_1f_\pi(k)}{(1-f_\pi(0))f_\pi(0)}+ c_2\left(\frac{f_\pi(k)}{(1-f_\pi(0))f_\pi(0)}\right)^2\right)\nonumber\\
            &~\quad+\sum_{x\ge1}c_3c_1(x+1)_k\tpik(x)\min\left\{n^2f_\pi(x)^2,1\right\}+c_3^2c_2\tpik(x)^2\min\left\{nf_\pi(x),1\right\}\nonumber\\
            \stepc{\le} &~ c'(1 + h^{2k} + k!(1 + h^k))+\sum_{x\ge 1} c_3c_1(x+1)_kh^k\min\{n^2f_{\pi}(x)^2, 1\}
            +c_3^2c_2h^{2k}\min\{nf_{\pi}(x), 1\}. 
        \end{align}
        where $c', c_1, c_2, c_3$ are absolute constants. 
        Here, (a) combines \prettyref{eq:regret_robbins_condition} and \prettyref{eq:poi_sterling}, 
        (b) is due to \prettyref{eq:binom_inverse_bound} and that $\tpik(x) = (x + 1)_k\frac{f_\pi(x+k)}{f_\pi(x)}$, 
        and (c) is by \prettyref{lmm:zero_term}, and that $\tpik(x)\le h^k$ for all $x\ge 0$. 
    \end{proof}
    To allow us to use \prettyref{lmm:rob_help} and establish a regret bound, 
    we prove the following tail bounds on $f_\pi$ to bound the $\min$ using $f_\pi(x)$ for large $x$.
    \begin{lemma}\label{lmm:17_generalized}
        The tail bounds $f_{\pi}$ for $\pi\in\calP([0,h])$ and $\pi\in\subexpo(s)$ satisfy the following: 
        \begin{itemize}
            \item Let $\pi\in\calP([0,h])$. Then for $x_0=\max\left(2h,(c_1 + c_2h)\frac{\log n}{\log\log n}\right),$ \begin{align}\label{eq:lemma17_1}\sum_{x>x_0+k}f_\pi(x)^2(x+1)_k\le\frac{2^{k+1}h^k}{n^2}.\end{align}
            
            \item Let $\pi\in\subexpo(s)$. Then for $x_1=(s + 1)\log n,$ \begin{align}\label{eq:lemma17_2}
            \sum_{x>x_1+k}f_\pi(x)^2(x+1)_k\le \frac{2(s+1)^{2(k + 1)}}{(2s + 1)^{k + 1}n^2}(e(x_1 + k))^k.\end{align}
        \end{itemize}
    \end{lemma}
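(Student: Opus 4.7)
The plan is to handle the two parts in parallel: in each case one reduces the sum $\sum_{x>\,\cdot\,+k} f_\pi(x)^2(x+1)_k$ to a product of two small pieces.

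\textbf{Bounded case \prettyref{eq:lemma17_1}.} Three ingredients combine. First, the elementary ratio bound $(x+1)_k \le 2^k(x-k+1)_k$ holds whenever $x\ge 2k-1$, since each factor $(x+i)/(x-k+i)\le 2$; once $n$ is large this applies to every $x>x_0+k$. Second, the Tweedie identity takes the form
\[
(x-k+1)_k f_\pi(x) \;=\; \int \theta^k\,\mathrm{Poi}(x-k;\theta)\,d\pi(\theta),
\]
which together with $\theta\le h$ $\pi$-a.s.\ gives $(x-k+1)_k f_\pi(x)\le h^k f_\pi(x-k)$. Combining yields $f_\pi(x)^2(x+1)_k \le 2^k h^k\, f_\pi(x)\,f_\pi(x-k)$. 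Third, \prettyref{lmm:poi_tail_bound} gives both $\sup_{x>x_0}f_\pi(x)\le 1/n$ pointwise and $\sum_{y>x_0}f_\pi(y)\le 1/n$ on summing, which combine to
\[
\sum_{x>x_0+k} f_\pi(x)^2(x+1)_k \;\le\; 2^k h^k \cdot \sup_{x>x_0}f_\pi(x) \cdot \sum_{y>x_0} f_\pi(y) \;\le\; \frac{2^k h^k}{n^2},
\]
which is a factor of two better than the claimed $2^{k+1}h^k/n^2$.

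\textbf{Subexponential case \prettyref{eq:lemma17_2}.} Since $\theta$ is unbounded, the Tweedie-plus-boundedness trick is replaced by a pointwise tail estimate
\[
f_\pi(x) \;\le\; 2\,(s/(s+1))^x \qquad \forall x\in\bbZ_+,
\]
derived by integration by parts: writing $\int e^{-t}t^x d\pi(t) = \int_0^\infty g'(t)\,\pi[\theta>t]\,dt$ with $g(t)=e^{-t}t^x$, using $|g'(t)|\le x e^{-t}t^{x-1}$ on the ascending branch $t\in(0,x)$ together with the subexponential tail $\pi[\theta>t]\le 2e^{-t/s}$, reduces the inner integral to a Gamma integral evaluating to $2x!\,(s/(s+1))^x$. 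Setting $r:=(s/(s+1))^2$ then gives $f_\pi(x)^2\le 4 r^x$, and the task reduces to bounding $4\sum_{x>x_1+k} r^x (x+1)_k$. Shifting $x=x_1+k+1+y$ and applying $(y+x_1+k+2)_k\le (x_1+2k+1)^k e^{ky/(x_1+2k+1)}$ together with a geometric series yields (for $n$ large) a bound of order $r^{x_1+k+1}(e(x_1+k))^k/(1-r)$, where we have used $(x_1+2k+1)\le e(x_1+k)$ valid for $x_1\ge k+1$. The last ingredient is $r^{x_1}\le n^{-2}$, which follows from the elementary inequality $\log(1+1/s)\ge 1/(s+1)$; substituting $1/(1-r)=(s+1)^2/(2s+1)$ and using $4 r^{k+1}(1-r)^k\le 1$ (a consequence of $\max_{r\in(0,1)} r^{k+1}(1-r)^k \le 4/27$ for $k\ge 1$) produces a bound within the claimed $\tfrac{2(s+1)^{2(k+1)}}{(2s+1)^{k+1}n^2}(e(x_1+k))^k$.

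\textbf{Main obstacle.} The bounded case is mostly bookkeeping once Tweedie and the ratio bound $(x+1)_k\le 2^k(x-k+1)_k$ are in hand. The substantive work lies in the subexponential case, which requires the non-standard pointwise estimate $f_\pi(x)\le 2(s/(s+1))^x$ and careful handling of the polynomial factor $(x+1)_k$ inside a geometric tail so that one loses no more than a factor of $e^k$ in the polynomial while retaining the correct $(1-r)^{-(k+1)}$ scaling in the denominator.
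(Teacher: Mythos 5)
Your approach takes a genuinely different route from the paper's in both cases, and both routes are essentially sound, with one step in the subexponential case that deserves to be made quantitative.

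For the bounded case, the paper replaces $f_\pi$ by the dominating Poisson pmf $\bar f(x)=h^xe^{-h}/x!$, uses the identity $x\bar f(x)=h\bar f(x-1)$ to absorb the factor $(x-k+1)_k$, and then invokes the explicit tail estimate $\sum_{x\ge x_0}\bar f(x)\le 2\bar f(x_0)$. You instead stay with $f_\pi$ itself and use the Tweedie relation $(x-k+1)_k f_\pi(x)=\E[\theta^k\mid X=x-k]f_\pi(x-k)\le h^k f_\pi(x-k)$, then split the sum as $\sup\cdot\sum$ and apply \prettyref{lmm:poi_tail_bound} twice (once for the sup via nonnegativity, once for the sum). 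This is a clean alternative, it buys you a factor of $2$, and it avoids needing the monotonicity of $\bar f$ past $2h$. One small point: the ratio bound $(x+1)_k\le 2^k(x-k+1)_k$ holds only for $x\ge 2k-1$, which for $x>x_0+k$ requires $x_0\ge k-1$; this is true once $n\ge n_0(k)$ but should be said explicitly since $x_0$ scales with $h$.

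For the subexponential case, your derivation of $f_\pi(x)\le 2(s/(s+1))^x$ by integration by parts is correct and nicely self-contained (the paper simply cites this as $\tilde f$ from \cite[Lemma 17]{polyanskiy2021sharp}). The divergence is in how you sum $\sum r^x(x+1)_k$. The paper proves the exact identity $\sum_{x\ge x_1}(x+1)_kr^x=\partial_r^k\bigl(r^{x_1+k}/(1-r)\bigr)$ and then bounds the Leibniz expansion, which delivers the precise $(1-r)^{-(k+1)}$ factor. You bound $(y+x_1+k+2)_k\le (x_1+2k+1)^k e^{ky/(x_1+2k+1)}$ and sum a geometric series; this changes the effective ratio from $r$ to $re^{k/(x_1+2k+1)}$, and the whole weight of the ``for $n$ large'' caveat is hiding in the comparison of $1-re^{k/(x_1+2k+1)}$ to $1-r$. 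This is not fatal — writing $\epsilon=k/(x_1+2k+1)$ and $u=r(e^\epsilon-1)/(1-r)$ one checks $u\le ek\cdot\frac{s^2}{(s+1)(2s+1)\log n}\le \frac{ek}{2\log n}$ uniformly in $s$, so $n\ge n_0(k)$ suffices — but that computation is the substance of the step and should appear. The paper's \prettyref{eq:geom_sum} sidesteps it entirely with an exact $k$-fold derivative, which I would regard as the cleaner way to get the $(1-r)^{-(k+1)}$ scaling with no restriction on $n$.
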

    A proof of this lemma can be found in \prettyref{app:robbins}.

    \prettyref{lmm:rob_help}, paired with \prettyref{lmm:17_generalized}, is sufficient for any bounded distributions. 
    To obtain regret bounds on a subexponential prior, we need to relate its regret to the regret of a truncated version of this prior. For any distribution $\pi$, let $\pi_h$ be $\pi$ restricted to the range $[0,h]$. That is, for any event $E$, 
    \begin{equation}\label{eq:pi_h}
        \pi_h(E) = \frac{\bbP_{\pi}(E\cap [0, h])}{\bbP_{\pi}([0, h])}
    \end{equation}
    We first begin with a helpful lemma regarding the mmse of a truncated prior.
    \begin{lemma}\label{lmm:truncate}
        For any $\pi$ and $h$, $\mmse_k$($\pi_h$) $\le \frac{\mmse_k(\pi)}{\PP_\pi(\theta\le h)}$.
    \end{lemma}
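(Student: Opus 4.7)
The plan is to bound $\mmse_k(\pi_h)$ from above by plugging in a suboptimal estimator, namely the Bayes estimator $\hat{t}_{\pi,k}$ associated with the original (untruncated) prior $\pi$. Since $\mmse_k(\pi_h)$ is defined as the infimum of the mean squared error over all estimators when the prior is $\pi_h$, any fixed estimator produces a valid upper bound.

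The key identification is that, under $\pi_h$, the joint distribution of $(\theta, X)$ coincides with the joint distribution of $(\theta, X)$ under $\pi$ conditioned on the event $\{\theta \le h\}$. Therefore, for any estimator $\hat{t}$,
\begin{equation*}
\E_{\pi_h}\bigl[(\hat{t}(X) - \theta^k)^2\bigr]
= \E_{\pi}\bigl[(\hat{t}(X) - \theta^k)^2 \,\big|\, \theta \le h\bigr]
= \frac{1}{\PP_\pi(\theta \le h)}\,\E_{\pi}\bigl[(\hat{t}(X) - \theta^k)^2 \mathbf{1}_{\{\theta \le h\}}\bigr].
\end{equation*}
Taking $\hat{t} = \hat{t}_{\pi, k}$ (the Bayes estimator under $\pi$) and then dropping the indicator yields
\begin{equation*}
\mmse_k(\pi_h) \le \E_{\pi_h}\bigl[(\hat{t}_{\pi,k}(X) - \theta^k)^2\bigr]
\le \frac{1}{\PP_\pi(\theta \le h)}\,\E_{\pi}\bigl[(\hat{t}_{\pi,k}(X) - \theta^k)^2\bigr]
= \frac{\mmse_k(\pi)}{\PP_\pi(\theta \le h)},
\end{equation*}
which is the desired inequality.

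There is essentially no technical obstacle in this proof; the only content is the observation that the Bayes estimator for $\pi$ is generally suboptimal for $\pi_h$, combined with the conditional-expectation identity above. The factor $1/\PP_\pi(\theta \le h)$ arises solely from the normalization in the definition of $\pi_h$.
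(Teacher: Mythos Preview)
Your proof is correct and takes essentially the same approach as the paper: both rely on the identification $\E_{\pi_h}[\cdot] = \E_\pi[\cdot \mid \theta \le h]$ and the trivial inequality obtained by dropping the indicator $\mathbf{1}_{\{\theta \le h\}}$. The only cosmetic difference is that you fix the specific estimator $\hat{t}_{\pi,k}$ before applying the inequality, whereas the paper keeps the minimization over all $\hat{T}$ on both sides; these are equivalent here since the minimizer of the right-hand side is attained and the inequality holds termwise.
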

    
    \begin{proof}
        Let $E$ be the event that $\theta \le h$ under $\pi$. Then \[\mmse_k(\pi) = \min_{\hat{T}}\E_{\pi}[(\hat{T}_n(X)-\theta_n^k)^2] \ge \min_{\hat{T}}\E_{\pi}[(\hat{T}_n(X)-\theta_n^k)^2|E]\PP[E] = \mmse_k(\pi_h)\PP(E).\]
    \end{proof}

    In addition, we prove the following lemma, which bounds the moments of $\theta$ and $X_{\max}=\max(X_1,\dots,X_n)$ for a subexponential prior. These results will help us in bounding the regrets in the tails of subexponential distributions.
    \begin{lemma}\label{lmm:moment}
        Let $\pi\in\subexpo(s)$, $\ell\ge 1$, and $X\sim f_\pi$. 
        Then: 
        \begin{align}\label{eq:moment_bound}
            \E_\pi[\theta^{\ell}] \le 2\ell(\ell-1)!s^{\ell}\quad\quad\quad \E[X_{\max}^\ell] \le \frac{4(\log n)^{\ell} + \frac 32 \ell!\max(1, (\log n)^{\ell - 1})}{(\log (1 + \frac{1}{2s}))^{\ell}}.
        \end{align}
    \end{lemma}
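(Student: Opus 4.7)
The plan is to treat the two bounds independently, using standard tail-integration (layer-cake) arguments.

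For the bound on $\E_\pi[\theta^\ell]$, I would write $\E_\pi[\theta^\ell] = \int_0^\infty \ell t^{\ell-1}\,\P_\pi[\theta\ge t]\,dt$ and directly insert the subexponential tail $\P_\pi[\theta\ge t]\le 2e^{-t/s}$. The change of variables $u=t/s$ reduces the integral to $2\ell s^\ell \int_0^\infty u^{\ell-1}e^{-u}du = 2\ell(\ell-1)!s^\ell$, which is exactly the stated bound.

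For the bound on $\Xmax$, the strategy is to derive a subexponential tail for a single $X_i\sim f_\pi$ and then union bound. First I would compute the MGF by conditioning on $\theta$: since $X\mid\theta\sim\Poi(\theta)$, one has $\E[e^{\lambda X_1}] = \E_\pi[e^{\theta(e^\lambda-1)}]$. Setting $\alpha = e^\lambda-1$ and applying the layer-cake identity to $\E_\pi[e^{\alpha\theta}]$ against the subexponential tail yields $\E_\pi[e^{\alpha\theta}]\le 1 + \frac{2\alpha s}{1-\alpha s}$ for $\alpha s<1$. The natural choice $\lambda=\log(1+1/(2s))$, giving $\alpha s=1/2$, produces the clean estimate $\E[e^{\lambda X_1}]\le 3$, and Markov's inequality then yields $\P[X_1\ge t]\le 3e^{-\lambda t}$. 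A union bound gives $\P[\Xmax\ge t]\le \min\{1,\,3ne^{-\lambda t}\}$.

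Next, I would apply the layer cake once more: $\E[\Xmax^\ell] = \int_0^\infty \ell t^{\ell-1}\,\P[\Xmax\ge t]\,dt$ and split the integration at $t_0 = (\log n)/\lambda$. Below $t_0$ I use the trivial bound $1$, contributing $t_0^\ell = (\log n)^\ell/\lambda^\ell$. Above $t_0$ I use the exponential bound; substituting $u=\lambda t$ the tail integral becomes $(3n\ell/\lambda^\ell)\,\Gamma(\ell,\log n)$, and for integer $\ell$ the formula $\Gamma(\ell,x) = (\ell-1)!e^{-x}\sum_{j=0}^{\ell-1}x^j/j!$ combined with $\sum_{j\le \ell-1} x^j/j! \le e\max(1,x^{\ell-1})$ (treating $x\ge 1$ and $x<1$ separately) cancels the factor of $n$ and produces a contribution of the form $C\ell!\max(1,(\log n)^{\ell-1})/\lambda^\ell$. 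Summing the two pieces delivers the claim.

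The only real obstacle is purely computational: verifying that the constants indeed collapse to the stated numerical factors $4$ and $3/2$. This is a matter of tracking the factors of $e$ and $3$ from the MGF / Markov step and tuning the split point $t_0$ slightly if needed, but no conceptual difficulty arises beyond routine bookkeeping.
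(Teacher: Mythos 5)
Your overall plan is correct and follows essentially the same route as the paper. The first bound on $\E_\pi[\theta^\ell]$ is identical to the paper's (layer-cake against the subexponential tail). For $\E[X_{\max}^\ell]$, you also match the paper's skeleton: union bound on $X_{\max}$, a single-sample exponential tail for $X$, split the layer-cake integral at $t_0 \asymp (\log n)/\lambda$ with $\lambda = \log(1 + \tfrac{1}{2s})$, and bound the tail piece via the incomplete gamma function. The genuine difference is the source of the single-sample tail: the paper simply cites $\PP_{f_\pi}(X\ge K)\le\tfrac32 e^{-K\log(1+1/(2s))}$ from \cite[(44)]{JPW24}, whereas you re-derive a tail from scratch via the conditional Poisson MGF $\E[e^{\lambda X}]=\E_\pi[e^{(e^\lambda-1)\theta}]$ and a Chernoff step. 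This buys you self-containedness at the cost of a slightly weaker constant.

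One inaccuracy worth correcting: you claim the discrepancy between your constants and the stated $4$ and $\tfrac32$ is ``routine bookkeeping'' fixable by tuning $t_0$. That is not quite right. Your MGF bound $\E_\pi[e^{\alpha\theta}]\le 1+\tfrac{2\alpha s}{1-\alpha s}$ at $\alpha s=\tfrac12$ yields $\E[e^{\lambda X}]\le 3$, hence $\PP(X\ge t)\le 3e^{-\lambda t}$, and no choice of split point $t_0$ will turn that prefactor $3$ into the cited $\tfrac32$; the factor propagates directly into the $\ell!$ term of the final bound. To hit the paper's constants you must use the sharper tail from \cite{JPW24} rather than the Markov/MGF route. (For what it's worth, the same care applies to the $\ell$ prefactor from $\E[X_{\max}^\ell]=\ell\int x^{\ell-1}\PP(X_{\max}>x)\,dx$, which should in principle multiply the tail contribution; the paper's own display is a little loose on that point. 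Since the lemma is used only for order-of-magnitude bounds in $n$, these constant-level discrepancies are harmless for the downstream arguments.)
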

    A proof of this lemma relies mainly on rewriting the expectation using tail probabilities and bounding those values. The exact proof is given in \prettyref{app:robbins}.

    Recall the notation of the truncated prior $\pi_h$ defined before \prettyref{lmm:truncate}. With these lemmas, we are now ready to show that the regret over the truncated prior $\pi_{c_1s\log n}$ exceeds the regret over $\pi$ by at most $o_{s,k}(1/n)$.
	\begin{lemma}\label{lmm:truncate_regret} 
           Let $\pi\in \subexpo(s)$, 
           $M = 8k(4k - 1)!s^{4k}$ and $L = \bbE[\hat{T}_n(X)^4]$ for some given estimator $\hat{T}$. 
           Then for some absolute constant $c$, \[\Regret_{\pi,k,n}(\hat{T})\le\Regret_{\pi_{11s\log n},k, n}(\hat{T}) + c\sqrt{n^{-9}(L + M)}.\]
	\end{lemma}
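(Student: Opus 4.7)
The plan is to condition on the high-probability event $E = \{\theta_i \le h: i=1,\ldots,n\}$ with $h = 11 s \log n$. Under the subexponential tail $\Prob_\pi[\theta > h] \le 2 e^{-11 \log n} = 2 n^{-11}$, and a union bound gives $\Prob[E^c] \le 2 n^{-10}$. The key observation is that under $\pi$ conditioned on $E$, the random variables $(\theta_1,\ldots,\theta_n)$ are i.i.d.\ from the truncated prior $\pi_h$, so the joint conditional law of $(\theta_1^n, X_1^n)$ given $E$ coincides exactly with the joint law under prior $\pi_h$. Consequently,
\[
\E_\pi[(\hat T_n(X_1^n) - \theta_n^k)^2 \mathbf{1}_E] = \Prob[E] \cdot \E_{\pi_h}[(\hat T_n(X_1^n) - \theta_n^k)^2] \le \E_{\pi_h}[(\hat T_n(X_1^n) - \theta_n^k)^2].
\]

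Next, I would control the $E^c$-contribution via Cauchy--Schwarz together with $(a-b)^4 \le 8(a^4 + b^4)$:
\[
\E_\pi[(\hat T_n - \theta_n^k)^2 \mathbf{1}_{E^c}] \le \sqrt{\E_\pi[(\hat T_n - \theta_n^k)^4]}\sqrt{\Prob[E^c]} \le \sqrt{8(L + \E_\pi[\theta^{4k}])}\cdot \sqrt{2 n^{-10}},
\]
and the moment bound $\E_\pi[\theta^{4k}] \le 8k(4k-1)! s^{4k} = M$ furnished by \prettyref{lmm:moment} (with $\ell = 4k$) yields a contribution of order $\sqrt{(L+M)\, n^{-10}}$, which is already stronger than the target $c\sqrt{(L+M)\, n^{-9}}$.

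Finally, to compare the mmses I would invoke \prettyref{lmm:truncate}, which gives $\mmse_k(\pi) \ge \mmse_k(\pi_h) \cdot \Prob_\pi[\theta \le h]$, so
\[
\mmse_k(\pi_h) - \mmse_k(\pi) \le \mmse_k(\pi_h) \cdot \Prob_\pi[\theta > h] \le h^{2k}\cdot 2 n^{-11} = 2 (11 s \log n)^{2k} n^{-11},
\]
using the trivial estimate $\mmse_k(\pi_h) \le \E_{\pi_h}[\theta^{2k}] \le h^{2k}$. This contribution is $o(n^{-10})$ and is also absorbed into the target slack. Combining the three estimates gives
\[
\Regret_{\pi,k,n}(\hat T) \le \Regret_{\pi_h, k, n}(\hat T) + \sqrt{16(L+M) n^{-10}} + 2(11 s \log n)^{2k} n^{-11} \le \Regret_{\pi_h, k, n}(\hat T) + c \sqrt{(L+M)\, n^{-9}}
\]
for some absolute constant $c > 0$, as desired.

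\textbf{Main obstacle.} There are no serious obstacles; the only point requiring care is the conditioning identity on $E$: one must verify that conditioning the joint i.i.d.\ law under $\pi$ on the product event $E = \cap_i \{\theta_i \le h\}$ yields precisely the i.i.d.\ law under $\pi_h$ (which is immediate from independence of the $\theta_i$'s and the fact that $X_i$ depends only on $\theta_i$). Once this identity is in hand, the rest is a routine Cauchy--Schwarz plus tail-bound argument, with all error terms dominated with large polynomial margin.
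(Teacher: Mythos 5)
Your proof is correct and follows essentially the same route as the paper: you condition on the same high-probability event $E$, use Cauchy--Schwarz with the moment bound from Lemma~\ref{lmm:moment} to handle the $E^c$ contribution, and invoke Lemma~\ref{lmm:truncate} to compare the mmses. The one small deviation is that for the mmse gap you bound $\mmse_k(\pi_h) \le h^{2k} = (11s\log n)^{2k}$, whereas the paper bounds $\mmse_k(\pi) \le \sqrt{M}$; the latter is cleaner because it matches the $\sqrt{M}$ appearing in the target error term directly, giving $2n^{-9}\sqrt{M}\le 2\sqrt{n^{-9}(L+M)}$ with an obviously absolute constant, while your $(11s\log n)^{2k}n^{-11}$ requires a short (and not-quite-automatic) verification that the resulting constant is independent of $k$ — the factor $11^{2k}(\log n)^{2k}/n^{13/2}$ must be checked against $\sqrt{8k(4k-1)!}$. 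This does work out, but ``absorbed into the target slack'' glosses over the $k$-uniformity that the lemma's statement (``absolute constant $c$'') requires.
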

	\begin{proof}[Proof of \prettyref{lmm:truncate_regret}]
		Let $\pi \in \subexpo(s)$, then there exists a constant $c(s)\overset{\Delta}{=} 11s$ such that \[\epsilon =\PP[\theta > c(s)\log n] \le \frac{1}{n^{10}},\quad\theta\sim\pi.\]
		Let $E$ be the event $\{\theta_i \le c(s)\log n, \forall i=1, \dots, n\}$. By union bounding, $\PP[E^c]\le n^{-9}$. 
  By the definition of regret in \prettyref{def:regret}, we obtain the following series of equations: \begin{align}\label{eq:regret_sube_truncate}
			\Regret_{\pi,k,n}(\hat{T}) 
                &= \E_\pi[(\hat{T}_n(X_1^n)-\theta_n^k)^2] - \mmse_k(\pi) \nonumber\\
			&\le \E_\pi[(\hat{T}_n(X_1^n)-\theta_n^k)^2|E] - \mmse_k(\pi_{c_1s\log n}) + \mmse_k(\pi_{c_1s\log n}) \nonumber\\
            &\quad- \mmse_k(\pi) + \E_\pi[(\hat{T}_n(X_1^n)-\theta_n^k)^2\mathbf{1}_{E^c}]\nonumber\\
			&= \Regret_{\pi_{c_1s\log n}, k, n}(\hat{T}) + \mmse_k(\pi_{c_1s\log n}) - \mmse_k(\pi) + \E_\pi[(\hat{T}_n(X_1^n)-\theta_n^k)^2\mathbf{1}_{E^c}].
		\end{align}
		For the last term of \prettyref{eq:regret_sube_truncate}, 
            we have 
            \begin{flalign*}
                \E_\pi[(\hat{T}_n(X_1^n)-\theta_n^k)^2\mathbf{1}_{E^c}]
            \stepa{\le} & ~\sqrt{\PP[E_c]\E_\pi[(\hat{T}_n(X_1^n) - \theta_n^k)^4]} 
            \le \sqrt{n^{-9}\E_\pi[(\hat{T}_n(X_1^n) - \theta_n^k)^4]}
            \nonumber\\
                \stepb{\le} & ~\sqrt{16n^{-9}(\bbE_{\pi}[\hat{T}_n(X)^4] + \bbE_{\pi}[\theta^{4k}])}
                \stepc{\le} \sqrt{16n^{-9}(L + M)}
            \end{flalign*}
            
            where (a) is due to Cauchy-Schwarz, (b) is due to the fact $(a - b)^4\le 16(a^4 + b^4)$, 
            and (c) is due to \prettyref{lmm:moment} that 
            $\bbE_{\pi}[\theta^{4k}]\le M$. 
            For the middle two terms of \prettyref{eq:regret_sube_truncate}, \prettyref{lmm:truncate} tells us that 
            \[\mmse_k(\pi) \ge \mmse_k(\pi_{c_1s\log n})(1-n^{-9})\]
		so we have 
            \[\mmse_k(\pi_{c_1s\log n}) - \mmse_k(\pi) \le \frac{n^{-9}}{1-n^{-9}}\mmse_k(\pi) \stepa{\le} 2n^{-9}\sqrt{M}\] 
            where (a) is because $n^{-9}\le\frac12$ and 
            $\mmse_k(\pi)\le \bbE_{\pi}[\theta^{2k}]\le \sqrt{M}$. 
            Combining these inequalities together, we obtain the desired result.
	\end{proof}
    
    Now we are ready to apply \prettyref{lmm:rob_help} to prove the original result.
    \begin{proof}[Proof of \prettyref{thm:robbins}]
        We first deal with the case $\pi\in\calP ([0, h])$. 
        Using $x_0 = \max\{h, (c_1+c_2h)\frac{\log n}{\log \log n}\}$ with 
        $c_1, c_2$ given in \prettyref{lmm:rob_help}, 
        and splitting up the summation in \prettyref{lmm:rob_help} at $x_0+k$ from \prettyref{lmm:17_generalized} and upper bound the $\min$'s, 
        we obtain 
        \begin{align*}
            \TotRegret_{\pi,k,n}(\Trobk) &\lesssim 1 + k!(1 + h^k) + h^{2k} +\sum_{x=1}^{x_0+k}\left((x+1)_kh^k+h^{2k}\right)\\
            &\quad+\sum_{x>x_0+k}\left((x+1)_kh^kn^2f_\pi(x)^2+h^{2k}nf_\pi(x)\right)\nonumber\\
            &\stepa{\le} 1 + h^{2k} + k!(1 + h^k) + h^{2k}(x_0+k) + h^k(x_0+k)^k(x_0+1)_k + 2^{k+1}h^{2k} + h^{2k}\nonumber\\
            &\lesssim 
            1 + h^{2k} + k!(1 + h^k)
            + h^{2k}[(c_1 + c_2h)(\frac{\log n}{\log \log n} + k) + 2^{k + 1} + 1]
            \nonumber\\
            &
            + h^k(c_1+c_2h)^{k + 1}(\frac{\log n}{\log \log n} + k)^{k + 1}
            \nonumber\\
            &\stepb{\le}
            (c_1 + (c_2 + 1)h)^{2k + 1}
            \left(\frac{\log \log n}{\log n} + k\right)^{k + 1}
        \end{align*}
        where (a) is from \prettyref{eq:lemma17_1} and \cite[(122)]{polyanskiy2021sharp}, and (b) is from using $k!\le k^k$ for $k\ge 1$, and that $h^{2k}(2^{k + 1} + 1)\le (c_1 + (1 + c_2h))^{2k + 1}$. Dividing by $n$ yields the desired result.
        
        Now we consider the case $\pi$ is subexponential. 
        We consider \prettyref{lmm:truncate_regret} with $h = c's\log n$ 
        (with $c' = 11$). 
        Here, we split up the summation in \prettyref{lmm:rob_help} at $x_1+k$ from \prettyref{lmm:17_generalized} 
        (where $x_1\triangleq (s + 1)\log n$ is as defined in \prettyref{lmm:17_generalized}) and upper bound the $\min$'s to obtain 
        \begin{flalign*}
            \TotRegret_{\pi_h, k,n}(\Trobk)
            &~\lesssim 1 + k!(1 + h^k) + h^{2k} +\sum_{x=1}^{x_1+k}\left((x+1)_kh^k+h^{2k}\right)
            \nonumber\\ &\quad+\sum_{x > x_1 + k} \left((x+1)_kh^kn^2f_\pi(x)^2+h^{2k}nf_\pi(x)\right)\nonumber\\
            &~\stepa{\lesssim} 1 + h^{2k} + k!(1 + h^k) 
            + h^{2k}(x_1 + k) + h^k(x_1+k)(x_1+1)_k\nonumber\\
            &~ + h^k\frac{2(s+1)^{2(k + 1)}}{(2s + 1)^{k + 1}}(e(x_1 + k))^k + h^{2k}\frac{2(s+1)^{2}}{(2s + 1)n}\nonumber\\
            &~\stepb{\lesssim} 
            ((c's + 1)\log n + k)^{2k + 1}
            + e^k((c's + 1)\log n + k)^{2k}
            \cdot \frac{2(s + 1)^{2(k + 1)}}{(2s + 1)^{k + 1}}\nonumber\\
            &~=O_{s,k}\left(\left(\log n\right)^{2k+1}\right)
        \end{flalign*}
        where (a) follows from \prettyref{eq:lemma17_2} and 
        \cite[(124)]{polyanskiy2021sharp}, and 
        (b) is by plugging $h$ and $x_1$, $(x_1 + 1)_k\le (x_1+k)^k$, 
        $\frac{(s+1)^2}{(2s + 1)}\ge 1$ 
        (and therefore $\frac{(s+1)^2}{(2s + 1)}\le \frac{(s+1)^{2(k+1)}}{(2s + 1)^k}$), 
        and also 
        $h^kk!\lesssim ((c's + 1)\log n + k)^{2k + 1}$. 

        To bound $\Regret_{\pi, k, n}(\Trobk)$, 
        we remind ourselves that 
        $\Trobk(x) = \frac{(x + 1)_kN(x + k)}{N(x)}\le n(x + 1)_k$ 
        given that $N(x + k)\le n$ and $N(x)\ge 1$ 
        for $x\in \{X_1, \cdots, X_n\}$. 
        This means 
        \[
        \bbE_{\pi}[\Trobk(X)^4]
        \le n^4\bbE_{\pi}[(X + 1)_k^4]
        \stepa{\le} 2^{4k}n^4[k^{4k} + \bbE_{\pi}[X^{4k}]]
        \stepb{\le} 2^{4k}n^4[k^{4k} + 8k(4k - 1)!s^{4k}]
        \]
        where (a) is due to that $(x + 1)_k\le \max\{(2k)^k, 2^kx^k\}$ 
        depending on whether $x\le k$, 
        and (b) is due to \prettyref{lmm:moment}. 
        Now the $\Regret_{\pi, k, n}(\Trobk) - \Regret_{\pi_h, k, n}(\Trobk)$ is $\sqrt{n^{-9}(L + M)}$ as per \prettyref{lmm:truncate_regret}. 
        We have $M = 8k(4k - 1)!s^{4k}$, 
        and we may take $L = 2^{4k}n^4[k^{4k} + 8k(4k - 1)!s^{4k}]$. 
        It follows that the extra contribution to the regret is bounded by 
        $\sqrt{n^{-5}2^{4k}[k^{4k} + 8k(4k - 1)!s^{4k}]} \lesssim \frac{(2s + 1)^{2k}(4k)!}{n^2}$, 
        which is $o_{s, k}(1/n)$ 
        (note the use of the fact that $k^{4k}\le (4k)!$ due to Stirling's inequality). 
        
    \end{proof}

\subsection{Minimum distance method}
    In this section, we extend the NPMLE algorithm first introduced in \cite{KW56} to estimate $\theta^k$. This extension is very natural, as the estimation of the prior distribution remains the same. Using this estimated prior, we calculate its empirical Bayes estimator by applying \prettyref{eq:bayes_polyk}. 

     Before proving the main results, we first discuss prior estimation via NPMLE. This method is a specific instance of the more general class of minimum distance estimators. These estimators are defined by a measure of distance between two distributions. There are many possibilities of distance functions, so we will focus on a specific set of functions which we call \textit{Generalized distance functions}. 
     \begin{definition}[\textit{generalized distance functions}]
         A function $d:\calP(\Z_+)\times\calP(\Z_+)\to\R_+$ such that $d(p\parallel q)\ge0$ with equality true if and only if $p=q$.
     \end{definition}
     Note that this includes any metrics and divergence. Then a minimum distance estimator with respect to $d$ over a set of distributions $\calG$ is \[\hat\pi\in\argmin_{\pi\in\calG}d(p_n^{\sfem}\parallel f_\pi).\] 
     In \cite{JPW24}, they describe some specific examples of these distance functions that correspond with well-known estimators, including the NPMLE estimator we focus on:
     \begin{itemize}
         \item The NPMLE estimator corresponds to the KL-divergence $d(p\parallel q)=D(p\parallel q)=\E\left[\log\frac{p(x)}{q(x)}\right].$
         \item The Minimum-Hellinger estimator corresponds to the squared Hellinger distance $d(p\parallel q)=H^2(p, q)=\sum\left(\sqrt{p(x)}-\sqrt{q(x)}\right)^2.$
         \item The Minimum-$\chi^2$ estimator corresponds to the $\chi^2$-divergence $d(p\parallel q)=\chi^2(p\parallel q)=\sum\frac{(p(x)-q(x))^2}{q(x)}.$
     \end{itemize}

     These estimators have their generalized distance metric satisfying Assumptions 1 and 2 in \cite{JPW24}:
     \begin{assump}
         There exists a map $\varphi:\calP(\Z_+)\to\R$ and $\ell:\R^2\to\R$ such that for any two distributions $p,q\in\calP(\Z_+)$, \[d(p\parallel q)=\varphi(p)+\sum_{x\ge0}\ell(p(x),q(x))\] where $\ell(a,b)$ is strictly decreasing and convex in $b$ for $a>0$ and $\ell(0,b)=0$ for $b\ge0$.
     \end{assump}
     \begin{assump}
         There exist positive constants $c_1, c_2$ such that for $p,q\in\calP(\Z_+)$, \[c_1H^2(p,q)\le d(p\parallel q)\le c_2\chi^2(p\parallel q).\]
     \end{assump}
     
     The KL-divergence satisfies these assumptions, so the following theorem applies to the NPMLE estimator as well. The main result of this section is that a minimum distance estimator satisfies the following regret bounds:
 
    \begin{theorem}\label{thm:npmle}
        Suppose $d$ satisfies Assumptions 1 and 2. For a fixed $h$ and $s$, the following regret bounds hold:
        \begin{enumerate}
            \item 
            There exists an absolute constant $c_1 > 0$ such that if 
            $\hat\pi=\argmin_{\pi\in\calP([0,h])}d(p_n^{\text{emp}}\parallel f_{\pi})$ and $\hat{T} := \hat{T}_{\hat{\pi}} = (\hat{t}_{\hat{\pi}}(X_1), \cdots, \hat{t}_{\hat{\pi}}(X_n))$ is the Bayes estimator for the prior $\hat\pi$, then for any $n\ge 3$, 
            \[\sup_{\pi\in\calP([0,h])}\Regret_{\pi,k,n}(\hat{T})
            \le \frac{c_1}{n}(2(2+he))^{k + 1} h^{2k}\left(\frac{\log n}{\log \log n} + k\right)^{k + 1}
            = O_{h,k}\left(\frac1n\left(\frac{\log n}{\log\log n}\right)^{k+1}\right)\]
            
            \item If $\hat\pi=\argmin_{\pi}d(p_n^{\text{emp}}\parallel f_{\pi})$ and $\hat{T}$ is the Bayes estimator for the prior $\hat\pi$, then for any $n\ge 2$, 
            \[\sup_{\pi\in\subexpo(s)}\Regret_{\pi,k,n}(\hat{T})=O_{s,k}\left(\frac1n\left(\log n\right)^{2k+1}\right).\]
        \end{enumerate}
    \end{theorem}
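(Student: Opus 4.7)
The plan follows the regret-to-Hellinger reduction of \cite{JPW24} for mean estimation, extended to the $k$-th moment functional via the generalized Tweedie formula \prettyref{eq:bayes_polyk}. By \prettyref{lmm:totregretind} it suffices to bound the individual regret, which for the Bayes-type plugin equals
\[
\Regret_{\pi, k, n}(\hat T) = \E\bigl[(\hat t_{\hat\pi}(X) - \tpik(X))^2\bigr],
\]
with $X \sim f_\pi$ drawn independently of $\hat\pi$.

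From Tweedie, I would decompose the pointwise error as
\[
\hat t_{\hat\pi}(x) - \tpik(x) = \frac{(x+1)_k\bigl(f_{\hat\pi}(x+k) - f_\pi(x+k)\bigr)}{f_{\hat\pi}(x)} - \tpik(x)\,\frac{f_{\hat\pi}(x) - f_\pi(x)}{f_{\hat\pi}(x)},
\]
square, and sum against $f_\pi(x)$. The summand becomes a combination of $(f_{\hat\pi}(y) - f_\pi(y))^2$ at $y = x$ and $y = x + k$, with polynomial weights in $x$. I would split the sum at $x_0 = \max\{h, (c_1 + c_2 h)\log n/\log\log n\}$ from \prettyref{lmm:poi_tail_bound}. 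On the tail $x > x_0$, both $\tpik$ and $\hat t_{\hat\pi}$ are uniformly bounded by $h^k$ (since $\hat\pi \in \calP([0, h])$ by the theorem's constraint), so the tail contributes at most $4 h^{2k}/n$.

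On the bulk $x \le x_0$, I would linearize via the pointwise identity $(f_\pi(y) - f_{\hat\pi}(y))^2 \le 2(f_\pi(y) + f_{\hat\pi}(y))H^2(f_\pi, f_{\hat\pi})$, reducing to estimating $\E[H^2(f_{\hat\pi}, f_\pi)]$, which is controlled through Assumptions 1 and 2 together with the minimum-distance inequality $d(p_n^{\sfem} \parallel f_{\hat\pi}) \le d(p_n^{\sfem} \parallel f_\pi)$; this is precisely the Hellinger machinery already developed in \cite{JPW24} for the $k = 1$ case. The polynomial prefactor $(x+1)_k^2$ can be handled by absorbing one factor through the Tweedie identity $(x+1)_k f_\pi(x+k)/f_\pi(x) = \tpik(x) \le h^k$, while the remaining $(x+1)_k \lesssim x_0^k$ contributes an extra $x_0^{k-1}$ over the $k = 1$ case when summed against the $f_\pi + f_{\hat\pi}$ weights, yielding $\frac{1}{n}(\log n/\log\log n)^{k+1}$ for bounded priors. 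For the subexponential case, I would first apply \prettyref{lmm:truncate_regret} to reduce to $\pi_{11 s \log n}$ (after bounding $\E[\hat t_{\hat\pi}(X)^4] = \poly(n)$ using $\hat t_{\hat\pi}(X) \lesssim (X + k)^k$ and \prettyref{lmm:moment}), then repeat the bulk/tail split with $x_0 = (s + 1)\log n$ via \prettyref{lmm:17_generalized}.

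The main obstacle is controlling $f_{\hat\pi}(x)$ from below on the bulk, since a priori $\hat\pi$ could place vanishingly small mass on some $x$ where $f_\pi(x)$ is of constant order, making the denominators in the two-term decomposition blow up. The standard workaround from \cite{JPW24} is to restrict to the high-probability event on which the empirical mass $p_n^{\sfem}(x)$ is comparable to $f_\pi(x)$ uniformly for $x \le x_0$ (established by a union bound and multiplicative Chernoff on $N(x) \sim \Binom(n, f_\pi(x))$), and then combine the $\chi^2$ upper bound in Assumption 2 with the minimum-distance property to conclude $f_{\hat\pi}(x) \gtrsim f_\pi(x)$ on the bulk. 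The second delicate point is producing exactly the $(k+1)$-th rather than $2k$-th power of $\log n/\log\log n$, which requires trading one factor of $(x+1)_k$ for the uniform bound $h^k$ via Tweedie rather than bounding $(x+1)_k^2$ by its worst-case value --- a generalization of the $k = 1$ bookkeeping in \cite{JPW24} that needs care.
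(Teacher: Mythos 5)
Your decomposition
\[
\hat t_{\hat\pi}(x) - \tpik(x) = \frac{(x+1)_k\bigl(f_{\hat\pi}(x+k) - f_\pi(x+k)\bigr)}{f_{\hat\pi}(x)} - \tpik(x)\,\frac{f_{\hat\pi}(x) - f_\pi(x)}{f_{\hat\pi}(x)}
\]
is algebraically fine, but it leaves a bare $f_{\hat\pi}(x)$ in the denominator unpaired with any $f_{\hat\pi}$-quantity in the numerator of the first term. You correctly identify that this forces a pointwise lower bound $f_{\hat\pi}(x)\gtrsim f_\pi(x)$ on the bulk, but the workaround you sketch does not go through: Assumption~2 gives $c_1H^2(p,q)\le d(p\parallel q)\le c_2\chi^2(p\parallel q)$, so the minimum-distance optimality $d(p_n^{\sfem}\parallel f_{\hat\pi})\le d(p_n^{\sfem}\parallel f_\pi)$ yields only a Hellinger bound between $p_n^{\sfem}$ and $f_{\hat\pi}$ (via the lower inequality applied to the left side, and the upper inequality applied to the right side), \emph{not} a $\chi^2$ bound; the two inequalities point in opposite directions and cannot be chained into $\chi^2(p_n^{\sfem}\parallel f_{\hat\pi})$ small. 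A Hellinger bound $H^2(p_n^{\sfem}, f_{\hat\pi})\le\epsilon$ gives $f_{\hat\pi}(x)\ge(\sqrt{p_n^{\sfem}(x)}-\sqrt\epsilon)^2$ only at points where $p_n^{\sfem}(x)>\epsilon$, which is not guaranteed uniformly over the bulk $x\le x_0$ (in the Poisson model $f_\pi(x)$ can be arbitrarily small even for small $x$), so the argument leaves a gap.

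The paper avoids this entirely by using in \prettyref{lmm:npmle} the \emph{symmetric} three-term identity
\[
\frac{a_1}{b_1}-\frac{a_2}{b_2}
=\frac{a_1(b_2-b_1)}{b_1(b_1+b_2)}+\frac{2(a_1-a_2)}{b_1+b_2}+\frac{a_2(b_2-b_1)}{b_2(b_1+b_2)},
\]
with $a_i,b_i$ being $f_{\hat\pi}$ or $f_{\pi_h}$ at $x+k$ or $x$. Every occurrence of $1/b_i$ that survives is matched with an $a_i$ in the numerator, so the ratio $(x+1)_k a_i/b_i$ is exactly a posterior $k$-th moment, uniformly bounded by $h^k$ or $\hat h^k$. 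The remaining factor $1/(b_1+b_2)$ is then consumed by $(b_1-b_2)^2\le 2(b_1+b_2)(\sqrt{b_1}-\sqrt{b_2})^2$, landing directly on the squared Hellinger distance with no lower bound on $f_{\hat\pi}$ needed anywhere. So the decomposition is the crux, not a bookkeeping choice: the asymmetric split you use needs an extra probabilistic control step that Assumptions~1--2 do not deliver, while the symmetric split makes the reduction to $\E[H^2(f_{\hat\pi},f_\pi)]$ deterministic. Your observation about trading one factor of $(x+1)_k$ for $h^k$ to get the exponent $k+1$ rather than $2k$ is on the right track and matches what the paper does, and the handling of tail/truncation is broadly in the right spirit, but the core reduction would have to be redone along the lines of the symmetric identity to close the gap.
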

 
	To prove \prettyref{thm:npmle}, we will use the following more general lemma bounding the regret in terms of the Hellinger distance between $f_{\pi}$ and $f_{\hat{\pi}}$, before using the results on bounds of Hellinger distance achieved by $\hat{\pi}$ \cite[Theorem 2]{JPW24}.
	\begin{lemma}\label{lmm:npmle}
		Let $\pi$ be a distribution such that $\E_\pi[\theta^{4k}] \le M$ for some constant $M$. Then for any distribution $\hat\pi$ supported on $[0,\hat h]$, any $h>0$ with $\PP_\pi(\theta\le h) > \frac12$ and any $K\ge1$,
		\begin{align*}
			\Regret_{\pi,k,n}(\hat{t}_{\hat{\pi}, k})\le&\left\{12(h^{2k}+\hat h^{2k})+48(h^k+\hat h^k)K^k\right\}(H^2(f_\pi,f_{\hat \pi})+4\PP_\pi(\theta>h))\\
			&+2(h^k+\hat h^k)^2\PP_{f_\pi}(X>K-k)+2(1+2\sqrt2)\sqrt{(M+\hat h^{4k})\PP_\pi(\theta>h)}
		\end{align*}
        where $\hat{t}_{\hat{\pi}, k}$ is the Bayes estimator for the prior $\hat\pi$.
	\end{lemma}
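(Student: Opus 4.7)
The plan is to reduce bounding the regret to bounding the squared $L^2(f_\pi)$-distance between the plug-in Bayes estimator $\hat t_{\hat\pi,k}$ and the oracle $\hat t_{\pi,k}$, and then dominate that distance by $H^2(f_{\hat\pi},f_\pi)$ via a two-stage truncation: first replacing $\pi$ by its restriction $\pi_h$ to $[0,h]$ (as in \prettyref{eq:pi_h}), which renders both Bayes estimators uniformly bounded, and then truncating the $x$-sum at $K-k$ so the Pochhammer factor $(x+1)_k\le K^k$ stays tame on the head.

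By MMSE orthogonality, conditioning on $X_1^n$ (which determines $\hat\pi$) and using $\mathbb{E}[\theta_n^k\mid X_n]=\hat t_{\pi,k}(X_n)$, the regret decomposes as $\mathbb{E}\|\hat t_{\hat\pi,k}-\hat t_{\pi,k}\|_{L^2(f_\pi)}^2$. Split this through $\hat t_{\pi_h,k}$ using $(a+b)^2\le 2a^2+2b^2$:
\[
\Regret_{\pi,k,n}(\hat t_{\hat\pi,k})\le 2\,\mathbb{E}\|\hat t_{\hat\pi,k}-\hat t_{\pi_h,k}\|_{L^2(f_\pi)}^2+2\|\hat t_{\pi_h,k}-\hat t_{\pi,k}\|_{L^2(f_\pi)}^2.
\]
The second summand captures the cost of discarding the tail $\{\theta>h\}$: writing the pointwise difference as a conditional expectation of $(\theta^k-\hat t_{\pi_h,k}(x))$ restricted to that event, then Cauchy--Schwarz together with $\mathbb{E}_\pi[\theta^{4k}]\le M$ and the uniform bound $\hat t_{\hat\pi,k}\le \hat h^k$, produces the $2(1+2\sqrt 2)\sqrt{(M+\hat h^{4k})\mathbb{P}_\pi(\theta>h)}$ additive term.

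In the first summand both Bayes estimators are bounded by $\max(h^k,\hat h^k)$. Split the $x$-sum at $K-k$: the tail $x>K-k$ contributes $\le 2(h^k+\hat h^k)^2\mathbb{P}_{f_\pi}(X>K-k)$. On the head $x\le K-k$, rewrite the difference of ratios as
\[
\frac{f_{\pi_h}(x+k)}{f_{\pi_h}(x)}-\frac{f_{\hat\pi}(x+k)}{f_{\hat\pi}(x)}=\frac{f_{\pi_h}(x+k)-f_{\hat\pi}(x+k)}{f_{\pi_h}(x)}-\hat t_{\hat\pi,k}(x)\cdot\frac{f_{\pi_h}(x)-f_{\hat\pi}(x)}{(x+1)_k\,f_{\pi_h}(x)},
\]
convert each numerator difference into a Hellinger summand via $(p-q)^2\le 2(\sqrt p-\sqrt q)^2(p+q)$, absorb the denominator using $f_\pi\ge\mathbb{P}_\pi(\theta\le h)f_{\pi_h}\ge\tfrac12 f_{\pi_h}$ together with the self-bounding identities $(x+1)_k f_{\pi_h}(x+k)\le h^k f_{\pi_h}(x)$ and its analogue for $\hat\pi$, and use $(x+1)_k\le K^k$ on the head. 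The two squared terms of the decomposition generate the prefactors $12(h^{2k}+\hat h^{2k})$ (from the leading ratio square) and $48(h^k+\hat h^k)K^k$ (from the cross term) in front of $H^2(f_{\hat\pi},f_{\pi_h})$. Finally, the triangle inequality for Hellinger together with the direct bound $H^2(f_\pi,f_{\pi_h})\le 2\mathbb{P}_\pi(\theta>h)$ (immediate from the mixture representation $f_\pi=\mathbb{P}_\pi(\theta\le h)f_{\pi_h}+\mathbb{P}_\pi(\theta>h)f_{\pi|_{\theta>h}}$) upgrades $H^2(f_{\hat\pi},f_{\pi_h})$ to $H^2(f_{\hat\pi},f_\pi)+4\mathbb{P}_\pi(\theta>h)$, yielding the combined factor in the statement.

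The main obstacle is the Hellinger-based algebra on the head. The $L^2$ weight is $f_\pi(x)$, but the target Hellinger summand involves $(\sqrt{f_{\hat\pi}(x)}-\sqrt{f_{\pi_h}(x)})^2$ and the product $f_{\hat\pi}(x)f_{\pi_h}(x)$ sits in the denominators of the ratio differences. After the rewriting above, one denominator must cancel against $\sqrt{f_\pi(x)}$ from the weight (via $f_\pi\ge\tfrac12 f_{\pi_h}$), and the boundedness $\hat t_{\pi_h,k},\hat t_{\hat\pi,k}\le\max(h^k,\hat h^k)$ must absorb the other; performing this cancellation symmetrically in $\hat\pi$ and $\pi_h$ without losing extra powers of $h,\hat h$ or $K$ is what pins down the explicit constants $12$ and $48$ in the lemma.
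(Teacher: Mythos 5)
Your high-level plan (MMSE orthogonality first, then a squared-triangle-inequality split through the auxiliary oracle $\hat t_{\pi_h,k}$, then a $K$-truncation and Hellinger conversion) is a genuinely different route from the paper, which instead splits the MSE on the event $\{\theta\le h\}$ \emph{first}, uses Cauchy--Schwarz to pay off the tail, and only then invokes MMSE orthogonality under $\pi_h$. The paper's order is not cosmetic: it keeps the base measure of the $L^2$ norm equal to $f_{\pi_h}$, which is exactly what the subsequent ratio algebra needs.

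This is where your plan breaks. After your $(a+b)^2\le 2a^2+2b^2$ step, the head term is $\sum_{x\le K-k} f_\pi(x)\bigl(\hat t_{\hat\pi,k}(x)-\hat t_{\pi_h,k}(x)\bigr)^2$, with weight $f_\pi$ but denominators $f_{\pi_h}(x)$ coming from your two-term identity. You propose to ``absorb the denominator using $f_\pi\ge\tfrac12 f_{\pi_h}$,'' but that inequality goes the wrong way: you need an \emph{upper} bound on $f_\pi/f_{\pi_h}$, and none exists in general (there can be an $x$ for which $f_{\pi|\theta>h}(x)\gg f_{\pi_h}(x)$). More fundamentally, your two-term decomposition
\[
\frac{f_{\pi_h}(x+k)}{f_{\pi_h}(x)}-\frac{f_{\hat\pi}(x+k)}{f_{\hat\pi}(x)}
=\frac{f_{\pi_h}(x+k)-f_{\hat\pi}(x+k)}{f_{\pi_h}(x)}-\frac{\hat t_{\hat\pi,k}(x)}{(x+1)_k}\cdot\frac{f_{\pi_h}(x)-f_{\hat\pi}(x)}{f_{\pi_h}(x)}
\]
is asymmetric in $\hat\pi$ and $\pi_h$: when you apply $(p-q)^2\le 2(p+q)(\sqrt p-\sqrt q)^2$ and try to cancel the remaining $f_{\pi_h}(x)$ in the denominator, you are left with the uncontrolled ratio $f_{\hat\pi}(x)/f_{\pi_h}(x)$ (or, after swapping roles, $f_{\pi_h}(x)/f_{\hat\pi}(x)$). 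The paper avoids this precisely by using the symmetric three-term identity
\[
\frac{a_1}{b_1}-\frac{a_2}{b_2}=\frac{a_1(b_2-b_1)}{b_1(b_1+b_2)}+\frac{2(a_1-a_2)}{b_1+b_2}+\frac{a_2(b_2-b_1)}{b_2(b_1+b_2)},
\]
whose $b_1+b_2$ denominators let every piece be controlled via $\frac{(b_1-b_2)^2}{b_1+b_2}\le 2\bigl(\sqrt{b_1}-\sqrt{b_2}\bigr)^2$ together with the two self-bounding identities. Without that middle-point trick, your head bound has a genuine hole.

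Two further problems with the accounting. First, the constants: the paper derives prefactors $6(h^{2k}+\hat h^{2k})+24(h^k+\hat h^k)K^k$ on $H^2(f_{\hat\pi},f_{\pi_h})$ and then pays a single factor of $2$ from the Hellinger triangle inequality $H^2(f_{\hat\pi},f_{\pi_h})\le 2H^2(f_{\hat\pi},f_\pi)+2H^2(f_\pi,f_{\pi_h})$ to arrive at $12$ and $48$. Your route pays the same triangle-inequality factor of $2$ but \emph{also} the factor of $2$ from $(a+b)^2\le 2a^2+2b^2$; if your head algebra really produced $12,48$ on $H^2(f_{\hat\pi},f_{\pi_h})$ as claimed, the final constants would be $24,96$, not $12,48$. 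Second, the tail-cost term: your second summand $\|\hat t_{\pi_h,k}-\hat t_{\pi,k}\|_{L^2(f_\pi)}^2$ contains only $\hat t_{\pi_h,k}$ (bounded by $h^k$), so the fourth-moment Cauchy--Schwarz naturally produces $\sqrt{(M+h^{4k})\PP_\pi(\theta>h)}$, whereas the statement has $\hat h^{4k}$, which in the paper arises because the estimator that actually appears in the tail piece is $\hat t_{\hat\pi,k}\le\hat h^k$. Your explanation invokes ``the uniform bound $\hat t_{\hat\pi,k}\le\hat h^k$'' for that summand, but $\hat t_{\hat\pi,k}$ does not appear there, so the justification doesn't match the decomposition being bounded.
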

	A proof of this lemma is provided in \prettyref{app:mindist}. Now equipped with \prettyref{lmm:npmle}, we can find a suitable application to prove the original result.
	
    \begin{proof}[Proof of \prettyref{thm:npmle}]
        For any $\pi\in\calP([0,h])$, we have 
        $\hat\pi=\argmin_{\pi\in\calP([0,h])}d(p_n^{\sfem}\parallel f_{\pi})$. We apply \prettyref{lmm:npmle} with \[\hat h=h,\quad M=h^{4k},\quad K=\left\lceil\frac{2(2+he)\log n}{\log\log n}+k-1\right\rceil.\] 
        As shown in the proof of \cite[Theorem 2(a)]{JPW24}, we have 
        \[\PP_{f_\pi}\left(X\ge K-k+1\right)\le\frac2{n^2},\quad\PP_\pi(\theta>h)=0,\quad\E[H^2(f_\pi, f_{\hat{\pi}})] = \frac{C_1}n\left(\frac{\log n}{\log\log n}\right)\] for some constant $C_1\triangleq C_1(h)$. 
        We note that from \cite[(19)]{JPW24} and also their choice of $K$ after (22) that 
        $C_1(h)\asymp 2 + he$. 
        Thus, we have \begin{align*}
			&~\Regret_{\pi,k,n}(\hat{T})\\
         \le&~\left\{24h^{2k}+96h^k\left\lceil\frac{2(2+he)\log n}{\log\log n}+k-1\right\rceil^k\right\}\E[H^2(f_\pi,f_{\hat \pi})]+2(2h^k)^2\PP_{f_\pi}(X>K-k)\nonumber\\
            \lesssim &~ \left\{2^k(2+he)^kh^{2k}\left(\frac{\log n}{\log \log n} + k\right)^{k}\right\}\left(\frac{(2 + he)\log n}{n\log \log n}\right) + \frac{h^{2k}}{n^2}\nonumber\\
            =& ~O_{h,k}\left(\frac1n\left(\frac{\log n}{\log\log n}\right)^{k+1}\right).
		\end{align*}
        
        For any $\pi\in\subexpo(s)$, we have $\hat\pi=\argmin_{\pi}d(p_n^{\sfem}\parallel f_{\pi})$. By \cite[Lemma 8]{JPW24}, $\hat\pi$ is supported on $[0,\hat h]$ where $\hat h=X_{\max}$. 
        By the bound on $\E[\theta^{4k}]$ in \prettyref{lmm:moment}, we can apply \prettyref{lmm:npmle} with \[h=4s\log n,\quad M=8k(4k-1)!s^{4k},\quad K=\frac{2\log n}{\log\left(1+\frac1{2s}\right)}+k-1.\] 
        By \cite[(44)]{JPW24} and the definition of subexponential prior, we have 
        \[\PP_{f_\pi}\left(X\ge\frac{2\log n}{\log\left(1+\frac1{2s}\right)}\right)\le\frac3{2n^2},\quad\PP_\pi(\theta>h)\le\frac2{n^4}.\] 
        Thus, we have \begin{align}\label{eq:SubE_npmle_regret}
			\Regret_{\pi,k,n}(\hat{T})&\le\E\left[\left\{12(h^{2k}+X_{\max}^{2k})+48(h^k+X_{\max}^k)K^k\right\}H^2(f_\pi,f_{\hat \pi})+\frac2{n^4}\right]\nonumber\\
            &\quad\quad+2(h^k+X_{\max}^k)^2\frac{3}{2n^2}+(1+2\sqrt2)\sqrt{(M+\hat h^{4k})\frac2{n^4}}\nonumber\\
            &= \E\left[\left\{12(h^{2k}+X_{\max}^{2k})+48(h^k+X_{\max}^k)K^k\right\}H^2(f_\pi,f_{\hat \pi})\right] + O_{s,k}\left(\frac1{n^2}\right).
		\end{align}
        It remains to bound the first term in \prettyref{eq:SubE_npmle_regret}. Splitting into cases by comparing $X_{\max}$ to $2K$ and using $H^2 \le 2$, we have \begin{align}\label{eq:SubE_npmle_regret_t1}
            &\E\left[\left\{h^{2k}+X_{\max}^{2k}+4(h^k+X_{\max}^k)K^k\right\}H^2(f_\pi,f_{\hat \pi})\right] \nonumber\\\le& (h^{2k}+(2K)^{2k} + 4(h^k+(2K)^k)K^k)\E\left[H^2(f_\pi,f_{\hat \pi})\right]\nonumber\\
            &+2\E\left[\left\{h^{2k}+X_{\max}^{2k}+4(h^k+X_{\max}^k)K^k\right\}\mathbb{I}_{X_{\max}\ge 2K}\right].
        \end{align}
        By \cite[Theorem 2(b) and (23)]{JPW24}, we know $\E\left[H^2(f_\pi,f_{\hat \pi})\right]
        \le C(\frac{\log n}{n\log (1 + \frac{1}{2s})})$ for some constant $C$ depending on $c_1, c_2$ as stated in the assumptions. 
        Therefore, from our choice of $h$ and $K$, 
        all $h^{2k}, (2K)^{2k}$ and $h^kK^k$ are all bounded by 
        $((c_3 + c_4h)\log n + k)^{2k}$ for some constant $c_3, c_4$ depending on 
        $c_1$ and $c_2$, so the first term of the \prettyref{eq:SubE_npmle_regret_t1} is then bounded by $((c_3 + c_4s)\log n + k)^{2k + 1}$. 
        By Cauchy-Schwarz, the second term of the \prettyref{eq:SubE_npmle_regret_t1} is \begin{align*}
            &2\E\left[\left\{h^{2k}+X_{\max}^{2k}+4(h^k+X_{\max}^k)K^k\right\}\mathbb{I}_{X_{\max}\ge2K}\right] \nonumber\\
            \le&2\sqrt{\E\left[\left\{h^{2k}+X_{\max}^{2k}+4(h^k+X_{\max}^k)K^k\right\}^2\right]\PP_{f_\pi}\left(X_{\max}\ge2K\right)}\nonumber\\
            \le&2\sqrt{\E\left[\left\{h^{2k}+X_{\max}^{2k}+4(h^k+X_{\max}^k)K^k\right\}^2\right]n\PP_{f_\pi}\left(X\ge2K\right)}\nonumber\\
            \stepa{\le}&2\sqrt{\E\left[\left\{4h^{4k}+4X_{\max}^{4k}+64(h^{2k}+X_{\max}^{2k})K^{2k}\right\}\right]n\frac{3}{2n^4}}\nonumber\\
            \stepb{=}& O_{c_1, c_2}\left(n^{-3/2}[((c_3+c_4s)\log n + k)^{2k} + (2s + 1)^{2k}(2k)^{2k}(1 + (\log n)^{2k - \frac 12}]\right)
        \end{align*}
        where (a) follows from \cite[(44)]{JPW24} and (b) follows from the moment bounds on $X_{\max}$ in \prettyref{lmm:moment}. Plugging this back into \prettyref{eq:SubE_npmle_regret_t1} and then \prettyref{eq:SubE_npmle_regret}, we obtain \[\Regret_{\pi,k,n}(\hat{T})=O_{s,k}\left(\frac1n\left(\log n\right)^{2k+1}\right).\]
    \end{proof}

\subsection{ERM method}
    The first step is to show that the Bayes estimator, $\tpik$, is monotone, 
    which allows us to take the ERM w.r.t. the monotone function class $\mathcal{F}_{\uparrow}$. 
	\begin{lemma}
		$\tpik$ is a monotone function. 
	\end{lemma}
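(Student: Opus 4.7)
The plan is to reduce the claim to the non-negativity of a symmetric double integral. Starting from Tweedie's formula~\eqref{eq:bayes_polyk} and using $(x+1)_k = (x+k)!/x!$, I rewrite
\[
\tpik(x) = \frac{(x+1)_k\, f_\pi(x+k)}{f_\pi(x)} = \frac{\mu_{x+k}}{\mu_x}, \qquad \mu_m \triangleq \int_0^\infty \theta^m\, e^{-\theta}\, d\pi(\theta),
\]
so that proving $\tpik(x+1) \ge \tpik(x)$ for all $x \in \mathbb{Z}_+$ is equivalent to the moment inequality
\[
\mu_{x+k+1}\, \mu_x \;\ge\; \mu_{x+k}\, \mu_{x+1}.
\]

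To establish this, I expand both products as double integrals over an independent copy $(\theta, \eta)$ drawn from $e^{-\theta} d\pi$ (tensorized), and symmetrize. A direct computation yields
\[
\mu_{x+k+1}\mu_x - \mu_{x+k}\mu_{x+1} \;=\; \tfrac{1}{2}\iint \theta^x \eta^x \bigl(\theta^k - \eta^k\bigr)\bigl(\theta - \eta\bigr)\, e^{-\theta-\eta}\, d\pi(\theta)\, d\pi(\eta).
\]
The integrand is pointwise non-negative: on $\theta,\eta \ge 0$ the function $u \mapsto u^k$ is non-decreasing, so $(\theta^k - \eta^k)$ and $(\theta - \eta)$ always share the same sign. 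This concludes the proof.

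There is no substantive obstacle: the entire argument is a two-line symmetrization once the moment-ratio representation is in place. As a conceptual alternative, one can note that the posterior $d\pi(\theta \mid X = x) \propto \theta^x e^{-\theta}\, d\pi(\theta)$ is monotone in $x$ under the monotone likelihood ratio order (since $d\pi(\theta \mid X=x+1)/d\pi(\theta \mid X=x) \propto \theta$ is non-decreasing in $\theta$), hence stochastically increasing; the claim then follows because $\theta \mapsto \theta^k$ is non-decreasing on $[0,\infty)$. Either route gives monotonicity with minimal computation.
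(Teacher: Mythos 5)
Your proof is correct, and it takes a genuinely different route from the paper's. The paper establishes, via Cauchy--Schwarz, the log-convexity $\mu_{x+1}^2 \le \mu_x\mu_{x+2}$ (where $\mu_m \triangleq \int \theta^m e^{-\theta}\,d\pi$), then iterates $k$ times along the chain $\frac{\mu_{x+k+1}}{\mu_{x+k}} \ge \frac{\mu_{x+k}}{\mu_{x+k-1}} \ge \cdots \ge \frac{\mu_{x+1}}{\mu_x}$, and finally cross-multiplies to obtain $\frac{\mu_{x+k+1}}{\mu_{x+1}} \ge \frac{\mu_{x+k}}{\mu_x}$, which is exactly $\tpik(x+1) \ge \tpik(x)$. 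You instead prove the target inequality $\mu_{x+k+1}\mu_x \ge \mu_{x+k}\mu_{x+1}$ in a single step by symmetrizing a double integral — a Chebyshev-order (correlation) inequality — so that the nonnegativity of the integrand $\theta^x\eta^x(\theta^k-\eta^k)(\theta-\eta)$ settles it immediately. (I verified the algebra: the pre-symmetrized integrand is $\theta^x\eta^x\theta^k(\theta-\eta)$, and averaging with the $\theta\leftrightarrow\eta$ swap gives the stated factorization.) Your method skips the iteration and rearrangement and yields the exact inequality directly; the paper's method is also short but passes through the intermediate $k=1$ inequality $k$ times. Your closing remark — that the posterior family $d\pi(\theta\mid X=x) \propto \theta^x e^{-\theta}\,d\pi(\theta)$ has the monotone likelihood ratio property in $x$, hence is stochastically increasing, so $\E[\theta^k\mid X=x]$ is non-decreasing for any non-decreasing $\ell$ — is the most conceptual of the three arguments and applies verbatim to arbitrary non-decreasing $\ell$, not just $\theta^k$; the paper does not mention it.
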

	\begin{proof}
        By Cauchy-Schwartz inequality, we have \[\frac{\int e^{-\theta}\theta^{x+2}d\pi(\theta)}{\int e^{-\theta}\theta^{x+1}d\pi(\theta) } \ge \frac{\int e^{-\theta}\theta^{x+1}d\pi(\theta)}{\int e^{-\theta}\theta^{x}d\pi(\theta) }, \qquad \forall x = 0, 1, 2, \cdots.\]
        Iterating this inequality $k$ times, we get 
        \[
        \frac{\int e^{-\theta}\theta^{x + k + 1}d\pi(\theta)}{\int e^{-\theta}\theta^{x+k}d\pi(\theta) } \ge \frac{\int e^{-\theta}\theta^{x+1}d\pi(\theta)}{\int e^{-\theta}\theta^{x}d\pi(\theta) }
        \qquad 
        \Rightarrow 
        \qquad 
        \frac{\int e^{-\theta}\theta^{x + k + 1}d\pi(\theta)}{\int e^{-\theta}\theta^{x + 1}d\pi(\theta) } \ge \frac{\int e^{-\theta}\theta^{x + k}d\pi(\theta)}{\int e^{-\theta}\theta^{x}d\pi(\theta) }
        \]
        Thus, we find that
        \[
        \tpik(x + 1) = \frac{(x + 2)_k f_{\pi}(x + k + 1)}{f_{\pi}(x + 1)} = 
        \frac{\int e^{-\theta}\theta^{x+k+1}d\pi(\theta)}{\int e^{-\theta}\theta^{x + 1}d\pi(\theta)}
        \ge \frac{\int e^{-\theta}\theta^{x+k}d\pi(\theta)}{\int e^{-\theta}\theta^{x}d\pi(\theta)}
        =\tpik(x)\,,
        \]
        as desired. 
	\end{proof}
    Now we recall our ERM-based estimator as defined in \prettyref{eq:erm_obj_mon}. 
    Note that there is no unique solution to this minimization problem since there are $\hat{T}$ is only uniquely determined for values that appear in our empirical expectation;  we choose to take $\hat{t}$ which is a step function that can only change at values where it is determined, and also where $\hat{t}(x) = X_{\max}$ for all $x > X_{\max}\triangleq \max\{X_1, \cdots, X_n\}$. 
    An explicit solution to \prettyref{eq:erm_obj_mon} may be calculated via \cite[Lemma 1]{JPTW23}. We now show that our empirical estimator is always bounded by $X_{\max}^k$, which will help us bound the complexity of our function class.
    
    \begin{lemma}\label{lmm:f_erm_max}
        Let $\Termk$ be the estimator defined in \prettyref{eq:erm_obj_mon}. Then $\max \Termk(x) \le X_{\max}^k.$
    \end{lemma}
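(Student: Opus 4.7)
The plan is proof by contradiction against the optimality of $\hat{t} := \Termk \in \mathcal{F}_{\uparrow}$ for $\hat{R}_k$ (\prettyref{eq:erm_obj_emp}). Suppose $\hat{t}(X_{\max}) > X_{\max}^k$, let $x^\star = \min\{x \in \mathbb{Z}_+ : \hat{t}(x) > X_{\max}^k\}$ (so $x^\star \le X_{\max}$), and form the truncation $\tilde{t}(x) = \min(\hat{t}(x), X_{\max}^k)$. Pointwise $\min$ of monotone functions is monotone, so $\tilde{t} \in \mathcal{F}_{\uparrow}$, and $\tilde{t}$ agrees with $\hat{t}$ for $x < x^\star$. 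The goal is to show $\hat{R}_k(\tilde{t}) < \hat{R}_k(\hat{t})$, contradicting optimality.

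First, rewrite the empirical risk by reindexing the cross term of \prettyref{eq:erm_obj_emp} via $y = X_i - k$:
$$n \hat{R}_k(\hat{t}) = \sum_{x} \bigl[ N(x) \hat{t}(x)^2 - 2 N(x+k)(x+1)_k \hat{t}(x) \bigr], \qquad N(x) = |\{i : X_i = x\}|,$$
so only $x \in \{0, 1, \ldots, X_{\max}\}$ contribute (since $N(y) = 0$ for $y > X_{\max}$). With $\epsilon(x) := \hat{t}(x) - X_{\max}^k$ (positive on $\{x \ge x^\star\}$), the change $n(\hat{R}_k(\tilde{t}) - \hat{R}_k(\hat{t}))$ is supported on $x \ge x^\star$. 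Applying the factorization $\tilde{t}(x)^2 - \hat{t}(x)^2 = -\epsilon(x)(X_{\max}^k + \hat{t}(x))$ and reindexing the cross piece via $y = x+k$ (range $x^\star + k \le y \le X_{\max}$), then using the monotonicity inequality $\epsilon(y-k) \le \epsilon(y)$ — which is valid precisely because $y \ge x^\star + k$ forces $\epsilon(y-k) > 0$, so that multiplication by the nonnegative weight $N(y)(y-k+1)_k$ preserves direction — yields
$$n\bigl(\hat{R}_k(\tilde{t}) - \hat{R}_k(\hat{t})\bigr) \le \sum_{x = x^\star}^{X_{\max}} N(x)\, \epsilon(x) \Bigl[\, 2(x-k+1)_k\, \mathbf{1}_{x \ge x^\star + k} - \bigl(X_{\max}^k + \hat{t}(x)\bigr) \Bigr].$$

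The final step is the elementary Pochhammer bound $(x-k+1)_k \le X_{\max}^k$ on $0 \le x \le X_{\max}$: when $0 \le x \le k-1$ the product contains a zero factor and vanishes, while for $x \ge k$ it expands as $x(x-1)\cdots(x-k+1) \le x^k \le X_{\max}^k$. Substituting this makes the bracket strictly negative in both regimes — at most $-\epsilon(x)$ when $x \ge x^\star + k$, and equal to $-(X_{\max}^k + \hat{t}(x))$ when $x < x^\star + k$. Since $N(X_{\max}) \ge 1$ and $\epsilon(X_{\max}) > 0$ by assumption, the $x = X_{\max}$ summand is strictly negative, giving the required strict decrease and the contradiction. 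Monotonicity of $\hat{t}$ then propagates $\hat{t}(X_i) \le \hat{t}(X_{\max}) \le X_{\max}^k$ to all data points.

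The main obstacle is the bookkeeping in the reindexing step: one must be careful to apply the monotonicity bound $\epsilon(y-k) \le \epsilon(y)$ only on $y \ge x^\star + k$ (where both sides are positive and the weight $N(y)(y-k+1)_k$ is nonnegative), and to align the summation range exactly with $\{x \ge x^\star\}$ after changing variables. Once those ranges are pinned down, the argument collapses cleanly onto the Pochhammer inequality and the obvious fact that the monotone envelope has no data available above $X_{\max}$ to push the estimator past the natural cap $X_{\max}^k$.
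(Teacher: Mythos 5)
Your proof is correct, and it is genuinely different from the paper's. The paper invokes \cite[Lemma 1]{JPTW23}, a pool-adjacent-violators-type characterization of isotonic regression, to write $\Termk(X_{\max})$ explicitly as a weighted average $\sum_{i=i^*+k}^{X_{\max}}(i-k+1)_k N(i)/\sum_{i=i^*}^{X_{\max}}N(i)$, which is immediately bounded by $(X_{\max}-k+1)_k \le X_{\max}^k$. You instead run a self-contained perturbation argument: suppose $\hat t(X_{\max})>X_{\max}^k$, replace $\hat t$ with $\tilde t=\min(\hat t, X_{\max}^k)\in\mathcal{F}_\uparrow$, and show $\hat R_k(\tilde t)<\hat R_k(\hat t)$ by decomposing the change over $x\ge x^\star$, reindexing the cross term, and using $(x-k+1)_k\le X_{\max}^k$ together with monotonicity of $\epsilon$. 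Both routes are sound; the paper's is shorter once the isotonic characterization is taken as given, while yours avoids that external lemma entirely, which is pedagogically nicer and is the cleaner argument to port to a setting where the explicit PAV formula is unavailable.

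One small remark on wording: the inequality $\epsilon(y-k)\le\epsilon(y)$ is valid purely because $\hat t$ is nondecreasing (so $\epsilon$ is nondecreasing), not ``because $y\ge x^\star+k$ forces $\epsilon(y-k)>0$.'' What the restriction $y\ge x^\star+k$ actually buys you is that the cross-term change is supported there (since $\tilde t(y-k)=\hat t(y-k)$ for $y-k<x^\star$), and that the sign of $\epsilon(y-k)$ is already controlled; neither is needed to justify applying the monotone inequality. The rest of the accounting --- the factorization $\tilde t^2-\hat t^2=-\epsilon(X_{\max}^k+\hat t)$, the range $x^\star+k\le y\le X_{\max}$ after reindexing, the case split at $k$ for the Pochhammer bound, and the strict negativity at $x=X_{\max}$ via $N(X_{\max})\ge1$ --- is exactly right.
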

    \begin{proof}
        As $\Termk$ is monotonic, and $\Termk(x) = \Termk(X_{\max})$ for all $x > X_{\max}$, it is sufficient to bound $\Termk(X_{\max})$. By \cite[Lemma 1]{JPTW23}, there exists some $i^*$ for which 
        \[\Termk(X_{\max}) = \frac{\sum_{i=i^*}^{X_{\max}}(i+1)_kN(i+k)}{\sum_{i=i^*}^{X_{\max}}N(i)} = \frac{\sum_{i=i^*+k}^{X_{\max}}(i-k+1)_k N(i)}{\sum_{i=i^*}^{X_{\max}}N(i)} 
        \le (X_{\max} - k + 1)_k\le X_{\max}^k\] 
        since $N(x) = 0$ for all $x>X_{\max}.$
    \end{proof}
    
    Next, we consider an even more restrictive class in the case where our prior is bounded (i.e. $\pi\in \mathcal{P}([0, h])$). 
    In this case we see that $\theta^k\in [0, h^k]$ at all times, 
    so we may now consider the following class of functions instead: 
    \begin{definition}\label{defn:monotone_clipped}
        The clipped monotone class $\mathcal{F}_{\uparrow, [a, b]}$ is defined as: 
        \[
        \mathcal{F}_{\uparrow, [a, b]}= \{\hat{t}:\bbZ_+\to\bbR_+: \hat{t}\in \mathcal{F}_{\uparrow}, a\le \hat{t}(x)\le b, \forall x\in\bbZ_+\}
        \]
    \end{definition}
    Now we may define the following: 
    \begin{align}\label{eq:erm_obj_clipped}
        \Termclipped{k, [a, b]}(X_1^n) = (\hat{t}(X_1), \cdots, \hat{t}(X_n)), 
    \qquad 
    \hat{t} \in \argmin_{\hat{t}\in \mathcal{F}_{\uparrow, [a, b]}}\sum_{i=1}^n \left[\hat{t}(X_i)^2-2(X_i - k + 1)_k \hat{t}(X_i - k)\right]
    \end{align}
    For brevity, we denote 
    $\mathcal{F}_{\uparrow, b}\triangleq \mathcal{F}_{\uparrow, [0, b]}$ 
    and 
    $\Termclipped{k, b}\triangleq \Termclipped{k, [0, b]}$. 
    Our ERM estimator in $\mathcal{P}([0, h])$ is then 
    $\Termclipped{k, h^k}$. 
    
    We now demonstrate that $\Termclipped{k, h^k}$ has an efficient computation just like $\Termk$.
    In fact, the following lemma shows that the ERM of the clipped monotone class is simply the clipped version of the unclipped monotone class, 
    which we defer the proof to \prettyref{app:erm}. 

    \begin{lemma}\label{lmm:thetaerm_clipped}
        The clipped ERM for each $a < b$ has the form given by 
        \[
        \Termclipped{k, [a, b]} 
        = \mathsf{clip}(\Termk, a, b)
        \]
        where the clipping function is given by 
        \[
        \mathsf{clip}(x) = 
        \begin{cases}
            a & x \le a\\
            x & a\le x \le b \\
            b & x\ge b\\
        \end{cases}
        \]
    \end{lemma}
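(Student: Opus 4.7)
The strategy is to exploit the piecewise-constant block structure of the unclipped ERM $\hat{t}^* := \Termk$ supplied by \cite[Lemma 1]{JPTW23}. Set $\tilde{t} := \mathsf{clip}(\hat{t}^*, a, b)$. Membership $\tilde{t} \in \mathcal{F}_{\uparrow, [a, b]}$ is immediate, since clipping preserves monotonicity and $\tilde{t}$ takes values in $[a, b]$; the goal is to show that $\tilde{t}$ minimizes $\hat{R}_k$ over $\mathcal{F}_{\uparrow, [a, b]}$.

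First I would recall the canonical pool description: there is a partition $\mathbb{Z}_+ = \bigsqcup_j B_j$ into consecutive intervals on which $\hat{t}^* \equiv c_{B_j}$, with
$c_B := \sum_{i \in B}(i+1)_k N(i+k) \big/ \sum_{i \in B} N(i)$,
the $c_{B_j}$ non-decreasing in $j$, and the classical pool property: every prefix $P \subseteq B_j$ satisfies $c_P \ge c_{B_j}$ and every suffix $S \subseteq B_j$ satisfies $c_S \le c_{B_j}$. Writing $v_B := \mathsf{clip}(c_B, a, b)$, one has $\tilde{t} \equiv v_{B_j}$ on $B_j$, and monotonicity of $c_{B_j}$ together with the monotonicity of clipping gives that $v_{B_j}$ is still non-decreasing in $j$. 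Because the empirical objective decomposes block-wise,
\[
n\,\hat{R}_k(\tilde{s}) = \sum_j \phi(B_j, \tilde{s}|_{B_j}), \qquad \phi(B, f) := \sum_{i \in B}\bigl[N(i) f(i)^2 - 2 N(i+k)(i+1)_k f(i)\bigr],
\]
and the restriction of any $\tilde{s} \in \mathcal{F}_{\uparrow, [a, b]}$ to a single block $B_j$ is itself non-decreasing and $[a, b]$-valued, it suffices to prove the block-wise claim: for each block $B$, the constant function $f \equiv v_B$ minimizes $\phi(B, \cdot)$ over non-decreasing $[a, b]$-valued $f$.

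The main technical step, and the one I expect to be the chief obstacle, is this block-wise minimization via Abel summation. Setting $e_i := f(i) - v_B$, expansion yields
\[
\phi(B, f) - \phi(B, v_B) = \sum_i N(i) e_i^2 - 2 \sum_i g_i e_i, \qquad g_i := N(i+k)(i+1)_k - v_B N(i),
\]
whose quadratic part is non-negative for free, so the problem reduces to showing $\sum_i g_i e_i \le 0$. The three sub-cases are handled by matching signs: when $c_B \in [a, b]$, $v_B = c_B$, so the total $\sum_B g_i = 0$ and the prefix partial sums $G_j = W_{[l, j]}(c_{[l, j]} - c_B) \ge 0$ by the pool property; when $c_B > b$, $v_B = b$ forces $e_i \le 0$ non-decreasing while $G_j \ge 0$ is preserved; when $c_B < a$, $v_B = a$ forces $e_i \ge 0$ non-decreasing, and the backward partial sums $G'_j = W_{[j, r]}(c_{[j, r]} - a) \le 0$ supply the needed sign via a reverse Abel identity. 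In each case the Abel identity $\sum_i g_i e_i = \sum_{j < r}(e_j - e_{j+1}) G_j + e_r G_r$ (or its reverse) combines the monotonicity of $e$ with the partial-sum signs to deliver the inequality. Degenerate sub-blocks with vanishing total weight must be checked separately, but they are handled by the step-function convention specified for $\hat{t}^*$ at indices where $N(x) = 0$, which groups each zero-weight index into the adjacent nondegenerate pool without altering $G_j$ meaningfully. Summing the block-wise bounds over $j$ yields $\hat{R}_k(\tilde{s}) \ge \hat{R}_k(\tilde{t})$ for every $\tilde{s} \in \mathcal{F}_{\uparrow, [a, b]}$, establishing the optimality of $\tilde{t}$.
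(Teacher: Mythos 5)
Your argument is correct in spirit and takes a genuinely different route from the paper. The paper's proof is an abstract one: it exploits only that $\hat{R}_k$ is separable (a sum of per-$x$ terms) and convex in the function values, and proceeds by an interpolation/contradiction argument --- if $\hat{t}_{[a,b]}$ disagreed with $\mathsf{clip}(\Termk,a,b)$ on some interval $I_2$, blending towards the unclipped ERM there leaves the objective unchanged by optimality of each, and iterating this modification forces $\hat{t}_{[a,b]}$ into the clipped form, then argues equality on the middle region. It never invokes the pool structure explicitly. Your proof instead leverages the closed-form PAVA block description from \cite[Lemma 1]{JPTW23}, decomposes $\hat{R}_k$ block-wise, and then nails the block-wise optimality of the clipped constant via the prefix/suffix averaging (pool) inequalities combined with an Abel summation sign argument. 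The trade-off is typical: the paper's convexity route is shorter and generalizes to any separable convex loss without re-deriving the PAVA characterization, while your route is fully constructive and exposes exactly where the pool inequalities enter, which some readers may find more transparent.

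The one place your sketch needs more care --- and you flag it yourself --- is the treatment of blocks or sub-blocks carrying zero ``diagonal'' weight, i.e.\ indices $x$ with $N(x)=0$ but $N(x+k)>0$. On such a prefix $[l,j]$ the average $c_{[l,j]}$ is undefined, yet the partial sum $G_j = \sum_{i=l}^{j}\bigl[(i+1)_k N(i+k) - v_B N(i)\bigr]$ is still well-defined and equals $\sum_{i=l}^j (i+1)_k N(i+k)\ge 0$; this is fine for Cases 1 and 2, which need $G_j\ge 0$. But in Case 3 (where $v_B=a>c_B$) the argument switches to backward partial sums $G'_j$, and if a \emph{suffix} $[j,r]$ has $\sum_{i=j}^r N(i)=0$, then $G'_j = \sum_{i=j}^r (i+1)_k N(i+k)\ge 0$, which has the \emph{wrong} sign for that case. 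You assert the step-function convention ``groups each zero-weight index into the adjacent nondegenerate pool,'' and indeed PAVA should never leave such a zero-weight suffix inside a pool whose value is strictly below the right neighbor (the linear term would push those indices up, merging them rightward); but this is precisely the spot that needs to be argued, not merely asserted. A clean way to close the gap is to observe that any index $x$ with $N(x)=0$ and $N(x+k)>0$ must satisfy $x\le X_{\max}-k$ and therefore has a positive-weight successor; the PAVA solution then assigns $\hat{t}^*(x)$ the value of that successor's block, so such $x$ always sits at the \emph{left} end of its block, never at the right end of a Case-3 block. With that observation in place, your Abel argument goes through in all three cases and the block-wise minimality claim holds.
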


    We now present the main result of this section. 
    \begin{theorem}\label{thm:erm}
		The ERM estimator of $\mathbb{E}[\theta^k|X=x]$ satisfies the following regret bounds:
		\begin{enumerate}
			\item \[\sup_{\pi\in\calP([0,h])}\Regret_{\pi,k,n}(\Termclipped{k, h^k}) \le (A + Bh)^{2k + 1}\frac 1n(\frac{\log n}{\log \log n})^{k + 1} 
            + \frac{\left[2(Ah + Bh^2)(\frac{\log n}{\log \log n})(1 + 2k)\right]^k}
            {n^3},\quad k\ge 2\]
			\item \[\sup_{\pi\in\subexpo(s)}\Regret_{\pi,k,n}(\Termk) \le O_{s, k}\left(\frac{(\log n)^{2k+1}}{n}\right).\]
		\end{enumerate}
            Here $A, B, C$ are absolute constants. 
	\end{theorem}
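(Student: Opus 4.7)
The plan is to adapt the offset Rademacher complexity framework of \cite{JPTW23} (originally developed for mean estimation) to the $\theta^k$ case, with the modified loss $\hat{t}(x)^2 - 2(x-k+1)_k \hat{t}(x-k)$ from \prettyref{eq:erm_obj}. The starting point is the basic inequality: since $\tpik \in \mathcal{F}_{\uparrow}$ and $\Termk$ (or its clipped version) minimizes $\hat{R}_k$, we have $\hat{R}_k(\hat{t}) \le \hat{R}_k(\tpik)$, so that the population excess risk $\Regret_{\pi, k, n}(\hat{t}) = \E[(\hat{t}(X) - \tpik(X))^2]$ is controlled by the supremum of a centered empirical process indexed by $\hat{t} - \tpik \in \mathcal{F}_{\uparrow, 2B} - \mathcal{F}_{\uparrow, 2B}$ (for suitable $B$).

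For the bounded-prior case, \prettyref{lmm:thetaerm_clipped} lets us work directly in the clipped class $\mathcal{F}_{\uparrow, h^k}$. Using \prettyref{lmm:poi_tail_bound}, the effective support is $[0, x_0]$ with $x_0 \asymp (A+Bh)\tfrac{\log n}{\log\log n}$, and the probability of any $X_i$ exceeding $x_0$ contributes at most $h^{2k}/n^3$-type terms (which produces the additive $\tfrac{[\,\cdots\,]^k}{n^3}$ tail term in the stated bound). On the effective support, the Dudley-type entropy integral for the monotone class on $[0, x_0]$ with values bounded by $h^k$ scales as $\sqrt{x_0 \log n}/n$ per unit of $L^2(f_\pi)$-radius; pairing this with the offset $c \hat{t}(X)^2$ and the Pochhammer weight $(X-k+1)_k \le x_0^k \asymp h^k(\tfrac{\log n}{\log\log n})^k$ in the cross term, the self-bounding localization argument of \cite{JPTW23} yields excess risk $\lesssim \tfrac{x_0 \cdot x_0^k}{n} = \tfrac{1}{n}(A+Bh)^{k+1}\bigl(\tfrac{\log n}{\log\log n}\bigr)^{k+1}$ (up to the $h$-polynomial prefactor $(A+Bh)^{2k+1}$ coming from the variance envelope).

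For the subexponential case, the proof reduces to the bounded case in two stages. First, \prettyref{lmm:f_erm_max} gives $\Termk(X) \le X_{\max}^k$, which combined with the moment bound in \prettyref{lmm:moment} controls $\E[\Termk(X)^4]$; this allows application of \prettyref{lmm:truncate_regret} (or its analogue) to truncate $\pi$ to $\pi_{cs \log n}$ at $o_{s,k}(1/n)$ cost. On the truncated prior, the bounded-prior argument applies with $h \asymp s \log n$. Substituting this into the bounded-prior rate $\tfrac{(A+Bh)^{2k+1}}{n}\bigl(\tfrac{\log n}{\log\log n}\bigr)^{k+1}$ gives exactly $O_{s,k}\bigl(\tfrac{(\log n)^{2k+1}}{n}\bigr)$, since the extra $(\log n)^{2k+1}$ from $h^{2k+1}$ absorbs the $1/(\log\log n)^{k+1}$ factor and dominates.

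The main obstacle will be establishing the correct localized offset Rademacher bound for the modified loss. Unlike the mean-estimation case $k=1$, the cross term carries the polynomial weight $(X-k+1)_k$, which both (i) inflates the noise variance by $x^{2k}$ on the empirical process and (ii) couples the function values at $X_i$ and $X_i - k$ rather than $X_i - 1$. Handling (i) requires the offset $c \hat{t}(X)^2$ to be weighted so that the effective contrast remains $O(1)$ across the support after accounting for the Pochhammer factor; handling (ii) requires a shifted symmetrization where the Rademacher signs are attached to the paired observation $(X_i, X_i - k)$, and a careful verification that the resulting process still concentrates with constants depending only on $k$. Once this modified offset-Rademacher bound is in place, the chaining and truncation arguments carry over from \cite{JPTW23} with only notational changes.
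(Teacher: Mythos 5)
Your high-level reduction is the same as the paper's: work with the clipped ERM $\Termclipped{k,h^k}$ for bounded priors via \prettyref{lmm:thetaerm_clipped}, bound $\Termk \le X_{\max}^k$ via \prettyref{lmm:f_erm_max}, truncate a subexponential prior to $[0,\,cs\log n]$ at $o_{s,k}(1/n)$ cost via \prettyref{lmm:truncate_regret}, and then control the excess risk by an offset Rademacher complexity in the spirit of \cite{JPTW23}. You also correctly observe that the regret equals the $L^2(f_\pi)$-distance to the Bayes estimator and that the Pochhammer weight $(X-k+1)_k$ injects an extra factor of $x_0^k$ into the rate.

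Where you diverge from the paper is the mechanism for bounding the offset Rademacher process, and this is precisely the part you flag as ``the main obstacle'' and leave open. You propose a Dudley entropy integral for the monotone class on $[0,x_0]$ together with a ``self-bounding localization'' argument. The paper does something structurally different and considerably more explicit. After symmetrization (\prettyref{lmm:erm_bound_regret}), two terms $R_1$ and $R_2$ appear. For $R_2$ the paper uses the elementary inequality $2a\xi - (N(x)/b)\xi^2 \le b\,a^2/N(x)$ to eliminate the supremum over $\hat{t}$ entirely on every bin $x$ with $N(x)>0$, reducing $R_2$ to a closed-form second moment $J_1(n)$ of the Rademacher-weighted counts, which is then computed directly from the Binomial structure of $N(x)$ conditional on the sample, plus a separate boundary term $J_0(n)$ over empty bins. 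For $R_1$, the paper proves \prettyref{lmm:sos-rademacher}, which shows that the supremum over the (clipped) monotone class is attained at a two-level threshold function --- a finite-dimensional linear-programming observation --- and then invokes a random-walk concentration bound for the threshold process. No covering numbers or chaining appear anywhere. This matters beyond style: a chaining bound for the monotone class under the cross term weighted by $(X-k+1)_k$ would have to cope with a noise envelope that grows polynomially in $x$, and the localized Rademacher rate for monotone classes does not come from a clean ``per unit $L^2$-radius'' scaling as your sketch suggests; the paper's threshold argument sidesteps both issues by exploiting the finite support and the Poisson/multinomial count structure directly. So while your outline correctly names the auxiliary lemmas and the final rates, the central technical step is both unproved in your write-up and proposed along a route that would require substantively new work to carry out, rather than ``only notational changes'' to \cite{JPTW23}.
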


    In order to prove these bounds, we use the following lemma, which is a generalization of \cite[Theorem 3]{JPTW23} that allows us to bound the regret using Rademacher random variables.
	\begin{lemma}\label{lmm:erm_bound_regret}
		Let $\calF$ be a convex function class that contains the Bayes estimator $\tpik$. Let $X_1,\dots,X_n$ be a sample drawn iid from $f_\pi$, $\epsilon_1,\dots,\epsilon_n$ an independent sequence of iid Rademacher random variables, and $\hat{t}_{\mathsf{ERM}}$ the corresponding ERM solution. 
        Denote, now, the following operator $S$ on to univariate functions $t_1: \mathbb{R}\to \mathbb{R}$ and $t_2: \mathbb{R}\to\mathbb{R}$, and an input $x\in\mathbb{R}$ as follows: 
        \begin{equation}\label{eq:s_erm}
        S(t_1, t_2; x)
        = t_2(x)(t_1(x) - t_2(x)) - (x-k + 1)_k(t_1(x - k) - t_2(x - k))
        \end{equation}
        and similarly we may define $S_{\pi}$ based on a prior $\pi$, 
        as well as the empirical version $\hat{S} := \hat{S}_{X_1, \cdots, X_n}$. 
        \begin{equation}\label{eq:s_erm_exp}
        S_{\pi}(t_1, t_2) 
        = \mathbb{E}_{X\sim \text{Poi}\circ\pi}[S(t_1, t_2; X)]
        \qquad 
        \hat{S}(t_1, t_2)
        =\frac 1n \sum_{i=1}^n S(t_1, t_2, X_i)
        \end{equation}
        Then for any function class $\calF_{p_n}$ depending on the empirical distribution $p_n^{\sfem} = \frac1n \sum_{i=1}^n \delta_{X_i}$ that includes $\hat{t}$ and $\tpik$, we have 
        \[\Regret_{\pi,k}(\hat{t})\le \frac3n R_1(n) + \frac2n R_2(n)\] 
        where 
		\begin{align}\label{eq:R1}
			R_1(n) = \E\Biggl[&\sup_{\hat{t}\in\calF_{p_n}\cup\calF_{p_n'}}\sum_{i=1}^n(\epsilon_i - \frac16)(\tpik(X_i)-\hat{t}(X_i))^2\Bigg]
		\end{align}
		and
            \begin{align}\label{eq:R2}
			R_2(n) = \E\Biggl[&\sup_{\hat{t}\in\calF_{p_n}\cup\calF_{p_n'}}\sum_{i=1}^n\bigg\{2\epsilon_i
            S(\hat{t}, \tpik; X_i) - \frac14(\tpik(X_i)-\hat{t}(X_i))^2\bigg\}\Bigg]
		\end{align}
		where $\calF_{p_n'}$ is defined with an independent copy of $X_1, \dots, X_n$.
	\end{lemma}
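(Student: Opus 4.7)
The plan is to adapt the offset Rademacher complexity argument of \cite[Theorem 3]{JPTW23} from the $k = 1$ case to the modified loss $g(t, x) \triangleq t(x)^2 - 2(x-k+1)_k t(x-k)$, whose empirical mean is $\hat R_k$. The backbone is the pointwise identity
\[
g(t, x) - g(\tpik, x) = (t(x) - \tpik(x))^2 + 2 S(t, \tpik; x),
\]
combined with the Tweedie-type fact $S_\pi(t, \tpik) = 0$ for every $t$ (a direct consequence of the calculation yielding \eqref{eq:erm_obj_theta}). Writing $h_t(x) \triangleq (t(x) - \tpik(x))^2$, this identity gives the population equality $R_{\pi, k}(t) - R_{\pi, k}(\tpik) = \E[h_t(X)]$, while the empirical counterpart $\hat R_k(t) - \hat R_k(\tpik) = \tfrac1n \sum_i h_t(X_i) + \tfrac 2n \sum_i S(t, \tpik; X_i)$ retains a non-vanishing $S$-process.

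First, combining this identity with the ERM inequality $\hat R_k(\hat t) \le \hat R_k(\tpik)$ yields
\[
\Regret_{\pi, k}(\hat t) \;\le\; \E\bigl[h_{\hat t}(X)\bigr] \;-\; \tfrac 1n \E\Bigl[\sum_i h_{\hat t}(X_i)\Bigr] \;-\; \tfrac 2n \E\Bigl[\sum_i S(\hat t, \tpik; X_i)\Bigr].
\]
Introducing a ghost sample $X_1', \ldots, X_n' \simiid f_\pi$ independent of $X_1^n$, I would rewrite $\E[h_{\hat t}(X)] = \E\bigl[\tfrac 1 n \sum_i h_{\hat t}(X_i')\bigr]$ and freely add the zero-conditional-mean term $\tfrac 2n \sum_i S(\hat t, \tpik; X_i')$, producing the coupled bound $\E\bigl[\tfrac 1n \sum_i(h_{\hat t}(X_i') - h_{\hat t}(X_i)) + \tfrac 2n \sum_i(S(\hat t, \tpik; X_i') - S(\hat t, \tpik; X_i))\bigr]$. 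Bounding by the supremum over $t \in \calF_{p_n} \cup \calF_{p_n'}$ (the union contains $\hat t$ whether trained on $X_1^n$ or on the ghost sample) and inserting Rademacher variables $\epsilon_i$ by exchangeability of $(X_i, X_i')$ then yields a symmetrized empirical process in $(t, \epsilon)$.

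To reach the fast-rate form with offsets $-\tfrac 16$ and $-\tfrac 14$, rather than paying the crude factor of $2$ from naive symmetrization I would retain an additional multiple of the non-positive quantity $\hat R_k(\hat t) - \hat R_k(\tpik)$ as a negative offset. Concretely, adding $-\alpha[\hat R_k(\hat t) - \hat R_k(\tpik)] \ge 0$ for a suitable $\alpha > 0$ contributes extra $-\alpha \tfrac 1n \sum_i h_{\hat t}(X_i)$ and $-2\alpha \tfrac 1n \sum_i S(\hat t, \tpik; X_i)$ terms. Distributing the resulting $-\tfrac 1n \sum h_{\hat t}$ offset across the two Rademacher processes (with shares $\tfrac 16$ absorbed into the $h$-process and $\tfrac 14$ absorbed into the $S$-process), and using subadditivity of the supremum to split into the squared-difference piece and the $S$-piece, produces precisely $\tfrac 3n R_1(n)$ and $\tfrac 2n R_2(n)$.

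The central technical obstacle is the offset calibration in the last step: one must verify that the quadratic offset $-\tfrac{1}{4n} \sum_i h_t(X_i)$ dominates the $S$-Rademacher cross terms via a Young-type inequality of the form $2 \epsilon (x-k+1)_k (t(x-k) - \tpik(x-k)) \le \lambda (t(x-k) - \tpik(x-k))^2 + \lambda^{-1}(x-k+1)_k^2$, with $\lambda$ tuned so that the quadratic term is absorbed into the $-\tfrac 14$ offset. The Pochhammer weight $(x-k+1)_k$ grows polynomially in $x$, but the residual $\lambda^{-1}(x-k+1)_k^2$ is deterministic in $t$ and hence drops out of the supremum over $t \in \calF_{p_n} \cup \calF_{p_n'}$; this is essentially the only place where the generalization from $k=1$ \cite{JPTW23} to general $k$ requires new bookkeeping, and the rest of the argument transfers verbatim.
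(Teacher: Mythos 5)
Your overall plan (pointwise identity, ghost sample, symmetrization, offset splitting) is the right shape, and the preliminary facts you invoke --- the identity $g(t,x)-g(\tpik,x)=(t(x)-\tpik(x))^2+2S(t,\tpik;x)$ and the Tweedie-type identity $S_\pi(t,\tpik)=0$ --- are both correct. But there is a genuine gap in the middle step. You only use the weak ERM inequality $\hat R_k(\hat t)\le\hat R_k(\tpik)$, whereas the paper (following the $k=1$ argument of JPTW23) uses the \emph{first-order optimality condition that follows from convexity of $\calF$}: since $(1-\epsilon)\hat t+\epsilon\tpik\in\calF$, the directional derivative of $\hat R$ at $\hat t$ toward $\tpik$ is nonnegative, which after rearranging gives the strictly stronger inequality
\begin{equation*}
\hat R_k(\tpik)-\hat R_k(\hat t)\;\ge\;\hat\E\bigl[(\tpik(X)-\hat t(X))^2\bigr].
\end{equation*}
Unpacking via your pointwise identity, adding one copy of this strong inequality contributes $-2\hat\E[h_{\hat t}]-2\hat\E[S_{\hat t}]$ to the right-hand side (an $h{:}S$ ratio of $1{:}1$), whereas adding $\alpha$ copies of the weak inequality contributes only $-\alpha\hat\E[h_{\hat t}]-2\alpha\hat\E[S_{\hat t}]$ (ratio fixed at $1{:}2$). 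Since the quadratic offset $\hat\E[h_{\hat t}]$ is the resource that must be split to tame both Rademacher processes, your $1{:}2$ ratio cannot be rescaled to match the paper's $1{:}1$, so the offsets $1/6$, $1/4$ and the prefactors $3/n$, $2/n$ cannot be produced ``precisely'' as you claim. With $\alpha>2$ one can still force positive offsets on both pieces, so a bound of the same shape (with worse constants) is attainable --- but to get the lemma as stated you must use the first-order condition, not the crude minimizer inequality.

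Separately, your final paragraph about Young's inequality and absorbing the Pochhammer weight $(x-k+1)_k^2$ belongs to the proof of the bounds on $R_1(n)$ and $R_2(n)$ themselves (the paper's \prettyref{lmm:erm_bound_T}), not to this lemma: here we only need to reduce $\Regret$ to the offset Rademacher complexities, and the one-step symmetrization is the same as in JPTW23's Lemma 3; the Pochhammer bookkeeping appears only later.
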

	
	\begin{proof}
        Recall the definition of $R_{\pi}$ and $\hat{R}$ as defined in 
        \prettyref{eq:erm_obj} and \prettyref{eq:erm_obj_emp}. 
        Now $(1 - \epsilon)\hat{t}_{\mathsf{ERM}} + \epsilon\hat{f}\in\mathcal{F}$ for any $\epsilon\in (0, 1)$ due to the convexity of $F$. 
        Since $\hat{t}_{\mathsf{ERM}}$ is a minimizer of $\hat{R}$, we have 
        \[
        0\ge \frac{\partial}{\partial\epsilon}\hat{R}((1-\epsilon)\hat{t}_{\mathsf{ERM}} + \epsilon \hat{t})|_{\epsilon=0}
        =-2\hat\E[(\hat{t}(X)-\hat{t}_{\mathsf{ERM}}(X))\hat{t}_{\mathsf{ERM}}(X) - (X - k + 1)_k(\hat{t}(X-k)-\hat{t}_{\mathsf{ERM}}(X-k))]
        \]
        Rearranging, we get 
        \begin{align}\label{eq:convexity}
        \hat R(\hat{t}) - \hat R(\hat{t}_{\mathsf{ERM}}) - \hat\E[(\hat{t}(X)-\hat{t}_{\mathsf{ERM}}(X))^2] \ge 0.
        \end{align}
        Next, we use the following observation on $S$: for every estimator $\hat{t}_1, \hat{t}_2$ we have  
        \[
        S_{\pi}(\hat{t}_1, \hat{t}_2)\triangleq R_{\pi}(\hat{t}_1) - R_{\pi}(\hat{t}_2) - \mathbb{E}_{X\sim \text{Poi}\circ \pi}[(\hat{t}_1(X) - \hat{t}_2(X))^2]
        \]
        and similarly, $\hat{S}(\hat{t}_1, \hat{t}_2) = \hat{R}(\hat{t}_1) - \hat{R}(\hat{t}_2) - \hat{\mathbb{E}}[(\hat{t}_1(X) - \hat{t}_2(X))^2]$. 
        Then given that $\Regret_{\pi,k,n}(\hat{t}) = R(\hat{t})-R(\tpik)$, using \prettyref{eq:convexity} gives 
        \[
        \Regret_{\pi,k,n}(\hat{t}_{\mathsf{ERM}})
        \le R(\hat{t}_{\mathsf{ERM}})-R(\tpik) + \mathbb{E}[\hat R(\tpik) - \hat R(\hat{t}_{\mathsf{ERM}}) - \hat\E[(\tpik(X)-\hat{t}_{\mathsf{ERM}}(X))^2]]:= I_1 + I_2
        \]
        where: 
        \[
        I_1 = S_{\pi}(\hat{t}_{\mathsf{ERM}}, \tpik) - \mathbb{E}[\hat{S}(\hat{t}_{\mathsf{ERM}}, \tpik)]
        - \frac 14 (\E[(\tpik(X)-\hat{t}(X))^2] + \hat\E[(\tpik(X)-\hat{t}(X))^2])
        \]
        \[
        I_2 = \frac54\E[(\tpik(X)-\hat{t}(X))^2]-\frac74\hat\E[(\tpik(X)-\hat{t}(X))^2]
        \]
        Define, now, $\epsilon_1, \cdots, \epsilon_n$ independent Rademacher symbols. 
        Then using the symmetrization result in \cite[Lemma 3]{JPTW23}, 
        $I_1\le \frac 2n R_2(n)$ and $I_2\le \frac 3n R_1(n)$. Combining these two upper bounds proves the lemma.
	\end{proof}

        In order to effectively bound the Rademacher complexity, 
        we leverage the fact that $\thetaerm$ is bounded by $X_{\max}^k$, 
        and that $\Termclipped{k, h^k}$ is bounded by $h^k$. 
        We now denote the following function class bounded by some given monotone function, $r$ (which may depend on the samples). 
        \[
        \mathcal{F}_{r} \triangleq 
        \left\{\hat{t} : \hat{t}\text{ is monotone}, \hat{t}(x)\le r(x), \forall x\in\bbZ_+\right\}. 
        \]
        We will specify the choice of $r$ later on. 
        Subsequently, we will apply 
        \prettyref{lmm:erm_bound_regret} with $\calF_{p_n} = \mathcal{F}_r$ 
        and $\calF_{p_n'} = \mathcal{F}_{r}'$. 
        Note that we allow this function class to depend on $\tpik$ 
        which is assumed to be unknown, but we can still use it for our theoretical analysis. 
        
        For the rest of the proof, 
        we consider a slightly generalized version of \prettyref{eq:R1} and \prettyref{eq:R2}:
	\begin{equation}\label{eq:r1_alt}
		R_1(b,n) = \E\Biggl[\sup_{\hat{t}\in\calF_{r}\cup\calF_{r}'}\sum_{i=1}^n(\epsilon_i-\frac1b)(\tpik(X_i)-\hat{t}(X_i))^2\Bigg]
	\end{equation}
	and
	\begin{align}\label{eq:r2_alt}
		R_2(b,n) = \E\Biggl[&\sup_{\hat{t}\in\calF_{r}\cup\calF_{r}'}\sum_{i=1}^n\bigg\{2\epsilon_i(S(\hat{t}, \tpik; X_i))-\frac1b(\tpik(X_i)-\hat{t}(X_i))^2\bigg\}\Bigg].
	\end{align}
    Next, given some tail bounds on the distribution $f_{\pi}$ and moment bounds on the maximum sample, we prove bounds on the expressions $R_1$ and $R_2$.
	\begin{lemma}\label{lmm:erm_bound_T}
		Let $\pi\in\calP[0,h]$ where $h$ is a constant or $s\log n$ for some constant $s$. Let $M := M(n,h) > \max\{h, k\}$ be such that 
		\begin{itemize}
			\item $\sup_{\pi\in\calP[0,h]} \PP_{X\in f_{\pi}}[X>M] \le \frac1{n^7}$.
                 \item For $X_i\overset{iid}{\sim}f_{\pi}, \E\left[X_{\max}^\ell\right] \le c_1(\ell)M^\ell$ for $\ell \le 1, \cdots, 4k$ and constant $c_1$.
			\item For $X_i\overset{iid}{\sim}f_{\pi}, \E\left[r(X_{\max})^\ell\right] \le c_2(k, \ell)L^\ell$ for $\ell \le 4$ and constant $c_2$.
		\end{itemize}
            Then $R_1, R_2$ as defined in \prettyref{eq:r1_alt} and \prettyref{eq:r2_alt} have the following bound: 
            \[R_1(b,n) \le c_0(b)\left(\max\{1, h^{2k}\}M + c_2(k, 2)L^2\right); \] 
            \[R_2(b,n) \le c_0(b)h^{2k}(2^k + c_1(1)M) + (2h)^kM^{k + 1} + Mh^kr(M) + k!(h^k + 1)\]\[  + \frac{h^k\sqrt{c_1(2k)}M^k + \sqrt{\bbE[X_{\max}^{2k} r(X_{\max})^2]} + \sqrt{c_1(2k)c_2(k, 2)}M^kL}{n^2}.\]
	\end{lemma}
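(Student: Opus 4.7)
The plan is to adapt the offset Rademacher complexity argument of \cite[Theorem 4]{JPTW23} (handled there for $k=1$) to the moment case. The guiding idea is that $\calF_r$ consists of \emph{monotone} functions bounded above by $r$, and classical offset Rademacher bounds show that for monotone classes on $\{0,\dots,M\}$ the offset complexity is essentially a constant times the range, independent of sample size. I would therefore split every supremum into a ``bulk'' part where $X_i\le M$ (contributing bounded-range monotone terms) and a ``tail'' part where $X_i>M$ (absorbed using the hypothesis $\PP[X>M]\le n^{-7}$ and the stated moment bounds on $X_{\max}$ and $r(X_{\max})$).

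For $R_1(b,n)$, on the bulk event both $\tpik(X_i)$ and $\hat t(X_i)$ are monotone and $\tpik$ is bounded by the $L^\infty$ bound of the Bayes estimator ($\le h^k$ for bounded priors, $\le \E[\theta^k\mid X]$ more generally); the difference squared lies in a range controlled by $\max\{1,h^{2k}\}$, so a standard monotone offset Rademacher inequality yields a term of order $c_0(b)\max\{1,h^{2k}\}M$. For tail samples I would apply Cauchy--Schwarz with $(\tpik-\hat t)^2(X_i)\le 2\tpik(X_i)^2+2r(X_i)^2$ and the moment bound on $r(X_{\max})^2$, producing the second summand $c_0(b)\,c_2(k,2)L^2$.

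For $R_2(b,n)$, I would split $S(\hat t,\tpik;X_i)$ into its two pieces. The ``diagonal'' piece $\tpik(X_i)(\hat t(X_i)-\tpik(X_i))$ is essentially $R_1$-type and contributes $c_0(b)h^{2k}(2^k+c_1(1)M)$, where $2^k$ absorbs the boundary $x<k$ contributions ($\tpik(x)$ takes values like $(x+1)_k$). The ``shifted'' piece $(X_i-k+1)_k(\hat t(X_i-k)-\tpik(X_i-k))$ carries a Pochhammer factor of order $M^k$ on the bulk, so its Rademacher/offset balance multiplies the effective range by $M^k$, giving $(2h)^kM^{k+1}$; the edge term $Mh^kr(M)+k!(h^k+1)$ comes from reindexing $x\mapsto x-k$ at the boundary. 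The tail contributions (the $/n^2$ terms) follow from Cauchy--Schwarz together with $\PP[X_{\max}>M]\le n\PP[X>M]\le n^{-6}$ combined with the moment hypotheses on $X_{\max}^{2k}$, $r(X_{\max})^2$, and their product.

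The main obstacle will be the shifted piece in $R_2$. Because $(X_i-k+1)_k\approx X_i^k$ is unbounded, one cannot directly invoke a bounded-range monotone complexity bound; instead I would follow the same chaining-style argument used in \cite[Lemma 2]{JPTW23} but with the ``effective offset'' now being $\tfrac1b(\tpik(X_i-k)-\hat t(X_i-k))^2$ while the Rademacher weights are $(X_i-k+1)_k$ rather than $X_i$. Making this balance honest requires carefully combining the envelope $r$ on $\hat t$ with the polynomial growth of the Pochhammer weight, which is precisely what produces the $M^{k+1}$ and the edge correction $Mh^kr(M)+k!(h^k+1)$ in the stated bound. Once this balance is controlled, the rest is bookkeeping via the assumed tail and moment hypotheses.
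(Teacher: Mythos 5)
Your plan for $R_1(b,n)$ does match the paper's: the paper invokes a monotone-class offset Rademacher bound (\prettyref{lmm:sos-rademacher}), with $h^k$ playing the role of the oracle range and $r(X_{\max})$ the envelope.

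For $R_2(b,n)$, however, your route diverges from the paper's and has a genuine gap. The paper does \emph{not} split $S(\hat t,\tpik;X_i)$ into a ``diagonal'' and a ``shifted'' piece and bound each via chaining. Instead it groups the whole sum by cells $x$ (writing $\epsilon(x)=\sum_i\epsilon_i\indc{X_i=x}$ and $N(x)$), reindexes the shifted piece so that \emph{both} pieces are in terms of $\hat t(x)-\tpik(x)$ at the same index, and then applies the elementary identity $2ay - cy^2\le a^2/c$ cell-wise: for each $x$ with $N(x)>0$, the combination
$2\bigl(\epsilon(x)\tpik(x)-(x+1)_k\epsilon(x+k)\bigr)\bigl(\hat t(x)-\tpik(x)\bigr)-\tfrac{N(x)}{b}\bigl(\hat t(x)-\tpik(x)\bigr)^2$
is upper-bounded by a quantity \emph{free of $\hat t$}. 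This removes the supremum entirely on those cells, and the remaining $\sup$ lives only on cells with $N(x)=0$ (where $\epsilon(x)=0$ and the offset vanishes) — these are handled separately via the envelope $r$ and the tail event $\{X_{\max}>M\}$. Your proposal misses this $N(x)>0$ vs.\ $N(x)=0$ decomposition entirely.

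The sticking point in your plan is exactly where you anticipate trouble. You propose to treat the shifted piece with an ``effective offset'' $\tfrac1b(\tpik(X_i-k)-\hat t(X_i-k))^2$, but the actual offset in the definition of $R_2(b,n)$ is $\tfrac1b(\tpik(X_i)-\hat t(X_i))^2$, evaluated at $X_i$, not $X_i-k$; you cannot swap one for the other. After the cell-wise reindexing the mismatch disappears, but then the single offset term $\tfrac{N(x)}{b}(\hat t(x)-\tpik(x))^2$ is shared between the diagonal and shifted contributions at cell $x$, and you cannot allot it separately to two chaining arguments. The cell-wise quadratic-completion trick is precisely what lets one use the offset jointly, and it also makes clear why $N(x)=0$ must be handled on its own. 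Without that idea the bound on $R_2(b,n)$ does not go through in the form stated.
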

    The proof of this lemma is in \prettyref{app:erm}. Just as in \prettyref{lmm:rob_help}, \prettyref{lmm:erm_bound_T} provides a regret bound on a bounded prior. We again require \prettyref{lmm:truncate_regret} to extend our results to a subexponential prior.
	
    We now combine these lemmas to prove the regret bounds as claimed in \prettyref{thm:erm}. 
    
    \begin{proof}[Proof of \prettyref{thm:erm}]
        We first consider $\pi\in\mathcal{P}([0, h])$, 
        where $\Termclipped{k, h^k}$ has individual components in $\mathcal{F}_{\uparrow, h^k}$, 
        and therefore we may make the following choices:
         \begin{itemize}
            \item $M = \max\{c_1, c_2h\}\frac{\log n}{\log\log n}$ for some constants $c_1, c_2$ due to \cite[Lemma 10 and 12]{JPTW23}; 

            \item $c_1(\ell) = 2^{\ell}(1 + \ell!)$; 

            \item $r$ the constant function $r(x)\equiv h^k$, $L = h^k$ and also $c_2(k, \ell) = 1$. 
        \end{itemize}
        Then by \prettyref{lmm:erm_bound_regret} and \prettyref{lmm:erm_bound_T}, we have 
        \begin{flalign*}
            n\Regret_{\pi, k, n}(\Termclipped{k, h^k})
            &\lesssim (A + Bh)^{2k + 1}\left(\frac{\log n}{\log \log n}\right)^{k + 1} 
            + \frac{h^k(A + Bh)^k(\frac{\log n}{\log \log n})^k(2^k\sqrt{1 + (2k)!})}
            {n^2}
            \\&\le 
            (A + Bh)^{2k + 1}\left(\frac{\log n}{\log \log n}\right)^{k + 1} 
            + \frac{\left[2(Ah + Bh^2)(\frac{\log n}{\log \log n})(1 + 2k)\right]^k}
            {n^2}
        \end{flalign*}
        for some absolute constants $A, B$. 
        Note that we used $\sqrt{1 + (2k)!}\le 1 + \sqrt{(2k)!}\le (2k)^k$
        Therefore the regret is indeed $O_{h, k}\left(\frac{1}{n}\left(\frac{\log n}{\log\log n}\right)^{k + 1}\right)$ for $k\ge 2$. 
        
	For subexponential $\pi$, we combine \prettyref{eq:moment_bound} to obtain the following:
        \begin{itemize}
            \item By \cite[Lemma 11 and 12]{JPTW23}, we may choose $M = \max\{a, bs\}\log n$ for some absolute constants $a, b > 0$. 

            \item By \prettyref{eq:moment_bound}, we have 
            \[\mathbb{E}[X_{\max}^{\ell}] \le 
            \frac{4(\log n)^{\ell} + \frac 32 \ell!\max(1, (\log n)^{\ell - 1})}{(\log (1 + \frac{1}{2s}))^{\ell}}
            \le (2s + 1)^{\ell}\cdot 6\ell! [1 + (\log n)^{\ell}]
            \]
            which allows us to take $c_1(\ell) = 6\ell!$. 

            \item We recall that by \prettyref{lmm:f_erm_max} we may take $r(X_{\max})\le X_{\max}^{k}$, 
            i.e.$\mathbb{E}[r(X_{\max})^{\ell}]\le \mathbb{E}[X_{\max}^{k\ell}]
            \le (2s + 1)^{k\ell}\cdot 6(k\ell)! [1 + (\log n)^{k\ell}]$ and therefore we may take 
            $L = M^k = (a + bs)^k(\log n)^k$ and $c_2(k, \ell) = 6(k\ell)!$. 
        \end{itemize}
        
            By \prettyref{lmm:truncate_regret}, it suffices to bound $\Regret_{\pi_{c_1s\log n},k}$, 
            which then gives the existence of constants $A, B$ such that 
            $T_1(b, n)\le c_0(b)[(A+ Bs)^{2k+1}(\log n)^{2k + 1} + (2k!)[(A+ Bs)^{2k+1}(\log n)^{2k}]$ and 
            $T_2(b, n)\le 
            c_0(b)[(A+ Bs)^{2k+1}(\log n)^{2k + 1} + k!(A+ Bs)^{k}(\log n)^{k} + \frac{(4k)!(A+ Bs)^{2k}(\log n)^{2k}}{n^2}]
            $
            where the last inequality uses $(4k)! \ge (2k)!^2$. 
            Then by \prettyref{lmm:erm_bound_regret} and \prettyref{lmm:erm_bound_T}, the regret is $O_k\left(\frac{\max\{1,s^{2k+1}\}(\log n)^{2k+1}}{n}\right).$
	\end{proof}

 \section{Proofs for upper bound (Poisson model): $\ell(\theta)$ smooth}\label{sec:smooth}

Before proceeding, we quote a few lemmas regarding polynomial approximation. 
\begin{lemma}[Chapter 2.6, (9) in \cite{timan2014theory}]\label{lmm:poly_approx_coefs}
    Given a polynomial $p_k(\theta) = \sum_{x = 0}^k c_x\theta^x$ with coefficients 
    $c_x\in\bbR$ for $x = 0, 1, \cdots, k$. 
    Then 
    \[
    |c_x|\le \frac{k^x}{x!}\max_{|\theta|\le 1} |p_k(\theta)|\le e^k \max_{|\theta|\le 1} |p_k(\theta)|. 
    \]
\end{lemma}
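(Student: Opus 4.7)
The plan is to represent $p_k$ in the basis of Chebyshev polynomials of the first kind and to push the uniform bound on $p_k$ through to each monomial coefficient via the explicit formula for Chebyshev coefficients. The second inequality $\frac{k^x}{x!}\le e^k$ is immediate from $\frac{k^x}{x!}\le \sum_{j=0}^{\infty}\frac{k^j}{j!}=e^k$, so the entire work is in the first inequality.

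First I would expand $p_k(\theta)=\sum_{j=0}^k a_j T_j(\theta)$, where $T_j$ is the $j$-th Chebyshev polynomial of the first kind, satisfying $\max_{|\theta|\le 1}|T_j(\theta)|\le 1$. Using the substitution $\theta=\cos\phi$, the coefficients $a_j$ are just the Fourier cosine coefficients of $\phi\mapsto p_k(\cos\phi)$, namely
\[
a_j=\frac{\epsilon_j}{\pi}\int_{-1}^{1}\frac{p_k(\theta)T_j(\theta)}{\sqrt{1-\theta^2}}\,d\theta,\qquad \epsilon_0=1,\ \epsilon_j=2\ (j\ge 1),
\]
which yields the uniform estimate $|a_j|\le 2M$, where $M=\max_{|\theta|\le 1}|p_k(\theta)|$.

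Next I would use the explicit expansion of $T_j$ into monomials: for $j\ge 1$,
\[
T_j(\theta)=\frac{j}{2}\sum_{m=0}^{\lfloor j/2\rfloor}\frac{(-1)^m(j-m-1)!}{m!(j-2m)!}(2\theta)^{j-2m},
\]
so the coefficient $t_{j,x}$ of $\theta^x$ in $T_j$ vanishes unless $j\ge x$ and $j-x$ is even, in which case a short calculation (writing $m=(j-x)/2$) gives a closed-form expression. Extracting the coefficient of $\theta^x$ in the Chebyshev expansion gives $c_x=\sum_{j}a_j t_{j,x}$, hence
\[
|c_x|\le 2M\sum_{\substack{x\le j\le k\\ j-x\text{ even}}}|t_{j,x}|.
\]

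The remaining task, and the main obstacle, is the combinatorial identity that bounds the right-hand side by $\frac{k^x}{x!}M$. This is established by substituting the explicit formula for $|t_{j,x}|$, reorganizing the sum with the index $m=(j-x)/2$, and comparing against the partial sum of the Taylor series of $(1-y)^{-x-1}$ evaluated at an appropriate point; the telescoping/identity is exactly the one recorded in Timan's Chapter 2.6 and depends only on standard binomial manipulations. Once the bound $\sum_{j}|t_{j,x}|\le \frac{k^x}{2\cdot x!}$ is in hand, combining with $|a_j|\le 2M$ yields $|c_x|\le \frac{k^x}{x!}M$, completing the proof.
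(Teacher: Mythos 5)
The paper does not prove this lemma --- it cites it directly from Timan --- so there is no proof to compare against; the question is whether your reconstruction is sound, and it is not. The intermediate bound you rely on, namely
\[
\sum_{\substack{x\le j\le k\\ j-x\ \mathrm{even}}} |t_{j,x}| \le \frac{k^x}{2\,x!},
\]
is false. Take $x=0$: the constant term of $T_j$ is $T_j(0)$, which equals $(-1)^{j/2}$ for $j$ even and $0$ for $j$ odd, so $\sum_{j\le k}|t_{j,0}| = \lfloor k/2\rfloor + 1 \ge 1$, while your claimed bound reads $\tfrac12$. It also fails at the other extreme $x=k$: only $j=k$ contributes, with $|t_{k,k}|=2^{k-1}$, and already for $k=2$ this gives $2 \le \tfrac{4}{2\cdot 2!} = 1$, which is false.

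The deeper problem is structural, not just arithmetic. Bounding each $|a_j|\le 2M$ separately and then summing $|t_{j,x}|$ in absolute value discards exactly the cancellation that makes the inequality $|c_x|\le\frac{k^x}{x!}M$ true. The Chebyshev-to-monomial conversion coefficients $t_{j,x}$ have magnitudes that, when summed over $j$ without regard to sign, are much larger than $\frac{k^x}{x!}$ (as the $x=0$ counterexample makes plain), and no choice of a uniform bound on $|a_j|$ can fix this: for $p=T_k$ the only nonzero Chebyshev coefficient is $a_k$, yet your estimate pays for every $j$. Timan's proof of (9) in Chapter 2.6 is not a term-by-term triangle-inequality argument in the Chebyshev basis; it hinges on an extremal characterization (the coefficients are maximized by Chebyshev-type extremal polynomials, established via interpolation at the Chebyshev alternation points or a Markov-brothers-type derivative estimate evaluated at the origin). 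If you want a self-contained proof rather than a citation, you should start from $c_x = p^{(x)}(0)/x!$ and control $|p^{(x)}(0)|$ by a pointwise Markov--Bernstein argument at $\theta=0$, not by separating $p$ into its Chebyshev components and summing absolute values.
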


\begin{lemma}[Theorem 8 in \cite{jackson31}]\label{lmm:deriv_poly}
    Let $g$ be $p$ times differentiable in $[0, h]$ with modulus of continuity $\omega(\delta) = \sup_{x, y\in [0, h]: |x-y|\le\delta} |g^{(p)}(x) - g^{(p)}(y)|$. 
    Then for every $k > p$, there exists a polynomial $p_k$ of degree $k$ such that for $x\in [0, h]$ we have 
    \[
    |g(x) - p_k(x)|\le \frac{L_ph^p}{k^p} \omega\left(\frac{h}{k - p}\right)
    \]
    where $L_p$ is an absolute constant depending only on $p$. 
\end{lemma}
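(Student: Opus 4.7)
The plan is to reduce to the unit interval via the substitution $\tilde g(y) = g(hy)$ for $y \in [0,1]$, under which $\tilde g^{(p)}(y) = h^p g^{(p)}(hy)$ and the modulus of continuity of $\tilde g^{(p)}$ transforms as $\tilde\omega(\delta) = h^p\,\omega(h\delta)$. It then suffices to produce a polynomial $q_k$ of degree $k$ with $\|\tilde g - q_k\|_{L^\infty[0,1]} \le L_p k^{-p}\,\tilde\omega(1/(k-p))$, since setting $p_k(x) := q_k(x/h)$ yields a degree-$k$ polynomial on $[0,h]$ whose error, after unwinding the scaling of $\tilde\omega$, is exactly $L_p h^p k^{-p}\,\omega(h/(k-p))$.

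To construct $q_k$ on $[0,1]$ I would follow the classical Jackson-kernel approach: build a kernel $K_k(x,t)$ that is a polynomial of degree $k$ in $x$, acts as an approximate identity $\int_0^1 K_k(x,t)\,dt = 1$, kills Taylor polynomials up to order $p$ via the vanishing-moment conditions $\int_0^1 (t-x)^j K_k(x,t)\,dt = 0$ for $j = 1, \ldots, p$, and whose absolute centred moments satisfy $\int_0^1 |t-x|^{p+\alpha}\,|K_k(x,t)|\,dt = O((k-p)^{-(p+\alpha)})$ for $\alpha \in [0,1]$. Defining $q_k(x) := \int_0^1 K_k(x,t)\,\tilde g(t)\,dt$ (automatically a polynomial of degree $k$ in $x$) and Taylor-expanding $\tilde g$ around $x$ with integral remainder,
\[
\tilde g(t) = \sum_{j=0}^{p} \frac{(t-x)^j}{j!}\,\tilde g^{(j)}(x) + \frac{1}{(p-1)!}\int_x^t (t-s)^{p-1}\bigl[\tilde g^{(p)}(s) - \tilde g^{(p)}(x)\bigr]\,ds,
\]
the moment conditions annihilate the polynomial terms with $j = 1, \ldots, p$ upon integration against $K_k(x,\cdot)$, leaving only the remainder. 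Bounding the inner integral pointwise by $\tfrac{|t-x|^p}{p!}\,\tilde\omega(|t-x|)$ and using the subadditivity estimate $\tilde\omega(\lambda\delta) \le (1+\lambda)\,\tilde\omega(\delta)$ together with the moment bounds on $K_k$ produces the required $k^{-p}\,\tilde\omega(1/(k-p))$ estimate.

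The main obstacle is producing such a $K_k$ on the bounded interval $[0,1]$ with the needed non-negativity (or uniform $L^1$ control), vanishing moments, and concentration, and with the moment identities holding uniformly in $x$ (including near the endpoints, where the boundary cuts into the bulk of the kernel). The standard route is to begin with the trigonometric Jackson kernel on $[-\pi,\pi]$, pass to the algebraic setting on $[-1,1]$ via $x = \cos\theta$ (whereupon the kernel becomes a linear combination of products of Chebyshev polynomials), and rescale to $[0,1]$. An alternative that avoids explicit kernels is to establish the basic Jackson inequality $E_n(f) \le C\,\omega(f, 1/n)$ for merely continuous $f$ (the $p=0$ case) and then iterate the reduction $E_n(f) \le \tfrac{C}{n} E_{n-1}(f')$ a total of $p$ times, obtaining $E_k(\tilde g) \le \frac{C^p}{k(k-1)\cdots(k-p+1)}\,E_{k-p}(\tilde g^{(p)})$; for $k > 2p$ the denominator exceeds $(k/2)^p$ and is absorbed into $L_p$, while for smaller $k$ a trivial polynomial suffices after enlarging the constant.
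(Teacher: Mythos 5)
The paper does not prove this lemma at all---it is stated as a direct quotation of Theorem~8 in the cited reference \cite{jackson31} and used as a black box in \prettyref{sec:smooth}. So there is no ``paper's own proof'' to compare against: the authors rely on the classical Jackson theory of best polynomial approximation of smooth functions.

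Your sketch is a correct outline of how results of this type are proved in the approximation-theory literature, and both of the routes you describe are standard. The rescaling to $[0,1]$ is handled correctly, and the Taylor-remainder computation together with the moment-vanishing / concentration properties of a Jackson kernel does produce the stated $k^{-p}\,\omega(h/(k-p))$ bound after using subadditivity of $\omega$. You also correctly spot that the iteration route $E_k(g) \le \frac{C}{k}E_{k-1}(g')$ iterated $p$ times, combined with the base Jackson bound, gives the estimate for $k>2p$ (since $k(k-1)\cdots(k-p+1) \ge (k/2)^p$ there), and that the finitely many values $p<k\le 2p$ can be handled by a Taylor polynomial and absorbed into $L_p$. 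The remaining work---constructing the algebraic Jackson kernel on a bounded interval with uniform-in-$x$ moment identities and boundary control, or proving the base Jackson inequality and the differentiation reduction lemma---is genuinely nontrivial and is exactly the content of the classical references; you flag this honestly rather than hand-waving past it. In short: your proposal reconstructs a proof the paper itself chose to omit by citation, and the high-level plan is sound; a fully self-contained argument would still need the kernel construction (or the two base lemmas) filled in, for which pointing to Jackson/Timan-type treatments is the appropriate resolution.
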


\begin{proof}[Proof of \prettyref{thm:eb_cts}]
    For each $k$, denote $\ell(\theta) = p_k(\theta) + r_k(\theta)$, 
    where $p_k(\theta) = \sum_{m = 0}^{\infty} c_m\theta^m$ is a degree-$k$ approximation of $\ell$. 
    The Bayes estimator $\tpiell$ has the following form, 
    thanks to the linearity of expectation: 
    \begin{align*}
      \tpiell(x) 
      &= \bbE_{\pi}[\ell(\theta) \mid X = x]
      = \bbE_{\pi}\left[\sum_{m = 0}^{\infty} c_m\theta^m + r_k(\theta) \mid X = x\right]
      \\&= \sum_{m = 0}^{k} c_m\bbE_{\pi}\left[\theta^m | x\right] + r_{\pi, k}(x)
      = \sum_{m = 0}^{k} c_m \hat{t}_{\pi, m}(x) + r_{\pi, k}(x)
    \end{align*}
    where $r_{\pi, k}(x)\triangleq \bbE_{\pi}\left[r_k(\theta) | X = x\right]$. 

    Now, suppose that for each $m\ge 1$ we have an estimator $\hat{T}_m$ of $\theta^m$. 
    We now consider the following estimate for $\ell$: 
    \[
    \hat{T}_{\ell, k} \triangleq \sum_{m = 0}^k c_m \hat{T}_m
    \]
    (given that $\hat{T}$ is $n$-letter to $n$-letter function, 
    we set the addition to be done component-wise). 
    Then we may bound the total regret of the estimator $\hat{T}_{\ell, k}$ in terms of 
    the total regret of $\hat{T}_{m}$ as follows: 
    \begin{flalign}\label{eq:regret_full_bound}
        \TotRegret_{\pi, \ell,n}(\hat{T}_{\ell, k})
        &=\bbE[||\hat{T}_{\ell, k} - \Tpiell||^2]
        \nonumber\\
        &=\sum_{i=1}^n \bbE[(\sum_{m = 0}^k c_m (\hat{T}_m(X_1^n)_i - \hat{t}_{\pi, m}(X_i))
        +r_{\pi, k}(X_i)
        )^2]
        \nonumber\\
        &\le \sum_{i=1}^n 2\bbE\left[\left(\sum_{m = 1}^k c_m (\hat{T}_m(X_1^n)_i - \hat{t}_{\pi, m}(X_i))\right)^2\right]
        +2\bbE[r_{\pi, k}(X_i)^2]
        \nonumber\\
        &\le 2(k + 1)\bbE\left[\sum_{m = 1}^K (c_m ||\hat{T}_m(X_1^n) - \hat{T}_{\pi, m}(X_1^n)||)^2\right]
        +2n\sup_{\theta\in [0, h]} r_k(\theta)^2
        \nonumber\\
        &= 2(k + 1)\sum_{m = 1}^k c_m^2\TotRegret_{\pi, m,n}(\hat{T}_{m})
        +2n\sup_{\theta\in [0, h]} r_k(\theta)^2
    \end{flalign}

    Next, note that $p_k(\theta) = \ell(\theta) - r_k(\theta)$, 
    so $|p_k(\theta)|\le \sup |\ell(\theta)| + \sup |r_k(\theta)| \le M(g, h) + \sup |r_k(\theta)|$. 
    and by \prettyref{lmm:poly_approx_coefs} applied to $p_k(\frac{\theta}{h})$, 
    for each $m$ we have 
    \[
    \frac{|c_m|}{h^m}\le \frac{k^m}{m!}\max_{|\theta|\le h} |p_k(\theta)|
    \]
    and therefore
    \begin{equation}\label{eq:poly_coef_bound}
    |c_m|\le \frac{k^mh^m}{m!}\max_{|\theta|\le h}|p_k(\theta)|
    \le e^k\max(1, h^k)(M + \sup |r_k(\theta)|). 
    \end{equation}
    Next, we recall that by the results we established in \prettyref{sec:upper_bound}, 
    there exist absolute constants $A, B$ such that for each $k$, there exists an estimator $\hat{T}_k$ such that 
    \[\TotRegret_{\pi, k,n}(\hat{T}_{k})
    \le Reg(k)\triangleq [(A+Bh)^{2k + 1}(\frac{\log n}{\log \log n} + k)^{k + 1}
    +\frac{(2h(A+Bh))^k\cdot (1 + 2k)^k \cdot (\frac{\log n}{\log \log n})^{k}}{n^2}]
    \]
    for each $k\ge 1$. 
    Consider taking $k = c\frac{\log n}{\log \log n}$ where $c < 1$ is to be determined later. 
    We may also consider $n$ sufficiently large such that $\sup_{\theta\in [0, h]} |r_k(\theta)|\le 1$. 
    This means we may continue from the last display of 
    \prettyref{eq:regret_full_bound} to establish 
    \begin{flalign*}
        \TotRegret_{\pi, \ell, n}(\hat{T}_{\ell, k})&\le  2(k + 1)\sum_{m = 1}^k c_m^2\TotRegret_{\pi, m, n}(\hat{T}_{m})
        +2n\sup_{\theta\in [0, h]} r_k(\theta)^2
        \\&\le 2(k + 1) e^{2k}\max(1, h^{2k})(M + 1)^2 
        \sum_{m = 1}^k Reg(m) + 2n\sup_{\theta\le h} r_k(\theta)^2
        \\&\le 2k(k + 1) e^{2k}\max(1, h^{2k})(M + 1)^2 \sup_{1\le m\le k} Reg(m) + 2n\sup_{\theta\in [0, h]} r_k(\theta)^2
    \end{flalign*}
    Now under this choice of $k$ we have the following asymptotic bound: 
    \[
    (\frac{\log n}{\log \log n} + k)^{k + 1} \le n^{c + O(\frac{1}{\log \log n})}
    \qquad 
     (2k + 1)^k \le n^{c + O(\frac{1}{\log \log n})}
    \]
    while other factors are at most $n^{O(\frac{1}{\log \log n})} \le n^{o(1)}$. 
    Therefore we have $Reg(m)\le n^{c + O(\frac{1}{\log \log n})}$ for each $m$ with $1\le m\le k$. 
    For the following class of functions, then, it suffices to analyze $\sup_{\theta\in [0, h]} r_k(\theta)^2$ 
    as this term is the limiting factor in bounding regret: 
    \begin{itemize}
        \item 
        $\ell^{(p)}\in \text{Lip}_{\alpha}(L)$ on $[0, h]$ where $0 < \alpha \le 1$. 
        We now bound the modulus of convergence of $\ell^{(p)}$, which is 
        \[
        \omega\left(\frac{h}{k - p}\right)
        = \sup_{|x - y|\le \frac{h}{k - p}} |\ell^{(p)}(x) - \ell^{(p)}(y)|
        \le L\left(\frac{h}{k - p}\right)^{\alpha}
        \]
        and by \prettyref{lmm:deriv_poly}, 
        there exists a polynomial $p_k$ that satisfies 
        $r_k(\theta)\le L'\frac{h^\beta}{k^p(k - p)^{\alpha}}\le 2L'\frac{h^\beta}{k^{\beta}}$ for all $k\ge 2p$. 
        Taking $k = c\frac{\log n}{\log \log n}$ would give the total regret bound 
        $O_{h,\beta, L}\left(n\left(\frac{\log \log n}{\log n}\right)^{2\beta}\right)$

        \item $\ell$ real analytic around $\theta = 0$ 
        with radius of convergence $R>h$: 
        we may now pick $R'$ with $h < R' < R$ such that there exists $C_1$ where $|a_m|\le C_1(R')^{-m}$. 
    Then we have 
    \[
    |r_k(\theta)|\le \sum_{m = k + 1}^{\infty} C_1(R')^{-m}h^m = \frac{C_1(R'')^{k + 1}}{1 - R''}
    \]
    where $R''\triangleq \frac{h}{R'} < 1$. 
    Picking $\hat{T}_{\ell}\triangleq \hat{T}_{\ell, k}$ with $k = c \frac{\log n}{\log \log n}$, 
    we have 
    $\TotRegret_{\pi, \ell, n}(\hat{T}_{\ell, k}) \le n^{1 - c - o(1)} + \frac{C_1(R'')^{c \frac{\log n}{\log \log n} + 1}}{1 - R''} \le O\left(n^{1-\frac{2c'}{\log \log n}}\right)$ with $c'\triangleq c\log(R'/h) > 0$. 
    \end{itemize}
    
    Finally, we consider the regime where $\ell$ has its coefficients decaying very rapidly, 
    where $\ell(\theta)\triangleq \sum_{m\ge 0} a_m\theta^m$ with $|a_m|\le (D_1m)^{-D_2m}$. 
    We note the following: there exists a constant $c_1\triangleq c_1(D_1, D_2, h)$ such that for all $k$, 
    \[
    \sup_{\theta\in [0, h]}r_k(\theta) \le \sum_{m > k} (D_1m)^{-D_2m}h^m \le c_1(D_1m)^{-D_2m}h^m. 
    \]
    
    For each $m\in O(\frac{\log n}{\log \log n})$, we now bound $c_m^2\TotRegret_{\pi, \ell, n}(\hat{T}_m)\le (D_1m)^{-2D_2m}Reg(m) := I_1(m) + I_2(m)$, where: 
    \[
    I_1(m) = (D_1m)^{-2D_2m}(A+Bh)^{2m + 1}(\frac{\log n}{\log \log n})^{m + 1}
    \]\[
    I_2(m) = \frac{1}{n^2}(D_1m)^{-2D_2m}(2h(A+Bh))^m\cdot (1+2m)^m \cdot (\frac{\log n}{\log \log n})^{m}
    \]
    Given the bound on $m$, 
    $D_1^{-2D_2m}(A+Bh)^{2m+1}\frac{\log n}{\log \log n} \le e^{O(\frac{\log n}{\log \log n})}\le n^{o(1)}$, 
    so $I_1(m)\le n^{o(1)} \cdot m^{- 2D_2m}(\frac{\log n}{\log \log n})^m$. 
    Similarly, $D_1^{-2D_2m}(2h(A + Bh))^m \le e^{O(\frac{\log n}{\log \log n})}\le n^{o(1)}$, \\
    so $I_2(m)\le n^{-2+o(1)}m^{(1 - 2D_2)m}(\frac{\log n}{\log \log n})^m$. 
    
    Next, we continue from \prettyref{eq:regret_full_bound} to obtain that for $k = c\frac{\log n}{\log \log n}$, 
    \begin{flalign*}
        \Regret_{\pi, \ell, n}(\hat{T}_{\ell, k})&\le  2(k + 1)\sum_{m = 1}^k c_m^2\TotRegret_{\pi, m}(\hat{T}_{m})
        +2n\sup_{\theta\in [0, h]} r_k(\theta)^2
        \\&\le 2k(k + 1)\sup_{1\le m\le k} (D_1m)^{-2D_2m}Reg(m) + 2n c_1^2(D_1k)^{-2D_2k}h^{2k}
        \\&\le 2k(k + 1)\sup_{1\le m\le k} n^{o(1)}\left(m^{- 2D_2}\frac{\log n}{\log \log n}\right)^m + n^{-2+o(1)}\left(m^{1 - 2D_2}\frac{\log n}{\log \log n}\right)^m
        \\&+2n c_1^2(D_1k)^{-2D_2k}h^{2k}
        \\&\stepa{\le} 2k(k + 1)\sup_{0\le c'\le c} n^{o(1)}\left((c')^{-2D_2}(\frac{\log n}{\log \log n})^{1 - 2D_2}\right)^{c'\frac{\log n}{\log \log n}} 
        \\&+ n^{-2+o(1)}\left((c')^{1 - 2D_2}(\frac{\log n}{\log \log n})^{2 - 2D_2}\right)^{c'\frac{\log n}{\log \log n}} 
        +2nc_1^2(D_1k)^{-2D_2k}h^{2k}
    \end{flalign*}
    where in (a) we parametrize $m = c'\frac{\log n}{\log \log n}$. 
    Note again that factors in terms of $(c')^m$ can be ignored as they are all $n^{o(1)}$. 

    We now split into two cases: 
    if $D_2\ge \frac 12$, then choosing $c = 1$ yields $I_1(m) \in n^{o(1)}$ for all $m\le k$ and 
    $I_2(m)\le n^{-2+o(1)+c} = n^{-2+o(1)}$, 
    and the residual term $r_k(\theta)$ are also bounded by $n^{-2D_2 + o(1)}\le n^{-1+o(1)}$, 
    giving the total regret bound as $n^{O(\frac{\log \log n}{\log n})}$. 

    If $0 < D_2 < \frac 12$, 
    then for each $m\le k$, $I_1(m)$ is bounded by $n^{c(1 - 2D_2) + o(1)}$, 
    $I_2(m)$ by $n^{-2+c(2 - 2D_2) + o(1)}$, 
    and $r_k(\theta)$ by $n^{-2cD_2+o(1)}$. Thus choosing $c = 1$ yields each of these bounded by $n^{1-2D_2+o(1)}$.

\end{proof}

 \section*{Acknowledgement}
 This work was supported in part by the MIT-IBM Watson AI Lab and the National Science
    Foundation under Grant No CCF-2131115. A.T. was supported by a fellowship from the Eric and Wendy Schmidt Center at the Broad Institute.

\bibliographystyle{alpha}
\bibliography{references}

\appendix

\section{Technical proofs for normal means model (lower bound)}\label{app:gaussian_lower}

We remind that in this section we choose prior to be $N(0, s)$, and $\eta=\sqrt{\frac{s}{s + 1}}$. As in~\cite[Proposition 8]{polyanskiy2021sharp}, we then have that $K$ acts as follows
$$ Kr(y) = (r(\eta\cdot) * \varphi)(\eta y) = \mathbb{E}_{\theta\sim \calN(\eta^2y, \eta^2)}[r(\theta)]\,. $$
In particular, we have the following first and second order identities: 
\[
(K\theta)(y) = \eta^2y\qquad (K\theta^2)(y) = \eta^4y^2+\eta^2. 
\]
 
    \begin{proof}[Proof of \prettyref{lmm:8_generalized}]
    We proceed by induction on $j$. 
    Choose any $r$ fulfilling \prettyref{eq:polybnd}. 
            For $j = 0$, both sides of \prettyref{eq:gsn_kop} are 0. 
            For $j = 1$, the claim is the same as 
    	$K_1r(y) = \eta^2K(r')(y)$, which is established in \cite[Proposition 8]{polyanskiy2021sharp} under a more restrictive condition on $r$. 
        To apply this to a broader class of $r$ like in \prettyref{eq:polybnd}, 
        note that 
        \begin{flalign}\label{eq:gsn_opj1}
            K_1r(y)
            &= (K\theta r)(y) - (K\theta)(Kr)(y)
            \nonumber\\&= \frac{1}{\eta}\int\theta r(\theta)\varphi(\frac{\theta}{\eta} - \eta y)d\theta
            -(K\theta)\cdot 
            (\frac{1}{\eta}\int r(\theta)\varphi(\frac{\theta}{\eta} - \eta y))d\theta
            \nonumber\\&\stepa{=} \frac{1}{\eta}
            \int r(\theta) (\theta - \eta^2 y)\varphi(\frac{\theta}{\eta} - \eta y)d\theta
            \nonumber\\&\stepb{=}-\eta 
            \int r(\theta) \frac{d}{d\theta}(\varphi(\frac{\theta}{\eta} - \eta y))d\theta
            \nonumber\\&\stepc{=}\eta \int r'(\theta)\varphi(\frac{\theta}{\eta} - \eta y)d\theta
            \nonumber\\&=\eta^2 Kr'(y)
        \end{flalign}
        where (a) is due to that 
        $(K\theta)(y)=\eta^2 y$ is the posterior mean of $\theta$ given $y$, 
        (b) is using that $\frac{d}{d\theta}\varphi\left(\frac{\theta}{\eta} - \eta y\right) = \frac{-\theta + \eta^2 y}{\eta^2}\varphi(\frac{\theta}{\eta} - \eta y)$, 
        and (c) is integration by parts and the fact 
        \[
        \lim_{\theta\to\infty}r(\theta)\varphi\left(\frac{\theta}{\eta} - \eta y\right)
        =\lim_{\theta\to -\infty}r(\theta)\varphi\left(\frac{\theta}{\eta} - \eta y\right) = 0
        \]
        due to $r\in\mathcal{O}^{(k)}\subseteq \mathcal{O}^{(0)}$ as defined in 
        \prettyref{eq:polybnd}. 
        Moreover, given that $r'\in \mathcal{O}^{(k - 1)}$, we have $op_1(r)\in \mathcal{O}^{(k - 1)}$. 

        Now suppose that our claim at \prettyref{eq:gsn_kop} holds for some $j$ where $1\le j\le k - 1$. 
        To extend this claim to $j + 1$, we note the following: 
        \[
        K(\theta^{j + 1}) - (K\theta)(K\theta^j) = K_1(\theta^j) = j\eta^2K(\theta^{j - 1})
        \]
        so 
        $$ op_{j + 1}(\cdot) = j\eta^2 op_{j-1}(\cdot) + op_1( op_j(\cdot))
        =j\eta^2 op_{j-1}(\cdot) + (\theta\cdot - \eta^2 \partial\cdot)op_j(\cdot)\,$$
        where in the last equation we used the fact that $op_j(r)\in \mathcal{O}^{(k - j)}\subseteq \mathcal{O}^{(0)}$ and therefore 
        \prettyref{eq:gsn_opj1} can be applied to $op_j(r)$. 
        In addition, 
        we have both $op_{j-1}(r)$ and $(\theta\cdot - \eta^2 \partial\cdot)op_j(r)\in \mathcal{O}^{(k - j - 1)}$, 
        hence $op_{j + 1}(r)\in \mathcal{O}^{(k - j - 1)}$. 
        
        To derive~\eqref{eq:opj_topr} from here we could plug back in the formula \prettyref{eq:opj_gsn} for $op_j$ and $op_{j - 1}$.
        Instead, we note that in \prettyref{eq:opj_gsn}, the coefficient of $(\theta\cdot)^i (-\eta^2\partial\cdot)^{j-i}$ is equal to that of $X^iY^{j-i}$ in the expansion $(X - \eta^2Y)^{j}$ for real (scalars) $X$ and $Y$ 
        (assuming $X$ and $Y$ are independent variables). 
        We observe that 
        \[
        \frac{\partial}{\partial X}[(X - \eta^2Y)^j] + (X - \eta^2Y)^jY
        =j(X - \eta^2Y)^{j-1} + (X - \eta^2Y)^jY
        \]
        i.e. $\partial \cdot op_j = jop_{j-1} + op_j\partial\cdot$. 
        It then follows that we have 
        \[
        op_{j + 1} = j\eta^2 op_{j-1} + (\theta\cdot - \eta^2 \partial\cdot)op_j
        =j\eta^2 op_{j-1} +  \theta\cdot op_j - \eta^2(jop_{j-1} + op_j\partial\cdot)
        =\theta\cdot op_j - op_j\partial\cdot
        \]
        as desired. 
        Finally, going from \prettyref{eq:opj_gsn} to \prettyref{eq:gsn_kop} is immediate by 
        \prettyref{eq:kk_vs_opk}.

    \end{proof}

    To proceed, we consider the following system of Hermite polynomials, orthogonal in $L_2(\mathcal{N}(0,1))$: 
    \[
    H_k(y) = (-1)^ke^{y^2/2}\frac{d^k}{dy^k}e^{-y^2/2}
    \]
    Next, denote the following quantities: 
    \[
    \rho = \frac{s}{s + 1} = \eta^2\qquad \lambda_1 = \frac{1}{2\pi s} \frac{1 + s}{\sqrt{1 + 2s}}
    \qquad \lambda_2 = \frac{(1 + s)^2}{s(1 + 2s)}
    \qquad \rho_1 = \sqrt{1 - \rho^2} 
    \]
    \begin{equation}\label{eq:gsn_notation_def}
    \alpha_1 = \sqrt{2\lambda_2\rho_1}
    \qquad 
    \mu = \frac{\rho}{1 + \rho_1}\qquad \lambda_0 = \frac{1}{\sqrt{2\pi s(1 + \rho_1)}}
    \end{equation}
    Denote, also, the following function: 
    \[
    \psi_q(\theta)\triangleq \sqrt{\frac{\alpha_1}{q!}} H_q(\alpha_1 \theta)\sqrt{\varphi(\alpha_1 \theta)}
    \]
    where we recall that $\varphi(x) = \frac{1}{\sqrt{2\pi}}e^{-\frac{x^2}{2}}$ denotes the standard normal density of $x$. 
    Then the self-adjoint operator $S=K^*K$ satisfies $S\psi_q = \lambda_0 \mu^q\psi_q$ by \cite[(85)]{polyanskiy2021sharp}, 
    and $\{\psi_q\}_{q\ge 1}$ forms an orthonormal basis of $S$ \cite[(82)]{polyanskiy2021sharp}. 
    This indeed follows from the orthogonality of $H_i$ \cite[7.374]{greenshtein2009asymptotic}. 

    We now consider the following identity, 
    which says that both $\theta^j \psi_q$ and $op_j(\psi_q)$ can be written as linear combinations of $\psi_{q - j}, \cdots, \psi_{q + j}$, and in addition the coefficients at $\psi_{q - j}$ and $\psi_{q + j}$ can be easily identified. 
    \begin{lemma}\label{lmm:gsn_xmpsiqsimple}
        Let $q, j\ge 0$ such that $j\le q$. 
        Then there exist constants $c_1(\cdot, \cdot, \cdot)$ and $c_2(\cdot, \cdot, \cdot)$ such that 
        \begin{equation}\label{eq:thetaj_psiq}
        \theta^j \psi_q = \sum_{i = -j}^j c_1(q, j, i)\psi_{q + i}
        \qquad 
        op_j(\psi_q) = \sum_{i = -j}^j c_2(q, j, i)\psi_{q + i}
        \end{equation}
        satisfying the following: 
        \begin{equation}\label{eq:psiq_c1}
        c_1(q, j, j) = \frac{\sqrt{(q + 1)_{j}}}{\alpha_1^j}
        \qquad 
        c_1(q, j, -j) = \frac{\sqrt{(q - j + 1)_{j}}}{\alpha_1^j}
        \end{equation}
        \begin{equation}\label{eq:psiq_c2}
        c_2(q, j, j) = \sqrt{(q + 1)_{j}}\left(\frac{1}{\alpha_1} + \frac{\eta^2\alpha_1}{2}\right)^j
        \qquad c_2(q, j, -j) = \sqrt{(q - j + 1)_{j}}\left(\frac{1}{\alpha_1} - \frac{\eta^2\alpha_1}{2}\right)^j
        \end{equation}
    \end{lemma}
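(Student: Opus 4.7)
The plan is to reduce both identities to a calculation on the two ``ladder'' operators $A \coloneq \theta\,\cdot$ and $B \coloneq -\eta^2\partial\,\cdot$, and then assemble $op_j$ via the closed form \prettyref{eq:opj_gsn}, namely $op_j = \sum_{i=0}^j \binom{j}{i} A^i B^{j-i}$. The only Hermite-specific input is the pair of recurrences $xH_q(x)=H_{q+1}(x)+qH_{q-1}(x)$ and $H_q'(x)=qH_{q-1}(x)$. Combining these with $\varphi'(x)=-x\varphi(x)$ and the definition $\psi_q=\sqrt{\alpha_1/q!}\,H_q(\alpha_1\theta)\sqrt{\varphi(\alpha_1\theta)}$ gives, after straightforward algebra,
\[
A\psi_q = \tfrac{1}{\alpha_1}\bigl(\sqrt{q+1}\,\psi_{q+1}+\sqrt{q}\,\psi_{q-1}\bigr), \qquad \psi_q' = \tfrac{\alpha_1}{2}\bigl(\sqrt{q}\,\psi_{q-1}-\sqrt{q+1}\,\psi_{q+1}\bigr),
\]
whence $B\psi_q = b\sqrt{q+1}\,\psi_{q+1} - b\sqrt{q}\,\psi_{q-1}$ with $a\coloneq 1/\alpha_1$ and $b\coloneq \eta^2\alpha_1/2$. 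Both $A$ and $B$ are tridiagonal shifts by $\pm 1$ with explicit square-root weights.

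For the first identity in \prettyref{eq:thetaj_psiq}, iterating the tridiagonal action of $A$ proves by induction that $A^j\psi_q$ lies in $\mathrm{span}\{\psi_{q-j},\psi_{q-j+2},\dots,\psi_{q+j}\}$, which establishes the support claim. The two extreme coefficients are read off by tracking the unique monotone walk through the recurrence: for $\psi_{q+j}$, at every step one must take the ``$+1$'' branch, and the weights telescope as $\prod_{i=0}^{j-1} a\sqrt{q+i+1} = a^j\sqrt{(q+1)_j}$, matching $c_1(q,j,j)$ in \prettyref{eq:psiq_c1}. The corresponding ``all-down'' walk yields $c_1(q,j,-j) = a^j\sqrt{(q-j+1)_j}$.

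For the second identity, the expansion $op_j = \sum_i \binom{j}{i} A^i B^{j-i}$ reduces the problem to computing the coefficient of $\psi_{q\pm j}$ in each summand $A^i B^{j-i}\psi_q$. Since both $A$ and $B$ shift by $\pm 1$, every such summand is supported on $\{\psi_{q-j},\dots,\psi_{q+j}\}$, which settles the support claim in \prettyref{eq:thetaj_psiq}. The coefficient of $\psi_{q+j}$ in $A^i B^{j-i}\psi_q$ comes only from the unique all-up path: the $B^{j-i}$ factor produces $b^{j-i}\sqrt{(q+1)_{j-i}}\,\psi_{q+j-i}$, and then $A^i$ along the up branch produces $a^i\sqrt{(q+j-i+1)_i}\,\psi_{q+j}$. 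The crucial observation is that the two Pochhammer factors combine into $\sqrt{(q+1)_j}$, independent of $i$, so the binomial sum closes:
\[
c_2(q,j,j) = \sqrt{(q+1)_j}\sum_{i=0}^j \binom{j}{i} a^i b^{j-i} = \sqrt{(q+1)_j}\,(a+b)^j,
\]
matching \prettyref{eq:psiq_c2}. For $c_2(q,j,-j)$ the argument is identical on the all-down walk, where each $B$ factor contributes a sign, so the binomial sum collapses to $(a-b)^j\sqrt{(q-j+1)_j}$.

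The main obstacle is purely bookkeeping: one must verify that the extreme walks are the unique contributions to $\psi_{q\pm j}$ in each $A^i B^{j-i}\psi_q$, and that the square-root factors always telescope into a single Pochhammer symbol independent of the mixing index $i$ --- this $i$-independence is precisely what lets the binomial theorem close the sum and produce the clean powers $(a\pm b)^j$. The intermediate coefficients $c_1(q,j,i), c_2(q,j,i)$ for $|i|<j$ are not required by the lemma and do not need to be computed.
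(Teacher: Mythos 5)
Your proof is correct, and it takes a genuinely different route from the paper. The paper proves the lemma by a fresh induction on $j$: it expands $op_{j+1}(\psi_q)=\theta\,op_j(\psi_q)-\eta^2\,op_j(\psi_q')$ using the three-term recurrences for $\theta\psi_q$ and $\psi_q'$, and then tracks how the extreme coefficients propagate through the recursion (note the inductive step for $c_2(q,j+1,j+1)$ involves $c_2(q+1,j,j)$, so the induction must be carried out uniformly over all $q$). You instead take the closed form $op_j=\sum_{i=0}^j\binom{j}{i}A^iB^{j-i}$ from \prettyref{eq:opj_gsn} as given and read off the coefficients of $\psi_{q\pm j}$ directly, exploiting that $A=\theta\cdot$ and $B=-\eta^2\partial\cdot$ are both tridiagonal shifts, so only the monotone walks reach the extremes. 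The crucial observation you make --- that $\sqrt{(q+1)_{j-i}}\cdot\sqrt{(q+j-i+1)_i}=\sqrt{(q+1)_j}$ telescopes independently of the split index $i$, and likewise on the down side --- is precisely what lets the binomial theorem collapse the sum to $(a\pm b)^j$ in one line, bypassing the paper's coefficient recursion. One remark worth keeping in mind: the binomial closed form is legitimate to invoke even though $A$ and $B$ do not commute ($[A,B]=\eta^2\,\mathrm{Id}$), because it is generated by the one-sided recursion $op_{j+1}=A\circ op_j+op_j\circ B$ rather than by $(A+B)^j$; this is already established in \prettyref{lmm:8_generalized}, so your argument is self-contained modulo that lemma. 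The net effect is a cleaner and more structural derivation than the paper's; it costs you a heavier reliance on \prettyref{eq:opj_gsn} as a black box, whereas the paper's induction runs in parallel with the derivation of \prettyref{eq:opj_topr}.
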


    \begin{proof}
        We first quickly check that for any $q\ge 0$ and any $j\ge 0$, $\theta^j\psi_q\in \mathcal{O}^{(k)}$ for all $k\ge 0$: indeed, $H_q(y)$ is a polynomial in $y$ with degree $q$. 
        It follows that the recursion from \prettyref{eq:opj_topr} can be used on $\theta^j\psi_q$. 
    
        We invoke the following two identities: 
        $\theta H_q(\theta) = H_{q+1}(\theta) + qH_{q - 1}(\theta)$ \cite[8.952.2]{gradshteyn2014table}, 
        and $H_q'(\theta) = qH_{q - 1}(\theta)$ \cite[8.952.1]{gradshteyn2014table}. 
        This gives the following as a starting point, as established in \cite{polyanskiy2021sharp} (between (85) and (86)): 
        \begin{equation}\label{eq:psiq_prime}
        \psi_q' = \frac{\alpha_1}{2}[\sqrt{q}\psi_{q - 1} - \sqrt{q + 1}\psi_{q + 1}]
        \end{equation}
        and 
        \begin{align}\label{eq:theta_psiq}
        \theta\psi_q(\theta) &= \sqrt{\frac{\alpha_1}{q!}}\theta H_q(\alpha_1\theta)\sqrt{\varphi (\alpha_1\theta)}
    		\nonumber\\&=\frac{1}{\alpha_1}\left(\sqrt{\frac{\alpha_1}{q!}}H_{q + 1}(\alpha_1\theta)\sqrt{\varphi(\alpha_1\theta)}
    		+\sqrt{\frac{\alpha_1}{q!}}qH_{q - 1}(\alpha_1\theta)\sqrt{\varphi(\alpha_1\theta)}\right)
    	\nonumber\\&=\frac{1}{\alpha_1}(\sqrt{q + 1}\psi_{q + 1}(\theta) + \sqrt{q}\psi_{q - 1}(\theta))
    \end{align}

    We first tackle $\theta^j\psi_q$, 
    the case $j = 1$ was shown in \prettyref{eq:theta_psiq}, 
    and if \prettyref{eq:thetaj_psiq} and \prettyref{eq:psiq_c1} for some $j\ge 1$, then 
    \[
    \theta^{j+1}\psi_q(\theta)
    =\theta\left(\sum_{i = -j}^j c_1(q, j, i)\psi_{q + i}\right)
    =\frac{1}{\alpha_1}\left(\sum_{i = -j}^j c_1(q, j, i)(\sqrt{q + i + 1}\psi_{q + i + 1}+ \sqrt{q + i}\psi_{q + i - 1})\right)
    \]
    which verifies that $\theta^{j+1}\psi_q$ is indeed linear combinations of $\psi_{q-j-1}, \cdots, \psi_{q+j+1}$. 
    In addition $c_1(q, j+1, j+1)$ and $c_1(q, j + 1, -(j+1))$ can be obtained via 
    \[
    c_1(q, j+1, j+1) = \frac{1}{\alpha_1} c_1(q, j, j) \sqrt{q+j+1} = \frac{\sqrt{q+j+1}}{\alpha_1}\cdot \frac{\sqrt{(q + 1)_j}}{\alpha_1^j} = \frac{\sqrt{(q + 1)_{j+1}}}{\alpha_1^{j+1}}
    \]
    \[
    c_1(q, j+1, -(j+1)) = \frac{1}{\alpha_1} c_1(q, j, -j) \sqrt{q-j} = \frac{\sqrt{q-j}}{\alpha_1}\cdot \frac{\sqrt{(q - j + 1)_j}}{\alpha_1^j} = \frac{\sqrt{(q - j)_{j+1}}}{\alpha_1^{j+1}}
    \]
    as desired. 

    Now to tackle $op_j(\psi_q)$, we again rely on recursion and note the starting point $c_2(q, 0, 0) = 1$. 
    Assume that \prettyref{eq:thetaj_psiq} and \prettyref{eq:psiq_c2} holds for some $j\ge 0$, and for all $q\ge j$. 
    Then we use $op_{j+1}(\psi_q) = \theta op_j(\psi_q) - \eta^2 op_j(\psi_q')$ to get 
    \begin{flalign*}
        op_{j+1}(\psi_q) &\stepa{=} \theta op_j(\psi_q) - \eta^2 op_j(\psi_q')
        \nonumber\\&\stepb{=} \theta op_j(\psi_q) - \frac{\alpha_1}{2}\sqrt{q}\cdot \eta^2 op_j(\psi_{q-1})
        + \frac{\alpha_1}{2}\sqrt{q + 1}\cdot \eta^2 op_j(\psi_{q+1})
        \nonumber\\&=\sum_{i=-j}^j c_2(q, j, i)\theta\psi_{q+i}
        - \frac{\alpha_1\eta^2}{2}\sqrt{q} \sum_{i=-j}^j c_2(q - 1, j, i)\psi_{q + i - 1}
        + \frac{\alpha_1}{2}\sqrt{q + 1}\cdot \eta^2 \sum_{i=-j}^j c_2(q + 1, j, i)\psi_{q + i + 1}
        \nonumber\\&=\sum_{i=-j}^j 
        \left[c_2(q, j, i)\frac{1}{\alpha_1}\left(\sqrt{q+i+1}\psi_{q+i + 1} + \sqrt{q+i}\psi_{q+i-1}\right)\right.
        \nonumber\\&+\left.\frac{\alpha_1\eta^2}{2}\left(\sqrt{q+1}c_2(q+1, j, i)\psi_{q+i+1} - \sqrt{q}c_2(q - 1, j, i)\psi_{q-i-1}\right)
        \right]
    \end{flalign*}
    where (a) is due to the recursion in \prettyref{eq:opj_topr}, 
    (b) is due to formula in \prettyref{eq:psiq_prime}. 
    It follows that $op_{j+1}(\psi_q)$ is linear combination of $\psi_{q-j-1}, \cdots, \psi_{q+j+1}$. 
    In addition, the coefficients $c_2(q, j + 1, j + 1)$ and $c_2(q, j + 1, -(j + 1))$ can be determined via the following recursion formula (and using induction hypothesis for $j$)
    \begin{flalign*}
    c_2(q, j + 1, j + 1) 
    &= c_2(q, j, j)\cdot \frac{1}{\alpha_1}\sqrt{q+j + 1}
    + \frac{\alpha\eta^2}{2}\sqrt{q+1}c_2(q+1, j, j)
    \\&=\sqrt{(q + 1)_{j}}\left(\frac{1}{\alpha_1} + \frac{\eta^2\alpha_1}{2}\right)^j\frac{1}{\alpha_1}\sqrt{q+j + 1}
    + \frac{\alpha_1\eta^2}{2}\sqrt{(q + 2)_{j}}\left(\frac{1}{\alpha_1} + \frac{\eta^2\alpha_1}{2}\right)^j\frac{1}{\alpha_1}\sqrt{q + 1}
    \\&=\sqrt{(q+1)_{j+1}}\left(\frac{1}{\alpha_1} + \frac{\eta^2\alpha_1}{2}\right)^{j+1}
    \end{flalign*}
    \begin{flalign*}
    c_2(q, j + 1, -(j + 1)) 
    &= c_2(q, j, -j)\cdot \frac{1}{\alpha_1}\sqrt{q-j}
    - \frac{\alpha_1\eta^2}{2}\sqrt{q}c_2(q-1, j, -j)
    \nonumber\\&=\sqrt{(q - j + 1)_j}\left(\frac{1}{\alpha_1} - \frac{\eta^2\alpha_1}{2}\right)^{j}\cdot \frac{1}{\alpha_1}\sqrt{q-j}
    - \frac{\alpha_1\eta^2}{2}\sqrt{(q - j)_j}\sqrt{q}\left(\frac{1}{\alpha_1} - \frac{\eta^2\alpha_1}{2}\right)^{j}
    \nonumber\\&=\sqrt{(q - j)_{j + 1}}\left(\frac{1}{\alpha_1} - \frac{\eta^2\alpha_1}{2}\right)^{j + 1}
    \end{flalign*}
    
    \end{proof}
    
    \begin{proof}[Proof of \prettyref{lmm:9_generalized}]
    We bound $||\psi_q||_{\infty}$ to start with. 
    This is shown in the end of proof of \cite[Lemma 9]{polyanskiy2021sharp}, 
    using Cramer's inequality \cite[8.954.2]{gradshteyn2014table} that for each $y$, 
    $|H_k(y)|\le c\sqrt{k!}e^{y^2/4}$ for some absolute constant $c$, and therefore 
    \[
    |\psi_q(y)| \le c\sqrt{\frac{\alpha_1}{q!}}\sqrt{q!}e^{(\alpha_1 y)^2/4}\varphi(\alpha_1 y)
    \le \frac{c\sqrt{\alpha_1}}{\sqrt{2\pi}}e^{(\alpha_1 y)^2/4 - (\alpha_1 y)^2/2}
    \le \frac{c\sqrt{\alpha_1}}{\sqrt{2\pi}}
    \]

Next, we consider the following for all $q\ge k$: 
\begin{flalign*}
    (K_k\psi_q, K_k\psi_q)
    &= \| K(\theta^k \psi_q - op_k(\psi_q))\|^2
    \nonumber\\&= \| K(\theta^k \psi_q - op_k(\psi_q))\|^2
    \nonumber\\&= \|(\theta^k \psi_q - op_k(\psi_q))\|^2_S
    \nonumber\\&\stepa{=} \|\sum_{i=-k}^k (c_1(q, k, i) - c_2(q, k, i)) \psi_{q + i}\|^2_S
    \nonumber\\&\stepb{=} \sum_{i=-k}^k (c_1(q, k, i) - c_2(q, k, i))^2 \|\psi_{q + i}\|^2_S
    \nonumber\\&\ge (c_1(q, k, -k) - c_2(q, k, -k))^2\|\psi_{q - k}\|^2_S + (c_1(q, k, k) - c_2(q, k, k))^2\|\psi_{q + k}\|^2_S
    \nonumber\\&= \frac{(q-k+1)_k}{\alpha_1^{2k}}\left(1 - (1 - \frac{\eta^2\alpha_1^2}{2})^k\right)^2\lambda_0\mu^{q - k} + \frac{(q+1)_k}{\alpha_1^{2k}}\left(1 - (1 + \frac{\eta^2\alpha_1^2}{2})^k\right)^2\lambda_0\mu^{q + k}
\end{flalign*}
where (a) is due to \prettyref{lmm:gsn_xmpsiqsimple} which shows that both $\theta^k\psi_q$ and $op_k(\psi_q)$ are linear combinations of $\psi_{q-k}, \cdots, \psi_{q+k}$, 
and (b) is due to the orthogonality of $\{\psi_q\}_{q\ge 1}$ in $S$-norm. 
Note that the expansion of $K_k(\psi_q)$ into linear combinations of $\psi_{q-k}, \cdots, \psi_{q+k}$ also implies that $K_k(\psi_q), K_k(\psi_r)$ are orthogonal in $S$-norm whenever $|q - r|\ge 2k + 1$. 

        Finally, 
        by the definitions in \prettyref{eq:gsn_notation_def}, 
        for all $s$ with $0 < s < \frac 12$, we have $0\le \rho\le\frac 13$ and $\sqrt{\frac 89}\le \rho_1\le 1$ 
        (hence $\rho_1\asymp 1$). Therefore, 
        \[
        \lambda_0\asymp\frac{1}{\sqrt{s}}, 
        \mu = \frac{s}{(s + 1)(1+\rho_1)}\asymp s, 
        \alpha_1 = (1+s)\sqrt{2\rho_1\frac{1}{s(1+2s)}}
        \asymp s^{-1/2}
        \]
        as required. 
        For the term $c(s, k) = 1 - (1 - \frac{\eta^2\alpha_1^2}{2})^k$ we note the following: 
        \[
        \eta^2\alpha_1^2 = \frac{s}{1 + s}\cdot 2 \frac{(1 + s)^2}{s (1 + 2s)}\sqrt{1 - (\frac{s}{1 + s})^2}
        =\frac{2(1 + s)}{1 + 2s}\sqrt{1 - (\frac{s}{1 + s})^2}
        \]
        We note that $\eta^2\alpha_1^2 < 2$ for all $s > 0$.  
        On the other hand, for $0 < s < \frac 12$, the expression above gives $\eta^2\alpha_1^2\ge \frac{4}{3}\sqrt{\frac{3}{4}}> 1$, 
        hence showing that $ 1 - (1 - \frac{\eta^2\alpha_1^2}{2})^k\ge 1 - (1 - \frac{\eta^2\alpha_1^2}{2}) = \frac{\eta^2\alpha_1^2}{2} > \frac 12$, as desired. 
    \end{proof}

    \section{Technical proofs for Poisson model (lower bound)}\label{app:poisson_lower}
    Before proceeding to the proofs, we recall that given $G_0 = \text{Gamma}(\alpha, \beta)$ and the posterior $\theta | y$ is $\text{Gamma}(\alpha + y, \beta + 1)$. Consequently, we have $K$ acting on $r$ as 
    \[
      Kr(y) = \int_{\theta \ge 0} \frac{(1 + \beta)^{y + \alpha - 1}}{\Gamma(y + \alpha)}\theta^{y + \alpha - 1}e^{-(1+\beta)\theta}  r(\theta)d\theta.
    \]
    In other words, $Kr(y) = \mathbb{E}[r(\theta)]$ where $\theta \sim \text{Gamma}(\alpha + y, \beta + 1)$, 
    so $K(\theta)(y) = \frac{y+ \alpha}{1 + \beta}$. 
    \begin{proof}[Proof of \prettyref{lmm:Kkr}]
      We first prove \prettyref{eq:opj_poisson_recur}, where \prettyref{eq:opj_poisson} follows immediately. 
      Note that for $j = 1$ the statement is equivalent to showing $op_1(r) = \theta r - \frac{r'}{1 + \beta}$, 
      which \cite[(56)]{polyanskiy2021sharp} shows for a more restrictive class of $r$. 
      To apply this to a broader class of $r$ (as stated in \prettyref{eq:polybnd}), we note the following is satisfied for all $r\in \mathcal{O}^{(k)}\subseteq \mathcal{O}^{(0)}$: 
      \begin{equation}\label{eq:theta_lim_gamma}
      \lim_{\theta\to\infty}\theta^{y + \alpha}e^{-(1+\beta)\theta}r(\theta) = 0
      \end{equation}
      and $\theta^{y + \alpha} = 0$ when $\theta = 0$ since $\alpha > 0$. 
      Therefore: 
      \begin{flalign*}
          K(\theta r')(y) &= \int_{\theta \ge 0} \frac{(1 + \beta)^{y + \alpha - 1}}{\Gamma(y + \alpha)}\theta^{y + \alpha}e^{-(1+\beta)\theta}  r'(\theta)d\theta
          \\&\stepa{=}-\int_{\theta \ge 0} \frac{(1 + \beta)^{y + \alpha - 1}}{\Gamma(y + \alpha)}(\theta^{y + \alpha}e^{-(1+\beta)\theta})'  r(\theta)d\theta
          \\&=-\int_{\theta \ge 0} \frac{(1 + \beta)^{y + \alpha - 1}}{\Gamma(y + \alpha)}e^{-(1+\beta)\theta}((y+\alpha)\theta^{y+\alpha-1} - (1+\beta)\theta^{y + \alpha})  r(\theta)d\theta
          \\&=  (1+\beta)K(\theta r)(y) - (y+\alpha) Kr(y)
          \\&\stepb{=} (1+\beta)K(\theta r)(y) - (1+\beta) K(\theta)Kr(y)
      \end{flalign*}
      where (a) is integration by parts and \prettyref{eq:theta_lim_gamma}, 
      and (b) is given that $K(\theta)(y) = \frac{y + \alpha}{1 + \beta}$, the posterior mean. 
      This effectively establishes that 
      $op_1(\cdot) = \theta(\cdot) - \frac{1}{1 + \beta}\theta(\partial \cdot) = \theta(1 - \frac{1}{1 + \beta}\partial\cdot)$. 

      Next, suppose our claim holds for some $j \ge 1$.
      We now consider the following: 
      \[
      K(\theta^{j+1})
      =K_1(\theta^j) + K(\theta)K(\theta^j)
      =\frac{j}{1+\beta}K(\theta^j) + K(\theta)K(\theta^j)
      \]
      where the second equality is due to \cite[(56)]{polyanskiy2021sharp}, $(\theta^j)' = j\theta^{j-1}$, 
      and the function $\theta\to \theta^j$ also satisfies \prettyref{eq:polybnd}. 
      This gives the following: 
      \[
      op_{j+1}(r) = \frac{j}{1+\beta}op_j(r) + op_1(op_j(r)), 
      \]

      Next, we note the following identity: 
      for any real number $c$ and $j\ge 1$, we have 
      \begin{equation}\label{eq:diff-theta-op}
      (1 + c\partial\cdot )^j (\theta\cdot)
      =\theta\cdot (1 + c\partial\cdot )^j
      +jc(1 + c\partial\cdot )^{j - 1}. 
      \end{equation}
      Indeed, this can also be established via induction. 
      Note that $(\theta r)' = r + \theta r'$, 
      so $\partial\cdot\theta\cdot = 1 + \theta\cdot\partial\cdot$. 
      Therefore, for $j = 1$ we have 
      \[
      (1 + c\partial\cdot)(\theta\cdot) = \theta\cdot + c\partial\cdot\theta\cdot
      =\theta\cdot + c + c\theta\cdot\partial\cdot
      =\theta\cdot(1 + c\partial\cdot) + c
      \]
      Now suppose \prettyref{eq:diff-theta-op} holds for all some $j\ge 1$, then 
      \begin{flalign*}
          (1 + c\partial\cdot )^{j+1} (\theta\cdot)(r)
        &=(1 + c\partial\cdot)
        [(\theta\cdot (1 + c\partial\cdot )^j
        + jc(1 + c\partial\cdot )^{j - 1}]
        \\&=(1 + c\partial\cdot)(\theta\cdot)(1 + c\partial\cdot )^j
        + jc(1 + c\partial\cdot )^{j}
        \\&=(\theta\cdot(1 + c\partial\cdot) + c)(1 + c\partial\cdot)^j
        + jc(1 + c\partial\cdot )^{j}
        \\&=\theta\cdot(1 + c\partial\cdot)^{j+1} + (j + 1)c(1 + c\partial\cdot )^{j}
      \end{flalign*}
      which finishes the induction hypothesis. 
      
      Therefore, we now have: 
      \begin{flalign*}
      op_{j+1}(\cdot) 
      &= \frac{j}{1+\beta}op_j + op_1(op_j)
      \nonumber\\
      &\stepa{=}\frac{j}{1+\beta}
      \left(\theta^j\cdot \left(1 - \frac{1}{1 + \beta}\partial\cdot\right)^j \right)
      +\theta^j\cdot \left(1 - \frac{1}{1 + \beta}\partial\cdot\right)^j \theta\cdot \left(1 - \frac{1}{1 + \beta}\partial\cdot\right)
      \nonumber\\
      &=\frac{j}{1+\beta}
      \left(\theta^j\cdot \left(1 - \frac{1}{1 + \beta}\partial\cdot\right)^j \right)
      +\theta^j\cdot \theta\cdot\left(1 - \frac{1}{1 + \beta}\partial\cdot\right)^j  \left(1 - \frac{1}{1 + \beta}\partial\cdot\right)
      \nonumber\\
      &-\frac{j}{1+\beta}\theta^j\cdot \left(1 - \frac{1}{1 + \beta}\partial\cdot\right)^{j-1}\cdot \left(1 - \frac{1}{1 + \beta}\partial\cdot\right)
      \nonumber\\
      &=\theta^{j+1}\cdot \left(1 - \frac{1}{1+\beta}\partial\cdot\right)^{j+1}
      \end{flalign*}
      where in (a) we used the commutativity of $op_1$ and $op_j$, 
      and $op_1(r)$ fulfills \prettyref{eq:polybnd}.
    \end{proof}

        We now construct the functions to be used in \prettyref{lmm:7_generalized}. 
        Like \cite[Appendix B]{polyanskiy2021sharp} we define $S \overset{\Delta}{=}K^*K$ and $S_k \overset{\Delta}{=}K_k^*K_k$, 
        satisfying the following as per \cite[(89)]{polyanskiy2021sharp}: 
        \[
        (Kf, Kg)_{L_2(\mathbb{Z}_+, f_0)} = (Sf, g)_{L_2(\mathbb{R}_+, \text{Leb})}. 
        \]
        Next, denote $L_q^{\nu}$ using the generalized Laguerre polynomials for all $q\ge 1$, c.f. \cite[(8.970)]{gradshteyn2014table}. 
        Recall that $\alpha, \beta$ such that the prior $G_0$ we have chosen is the Gamma prior $\text{Gamma}(\alpha, \beta)$. Based on these two quantities, we define the following symbols: 
        \begin{equation}\label{eq:poisson-beta-z}
            z=(\sqrt{1+\beta}-\sqrt\beta)^2, 
            \quad 
            \gamma_2=2\sqrt{\beta(1+\beta)},\quad \gamma := \gamma_1 = \frac{\gamma_2}{2},\quad \nu = \alpha - 1. 
        \end{equation}
        and in addition, let $\Gamma_q^{\nu}(\theta) = e^{-\gamma_1\theta}L_q^\nu(\gamma_2 \theta) = e^{-\gamma\theta}L_q^{\nu}(2\gamma\theta)$. 
        By \cite[(101)]{polyanskiy2021sharp}, 
        the system of functions $\{\Gamma_q\}_{q\ge 1}$ satisfies 
        \begin{align}\label{eq:bn_def}
        (S\Gamma_q,\Gamma_r)=b_q\mathbf{1}_{q=r},\quad b_q=C_2(\alpha,\beta)z^q\frac{\Gamma(q+\alpha)}{q!}.
    \end{align}
    where $C_2(\alpha, \beta) = \frac{(1 + \beta)^3\beta^{\alpha}}{\Gamma(\alpha)}(1 - z)((1 + \beta)z)^{\frac{\alpha-1}{2}}\gamma_2^{-\alpha-1}$.

    We now study some properties of these generalized Laguerre polynomials. 
    These polynomials grow exponentially \cite[22.14.13]{AS64}, specifically
    \begin{align}\label{eq:LG_expo}
        |L_q^\nu(\theta)|\le e^{\theta/2}{q+\nu\choose q}.
    \end{align}
    Next, we establish some recurrence relations governing $\Gamma_q$. 

    \begin{lemma}\label{lmm:recur_gamma_poisson}
        Let $j$ be such that $0\le j\le q$. Then there exist constants $c_1(\cdot, \cdot, \cdot, \cdot)$ and $c_2(\cdot, \cdot, \cdot)$ such that 
        \begin{equation}\label{eq:Gamma_poisson}
        \theta^j\Gamma_q^{\nu} = \sum_{i = - j}^j c_1(q, j, i, \nu) \Gamma_{q + i}^{\nu}
        \qquad op_j(\Gamma_q^{\nu}) = \sum_{i = - j}^j c_2(q, j, i, \nu) \Gamma_{q + i}^{\nu}
        \end{equation}
        satisfying the following: 
        \begin{equation}\label{eq:Gamma_c1}
        c_1(q, j, j) = (-2\gamma)^{-j}(q + 1)_j\qquad c_1(q, j, -j) = (-2\gamma)^{-j}(q - j + \nu + 1)_{j}
        \end{equation}
        \begin{equation}\label{eq:Gamma_c2}
        c_2(q, j, j) = (-\frac 12)^j(\frac{1}{\gamma} + \frac{1}{1 + \beta})^j(q + 1)_j\qquad c_2(q, j, -j) = (\frac 12)^j (-\frac{1}{\gamma} + \frac{1}{1 + \beta})^j (q - j + \nu + 1)_j
        \end{equation}
    \end{lemma}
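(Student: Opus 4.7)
The proof proceeds by induction on $j$, with separate inductions for $\theta^j\Gamma_q^\nu$ and $op_j(\Gamma_q^\nu)$. For the multiplication claim, I start from the standard three–term recurrence for generalized Laguerre polynomials (Abramowitz--Stegun 22.7.12),
$$xL_q^\nu(x) = -(q+1)L_{q+1}^\nu(x) + (2q+\nu+1)L_q^\nu(x) - (q+\nu)L_{q-1}^\nu(x).$$
Substituting $x = 2\gamma\theta$ and multiplying by $e^{-\gamma\theta}/(2\gamma)$ gives
$$\theta\Gamma_q^\nu = -\tfrac{q+1}{2\gamma}\Gamma_{q+1}^\nu + \tfrac{2q+\nu+1}{2\gamma}\Gamma_q^\nu - \tfrac{q+\nu}{2\gamma}\Gamma_{q-1}^\nu,$$
matching \prettyref{eq:Gamma_c1} at $j=1$. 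For the inductive step, expanding $\theta^{j+1}\Gamma_q^\nu = \theta\sum_{i=-j}^{j}c_1(q,j,i,\nu)\Gamma_{q+i}^\nu$ and observing that only the $i=j$ (resp.\ $i=-j$) summand can produce a $\Gamma_{q+j+1}^\nu$ (resp.\ $\Gamma_{q-j-1}^\nu$) term yields the one–line recursion $c_1(q,j+1,j+1) = -\tfrac{q+j+1}{2\gamma}c_1(q,j,j,\nu)$ (and its trailing analogue), which telescope to the claimed closed forms.

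For the $op_j$ claim, the key is to avoid working with $B := 1 - \tfrac{\partial}{1+\beta}$ in isolation, since $B\Gamma_q^\nu$ leaves the system $\{\Gamma_r^\nu\}$ (because $(L_q^\nu)' = -L_{q-1}^{\nu+1}$ raises the index $\nu$). Instead, I introduce the composite operator $T := M_\theta B$ (where $M_\theta$ is multiplication by $\theta$) and establish the clean recursion
$$op_{j+1} = \bigl(T + \tfrac{j}{1+\beta}\bigr)op_j.$$
This follows from \prettyref{eq:opj_poisson_recur} (i.e.\ $op_j = M_\theta^j B^j$) together with the commutator $[B, M_\theta] = -\tfrac{1}{1+\beta}I$: iterating the commutator gives $BM_\theta^j = M_\theta^j B - \tfrac{j}{1+\beta}M_\theta^{j-1}$, whence $Top_j = M_\theta(BM_\theta^j)B^j = op_{j+1} - \tfrac{j}{1+\beta}op_j$. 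The operator $T$ keeps $\{\Gamma_q^\nu\}_q$ closed on three neighbors: combining $\theta\Gamma_q^\nu$ above with the identity $\theta\Gamma_q^{\nu\prime} = \tfrac{q+1}{2}\Gamma_{q+1}^\nu - \tfrac{\nu+1}{2}\Gamma_q^\nu - \tfrac{q+\nu}{2}\Gamma_{q-1}^\nu$ (derived from Abramowitz--Stegun 22.8.6: $x(L_q^\nu)'(x) = qL_q^\nu(x) - (q+\nu)L_{q-1}^\nu(x)$) and using $T = M_\theta - \tfrac{1}{1+\beta}M_\theta\partial$ yields
$$T\Gamma_q^\nu = -\tfrac{q+1}{2}\bigl(\tfrac{1}{\gamma}+\tfrac{1}{1+\beta}\bigr)\Gamma_{q+1}^\nu + (\cdots)\Gamma_q^\nu + \tfrac{q+\nu}{2}\bigl(-\tfrac{1}{\gamma}+\tfrac{1}{1+\beta}\bigr)\Gamma_{q-1}^\nu,$$
establishing \prettyref{eq:Gamma_c2} at $j=1$.

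For the inductive step, I apply $T + \tfrac{j}{1+\beta}$ to $\sum_{i=-j}^{j}c_2(q,j,i,\nu)\Gamma_{q+i}^\nu$. The shift term $\tfrac{j}{1+\beta}$ cannot raise the index, so the coefficient of $\Gamma_{q+j+1}^\nu$ in $op_{j+1}(\Gamma_q^\nu)$ is produced solely by the leading piece of $T$ acting on the $i=j$ summand, giving $c_2(q,j+1,j+1) = -\tfrac{q+j+1}{2}\bigl(\tfrac{1}{\gamma}+\tfrac{1}{1+\beta}\bigr)c_2(q,j,j)$. Unrolling reproduces the closed form in \prettyref{eq:Gamma_c2}; the trailing coefficient is handled symmetrically via the $i=-j$ summand and the $\Gamma_{q+i-1}^\nu$ part of $T$. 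The main obstacle — and the reason a direct induction along the naive recursion $op_{j+1}(r) = \theta\,op_j(r) - \tfrac{\theta}{1+\beta}op_j(r')$ would be painful — is precisely that $\Gamma_q^{\nu\prime}$ falls outside the system $\{\Gamma_r^\nu\}_r$; the commutator repackaging into $T$ circumvents this obstruction and reduces everything to bookkeeping of extreme coefficients under a three–term recurrence.
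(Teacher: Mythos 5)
Your proof is correct and follows essentially the same route as the paper. The only notable difference is a cosmetic one: you derive the recursion $op_{j+1}=\bigl(T+\tfrac{j}{1+\beta}\bigr)op_j$ (with $T=M_\theta B=op_1$) directly from the commutator $[B,M_\theta]=-\tfrac{1}{1+\beta}I$ applied to $op_j=M_\theta^j B^j$, whereas the paper obtains the identical recursion $op_{j+1}(r)=\tfrac{j}{1+\beta}op_j(r)+op_1(op_j(r))$ as a byproduct of the $K$-formula computation $K(\theta^{j+1})=K_1(\theta^j)+K(\theta)K(\theta^j)$ in the proof of Lemma~\ref{lmm:Kkr} and then simply reuses it. Everything else — the three-term recurrence for $\theta\Gamma_q^\nu$ from AS~22.7.12, the use of $x(L_q^\nu)'(x)=qL_q^\nu-(q+\nu)L_{q-1}^\nu$ (AS~22.8.6) combined with $\theta\Gamma_q^\nu$ to get $\theta\Gamma_q^{\nu\prime}$ and hence $T\Gamma_q^\nu$ without ever leaving the index-$\nu$ family, and the unrolling of extreme coefficients — matches the paper's argument step for step, and your observation that the naive recursion via $op_j(r')$ would be painful because $\Gamma_q^{\nu\prime}$ leaves the system is precisely the reason the paper also routes the induction through $op_1(op_j(\cdot))$.
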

    \begin{proof}
        We first do a quick regularity check on $\Gamma_q^{\nu}$. 
        Indeed, the generalized Laguerre polynomials are polynomial in $\theta$, so for all $k\ge 0$, 
        $\Gamma_q^{\nu}\in\mathcal{O}^{(k)}$ as defined in
        \prettyref{eq:polybnd}, and therefore \prettyref{lmm:9_generalized} can be applied. 
        
        We now consider $\theta^j\Gamma_q$. 
        Here, $c_1(q, 0, 0, \nu) = 1$, and the case $j = 1$ follows immediately from \cite[22.7.12]{AS64}: 
        \[
        (q + 1)L_{q+1}^{\nu}(\theta) = (2q+\nu+1 -\theta)L_q^{\nu}(\theta) -(q+\nu)L_{q - 1}^{\nu}(\theta)
        \]
        which can be rearranged into 
        $\theta L_q^{\nu}(\theta) = (2q+\nu+1)L_q^{\nu}(\theta) - (q + 1)L_{q+1}^{\nu}(\theta) - (q+\nu)L_{q - 1}^{\nu}(\theta)$. 
        Consequently, replacing $\theta$ with $2\gamma\theta$ in the argument, and multiplying by $e^{-\gamma\theta}$ gives 
        \begin{flalign*}
        \theta \Gamma_q^{\nu}(\theta) 
        &= (2\gamma)^{-1}(2\gamma\theta e^{-\gamma\theta}L_1^{\nu}(2\gamma\theta))
        \nonumber\\&=(2\gamma)^{-1}e^{-\gamma\theta}[(2q+\nu+1)L_q^{\nu}(2\gamma\theta) - (q + 1)L_{q+1}^{\nu}(2\gamma\theta) - (q+\nu)L_{q - 1}^{\nu}(2\gamma\theta)]
        \nonumber\\&=(2\gamma)^{-1}[(2q+\nu+1)\Gamma_q^{\nu}- (q + 1)\Gamma_{q+1}^{\nu} - (q+\nu)\Gamma_{q - 1}^{\nu}]
        \end{flalign*}
        Next, suppose that \prettyref{eq:Gamma_poisson} holds for some $j\ge 1$ and $q > j$ with coefficients satisfying \prettyref{eq:Gamma_c1}. 
        Then 
        \begin{flalign*}
            \theta^{j+1}\Gamma_q^{\nu}
            &=\theta(\theta^{j}\Gamma_q^{\nu})
            \nonumber\\
            &=\sum_{i = - j}^j c_1(q, j, i, \nu) \theta\Gamma_{q + i}^{\nu}
            \nonumber\\
            &=\sum_{i = - j}^j c_1(q, j, i, \nu) (2\gamma)^{-1}[(2(q+i)+\nu+1)\Gamma_{q+i}^{\nu}- (q + i + 1)\Gamma_{q + i +1}^{\nu} - (q + i +\nu)\Gamma_{q + i - 1}^{\nu}]
        \end{flalign*}
        which shows that $\theta^{j+1}\Gamma_q^{\nu}$ is indeed a linear combination of $\Gamma_{q - j - 1}, \cdots, \Gamma_{q + j + 1}$. 
        In addition, we may continue from the display above to obtain the following recursion for the tail coefficients: 
        \begin{flalign*}
        c_1(q, j + 1, j + 1, \nu) &= -(2\gamma)^{-1}(q + j + 1)c_1(q, j, j, \nu)  
        \nonumber\\&= (-2\gamma)^{-1}(q + j + 1)(-2\gamma)^{-j}(q + 1)_j
        \nonumber\\&= (-2\gamma)^{-j - 1}(q + 1)_{j + 1}
        \end{flalign*}
        \begin{flalign*}
        c_1(q, j + 1, -(j + 1), \nu)&= -(2\gamma)^{-1}(q - j +\nu)c_1(q, j, -j, \nu)
        \nonumber\\&= (-2\gamma)^{-1}(q - j +\nu)(-2\gamma)^{-j}(q - j + \nu + 1)_{j}
        \nonumber\\&= (-2\gamma)^{-j - 1}(q - j +\nu)_{j + 1}
        \end{flalign*}
        proving the induction claim. 

        Next, for $op_j(\Gamma_q^{\nu})$, we first note the starting point where $c_2(q, 0, 0, \nu) = 1$, 
        and from the recurrence relation in \cite[22.8.6]{AS64}: 
        \begin{equation}\label{eq:recur_dx1}
        \theta\frac{d}{d\theta}L_q^{\nu}(\theta) = q L_q^{\nu}(\theta) - (q + \nu)L_{q - 1}^{\nu}(\theta)
        \end{equation}
        we get the following recurrence \cite[(113)]{polyanskiy2021sharp}:
        \begin{equation}\label{eq:recur_dxGamma}
        \theta\frac{d}{d\theta}\Gamma_q^{\nu}(\theta) = 
        -\frac {q + \nu}{2}\Gamma_{q-1}^{\nu} - \frac {\nu + 1}{2}\Gamma_q^{\nu} + \frac {q + 1}{2}\Gamma_{q + 1}^{\nu}
        \end{equation}
        Therefore, 
        \begin{flalign*}
            op_1(\Gamma_q^{\nu}) &= \theta \Gamma_q^{\nu} - \theta\frac{(\Gamma_q^{\nu})'}{1 + \beta}
            \\&=(2\gamma)^{-1}[(2q+\nu+1)\Gamma_q^{\nu}- (q + 1)\Gamma_{q+1}^{\nu} - (q+\nu)\Gamma_{q - 1}^{\nu}]
        -\frac{1}{1 + \beta}(-\frac {q + \nu}{2}\Gamma_{q-1}^{\nu} - \frac {\nu + 1}{2}\Gamma_q^{\nu} + \frac {q + 1}{2}\Gamma_{q + 1}^{\nu})
        \end{flalign*}
        which gives $c_2(q, 1, 1, \nu) = -\frac{q + 1}{2}(\frac{1}{\gamma} + \frac{1}{1 + \beta})$ and 
        $c_2(q, 1, -1, \nu) = \frac{q + \nu}{2}(-\frac{1}{\gamma} + \frac{1}{1 + \beta})$. 

        Next, suppose that \prettyref{eq:Gamma_poisson} holds for some $j\ge 1$ and $q > j$ with coefficients satisfying \prettyref{eq:Gamma_c2}. We use the identity $op_j(r) = \theta^j(1 - \frac{\partial\cdot}{1 + \beta})r$ and also the recurrence $op_{j+1}(r) = \frac{j}{1 + \beta}op_j + op_1(op_j(r))$. 
        This gives the following: 
        \begin{flalign*}
            op_{j + 1}(\Gamma_q^{\nu})
            &=\frac{j}{1 + \beta}op_j(\Gamma_q^{\nu}) + op_1(op_j(\Gamma_q^{\nu}))
            \nonumber\\&=\sum_{i = - j}^j \frac{j c_2(q, j, i, \nu)}{1 + \beta} \Gamma_{q + i}^{\nu}
            + c_2(q, j, i, \nu)op_1(\Gamma_{q + i}^{\nu})
            \nonumber\\&=\sum_{i = - j}^j \frac{j c_2(q, j, i, \nu)}{1 + \beta} \Gamma_{q + i}^{\nu}
            + c_2(q, j, i, \nu)\theta\Gamma_{q + i}^{\nu}
            - \frac{c_2(q, j, i, \nu)}{1 + \beta}\theta(\Gamma_{q + i}^{\nu})'
            \nonumber\\&=\left(\sum_{i = - j}^j \frac{j c_2(q, j, i, \nu)}{1 + \beta} \Gamma_{q + i}^{\nu}\right.
            \nonumber\\&+ c_2(q, j, i, \nu)(2\gamma)^{-1}[(2(q+i)+\nu+1)\Gamma_{q+i}^{\nu}- (q + i + 1)\Gamma_{q + i +1}^{\nu} - (q + i +\nu)\Gamma_{q + i - 1}^{\nu}]
            \nonumber\\&\left.- \frac{c_2(q, j, i, \nu)}{1 + \beta}
            (-\frac {q + i + \nu}{2}\Gamma_{q + i - 1}^{\nu} - \frac {\nu + 1}{2}\Gamma_{q + i}^{\nu} + \frac {q + i + 1}{2}\Gamma_{q + i + 1}^{\nu})\right)
        \end{flalign*}
        which demonstrates that $op_{j+1}(\Gamma_q^{\nu})$ is a linear combination of $\Gamma_{q - j - 1}, \cdots, \Gamma_{q + j + 1}$. 
        In addition, the tail coefficients follow the following recursion: 
        \[
        c_2(q, j + 1, j + 1, \nu) 
        = -\frac{c_2(q, j, j, \nu) (q + j + 1)}{2} (\frac{1}{\gamma} + \frac{1}{1 + \beta})
        \]
        \[=-\frac{(q + j + 1)}{2} (\frac{1}{\gamma} + \frac{1}{1 + \beta})\cdot (-\frac 12)^j(\frac{1}{\gamma} + \frac{1}{1 + \beta})^j(q + 1)_j
        =(-\frac 12)^{j+1}(\frac{1}{\gamma} + \frac{1}{1 + \beta})^{j+1}(q + 1)_{j + 1}
        \]
        \[
        c_2(q, j + 1, -(j + 1), \nu) 
        =\frac{c_2(q, j, -j, \nu) (q - j + \nu)}{2}(-\frac{1}{\gamma} + \frac{1}{1 + \beta})
        \]
        \[
        =(\frac 12)^j (-\frac{1}{\gamma} + \frac{1}{1 + \beta})^j (q - j + \nu + 1)_j\cdot \frac{(q - j + \nu)}{2}(-\frac{1}{\gamma} + \frac{1}{1 + \beta})
        =(\frac 12)^{j + 1} (-\frac{1}{\gamma} + \frac{1}{1 + \beta})^{j + 1} (q - j + \nu)_{j + 1}
        \]
    \end{proof}
    
	Now we prove a generalization of \cite[Lemma 15]{polyanskiy2021sharp}. This will allow us to construct a set of functions that are orthogonal and have bounded magnitude. 
	\begin{lemma}\label{lmm:15_generalized}
		The functions $\Gamma_q(x)$ satisfy \begin{align}
			\label{eq:lemma15_1}\|\Gamma_q(\theta)\|_\infty&\le{q+\alpha\choose q} \\
			\label{eq:lemma15_2}(S_k\Gamma_q, \Gamma_q) &\ge \frac{b_q}{2^{2k}\beta^{k-1}(1+\beta)^{k+1}}(q+\nu - k + 1)_k(q - k + 1)_kz^{-k}\\
			\label{eq:lemma15_3}(S_k\Gamma_{q_1}, \Gamma_{q_2}) &= 0\quad \forall |q_1-q_2|\ge 2k+1
		\end{align}
	\end{lemma}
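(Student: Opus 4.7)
\medskip

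\noindent\textbf{Proof plan for Lemma \ref{lmm:15_generalized}.}

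The plan is to handle the three claims in order of increasing difficulty, in all three cases leveraging Lemma \ref{lmm:recur_gamma_poisson} and the orthogonality relation~\eqref{eq:bn_def}. Throughout, since $\Gamma_q^\nu$ is a polynomial in $\theta$ times $e^{-\gamma\theta}$, it lies in $\mathcal{O}^{(k)}$ for every $k$, so the lemmas from \prettyref{sec:lower-bound} apply.

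For \eqref{eq:lemma15_1}, I would simply substitute the Laguerre bound~\eqref{eq:LG_expo} into the definition $\Gamma_q^\nu(\theta) = e^{-\gamma\theta}L_q^\nu(2\gamma\theta)$. With $x = 2\gamma\theta$ in~\eqref{eq:LG_expo} the $e^{x/2} = e^{\gamma\theta}$ cancels the $e^{-\gamma\theta}$ prefactor, leaving $|\Gamma_q^\nu(\theta)|\le \binom{q+\nu}{q}\le \binom{q+\alpha}{q}$.

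For the orthogonality claim \eqref{eq:lemma15_3}, I would expand $K_k\Gamma_q^\nu$ using~\eqref{eq:kk_vs_opk} and Lemma~\ref{lmm:recur_gamma_poisson}:
\begin{equation*}
K_k\Gamma_q^\nu \;=\; K\!\left(\theta^k\Gamma_q^\nu - op_k(\Gamma_q^\nu)\right)
\;=\; \sum_{i=-k}^{k}\bigl(c_1(q,k,i,\nu)-c_2(q,k,i,\nu)\bigr)\,K\Gamma_{q+i}^\nu.
\end{equation*}
Using $(S_k\Gamma_{q_1},\Gamma_{q_2}) = (K_k\Gamma_{q_1},K_k\Gamma_{q_2})_{L_2(f_0)}$ and the orthogonality $(K\Gamma_{r},K\Gamma_{r'})_{L_2(f_0)} = b_r\mathbf{1}_{r=r'}$ coming from~\eqref{eq:bn_def}, the bilinear expansion of $(K_k\Gamma_{q_1},K_k\Gamma_{q_2})_{L_2(f_0)}$ contains only terms indexed by pairs $(i,j)\in\{-k,\dots,k\}^2$ with $q_1+i=q_2+j$. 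When $|q_1-q_2|\ge 2k+1$ this equation has no solution, so the inner product vanishes.

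For the lower bound \eqref{eq:lemma15_2}, I would expand $\|K_k\Gamma_q^\nu\|_{L_2(f_0)}^2$ using the same expansion and keep only the extreme term at $i=-k$:
\begin{equation*}
(S_k\Gamma_q,\Gamma_q) \;=\; \sum_{i=-k}^{k}\bigl(c_1(q,k,i,\nu)-c_2(q,k,i,\nu)\bigr)^2 b_{q+i}
\;\ge\; \bigl(c_1(q,k,-k,\nu)-c_2(q,k,-k,\nu)\bigr)^2\, b_{q-k}.
\end{equation*}
Plugging in the closed-form tail coefficients from~\eqref{eq:Gamma_c1}--\eqref{eq:Gamma_c2} (with $\alpha=\nu+1$) and factoring out $(-1/2)^k$ yields
\begin{equation*}
c_1(q,k,-k,\nu)-c_2(q,k,-k,\nu) \;=\; (q-k+\nu+1)_k\,\frac{(-1)^k}{2^k}\Bigl[\gamma^{-k} - \bigl(\gamma^{-1}-(1+\beta)^{-1}\bigr)^k\Bigr].
\end{equation*}
To finish, I would use the elementary inequality $a^k-b^k\ge (a-b)a^{k-1}$ for $a\ge b\ge 0$ with $a=\gamma^{-1}$ and $b=\gamma^{-1}-(1+\beta)^{-1}$ (noting $\gamma=\sqrt{\beta(1+\beta)}\le 1+\beta$ so $b\ge 0$), obtaining
\begin{equation*}
\Bigl[\gamma^{-k} - \bigl(\gamma^{-1}-(1+\beta)^{-1}\bigr)^k\Bigr]^2 \;\ge\; \frac{\gamma^{-2(k-1)}}{(1+\beta)^2} \;=\; \frac{1}{\beta^{k-1}(1+\beta)^{k+1}}.
\end{equation*}
Combining this with the ratio $b_{q-k}/b_q = z^{-k}(q-k+1)_k/(q-k+\nu+1)_k$ read off from~\eqref{eq:bn_def} makes one factor of $(q-k+\nu+1)_k$ cancel against the $(q-k+\alpha)_k$ in the denominator and yields exactly the claimed bound. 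The only step that needs care is this last algebraic simplification — ensuring that the signs, the factor $2^{-2k}$, and the powers of $\beta,(1+\beta),z$ line up as in~\eqref{eq:lemma15_2}; this is the main obstacle, though it is routine given the recurrence coefficients already computed in Lemma~\ref{lmm:recur_gamma_poisson}.
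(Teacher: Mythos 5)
Your proposal is correct and follows essentially the same route as the paper: expand $K_k\Gamma_q^\nu$ via \prettyref{lmm:recur_gamma_poisson} into the span of $\Gamma_{q-k},\dots,\Gamma_{q+k}$, exploit $S$-orthogonality, keep the $i=-k$ term, and substitute the closed-form tail coefficients together with $b_{q-k}/b_q=z^{-k}(q-k+1)_k/(q-k+\alpha)_k$. Your inequality $a^k-b^k\ge(a-b)a^{k-1}$ with $a=\gamma^{-1}$, $b=\gamma^{-1}-(1+\beta)^{-1}$ is arithmetically identical to the paper's use of $1-(1-\tfrac{\gamma}{1+\beta})^k\ge\tfrac{\gamma}{1+\beta}$, so there is no substantive difference.
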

	\begin{proof}
		First, \prettyref{eq:lemma15_1} follows directly from \prettyref{eq:poisson-beta-z} and \prettyref{eq:LG_expo}, so it remains to show the identities with $S_k$. Using \prettyref{lmm:recur_gamma_poisson} we have for all $q\ge k$, 

        \begin{flalign*}
            (K_k\Gamma_q^{\nu}, K_k\Gamma_q^{\nu})
            &= \| K(\theta^k \Gamma_q^{\nu} - op_k(\Gamma_q^{\nu}))\|^2
            \nonumber\\&= \| K(\theta^k \Gamma_q^{\nu} - op_k(\Gamma_q^{\nu}))\|^2
            \nonumber\\&= \|(\theta^k \Gamma_q^{\nu} - op_k(\Gamma_q^{\nu}))\|^2_S
            \nonumber\\&\stepa{=} \|\sum_{i=-k}^k (c_1(q, k, i, \nu) - c_2(q, k, i, \nu)) \Gamma_{q + i}^{\nu}\|^2_S
            \nonumber\\&\stepb{=} \sum_{i=-k}^k (c_1(q, k, i, \nu) - c_2(q, k, i, \nu))^2 \|\Gamma_{q + i}^{\nu}\|^2_S
            \nonumber\\&\ge (c_1(q, k, -k) - c_2(q, k, -k))^2\|\Gamma_{q - k}^{\nu}\|^2_S + (c_1(q, k, k) - c_2(q, k, k))^2\|\Gamma_{q + k}^{\nu}\|^2_S
            \nonumber\\& = (q - k + \nu + 1)_k^2(-2\gamma)^{-2k}(1 + (1 - \frac{\gamma}{1 + \beta})^k)^2\|\Gamma_{q - k}^{\nu}\|^2_S
            \nonumber\\& + (q+1)_k^2(-2\gamma)^{-2k}(1 - (1 + \frac{\gamma}{1 + \beta})^k)^2\|\Gamma_{q + k}^{\nu}\|^2_S
            \nonumber\\& \ge (q - k + \nu + 1)_k^2(-2\gamma)^{-2k}(1 - (1 - \frac{\gamma}{1 + \beta})^k)^2b_{q - k}
            \nonumber\\& \stepc{=} (q - k + \nu + 1)_k^2(-2\gamma)^{-2k}(1 - (1 - \frac{\gamma}{1 + \beta})^k)^2b_q z^{-k}\frac{(q - k +1)_k}{(q - k + \alpha)_k}
            \nonumber\\& \ge (q - k + \nu + 1)_k(q - k +1)_k\frac{b_q}{2^{2k}\beta^{k-1}(1+\beta)^{k+1}} z^{-k}
        \end{flalign*}
        where (a) is due to \prettyref{lmm:recur_gamma_poisson} which shows that both $\theta^k\Gamma_q^{\nu}$ and $op_k(\Gamma_q^{\nu})$ are linear combinations of $\Gamma_{q-k}, \cdots, \Gamma_{q+k}$, 
        (b) is due to the orthogonality of $\{\Gamma_q^{\nu}\}_{q\ge 1}$ in $S$-norm \cite[(101)]{polyanskiy2021sharp}, 
        (c) uses the closed form of $b_q$ from \prettyref{eq:bn_def} and the (d) follows from plugging in $\gamma=\sqrt{\beta(1+\beta)}$ and using the fact that $1-\left(1-\frac\gamma{1+\beta}\right)^k \ge \frac\gamma{1+\beta}$ (note also $\gamma < 1 + \beta$ given our choice of $\gamma$) when $k\ge 1$. Thus, we established \prettyref{eq:lemma15_2}.
        
        Note that the expansion of $K_k(\Gamma_q^{\nu})$ into linear combinations of $\Gamma_{q-k}, \cdots, \psi_{\Gamma+k}$ also implies that $K_k(\Gamma_q), K_k(\Gamma_r)$ are orthogonal in $S$-norm whenever $|q - r|\ge 2k + 1$, establishing \prettyref{eq:lemma15_3}. 
        
	\end{proof}
	
	With \prettyref{lmm:15_generalized}, we are able to prove \prettyref{lmm:11_generalized} and \prettyref{lmm:12_generalized}.
 
	\begin{proof}[Proof of \prettyref{lmm:11_generalized}]
		Fix $m$ and let \[r_q = \frac{\Gamma_q}{\sqrt{(S_k\Gamma_q,\Gamma_q)}},\quad q\in\calQ=\{m,m+2k+1, \dots, (2k+2)m\}.\] Note that this definition guarantees \prettyref{eq:lemma11_1} and \prettyref{eq:lemma11_2}. Since $z = \frac1{(\sqrt{1+\beta}+\sqrt\beta)^2}$ and $\beta\ge2$, we have $\frac1{6\beta}\le z\le\frac1{4\beta}\le\frac18$. Then \begin{align*}
			\|Kr_q\|_{L_2(f_0)}^2 &= \frac{(S\Gamma_q,\Gamma_q)}{(S_k\Gamma_q,\Gamma_q)} \le \frac{2^{2k}\beta^{k-1}(1+\beta)^{k+1}z^k}{(q+\nu - k + 1)_k(q - k + 1)_k} \in O_k(\frac{\beta^k}{\alpha^km^k})
		\end{align*}
		where the last line follows since $z^k = \Theta_k(\beta^{-k})$, $q=\Theta_k(m)$, and $q+\nu=\Omega(\alpha)$. This proves \prettyref{eq:lemma11_3}.\\
		From the proof of \cite[Lemma 11]{polyanskiy2021sharp}, we know ${q+\alpha\choose q}^2b_q^{-1} \le \exp\left\{C'(\alpha+m\log\beta)\right\}$ for some absolute constant $C'$. 
        Using \prettyref{eq:lemma15_1}, \[\max_{q\in\calQ}\|r_q\|_\infty\le\sqrt{\frac{\beta^k}{\alpha^kq^kb_q}}{q+\alpha\choose q} \le \sqrt{\frac{\beta^k}{\alpha^k}}e^{C(\alpha+m\log\beta)}\]
		thus proving \prettyref{eq:lemma11_4}.
    \end{proof}
    \begin{proof}[Proof of \prettyref{lmm:12_generalized}]
        We choose the same $r_q$ as in the previous proof. We know $\nu=\alpha-1=0$ and $\beta$ is a constant, so \begin{align*}
            \|Kr_q\|_{L_2(f_0)}^2 &= \frac{(S\Gamma_q,\Gamma_q)}{(S_k\Gamma_q,\Gamma_q)} \le \frac{2^{2k}\beta^{k-1}(1+\beta)^{k+1}z^k}{(q - k + 1)_k^2} = O_{\beta,k}\left(\frac{1}{m^{2k}}\right).
        \end{align*}
        From the proof of \cite[Lemma 12]{polyanskiy2021sharp}, we know $b_q\asymp z^q$, so using \prettyref{eq:lemma15_1} with $\alpha=1$, we also have for some constant $c=c(k)$ and $C=C(k)$, \begin{align*}
            \|r_q\|_\infty = \frac{\|\Gamma_q\|_\infty}{\sqrt{(S_k\Gamma_q,\Gamma_q)}} \le \frac{cm}{\sqrt{m^{2k}b_q}} \le cm^{1-k}z^{-(2k+2)m} = m^{1-k}e^{Cm}.
        \end{align*}
        since $z < 1$.
    \end{proof}

\section{Technical proofs for Poisson model (upper bound)}\label{app:poisson_upper}
\subsection{Proofs related to modified Robbins}\label{app:robbins}

   \begin{proof}[Proof of \prettyref{lmm:zero_term}]
       The case $k = 1$ has been shown in $\cite[(119)]{polyanskiy2021sharp}$, 
        with bound $2\max \{1, h\}$. 
        For $k \ge 2$, 
        we first observe the following: 
        \begin{equation}\label{eq:ratiobound}
        \frac{f_\pi(x + k)}{f_\pi(x)} = \frac{\tpik(x)}{(x + 1)_{k}}
        = \frac{\EE[\theta^k|X = x]}{(x + 1)_{k}} \le \frac{h^k}{(x + 1)_{k}}, \forall x\ge 0
        \end{equation}
        Therefore, 
        \[
        \frac{f_\pi(k)}{(1-f_\pi(0))f_\pi(0)} = \frac{\max\left\{\frac{f_\pi(k)}{f_\pi(0)}, \frac{f_\pi(k)}{1-f_\pi(0)}\right\}}{\max\{f_\pi(0),1-f_\pi(0)\}} \stepa{\le} 2\max\left\{\frac{h^k}{k!},1\right\}\]
        where (a) is due to \prettyref{eq:ratiobound}, 
        $f_{\pi}(k) = 1 - \sum_{x\neq k} f_{\pi}(x)\le 1 - f_{\pi}(0)$, and $\max\{f_{\pi}(0), 1 - f_{\pi}(0)\}\ge \frac 12$. 
        Therefore, for the case $2\le k < h + 1$, 
        we note that $\frac{h^{k - 1}}{(k - 1)!} > 1$, 
        and therefore $\frac{h^k}{k!}\ge \frac{h}{k}\ge \frac{k - 1}{k} \ge \frac 12$, 
        and therefore $\frac{f_\pi(k)}{(1-f_\pi(0))f_\pi(0)}\le \frac{4h^k}{k!}$ whenever $2\le k < h + 1$. 

        We now establish another bound for the case $k \ge h + 1$. 
        Denote $r(\theta, k) = \frac{e^{-\theta} \theta^k}{1 - e^{-\theta}}$, 
        and $L = \sup_{0 < \theta\le h} r(\theta, k)$. 
        Then 
        \[
        \frac{f_{\pi}(k)}{1 - f_{\pi}(0)}
        =\frac{\int e^{-\theta}\theta^k/k! d\pi(\theta)}{\int (1 - e^{-\theta})d\pi(\theta)}
        \le \frac{L}{k!}
        \]
        We show that $r(\theta, k)$ is increasing for all $\theta < k - 1$, 
        which means that $L = r(h, k)$ since $k\ge h + 1$. 
        Note that 
        \[
        \frac{\partial \log r(\theta, k)}{\partial \theta}
        = -1 + \frac{k}{\theta} - \frac{e^{-\theta}}{1 - e^{-\theta}}
        =\frac{k - \theta - ke^{-\theta}}{\theta(1-e^{\theta})}
        \]
        The function 
        $k - \theta - ke^{-\theta}$ is decreasing in $\theta$ at $\theta > \log k$, 
        so it suffices to consider the case where $\theta = k - 1$. 
        Using the identity $e^x\ge 1 + x$ for all $x\ge 1$, 
        we have $k - \theta - ke^{-\theta} = 1 - ke^{-(k - 1)} > 0$, 
        implying that the aforementioned derivative is positive at $\theta\in [0, h]$, i.e. supremum of $\frac{e^{-\theta} \theta^k}{1 - e^{-\theta}}$ is indeed attained at $\theta = h$. 
        Given also that $f_\pi(0) = \int e^{-\theta}d\pi(\theta)
        \ge e^{-h}$, we have 
        \[
        \frac{f_\pi(k)}{(1 - f_\pi(0))f_\pi(0)}
        \le \frac{L}{k!e^{-h}}
        =\frac{e^{-h}h^k}{k!(1 - e^{-h})e^{-h}}
        =\frac{h^k}{k!(1 - e^{-h})}
        \]
        Finally, when $h\ge 1$, $1 - e^{-h}\ge 1 - e^{-1}$ so 
        $\frac{h^k}{1 - e^{-h}}\le \frac{h^k}{1 - e^{-1}}$;
        when $h < 1$, $1 - e^{-h} = \int_0^h e^{-x}dx\ge he^{-h}$, 
        and therefore $\frac{h^k}{1-e^{-h}}\le \frac{h^k}{he^{-h}} 
        = h^{k - 1}e^h \le e$. 
        Combining the two cases gives 
        $\frac{h^k}{1 - e^{-h}}\le \max \{e, \frac{h^k}{1-e^{-1}}\}$. 
        Thus combining this with the case $2\le k < h + 1$ and that 
        $\frac{1}{1 - e^{-1}} < 4$, the claim follows. 
   \end{proof}
\begin{proof}[Proof of \prettyref{lmm:17_generalized}]
        For the case $\pi\in \calP([0, h])$, define $\bar f(x)=\frac{h^xe^{-h}}{x!} \ge f_\pi(x)$ for all $x\ge h$, 
        just as in \cite[Lemma 17]{polyanskiy2021sharp}. Then \begin{align*}
            \sum_{x>x_0+k}\bar f(x)^2(x+1)_k &\le \sum_{x>x_0+k} 2^k\bar f(x)^2(x - k + 1)_k \nonumber\\
            &\stepa{\le} (2h)^k\sum_{x>x_0+k}\bar f(x-k)\bar f(x)\nonumber\\
            &\stepb{\le} (2h)^k\bar f(x_0)\sum_{x>x_0+k}\bar f(x)\nonumber\\
            &\stepc{\le} 2(2h)^k\bar f(x_0)^2
        \end{align*}
        where (a) is by the identity $x\bar f(x) = h\bar f(x-1)$, (b) is by the monotonicity of $\bar f$ on the domain $[2h, \infty)$, 
        and (c) is by \cite[(133)]{polyanskiy2021sharp}, 
        i.e. $\sum_{x \ge x_0} \sum \bar{f}(x) \le 2\bar{f}(x_0)$ for all $x_0\ge 2h$. 
        By taking $x_0 = \max\{h, (c_1 + c_2h)\frac{\log n}{\log \log n}$,, 
        $x_0$ satisfies $\bar f(x_0)\le\frac1n$ as per \prettyref{lmm:poi_tail_bound}, so we obtain \prettyref{eq:lemma17_1}.

        We now consider the case $\pi\in\subexpo(s)$. 
        By the proof of \cite[Lemma 17]{polyanskiy2021sharp} (towards the end of the proof), 
        we know $\tilde f(x) \triangleq 2(1+\frac1s)^{-x}\ge f_{\pi}(x)$. 

        We now claim the following: for any real $r$ with $0 < r < 1$ and $x_1\ge 0$, 
        \begin{equation}\label{eq:geom_sum}
            \sum_{x\ge x_1} (x + 1)_kr^x \le \frac{r^{x_1}}{(1 - r)^{(k + 1)}}\cdot (e(x_1+k))^k
        \end{equation}
        Indeed, \prettyref{eq:geom_sum} has the following form the start with: 
        \[
        \sum_{x\ge x_1} (x + 1)_kr^x 
        = \frac{\partial ^k}{\partial r^k}\sum_{x\ge x_1 + k} r^x
        = \frac{\partial ^k}{\partial r^k}\left(\frac{r^{x_1 + k}}{1 - r}\right)
        \]

        Next, we use the fact that the $k$-th derivative $(ab)^{(k)}$ of $ab$ has the form 
        $\sum_{j=0}^k \sum \binom{k}{j}a^{(k - j)}b^{(j)}$. 
        Therefore, 
        \begin{flalign*}
            \frac{\partial ^k}{\partial r^k}\left(\frac{r^{x_1 + k}}{1 - r}\right)
            & ~= \sum_{j = 0}^k \binom{k}{j}
            \frac{\partial ^{k - j}}{\partial r^{k - j}}\left(r^{x_1 + k}\right) \frac{\partial ^{j}}{\partial r^{j}}\left((1 - r)^{-1}\right)\nonumber\\
            & ~= \sum_{j = 0}^k \binom{k}{j}
            P(x_1 + k, k - j)r^{x_1 + j}\cdot j!\cdot (1 - r)^{-(j + 1)}
            \nonumber\\
            & ~= k!\sum_{j = 0}^k \binom{x_1 + k}{k - j}\frac{r^{x_1 + j}}{(1 - r)^{(j + 1)}}
            \nonumber\\
            &~\stepa{\le} k!\sum_{j = 0}^k \binom{x_1 + k}{k - j}\frac{r^{x_1}}{(1 - r)^{(k + 1)}}
            \nonumber\\
            &~\stepb{=} \frac{r^{x_1}}{(1 - r)^{(k + 1)}}\cdot (e(x_1+k))^k
        \end{flalign*}
        where (a) follows from that 
        $r^{x_1+j}\le r^{x_1}$ and $(1 - r)^{j + 1}\ge (1-r)^{k + 1}$ for $0 < r < 1$, 
        and (b) follows from the following fact about partial sum of binomial coefficients \cite[Lemma A.5]{shalev2014understanding}:
        \[
        \sum_{j = 0}^k \binom{m}{j}
        \le \left(\frac{em}{k}\right)^k
        \]
        and that $k^k > k!$. 

        To finish the proof, here we have $r = (1 + \frac{1}{s})^{-2}$, 
        and take $x_1 = c_1(s)\log n$ where $c_1(s) = \frac{1}{\log (1 + \frac{1}{s})}$. 
        Note that 
        \[\log\left(1 + \frac{1}{s}\right) 
        = \frac{1}{s}\int_0^{1/s}\frac{1}{1 + x}dx
        \ge \frac{1}{s}\min_{0\le x\le 1/s} \frac{1}{1 + x}
        = \frac{1}{s}\cdot \frac{s}{s + 1}
        =\frac{1}{s + 1}
        \]
        and therefore $c_1(s)\le s + 1$. 
        Note also that $1 - r = 1 - \frac{s^2}{(1 + s)^2} = \frac{2s + 1}{(s + 1)^2}$. 
        Then we have the bound as 
        \[
        \sum_{x\ge x_1 + k}f_{\pi}(x)^2(x + 1)_k
        \le \sum_{x\ge x_1 + k}2r^x (x + 1)_k
        \le \frac{2(s+1)^{2(k + 1)}}{(2s + 1)^{k + 1}n^2}(e((s + 1)\log n + k))^k
        \]
        
    \end{proof}
    \begin{proof}[Proof of \prettyref{lmm:moment}]
        For the first bound, note that 
        \[\E_\pi[\theta^{\ell}] = \ell\int_0^\infty x^{\ell-1}\PP_\pi(\theta>x)dx \le 2\ell\int_0^\infty x^{\ell - 1}e^{-x/s}dx=2\ell(\ell-1)!s^{\ell}.\]
        For the second bound, using \cite[(44)]{JPW24}, we know $\PP_{f_\pi}(X\ge K)\le\frac32e^{-K\log(1+\frac1{2s})}$. Thus for any $L$, \begin{align*}
            \E[X_{\max}^{\ell}] &= \ell\int_0^\infty x^{\ell-1}\PP(X_{\max}>x)dx \nonumber\\
            &\le 4L^{\ell} + n\int_L^\infty x^{\ell-1}\PP(X>x)dx \nonumber\\
            &\le 4L^{\ell} + \frac{3n}2\int_L^\infty x^{\ell-1}e^{-x\log(1+\frac1{2s})}dx \nonumber\\
            &\stepa{\le} 4L^{\ell} + \frac{3n}{2(\log(1+\frac1{2s}))^{\ell}}\int_{L\log(1+\frac1{2s})}^\infty z^{\ell-1}e^{-z}dz \nonumber\\
            &\stepb{\le} 4L^{\ell} + \frac{3n}{2(\log(1+\frac1{2s}))^{\ell}} 
            \ell! e^{-L\log(1 + \frac{1}{2s})}\max_{0\le m\le \ell - 1} (L\log(1 + \frac{1}{2s}))^{m}
        \end{align*}
        where (a) follows from a change of variables with $z=x\log(1+\frac1{2s})$ and (b) follows from the following identity of the indefinite integral: 
        \[
        \int z^{\ell - 1}e^{-z}dz
        =-\left(\sum_{m = 0}^{\ell - 1}P(\ell - 1, m)z^{\ell - 1 - m}\right)e^{-z}
        \]
        Plugging in $L = \frac{\log n}{\log(1+\frac1{2s})}$ gives the following bound: 
        \[
        \frac{4(\log n)^{\ell} + \frac 32 \ell!\max(1, (\log n)^{\ell - 1})}{(\log (1 + \frac{1}{2s}))^{\ell}}
        \]
        
    \end{proof}

    \subsection{Proofs related to minimum distance estimator}\label{app:mindist}
    \begin{proof}[Proof of \prettyref{lmm:npmle}]
        First, we note that $\mmse_k$($\pi$)$\le \E[\theta^{2k}] \le \sqrt{\E[\theta^{4k}]} \le \sqrt{M}$. 
        Denote the truncated prior $\pi_h$ as per \prettyref{eq:pi_h}. Then the MSE of $\hat{t}_{\hat{\pi}, k}$
        has the following form: 
            \begin{align*}
			\E_\pi[(\hat{t}_{\hat{\pi}, k}(X) - \theta^k)^2] 
            &= \E_\pi[\hat{t}_{\hat{\pi}, k}(X) - \theta^k)^2\mathbf{1}_{\theta\le h}] +\E_\pi[(\hat{t}_{\hat{\pi}, k}(X) - \theta^k)^2\mathbf{1}_{\theta> h}] \\
			&\stepa{\le} \E_\pi[(\hat{t}_{\hat{\pi}, k}(X) - \theta^k)^2|\theta\le h] 
            + \sqrt{\E_\pi[(\hat{t}_{\hat{\pi}, k}(X) - \theta^k)^4]\PP_\pi(\theta> h)} \\
			&\stepb{\le} \E_{\pi_h}[(\hat{t}_{\hat{\pi}, k}(X) - \theta^k)^2] +\sqrt{8(\hat h^{4k} + \E_\pi[\theta^{4k}])\PP_\pi(\theta > h)} \\
			&\le \E_{\pi_h}[(\hat{t}_{\hat{\pi}, k}(X) - \theta^k)^2] +\sqrt{8(\hat h^{4k} + M)\PP_\pi(\theta> h)}
		\end{align*}
    	where (a) follows from Cauchy Schwarz and (b) follows from $(a+b)^4 \le 8(a^4+b^4)$ for any real $a,b$.
    	Then
        \begin{align}\label{eq:npmle}
    		&~\Regret_{\pi,k}(\hat{t}_{\hat{\pi}, k}) \nonumber\\
            =& ~\E_\pi[(\hat{t}_{\hat{\pi}, k}(X) - \theta^k)^2] - \mmse_k(\pi) \nonumber\\
    		\le& ~\E_{\pi_h}[(\hat{t}_{\hat{\pi}, k}(X) - \theta^k)^2] - \mmse_k(\pi_h) + \mmse_k(\pi_h) -\mmse_k(\pi) +\sqrt{8(\hat h^{4k} + M)\PP_\pi(\theta> h)}\nonumber\\
    		\le& ~\E_{\pi_h}[(\hat{t}_{\hat{\pi}, k}(X) - \hat{t}_{\pi_h, k}(X))^2] +\left(\frac1{\PP_\pi(\theta\le h)}-1\right)\mmse_k(\pi) +\sqrt{8(\hat h^{4k} + M)\PP_\pi(\theta> h)}\nonumber\\
    		\stepa{\le}& ~\E_{\pi_h}[(\hat{t}_{\hat{\pi}, k}(X) - \hat{t}_{\pi_h, k}(X))^2] + \frac{\PP_\pi(\theta > h)}{\PP_\pi(\theta\le h)}\sqrt{M} +\sqrt{8(\hat h^{4k} + M)\PP_\pi(\theta> h)}\nonumber\\
    		\le& ~\E_{\pi_h}[(\hat{t}_{\hat{\pi}, k}(X) - \hat{t}_{\pi_h, k}(X))^2] +\frac{(1+2\sqrt2)\sqrt{(\hat h^{4k} + M)\PP_\pi(\theta> h)}}{\PP_\pi(\theta\le h)}
    	\end{align}
        where (a) follows from $\mmse_k(\pi)\le\sqrt M$.
    	The second term is already in the right form, so we bound the first term. 
        We use the following observation: 
        \[
        \frac{a_1}{b_1} - \frac{a_2}{b_2}
        =\left(\frac{a_1}{b_1} - \frac{2a_1}{b_1+b_2}\right)
        +\left(\frac{2a_1}{b_1+b_2} - \frac{2a_2}{b_1+b_2}\right)
        +\left(\frac{2a_2}{b_1+b_2} - \frac{a_2}{b_2}\right)
        \]\[
        =\frac{a_1(b_2-b_1)}{b_1(b_1+b_2)}
        + \frac{2(a_1 - a_2)}{b_1+b_2}
        + \frac{a_2(b_2-b_1)}{b_2(b_1+b_2)}
        \]
        where $a_1 = f_{\hat{\pi}}(x + k)$, $b_1 = f_{\hat{\pi}}(x)$, 
        $a_2 = f_{\pi_h}(x + k)$, $b_2 = f_{\pi_h}(x)$. 
        Therefore, for $K\ge 1$, 
        \begin{align*}
            &~\E_{\pi_h}[(\hat{t}_{\hat{\pi}, k}(X) - \hat{t}_{\pi_h, k}(X))^2\mathbf{1}_{X\le K-k}]
            \\=&~\sum_{x=0}^{K-k}(x+1)_k^2f_{\pi_h}(x)\left(\frac{f_{\hat\pi}(x+k)}{f_{\hat\pi}(x)} - \frac{f_{\pi_h}(x+k)}{f_{\pi_h}(x)}\right)^2
            \\\stepa{\le}& ~3 \sum_{x=0}^{K-k} (x+1)_k^2f_{\pi_h}(x)
            \left(\frac{f_{\hat{\pi}}(x + k)^2(f_{\pi_h}(x)-f_{\hat{\pi}}(x))^2}
            {f_{\hat{\pi}}(x)^2(f_{\pi_h}(x) + f_{\hat{\pi}}(x))^2}
            + 4\frac{(f_{\pi_h}(x + k)-f_{\hat{\pi}}(x + k))^2}{(f_{\pi_h}(x) + f_{\hat{\pi}}(x))^2}
            \right. 
            \\&\left.
            + \frac{f_{\pi_h}(x + k)^2(f_{\pi_h}(x)-f_{\hat{\pi}}(x))^2}{f_{\pi_h}(x)^2(f_{\pi_h}(x) + f_{\hat{\pi}}(x))^2}
            \right)
            \\\stepb{\le }& ~3 \sum_{x=0}^{K-k} f_{\pi_h}(x)(\tpik(x)^2 + \hat{t}_{\pi_h, k}(x)^2)
            \frac{(f_{\pi_h}(x)-f_{\hat{\pi}}(x))^2}{(f_{\pi_h}(x) + f_{\hat{\pi}}(x))^2}
            +12 \sum_{x=0}^{K-k} \frac{(x+1)_k^2 (f_{\pi_h}(x + k)-f_{\hat{\pi}}(x + k))^2}{f_{\pi_h}(x) + f_{\hat{\pi}}(x)}
            \\\stepc{\le} & ~3(h^{2k}+\hat{h}^{2k}) \sum_{x=0}^{K-k} f_{\pi_h}(x)
            \frac{(f_{\pi_h}(x)-f_{\hat{\pi}}(x))^2}{(f_{\pi_h}(x) + f_{\hat{\pi}}(x))^2}
            +12 \sum_{x=0}^{K-k} \frac{(x+1)_k^2 (f_{\pi_h}(x + k)-f_{\hat{\pi}}(x + k))^2}{f_{\pi_h}(x) + f_{\hat{\pi}}(x)}
        \end{align*}
        where (a) is using $(x+y+z)^2\le 3(x^2+y^2+z^2)$ for real numbers $x, y, z$, 
        (b) is using $f_{\pi}(x) = (x+1)_k\frac{f_{\pi}(x + k)}{f_{\pi}(x)}$ and that 
        $\frac{f_{\pi_h}(x)}{f_{\pi_h}(x) + f_{\hat{\pi}}(x)} < 1$. 
        and (c) using $f_{\pi_h}\le h^k$ and $f_{\hat{\pi}}\le \hat{h}^k$. 

        To relate this bound with $H^2$, 
        we use the fact that $(\sqrt{a}+\sqrt{b})^2\le 2(a + b)$ for all $a, b\ge 0$ to obtain the bound 
        \[(f_{\hat\pi}(x)-f_{\pi_h}(x))^2 \le 2(f_{\hat\pi}(x)+f_{\pi_h}(x))\left(\sqrt{f_{\hat\pi}(x)}-\sqrt{f_{\pi_h}(x)}\right)^2.\] 
        We also see that $(x+1)_k \le K^k$ for $x \le K-k$, we have 
        \begin{align}\label{eq:npmle_term1}
            &\E_{\pi_h}[(\hat{t}_{\hat{\pi}, k}(X) - \hat{t}_{\pi_h, k}(X))^2\mathbf{1}_{X\le K-k}] \nonumber\\
            \le&6(h^{2k} + \hat h^{2k})\sum_{x=0}^{K-k}(\sqrt{f_{\hat\pi}(x)}-\sqrt{f_{\pi_h}(x)})^2\nonumber\\
            &+ 24K^k\max_{x\le K-k}\frac{(x+1)_kf_{\hat\pi}(x+k) + (x+1)_kf_{\pi_h}(x+k)}{f_{\hat\pi}(x) + f_{\pi_h}(x)}\sum_{x=0}^{K-k}\left(\sqrt{f_{\hat\pi}(x+k)}-\sqrt{f_{\pi_h}(x+k)}\right)^2 \nonumber\\
            \le&\left(6(h^{2k} + \hat h^{2k}) + 24K^k\max_{x\le K-k}\left(f_{\hat\pi}(x) + f_{\pi_h}(x)\right)\right)H^2(f_{\hat\pi}, f_{\pi_h})\nonumber\\
            \le&\left(6(h^{2k} + \hat h^{2k}) + 24K^k(h^k + \hat h^k)\right)H^2(f_{\hat\pi}, f_{\pi_h}).
        \end{align}
        Note that by triangle inequality on Hellinger distance, 
        \[H^2(f_{\hat\pi}, f_{\pi_h}) \le (H(f_{\hat\pi}, f_{\pi}) + H(f_{\pi}, f_{\pi_h}))^2 \le 2H^2(f_{\hat\pi}, f_{\pi}) + 2H(f_{\pi}, f_{\pi_h})^2.\] 
        But \[H(p_{\pi}, f_{\pi_h})^2 \le 2\TV(f_{\pi}, f_{\pi_h}) \le 2\TV(\pi, \pi_h) = 4\PP_\pi(\theta>h)\] where TV is the total variation, the middle inequality is from the data processing inequality, 
        and the last equality $\TV(\pi, \pi_h) = 2\PP_\pi(\theta>h)$ is justified in \cite[Appendix B]{JPW24}. 
        Combining this with \prettyref{eq:npmle_term1}, \begin{align*}
            \E_{\pi_h}[(\hat{t}_{\hat{\pi}, k}(X) - \hat{t}_{\pi_h, k}(X))^2\mathbf{1}_{X\le K-k}] \le
            \left(12(h^{2k} + \hat h^{2k}) + 48K^k(h^k + \hat h^k)\right)\left(H^2(f_{\hat\pi}, p_{\pi})+4\PP_\pi(\theta>h)\right).
        \end{align*}
        We can also bound \begin{align*}
            \E_{\pi_h}[(\hat{t}_{\hat{\pi}, k}(X) - \hat{t}_{\pi_h, k}(X))^2\mathbf{1}_{X> K-k}] \le (h^k+\hat h^k)^2\PP_{f_{\pi_h}}(X>K-k) = (h^k+\hat h^k)^2\frac{\PP_{f_{\pi}}(X>K-k)}{\PP_\pi(\theta\le h)}.
        \end{align*}
        Using $\PP_\pi(\theta\le h)\ge\frac12$, summing these two inequalities, and combining with \prettyref{eq:npmle} gives the desired bound.
    \end{proof}

    \subsection{Proofs related to ERM}\label{app:erm}
    \begin{proof}[Proof of \prettyref{lmm:thetaerm_clipped}]
        We recall that $\Termk(X_1^n) = (\hat{t}_{\mathsf{erm}}(X_1), \cdots, \hat{t}_{\mathsf{erm}}(X_n))$ 
        where $\hat{t}_{\mathsf{erm}}$ is a minimizer of $\hat{R} := \hat{R}_{X_1, \cdots, X_n}$ as defined in \prettyref{eq:erm_obj_emp}. 
        Denote, now, $\Termclipped{k, [a, b]}$ is 
        $(\hat{t}_{[a, b]}(X_1), \cdots, \hat{t}_{[a, b]}(X_n))$
        which is in $\mathcal{F}_{\uparrow}$ and $\hat{t}_{[a, b]}$ is clipped in $[a, b]$, 
        and also a minimizer of $\hat{R}$. 

        We show that there exists a minimizer $\hat{u}$ of $\hat{R}$ among $\mathcal{F}_{\uparrow, [a, b]}$ such that $\hat{u}(x)=a$ for all $x$ such that $\hat{t}_{\mathsf{erm}}(x) = a$. 
        Suppose this is not satisfied by $\hat{t}_{[a, b]}$. 
        By the monotonicity of both $\hat{t}_{[a, b]}$ and $\hat{t}_{\mathsf{erm}}$, 
        there exist disjoint intervals $I_1, I_2, I_3$ where $I_1 < I_2 < I_3$ such that 
        $\hat{t}_{[a, b]}(x)= a$ and $\hat{t}_{\mathsf{erm}}(x)\le a$ for $x\in I_1$, 
        $\hat{t}_{\mathsf{erm}}(x)\le a < \hat{t}_{[a, b]}(x)$ for $x\in I_2$, and 
        $\hat{t}_{[a, b]}(x)> a$ and $\hat{t}_{\mathsf{erm}}(x)> a$ for $x\in I_3$. 
        Now for $\epsilon\in [0, 1]$ we define the following two functions $\hat{t}_{1, \epsilon}, \hat{t}_{2, \epsilon}$ as follows: 
        \[
        \hat{t}_{1, \epsilon}(x) = 
        \begin{cases}
            \hat{t}_{[a, b]}(x) & x\not\in I_2\\
            (1-\epsilon)\hat{t}_{[a, b]}(x) + \epsilon\hat{t}_{\mathsf{erm}}(x) & x\in I_2\\
        \end{cases}
        \qquad 
        \hat{t}_{2, \epsilon}(x) = 
        \begin{cases}
            \hat{t}_{\mathsf{erm}}(x) & x\not\in I_2\\
            (1-\epsilon)\hat{t}_{\mathsf{erm}}(x) + \epsilon\hat{t}_{[a, b]}(x) & x\in I_2\\
        \end{cases}
        \]
        Note that $\hat{R}$ is separable and also convex in $\hat{t}$. 
        For this purpose, for each subset $I\subseteq \mathbb{R}$ we define $\hat{R}(\hat{t}; I)$ as 
        \[
        \hat{R}(\hat{t}; I_2)=\frac{1}{n} \sum_{i=1}^n \left(\hat{t}(X_i)^2-2(X_i - k + 1)_k \hat{t}(X_i - k)\right)\indc{X_i\in I_2}
        \]
        i.e. $\hat{R}=\hat{R}(\cdot, \mathbb{R})$. 
        Note that $\hat{t}_{[a, b]}(x) > \hat{t}_{\mathsf{erm}}(x)$ on $I_2$, 
        and by the definition of the intervals $I_1, I_2, I_3$, 
        there exists $\epsilon_1, \epsilon_2 > 0$
        such that $\hat{t}_{1, \epsilon}\in \mathcal{F}_{\uparrow, [a, b]}$, 
        $\hat{t}_{2, \epsilon}\in \mathcal{F}_{\uparrow}$. 
        Thus we have 
        \begin{flalign*}
            \hat{R}(\hat{t}_{[a, b]}; I_2)&\stepa{=}\hat{R}(\hat{t}_{[a, b]}) - \hat{R}(\hat{t}_{[a, b]}; \mathbb{R}\backslash I_2)
            \nonumber\\&\stepb{\le}\hat{R}(\hat{t}_{1, \epsilon_1}) - \hat{R}(\hat{t}_{1, \epsilon_1}; \mathbb{R}\backslash I_2)
        =\hat{R}(\hat{t}_{1, \epsilon_1}; I_2)
        \stepc{\le} (1-\epsilon_1)\hat{R}(\hat{t}_{[a, b]}; I_2) + \epsilon_1\hat{R}(\hat{t}_{\mathsf{erm}}; I_2)
        \end{flalign*}
        where (a) follows from the separability of $\hat{R}$, 
        (b) follows from that $\hat{t}_{([a, b])}$ 
        is a minimizer of $\hat{R}$ among functions in $\mathcal{F}_{\uparrow, [a, b]}$ and that $\hat{R}(\hat{t}_{[a, b]}; \mathbb{R}\backslash I_2) = \hat{R}(\hat{t}_{1, \epsilon_1}; \mathbb{R}\backslash I_2)$, 
        and (c) follows from the convexity of $\hat{R}$. 
        An immediate consequence of this is that $\hat{R}(\hat{t}_{[a, b]}; I_2)\le \hat{R}(\hat{t}_{\mathsf{erm}}; I_2)$. 
        Similarly, 
        considering the similar operation above on $\hat{R}(\hat{t}_{2, \epsilon_2}; I_2)$ and comparing against $\hat{R}(\hat{t}_{\mathsf{erm}, I_2})$ we have 
        $\hat{R}(\hat{t}_{\mathsf{erm}}; I_2)\le \hat{R}(\hat{t}_{[a, b]}; I_2)$. 
        Thus, in fact all the inequalities above are equality. 
        In particular, $\hat{R}(\hat{t}_{[a, b]}) = \hat{R}(\hat{t}_{1, \epsilon_1})$. 
        Such $\epsilon_1$ can be chosen such that there's some $X_i\in I_2$ where $\hat{t}_{1, \epsilon_1}(X_i) = a$ and $\hat{t}_{1, \epsilon}\in \mathcal{F}_{\uparrow, [a, b]}$. 
        We may then replace $\hat{t}_{[a, b]}$ with this $\hat{t}_{1, \epsilon_1}$, 
        and repeating this process would yield $\hat{t}_{[a, b]}$ with $\hat{t}_{[a, b]}(x)=a$ whenever 
        $\hat{t}_{\mathsf{erm}}(x)<a$. 
        In a similar way, we could also choose $\hat{t}_{[a, b]}$ such that 
        $\hat{t}_{[a, b]}(x) = b$ whenever $\hat{t}_{\mathsf{erm}}(x) > b$. 

        We are now left with $x_i$'s such that $a < \hat{t}_{\mathsf{erm}}(x_i) < b$. 
        Let the set of such $x_i$ be $I$, then 
        $\hat{t}_{\mathsf{erm}}|_I$ on this set $I$ is a minimizer of $\hat{R}$ among $\hat{t}\in\mathcal{F}_{\uparrow, [a, b]}$. Given also that $\hat{t}_{[a, b]}$ now takes only values $a$ or $b$ outside $I$, 
        it follows that we may also take $t_{[a, b]}|_I = \hat{t}_{\mathsf{erm}}|_I$. 
    \end{proof}

    Before proving \prettyref{lmm:erm_bound_T}, 
    we introduce the following auxiliary step, which generalizes a key step in part of a proof of \cite[Lemma 4]{JPTW23}. 
    The proof is deferred to the end of this subsection. 
    \begin{lemma}\label{lmm:sos-rademacher}
        Let $h, \hat{h}, b > 0$ and $X_1, X_2, \cdots, X_n$ be fixed. 
        Denote $X_{\max} = \max\{X_1, \cdots, X_n\}$. 
        Let $\hat{t}_{\mathsf{oracle}}$ be a function taking values in $[0, h]$. 
        Let $\mathcal{F}$ be a class of monotone functions taking values in $[0, \hat{h}]$. 
        Then for $\epsilon_1, \cdots, \epsilon_n$ as $n$ independent Rademacher symbols, we have 
        \[
        \mathbb{E}\left[\sup_{\hat{t}\in\mathcal{F}}\sum_{i=1}^n (\epsilon_i - \frac 1b)(\hat{t}_{\mathsf{oracle}}(X_i) - \hat{t}(X_i))^2\right]
        \le c(\hat{h}^2 + h^2(1+X_{\max}))
        \]
        where the expectation is taken over $\epsilon_i$ and $c:=c(b) > 0$ is a constant that depends only on $b$. 
    \end{lemma}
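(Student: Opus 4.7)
The plan is to adapt the offset-Rademacher/peeling argument used in \cite[Lemma 4]{JPTW23}, which treated the symmetric case $\hat{t}_{\mathsf{oracle}} = \hat{t}_{\pi, k}$, to the present asymmetric setting where the oracle and the monotone class carry different sup-norm bounds $h$ and $\hat{h}$.

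The first step is to discretize. Since $X_i \in \mathbb{Z}_+$, the set $S = \{X_1, \ldots, X_n\}$ of distinct values has $|S| \le X_{\max} + 1$. Writing $n_x = |\{i : X_i = x\}|$ and $B_x = \sum_{i: X_i = x}\epsilon_i$, the expression of interest becomes $\sup_{\hat{t} \in \mathcal{F}}\sum_{x \in S}(\hat{t}_{\mathsf{oracle}}(x) - \hat{t}(x))^2(B_x - n_x/b)$. This already makes the $(X_{\max}+1)$-dependence visible. Expanding the square $(\hat{t}_{\mathsf{oracle}}-\hat{t})^2 = \hat{t}_{\mathsf{oracle}}^2 - 2\hat{t}_{\mathsf{oracle}}\hat{t} + \hat{t}^2$ splits the supremum into three pieces: a constant-in-$\hat{t}$ piece, a cross piece linear in $\hat{t}$, and a pure quadratic piece in $\hat{t}$. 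The constant piece has nonpositive expectation $-\frac{1}{b}\sum_x n_x \hat{t}_{\mathsf{oracle}}(x)^2$ and may be discarded from the upper bound.

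The second step bounds the remaining two pieces. For the pure quadratic piece, since $\hat{t} \in [0,\hat{h}]$ monotone forces $\hat{t}^2$ to be monotone in $[0,\hat{h}^2]$, we decompose $\hat{t}(x)^2 = \sum_{y \le x} d_y$ with $d_y \ge 0$ and $\sum_y d_y \le \hat{h}^2$, reducing the supremum to $\hat{h}^2(\max_y M_y)_+$ where $M_y = \sum_{x \ge y}(B_x - n_x/b)$. The quadratic offset $n_{\ge y}/b$ yields $\Pr(M_y > t) \le \exp(-n_{\ge y}/(2b^2))\exp(-t/b)$ via a Hoeffding tail; combined with the bound $\mathbb{E}[(M_y)_+] \le b$ from Markov's inequality, this produces $\mathbb{E}[\hat{h}^2(\max_y M_y)_+] = O_b(\hat{h}^2)$ through the localization induced by the offset. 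The cross piece $-2\sum_x \hat{t}_{\mathsf{oracle}}(x)\hat{t}(x)(B_x - n_x/b)$ is handled via weighted AM-GM $-2uv \le \epsilon v^2 + u^2/\epsilon$ with $u = \hat{t}_{\mathsf{oracle}}(x)(B_x - n_x/b)$ and $v = \hat{t}(x)$: choosing $\epsilon$ to match (a fraction of) the quadratic offset $\frac{1}{b}\sum_x n_x \hat{t}(x)^2$ already used to control the quadratic piece, the residual $u^2$ becomes $\sum_x \hat{t}_{\mathsf{oracle}}(x)^2(B_x - n_x/b)^2$, whose expectation is bounded by $h^2 \sum_{x \in S} \mathbb{E}[(B_x - n_x/b)_+] \cdot O_b(1) = O_b(h^2(1+X_{\max}))$ using $\mathbb{E}[(B_x - n_x/b)_+]\le b$ and $|S| \le X_{\max}+1$.

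The main technical obstacle lies in the pure quadratic piece: showing that the sup scales as $\hat{h}^2$ without picking up a factor of $X_{\max}$ (or $\sqrt{n}$) requires fully exploiting the quadratic offset as a localizer. Naive bounds on $\mathbb{E}[\max_y M_y]$ via union bound over the $|S| \le X_{\max}+1$ values of $y$ would yield a $\log(X_{\max})$ factor; removing it uses that the tail of $M_y$ carries an additional $\exp(-n_{\ge y}/(2b^2))$ damping factor that makes only $O_b(1)$ values of $y$ effectively ``active.'' Once this is in hand, summing the two pieces and the nonpositive constant piece yields the claimed bound $c(\hat{h}^2 + h^2(1+X_{\max}))$ with a constant depending only on $b$.
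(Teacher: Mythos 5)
Your decomposition of the supremum and your treatment of the pure-quadratic piece are sound: expanding $(\hat{t}_{\mathsf{oracle}}-\hat{t})^2$, discarding the nonpositive-expectation constant piece, writing $\hat{t}(x)^2=\sum_{y\le x}d_y$ with $d_y\ge0$, $\sum_y d_y\le\hat{h}^2$, and using the offset $\exp(-n_{\ge y}/(2b^2))$ to beat the sum over $y\in S$ is a legitimate alternative to the paper's appeal to \cite[Lemma 6]{JPTW23}, and does give $\E[\sup_{\hat{t}}\sum_x\hat{t}(x)^2(B_x-n_x/b)]=O_b(\hat{h}^2)$.

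The cross-piece step, however, does not work, and the flaw is not cosmetic. After the AM--GM step with any permissible choice $\epsilon_x\le\frac{cn_x}{b}$, $c<1$ (needed so $\epsilon_x\hat{t}(x)^2$ can be absorbed into the quadratic offset $\frac{1}{b}\sum_x n_x\hat{t}(x)^2$), the residual is $\sum_x\hat{t}_{\mathsf{oracle}}(x)^2(B_x-n_x/b)^2/\epsilon_x\ge\frac{b}{c}\sum_x\hat{t}_{\mathsf{oracle}}(x)^2(B_x-n_x/b)^2/n_x$, and
\[
\E\!\left[(B_x-n_x/b)^2\right]=\Var(B_x)+\left(\tfrac{n_x}{b}\right)^2=n_x+\tfrac{n_x^2}{b^2}.
\]
The $n_x^2/b^2$ term is the culprit: summing gives a contribution of order $h^2n/b$, which cannot be absorbed into $O_b(h^2(1+X_{\max}))$. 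Your stated inequality $\E[(B_x-n_x/b)^2]\le O_b(1)\cdot\E[(B_x-n_x/b)_+]$ is false; while $\E[(B_x-n_x/b)_+]\le b e^{-n_x/(2b^2)}$ is bounded (and decaying), the square of the deterministic drift grows like $n_x^2$. More fundamentally, the paper's bound is \emph{not} of the form $\E[\sup Q]+\E[\sup C]$: the two pieces partially cancel (when $B_x-n_x/b<0$, the quadratic piece is helpfully negative while the cross piece is adversarially positive), and AM--GM squanders exactly this cancellation by squaring the drift.

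The paper sidesteps this entirely by \emph{not} expanding the square. It fixes a threshold $m_b h$ with $m_b=b+1$ and splits the sum by whether $\hat{t}(x)\le m_b h$ or $\hat{t}(x)>m_b h$. In the first regime $(\hat{t}_{\mathsf{oracle}}(x)-\hat{t}(x))^2\le m_b^2 h^2$ and the contribution is at most $m_b^2h^2\sum_x\max\{\epsilon(x)-N(x)/b,0\}$, which has expectation $O_b(1)\cdot(1+X_{\max})$ by \cite[Lemma 5]{JPTW23}. In the second regime $(\hat{t}_{\mathsf{oracle}}(x)-\hat{t}(x))^2\in[(\tfrac{m_b-1}{m_b})^2\hat{t}(x)^2,\hat{t}(x)^2]$, so the cross-term is effectively absorbed into the quadratic up to bounded constants and a convexity/LP argument delivers the $\hat{h}^2$ factor. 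Monotonicity of $\hat{t}$ is what makes the threshold set $\{\hat{t}(x)>m_b h\}$ a half-line, which is crucial to both regimes. To repair your proof you would need to recover this case distinction rather than using a global AM--GM on the cross term.
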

    
    \begin{proof}[Proof of \prettyref{lmm:erm_bound_T}]
    Recall $N(x)$ is the sample frequency and define the quantity \begin{align}\label{eq:epsilon_x}
        \epsilon(x) = \sum_{i=1}^n\epsilon_i\mathbf{1}_{X_i=x}.
    \end{align}
    Recall also the expression $S(t_1, t_2, x)$ as defined in \prettyref{eq:s_erm}. 
    We first prove the bound on $R_2(b,n)$. 
    Defining $\hat{t}(x) = 0$ and $\tpik(x) = 0$ for $x<0$, we have
	\begin{align}\label{eq:T2_bn}
		&\sum_{i=1}^n2\epsilon_iS(\hat{t}, \tpik, X_i)-\frac1b(\tpik(X_i)-\hat{t}(X_i))^2\nonumber\\
		=&\sum_{x\ge0}2\epsilon(x)S(\hat{t}, \tpik, x) - \frac{N(x)}b(\tpik(x)-\hat{t}(x))^2\nonumber\\
		=&\sum_{x\ge0}2(\epsilon(x)\tpik(x)-(x+1)_k\epsilon(x+k))(\hat{t}(x) - \tpik(x))-\frac{N(x)}b(\tpik(x)-\hat{t}(x))^2
	\end{align}
        Note that due to the identity $2ax - bx^2\le \frac{a^2}{b}$ for all $b > 0$, 
        for any $x$ with $N(x) > 0$ we have 
        \[
        2(\epsilon(x)\tpik(x)-(x+1)_k\epsilon(x+k))(\hat{t}(x) - \tpik(x))-\frac{N(x)}b(\tpik(x)-\hat{t}(x))^2
        \ge b\cdot \frac{(\epsilon(x)\hat{t}_{\pi, k}(x) - (x + 1)_k\epsilon(x+k))^2}{N(x)}
        \]
        which is a bound independent of $\hat{t}$. 
        Therefore we may substitute \prettyref{eq:T2_bn} back into $R_2(b,n)$ to get $R_2(b, n)\le J_1(n) + J_0(n)$ where 
        \begin{align}\label{eq:j1}
            J_1(n) = \E\left[\sum_{x\ge 0}\frac{(\epsilon(x)\hat{t}_{\pi, k}(x) - (x + 1)_k\epsilon(x+k))^2}{N(x)}\indc{N(x) > 0}\right]
        \end{align}
        \begin{align}\label{eq:j0}
            J_0(n) = b\E\left\{\sup_{f\in\calF_*\cup\calF_*'}\left[\sum_{x\ge0}2(x+1)_k\epsilon(x+k)(\tpik(x)-\hat{t}(x))\mathbf{1}_{N(x)=0}\right]\right\}
        \end{align}
        We first bound $J_1(n)$. 
        Note that $\epsilon_1, \cdots, \epsilon_n$ are independent of $X_1, \cdots, X_n$. 
        Thus for each $x\ge 0$, by the definition of $\epsilon(x)$ defined in \prettyref{eq:epsilon_x}, 
        $\epsilon(x) | X_1, \cdots, X_n\sim 2\text{Binom}(N(x), \frac 12) - \frac 12 N(x)$. 
        Thus $\E[\epsilon(x)|X_1, \dots, X_n] = 0$, $\E[(\epsilon(x))^2|X_1,\dots,X_n] = N(x)$, 
        and $\E[\epsilon(x)\epsilon(x + k)|X_1, \dots, X_n] = 0$. 
        This gives 
	\begin{align}\label{eq:t1_v3}
		J_1(n)\le &b\cdot\E\left[\sum_{x\ge0}\frac{(\epsilon(x)\tpik(x)-(x+1)_k\epsilon(x+k))^2}{N(x)}\mathbf{1}_{N(x)>0}\right]\nonumber\\
		=&b\cdot\E\left[\sum_{x\ge0}\left((\tpik(x))^2+\frac{(x+1)_k^2N(x+k)}{N(x)}\right)\mathbf{1}_{N(x)>0}\right]
            \nonumber\\
            =&b\cdot \E\left[\sum_{x\ge0}(\tpik(x))^2\mathbf{1}_{N(x)>0}\right]
            +b\E\left[\sum_{x\ge0}\frac{(x+1)_k^2N(x+k)}{N(x)}\mathbf{1}_{N(x)>0}\right]
	\end{align}
        The first term can be bounded by observing that $\tpik(x)^2\le h^{2k}$ and 
        $\mathbf{1}_{N(x)>0} = 0$ for all but $1+X_{\max}$ of $x$'s, thus 
        $\E\left[\sum_{x\ge0}(\tpik(x))^2\mathbf{1}_{N(x)>0}\right]\le h^{2k}\mathbb{E}[1+X_{\max}]$. 
        For the second term, recall that conditioned on $N(x)$, $\E[N(x+k) | N(x)] = \frac{nf_{\pi}(x+k)}{f_{\pi}(x)}$, 
        so we may continue the expansion as 
        \begin{align}\label{eq:t1_v4}
            \E\left[\sum_{x\ge0}\frac{(x+1)_k^2N(x+k)}{N(x)}\mathbf{1}_{N(x)>0}\right]
            &=\sum_{x\ge 0} (x+1)_k^2\frac{nf_{\pi}(x + k)}{1 - f_{\pi}(x)}\E\left[\frac{\indc{N(x) > 0}}{N(x)}\right]
            \nonumber\\&\stepa{\le} c_1 (x+1)_k^2 \frac{f_{\pi}(x + k)}{(1 - f_{\pi}(x))f_{\pi}(x)}\min\{1, (nf_{\pi}(x))^2\}
            \nonumber\\&\stepb{\le} c_1 (k!)^2 \frac{f_{\pi}(k)}{(1-f_{\pi(0)})f_{\pi}(0)}
            + c_2\sum_{x\ge 1}(x+1)_k^2 \frac{f_{\pi}(x + k)}{f_{\pi}(x)}\min\{1, (nf_{\pi}(x))^2\}
            \nonumber\\&\stepc{\le} c_3 k!(h^k + 1)
            + c_2h^k\sum_{x\ge 1}(x+1)_k\min\{1, (nf_{\pi}(x))^2\}
        \end{align}
        where $c_1, c_2, c_3 > 0$ are absolute constants. 
        Here (a) is due to \prettyref{eq:binom_inverse_bound}, (b) due to \prettyref{eq:poi_sterling}, 
        (c) due to \prettyref{lmm:zero_term} and that 
        $(x+1)_k\frac{f_{\pi}(x + k)}{f_{\pi}(x)}=\tpik(x)\le h^k$. 
        We further bound the second term in \prettyref{eq:t1_v4} as follows: 
	\begin{align}\label{eq:t1_term3}
		&h^k\sum_{x\ge1}(x+1)_k\min\{(nf_{\pi}(x))^2,1\} \le h^kM^{k+1} + h^k\sum_{x\ge M}(x+1)_k\min\{(nf_{\pi}(x))^2,1\}\nonumber\\
		&\stepa{\le} h^kM^{k+1}+2^kn^2h^k\sum_{x\ge M}(x - k + 1)_k(f_{\pi}(x))^2\stepb{\le} h^kM^{k+1}+2^kn^2h^{2k}\PP_{X\sim f_{\pi}}[X>M]\nonumber\\
		&\le2^k\left(h^kM^{k+1}+\frac{h^{2k}}{n^5}\right)
             \le 2^k(h^k + M^{k + 1} + h^{2k})
	\end{align}
        where (a) we used the crude inequality $(x + 1)_k \le 2^k (x - k + 1)_k$ for $x\ge M\ge k$,  and
        (b) is because $$(x - k + 1)_kf_{\pi}(x) = \tpik(x-k)f_\pi(x-k) \le h^k.$$ The $\frac{h^{2k}}{n^5}$ term disappears asymptotically, so substituting \prettyref{eq:t1_term3} back into \prettyref{eq:t1_v4}, we obtain 
        \begin{align}\label{eq:t1_v5}
        \frac1b J_1(n)\le h^{2k}(2^k + c_1(1)M) + k!(4h^k + e) + 2^kh^kM^{k+1}.
        \end{align}
        
	Now we bound $J_0(n)$. We know $|\epsilon(x+k)|\le N(x+k)=0$ for $x\ge X_{\max}-k+1$. We also use the fact that for $\hat{t}\in \mathcal{F}_r\cup \mathcal{F}'_r$, 
     $|\tpik(x) - \hat{t}(x)|\le \tpik(x) + r(x) + r(x') \le h^k + r(X_{\max}) 
     + r(X_{\max}')$ for all $x\le X_{\max}$. 
     Thus
	\begin{align}\label{eq:t0_v2}
	   J_0(n)\le&~\E\left[\sum_{x\ge0}2(x+1)_kN(x+k)\sup_{f\in\calF_*\cup\calF_*'}|\tpik(x)-\hat{t}(x)|\mathbf{1}_{N(x)=0}\right]\nonumber\\
		\le&~\E\left[\sum_{x=0}^{X_{\max} -k}2(x+1)_k(h^k + r(X_{\max}) 
     + r(X_{\max}'))N(x+k)\mathbf{1}_{N(x)=0}\right]
	\end{align}
	Let $A=\{X_{\max}\le M,X_{\max}'\le M\}$. Then $\PP[A^C]\le\frac2{n^6}$ by union bounding.
        Thus, for some absolute constant $c > 0$: 
	\begin{align}\label{eq:t0_v3}
		&\E\left[\sum_{x=0}^{X_{\max}-k}2(x+1)_k(h^k + r(X_{\max}) 
     + r(X_{\max}'))N(x+k)\mathbf{1}_{N(x)=0}\mathbf{1}_{A^C}\right]\nonumber\\
		\le&~\E\left[X_{\max}^k(h^k + r(X_{\max}) 
     + r(X_{\max}'))\sum_{x=0}^{X_{\max}-k}N(x+k)\mathbf{1}_{N(x)=0}\mathbf{1}_{A^C}\right]\nonumber\\
		\stepa{\le}&~n\E\left[X_{\max}^k(h^k + r(X_{\max}) 
     + r(X_{\max}'))\mathbf{1}_{A^C}\right]\nonumber\\
	\stepb{\le}&~n\sqrt{\E\left[X_{\max}^{2k} (h^k + r(X_{\max}) 
     + r(X_{\max}'))^2\right]}\sqrt{\PP[A^C]}\nonumber\\
         \stepc{\le} & ~n\sqrt{3}
         (h^k\sqrt{\bbE[X_{\max}^{2k}]} + \sqrt{\bbE[X_{\max}^{2k} r(X_{\max})^2]} + \sqrt{\bbE[X_{\max}^{2k} r(X'_{\max})^2]})\cdot \frac{1}{n^3}\nonumber\\
     \stepd{\le} &~\frac{c(h^k\sqrt{c_1(2k)}M^k + \sqrt{\bbE[X_{\max}^{2k} r(X_{\max})^2]} + \sqrt{c_1(2k)c_2(k, 2)}M^kL)}{n^2}
	\end{align}
	where (a) follows from $\sum_{x = 0}^{X_{\max} - k} N(x + k) \le \sum_{x = 0}^{\infty} N(x) = n$, and 
        (b) follows from Cauchy-Schwarz,
        (c) follows from 
        \[
        \sqrt{\bbE[(A + B + C)^2]}
        \le \sqrt{3}\sqrt{\bbE[A^2 + B^2 + C^2]}
        \le \sqrt{3}(\sqrt{\bbE[A^2]} + \sqrt{\bbE[B^2]} + \sqrt{\bbE[C^2]})
        \]
        and (d) uses $\bbE[X^{2k}_{\max}r(X'_{\max})^2] = 
        \bbE[X^{2k}_{\max}]\bbE[r(X'_{\max})^2]$ due to the independence of 
        $X_{\max}$ and $X'_{\max}$. 
        
        For each $x\le M$, define $q_{\pi,M}(x)=\frac{f_{\pi}(x)}{\PP_{X\sim f_{\pi}}[X\le M]}$. Note that $\PP[N(x)=0|A]=(1-q_{\pi,M}(x))^n$ and conditioned on $A$ and $N(x)=0$, the random variable $N(x+k) \sim \text{Binom}\left(n,\frac{q_{\pi,M}(x+k)}{1-q_{\pi,M}(x)}\right)$. Then 
	\begin{align}\label{eq:t0_v4}
		&~\E\left[\sum_{x=0}^{X_{\max}-k}2(x+1)_k(h^k + r(X_{\max}) 
     + r(X_{\max}'))N(x+k)\mathbf{1}_{N(x)=0}\mathbf{1}_{A}\right]\nonumber\\
		\le&~\E\left[\sum_{x=0}^{X_{\max}-k}2(x+1)_k(h^k + r(X_{\max}) 
     + r(X_{\max}'))N(x+k)\mathbf{1}_{N(x)=0}\bigg|A\right]\nonumber\\
		\le&~\sum_{x=0}^{M-k}2(x+1)_k(h^k+2r(M))\E\left[N(x+k)\mathbf{1}_{N(x)=0}|A\right]\nonumber\\
		=&~\sum_{x=0}^{M-k}2(x+1)_k(h^k+2r(M))\E\left[N(x+k)|N(x)=0,A\right]\PP[N(x)=0|A]\nonumber\\
		\le &~\sum_{x=0}^{M-k}2(x+1)_k(h^k+2r(M))\frac{nq_{\pi,M}(x+k)}{1-q_{\pi,M}(x)}(1-q_{\pi,M}(x))^n\nonumber\\
		\stepa{=}&~\sum_{x=0}^{M-k}2(h^k+2r(M))\tpik(x)nq_{\pi,M}(x)(1-q_{\pi,M}(x))^{n-1}\nonumber\\
		\le&~2Mh^k(h^k+2r(M))
	\end{align}
        where (a) is because $\tpik(x)\le h$ for all $x$ and 
        $nw(1 - w)^{n - 1}\le (1- \frac{1}{n})^{n - 1} < 1$ for all $w\in [0, 1]$. 
	Summing \prettyref{eq:t0_v3} and \prettyref{eq:t0_v4} and continuing \prettyref{eq:t0_v2}, we have 
        $$J_0(n)\le\frac{c(h^k\sqrt{c_1(2k)}M^k + \sqrt{\bbE[X_{\max}^{2k} r(X_{\max})^2]} + \sqrt{c_1(2k)c_2(k, 2)}M^kL)}{n^2} + 2Mh^k(h^k+2r(M))$$ and combining with \prettyref{eq:t1_v5}, we obtain the desired bound 
    \begin{flalign*}
        R_2(b,n) &\le ~h^{2k}(1 + M) + k!(4h^k + e) + 2^kh^kM^{k+1}
      +2Mh^k(h^k+2r(M)) \nonumber\\
      &\quad 
      +\frac{c(h^k\sqrt{c_1(2k)}M^k + \sqrt{c_1(4k)}M^{2k} + \sqrt{c_2(k, 4)}L^2)}{n^2}
     \nonumber\\
     &\lesssim ~ h^{2k}(2^k + c_1(1)M) + (2h)^kM^{k + 1} + Mh^kr(M) + k!(h^k + 1) \nonumber\\
     & + \frac{h^k\sqrt{c_1(2k)}M^k + \sqrt{\bbE[X_{\max}^{2k} r(X_{\max})^2]} + \sqrt{c_1(2k)c_2(k, 2)}M^kL}{n^2}
    \end{flalign*}
    
    To bound $R_1(b, n)$, 
    we will utilize \prettyref{lmm:sos-rademacher} and note that we may use $h^k$ in place of $h$ and $r(X_{\max})$ in place of $\hat{h}$. Thus for some constant $c:= c(b)$
    \[
    R_1(b, n) \le c\mathbb{E}[h^{2k}(1+X_{\max}) + r(X_{\max})^2]
    =c'(h^{2k}(1+M) + c_2(k, 2)L^2)
    \]
    as desired. 
    \end{proof}
    \begin{proof}[Proof of \prettyref{lmm:sos-rademacher}]
        Set $m_b=b+1$, and recall the definition of $\epsilon(x)$ at \prettyref{eq:epsilon_x}. 
    Then for each $\hat{t}$, we have 
    \begin{align}
        &\sum_{i=1}^n \left(\epsilon_i -\frac 1b\right)(\hat{t}_{\mathsf{oracle}}(X_i) - \hat{t}(X_i))^2
        \nonumber\\=&~\sum_{x\ge 0, N(x)>0} \left(\epsilon(x) -\frac {N(x)}{b}\right)(\hat{t}_{\mathsf{oracle}}(x) - \hat{t}(x))^2
        \nonumber\\ 
        \le&~ m_b^2h^2\sum_{x=0}^{X_{\max}}\max\{\epsilon(x) - \frac 1b N(x), 0\}
        +\sum_{x=0}^{X_{\max}}\left(\epsilon(x) - \frac 1b N(x)\right)(\hat{t}_{\mathsf{oracle}}(x) - \hat{t}(x))^2\indc{\hat{t}(x) > m_bh}
    \end{align}
    By \cite[Lemma 5]{JPTW23}, we may bound the expectation of the first term by 
    \[
    \mathbb{E}\left[\sum_{x=0}^{X_{\max}} \max\{\epsilon(x) - \frac 1b N(x), 0\}\right]
    \le N_b(1 + X_{\max})
    \qquad N_b\triangleq \frac{1 - \frac{1}{b}}{e\cdot D(\frac{1 + \frac{1}{b}}{2} || \frac 12)}
    \]
    Now if $\hat{h}\le m_bh$ then the second term is 0 and we are done. 
    Otherwise, assume $\hat{h}> m_bh$. 
    To bound the second term, 
    for $x$ satisfying 
    $[m_bh, \hat{h}]$, we have $\frac{m_b-1}{m_b}\hat{t}(x) \le \hat{t}(x) - \hat{t}_{\mathsf{oracle}}(x) \le \hat{t}(x)$. 
    Therefore, we may define $g: [-1, 1]\to\mathbb{R}$ as 
    \[g(x) = \max\left\{\left(x-\frac{1}{b}\right), \left(\frac{m_b-1}{m_b}\right)^2\left(x-\frac{1}{b}\right)\right\},\]
    and have the following inequality 
    \[(\epsilon(x)-\frac{1}{b}N(x))(\hat{t}(x)-\tpik(x))^2 
    \le
    g\left(\frac{\epsilon(x)}{N(x)}\right)N(x)\hat{t}(x)^2
    \le \frac{2b^2-1}{2b(b+1)}\left(\frac{\epsilon(x)}{N(x)} - \frac{1}{2b^2-1}\right)N(x)\hat{t}(x)^2
    \]
    with the right inequality due to that $g$ is convex and $\frac{\epsilon(x)}{N(x)} \in [-1,1]$, 
    therefore bounded by the line connecting the two endpoints $(-1, -\frac{b}{b+1}), (1, \frac{b-1}{b})$. 
    In addition, given the monotonicity of $\hat{t}$, 
    there exists $v$ such that $\hat{t}(x)> m_bh^k$ if and only if $x \ge v$. Therefore, we continue from the display to get the bound 
    \[
    \sum_{x: N(x) > 0}\left(\epsilon(x) - \frac 1b N(x)\right)(\hat{t}_{\mathsf{oracle}}(x) - \hat{t}(x))^2\indc{\hat{t}(x) > m_bh}
    \le c_3(b)\sum_{x=v}^{X_{\max}}\left(\epsilon(x) - \frac{1}{2b^2-1}N(x)\right)\hat{t}(x)^2. 
    \]
    with $c_3(b) = \frac{2b^2-1}{2b(b+1)}$. 
    Fixing $v$, the optimization problem of taking the supremum can be viewed as an $X_{\max}-v+1$-th dimensional linear programming problem with unknowns being the values $\hat{t}(v)^2, \cdots, \hat{t}(X_{\max})^2$. The set of solutions $m_b^2h^{2} \le \hat{t}(v)^2 \le \dots \le \hat{t}(X_{\max})^2 \le \hat{h}^2$ is convex, so the optimum value must occur at one of the corners, 
    i.e. there is a $w$ with $\hat{t}(x)=m_bh$ for $x\le w$ and $\hat{t}(x)=\hat{h}$ for $x>w$. Therefore, 
    \begin{flalign*}
        &~\sup_{\hat{t}} \sum_{x=v}^{X_{\max}}\left(\epsilon(x) - \frac{1}{2b^2-1}N(x)\right)\hat{t}(x)^2
        \\=&~\sup_{w} 
        (m_bh)^2\sum_{x=v}^{w}\left(\epsilon(x) - \frac{1}{2b^2-1}N(x)\right) + \hat{h}^2\sum_{x=w+1}^{X_{\max}}\left(\epsilon(x) - \frac{1}{2b^2-1}N(x)\right)
        \\\stepa{\le} &~\sup_{w}  \hat{h}^2\sum_{x=w+1}^{X_{\max}}\left(\epsilon(x) - \frac{1}{2b^2-1}N(x)\right)
    \end{flalign*}
    where (a) is due to that $\sum_{x=v}^{w}\left(\epsilon(x) - \frac{1}{2b^2-1}N(x)\right)\le 0$, otherwise the objective value would increase by $\hat{t}(x)=\hat{h}$ for all $v$. 
    By \cite[Lemma 6]{JPTW23}, we have 
    \[
    \E\left[\sup_{w\ge 0}\left\{\sum_{x>w}^{X_{\max}}\left(\epsilon(x) - \frac{1}{2b^2-1}N(x)\right)\right\}\right]\le \sup_{w:0\le w\le n}(\epsilon_{w+1}+\dots+\epsilon_{n})-\frac{1}{2b^2-1}(n-w) \le c\] for some constant $c := c(b)$. 
    Thus combining the bounds for two terms give $c'(\hat{h}^2 + h^2(1+X_{\max})$ for some constant $c':=c(b)$. 
    \end{proof}

\section{Compound decision problem}\label{app:compound}
A related setting is the compound decision problem, 
where the hidden parameters $\theta_1^n := (\theta_1, \cdots, \theta_n)$ are not random but drawn deterministically. 
More specifically, one draws $(\nu_1, \cdots, \nu_n)$ according to $\pi_{\theta_1^n} := \frac 1n\sum_{i=1}^n \delta_{\theta_i}$. 
Each observation $X_i$ is then generated through $f(\cdot; \nu_i)$. 

Similarly to the mean estimation problem, there are two choices of oracle estimators for $\ell(\theta)$. Both are of the form 
\[
\argmin_{\hat{T}}[||\hat{T}(X_1^n) - \ell(\theta_1^n)||^2]
\]
The oracle based on \textit{simple} decision criterion has $\hat{T}$ optimized over the class 
\[
\mathcal{D}^S_n := \{\hat{T}: \mathbb{R}^n\to\mathbb{R}^n; \hat{t}: \R\to \R; \hat{T}(X_1^n) = (\hat{t}(X_1), \cdots, \hat{t}(X_n))\}
\]
Note that such optimal $\hat{t}$ satisfies $\hat{t}(x) = \mathbb{E}[\ell(\theta)|X = x]$, i.e. is also the Bayes estimator. 
The oracle based on the \textit{permutation invariant} criterion, first introduced in \cite{greenshtein2009asymptotic}, has $\hat{T}$ optimized over the class 
\[
\mathcal{D}^{PI}_n := \{\hat{T}: \mathbb{R}^n\to\mathbb{R}^n; \forall \sigma\in S_n: \hat{T}(\sigma(X_1^n)) = \sigma(\hat{T}(X_1^n))\}
\]
i.e., the input and output under $\hat{T}$ is permuted in the same way. 

Denote $r^S_{\theta_1^n}$ and $r^{PI}_{\theta_1^n}$ as the risks incurred by the simple and permutation invariant oracles, 
respectively. 
Then we may similarly define the total regret of a permutation invariant $\hat{T}$ as: 
\[
\mathsf{TotRegretComp}_{\ell, \theta_1^n}^S (\hat{T}) = \E[||\hat{T}(X_1^n) - \theta_1^n||_2^2] - r^S(\theta)
\]
\[
\mathsf{TotRegretComp}_{\ell, \theta_1^n}^{PI} (\hat{T}) = \E[||\hat{T}(X_1^n) - \theta_1^n||_2^2] - r^{PI}(\theta)
\]
Since simple decision oracles are also permutation invariant, $r^{S}\ge r^{PI}$ and therefore 
$\mathsf{TotRegretComp}_{\pi, \ell, \theta^n}^S\le \mathsf{TotRegretComp}_{\pi, \ell, \theta^n}^{PI}$. 
On the other hand, \cite[Corollary 5.2]{greenshtein2009asymptotic} shows that in the mean estimation problem of the normal means model, the gap between $r^{S}$ and $r^{PI}$ is $O(1)$ under suitable conditions. In our moment estimation problem under normal means, 
an explicit bound can be further quantified by considering the channel $\theta\to \mathcal{N}(\theta^{1/k}, 1)$, and utilizing a recent result in \cite[Lemma 2.5]{han2024approximate} to obtain a bound of 
\[
r^{S} - r^{PI}\le C\cdot \frac{h^{2k}\cdot (e^{4h^2} - 1)(1+h)}{n}
\]
for some absolute constant $C$ whenever $\theta_1^n\in [0, h]^{n}$. 

In a similar way, we may define the minimax total regret over a class $\mathcal{G}$ of priors as 
\[
\mathsf{TotRegretComp}_{\ell, n}^S (\mathcal{G}) = \inf_{\hat{T}}\sup_{\theta_1^n\in \mathcal{G}^{\otimes n}}\mathsf{TotRegretComp}_{\ell, \theta^n}^S (\hat{T})
\]\[
\mathsf{TotRegretComp}_{\ell, n}^{PI} (\mathcal{G}) = \inf_{\hat{T}}\sup_{\theta_1^n\in \mathcal{G}^{\otimes n}}\mathsf{TotRegretComp}_{\ell, \theta^n}^{PI} (\hat{T})
\]
Unlike empirical Bayes minimax total regret, much less is known about the upper bounds on total regret in the compound decision problem. 
For the normal means model, the best upper bound of $\mathsf{TotRegretComp}$ is $O(\frac{\log^{4.5} n}{\log \log^{1.5} n})$, achieved by \cite[Lemma 1]{JZ09} and by bounding the Hellinger rate of estimating Gaussian mixtures, 
which is of polylog factor away from the lower bound. 
For the Poisson model, the only upper bound established is for heavy-tailed setting in \cite[Appendix F]{SW22}, 
which matches the corresponding lower bound up to polylog factors. 
Although one may adapt it to the setting where $\theta_1^n\in [0, h]^{n}$, 
the bound is likely suboptimal, with extra logarithmic factors. 

On the other hand, \cite[Corollary 3]{polyanskiy2021sharp} provides a way to establish a compound decision minimax regret lower bound using an empirical Bayes minimax regret lower bound. We now establish an important corollary for moment estimation in the compound estimation setting. 

\begin{corollary}\label{cor:totcompregret}
    When $\ell(\theta) := \theta^k$, the minimax total regret over $\mathcal{P}([0, h])$ has a lower bound of $\Omega\left((\frac{\log n}{\log\log n})^{k+1}\right)$ in both the Poisson and normal means models. 
\end{corollary}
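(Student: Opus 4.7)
\textbf{Proof proposal for \prettyref{cor:totcompregret}.} The plan is to reduce the compound decision lower bound to the empirical Bayes lower bound already established in \prettyref{thm:poisson_bound} and \prettyref{thm:gsn_lower_bound}. The key observation is that averaging the simple-oracle compound decision risk over an iid prior draw $\theta_1^n \sim \pi^{\otimes n}$ essentially recovers the empirical Bayes risk, and the gap between the Bayes risk $n\cdot \mmse_\ell(\pi)$ and the expected simple-oracle risk goes the right way via concavity of the mmse functional.

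First I would check that for any fixed $\theta_1^n$ the simple oracle admits the closed form $r^S(\theta_1^n) = n\cdot \mmse_\ell(\pi_{\theta_1^n})$, where $\pi_{\theta_1^n} = \frac{1}{n}\sum_{i=1}^n \delta_{\theta_i}$ is the empirical distribution of the $\theta_i$'s. Indeed, rewriting $\frac{1}{n}\sum_i \E_{X_i\sim f(\cdot;\theta_i)}[(\hat t(X_i)-\ell(\theta_i))^2]$ as $\E_{X\sim f_{\pi_{\theta_1^n}}}\E_{\pi_{\theta_1^n}}[(\hat t(X)-\ell(\theta))^2\mid X]$ identifies the optimal $\hat t$ as the Bayes estimator for $\pi_{\theta_1^n}$ and the minimum value as $\mmse_\ell(\pi_{\theta_1^n})$. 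Next, $\pi\mapsto\mmse_\ell(\pi)$ is concave, being the infimum over $\hat t$ of the linear-in-$\pi$ functionals $\pi\mapsto\E_\pi[(\hat t(X)-\ell(\theta))^2]$. Since $\E_{\pi^{\otimes n}}[\pi_{\theta_1^n}] = \pi$, Jensen's inequality yields $\E_{\pi^{\otimes n}}[\mmse_\ell(\pi_{\theta_1^n})] \le \mmse_\ell(\pi)$.

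Putting these steps together, for any estimator $\hat T$ and any $\pi\in\mathcal{G}$ I would deduce
\begin{align*}
\E_{\pi^{\otimes n}}[\mathsf{TotRegretComp}^S_{\ell,\theta_1^n}(\hat T)]
& = \E_{\pi^{\otimes n}}[\|\hat T(X_1^n)-\ell(\theta_1^n)\|^2] - n\cdot \E_{\pi^{\otimes n}}[\mmse_\ell(\pi_{\theta_1^n})] \\
& \ge \E_{\pi^{\otimes n}}[\|\hat T(X_1^n)-\ell(\theta_1^n)\|^2] - n\cdot \mmse_\ell(\pi)
= \TotRegret_{\pi,\ell,n}(\hat T).
\end{align*}
Bounding the left-hand side from above by $\sup_{\theta_1^n\in\mathcal{G}^{\otimes n}} \mathsf{TotRegretComp}^S_{\ell,\theta_1^n}(\hat T)$, then taking supremum over $\pi\in\mathcal{G}$ and infimum over $\hat T$, yields $\TotRegret_{\ell,n}(\mathcal{G}) \le \mathsf{TotRegretComp}^S_{\ell,n}(\mathcal{G}) \le \mathsf{TotRegretComp}^{PI}_{\ell,n}(\mathcal{G})$, where the last inequality is the one noted in the paper immediately before the corollary.

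The conclusion then follows by plugging in the empirical Bayes lower bound $\TotRegret_{k,n}(\mathcal{P}([0,h])) \ge c_1(\log n/\log\log n)^{k+1}$ from \prettyref{thm:poisson_bound} for Poisson and the analogous bound from \prettyref{thm:gsn_lower_bound} for normal means. The only cosmetic issue is that \prettyref{thm:gsn_lower_bound} is stated over $\mathcal{P}([-h,h])$ rather than $\mathcal{P}([0,h])$; I would patch this by repeating the construction of \prettyref{sec:normal_means_lower} with base prior $G_0 = \mathcal{N}(h/2, s)$ and $s \asymp 1/\log n$, so that the perturbed priors fall inside $[0,h]$ with probability $1-O(n^{-C})$ and \prettyref{lmm:6_generalized} truncates them at negligible cost. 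I expect the concavity step to be the only genuinely non-routine part of the argument; the rest is book-keeping.
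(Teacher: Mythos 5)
Your proof is correct and follows essentially the same reduction as the paper: reduce $\mathsf{TotRegretComp}^S_{\ell,n}(\mathcal{G}) \ge \mathsf{TotRegret}_{\ell,n}(\mathcal{G})$ via concavity of the mmse functional, then plug in the empirical Bayes lower bounds. The one substantive difference is in how you establish concavity of $\pi\mapsto\mmse_\ell(\pi)$: the paper cites \cite[Corollary~1]{wu2010functional} (a concavity result for \emph{mean}-estimation mmse) and routes the problem through the reparametrized channel $\theta\mapsto\mathcal{N}(\theta^{1/k},1)$ resp.\ $\mathsf{Poi}(\theta^{1/k})$ to make $\theta^k$ look like a mean; you instead observe directly that $\mmse_\ell$ is a pointwise infimum of the affine-in-$\pi$ functionals $\pi\mapsto\E_\pi[(\hat t(X)-\ell(\theta))^2]$, hence concave. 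Your argument is more self-contained, sidesteps the somewhat awkward reparametrization, and works for arbitrary $\ell$, not just monomials --- a genuine simplification. You also correctly flag an issue the paper's proof silently glosses over: \prettyref{thm:gsn_lower_bound} gives the normal-means lower bound over $\mathcal{P}([-h,h])$, not $\mathcal{P}([0,h])$, and since $\inf_{\hat T}\sup_{\pi\in\mathcal{A}}$ only \emph{decreases} as $\mathcal{A}$ shrinks, the theorem as stated does not imply the corollary's claim over $\mathcal{P}([0,h])$. Your patch (re-run the construction of \prettyref{sec:normal_means_lower} with $G_0=\mathcal{N}(h/2,s)$, $s\asymp 1/\log n$, and truncate via \prettyref{lmm:6_generalized}) is the right fix; carrying it out fully would require checking that \prettyref{lmm:9_generalized}'s Hermite-basis machinery survives the translation and that $\theta^k$, expanded around $h/2$, still yields the dominant $(q+1)_k$ growth --- both of which hold, but are worth stating.
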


\begin{proof}[Proof of \prettyref{cor:totcompregret}]
    We mimic the idea of \cite[Proposition 3]{polyanskiy2021sharp} that establishes 
    \[
    \mathsf{TotRegretComp}_{\ell, n}^S (\mathcal{G})\ge \mathsf{TotRegret}_{\ell, n} (\mathcal{G})
    \]
    whenever $\ell := \theta^k$
    Next, \cite[Corollary 1]{wu2010functional} says that the function $G\to \mathsf{mmse}(G)$ is concave, when applied to channels that are either of the form 
    $\theta \to \mathcal{N}(\theta^{1/k}, 1)$ or $\theta \to \mathsf{Poi}(\theta^{1/k})$. 
    This gives, for each $G$, $\E_{\theta_1^n \simiid G}\mmse(G_{\theta_1^n}) \le \mmse(G)$, 
    and therefore for each $\hat{T}$ and each $G$ we have 
    \[
    \mathsf{TotRegret}_{\pi, \ell, n}(\hat{T}) \le \E_{\theta_1^n \simiid \pi}[\mathsf{TotRegretComp}_{\ell, \theta_1^n}^S(\hat{T})]
    \]
    Therefore, the claim is established by taking $\inf_{\hat{T}}\sup_{\pi}$ on both sides. 
\end{proof}
\end{document}